\newtheorem{thm}{Theorem}[section]
\newtheorem{prop}[thm]{Proposition}
\newtheorem{cor}[thm]{Corollary}
\newtheorem{Def}[thm]{Definition}
\newtheorem{lemma}[thm]{Lemma}
\newtheorem{remark}[thm]{Remark}
\newtheorem{notation}[thm]{Notation}
\pgfplotsset{compat=1.16}
\definecolor{aqua}{rgb}{0.0, 0.44, 1.0}
\newtheorem{assumption}[thm]{Assumption}
\newcommand{\ve}{{\varepsilon}}
\newcommand{\N}{{\mathbb{N}}}
\newcommand{\Z}{{\mathbb{Z}}}
\newcommand{\R}{{\mathbb{R}}}
\newcommand{\GD}{{\Gamma}^{\rm D}}
\newcommand{\opA}{A}
\newcommand{\RR}{{\mathbb R}}
\newcommand{\dz}{\partial_z}
\numberwithin{equation}{section}
\title{
	The F. John model and Cummins' equations for freely floating objects}
\author[D. Lannes]{David Lannes}
\address{Institut de Mathématiques de Bordeaux\\  Université de Bordeaux et CNRS UMR 5251\\ 351 Cours de la Libération \\ 33405 Talence Cedex \\ France}\email{David.Lannes@math.u-bordeaux.fr}
\author[M. O. Paulsen]{Martin Oen Paulsen} 
\address{Institut de Mathématiques de Bordeaux\\  Université de Bordeaux et CNRS UMR 5251\\ 351 Cours de la Libération \\ 33405 Talence Cedex \\ France}\email{Martin.Paulsen@math.u-bordeaux.fr}
\date{\today}
\keywords{Cummins equations, wave structure interaction, elliptic regularity on corner domains, homogeneous Sobolev spaces, Dirichlet-Neumann operator}
\begin{document}
	\maketitle
	\begin{abstract}  
		In this paper, we address the well-posedness theory of F. John’s problem for freely floating objects in a two-dimensional framework. This problem is a linear description of the interactions between an incompressible, irrotational free-surface fluid and a partially immersed solid object.  It is related to the Cummins equations, which are a set of coupled integro-differential equations widely used by naval engineers,       
		\begin{equation*}
			({\mathcal M}+{\mathcal M}_{\rm a})\ddot{\mathtt X}
			+   
			\rho{\mathtt g}\int_{0}^t\mathcal{K}(t-\tau)\dot{\mathtt X}(\tau) \: \mathrm{d}\tau 
			+
			{\mathcal C}{\mathtt X} 
			=  
			\mathcal{F};
		\end{equation*}
		%
		% better to skip the jokes of Cummins and stay positive
		%
		here ${\mathtt X}(t)\in \R^3$ describes the three degrees of freedom of a partially immersed solid object at time $t\geq 0$ (two translations and one rotation), while $\mathcal{M}$ and $\mathcal{M}_a$ are the mass and added mass matrices, $\mathcal{K}$ is a matricial kernel that accounts for the memory effect of the fluid, $\mathcal{C}{\mathtt X}$ is the hydrostatic force, and $\mathcal{F}$ is the force resulting from the waves acting on the object. Our results provide a rigorous justification of this model and the first proof of its well-posedness. To this end, we show that F. John's problem has a Hamiltonian structure, albeit with a non-definite Hamiltonian which compels us to work in a semi-Hilbertian framework. The presence of corners in the fluid domain also induces a lack of elliptic regularity for the boundary value problem satisfied by the velocity potential. This is why the solution to F. John's problem has in general a limited regularity, even with very smooth initial data. Correspondingly, the solution ${\mathtt X}$ to Cummins' equation is in general no better than $C^3$. We demonstrate higher regularity when the contact angles are small or equal to $\pi/2$, provided that some compatibility conditions, propagated by the flow, are satisfied. This allows us to exhibit qualitative differences between the three degrees of freedom of the object. For instance, the motion of an object allowed to move only vertically can be $C^\infty$, while it cannot be better than $C^3$ if it is only allowed to translate horizontally.

	\end{abstract}

	\section{Introduction}

	\subsection{The floating body problem}

	The derivation of equations for ship motions based on linear potential flow is a historical problem that dates back to the 19th century. Two significant contributions to this field were made by Froude in 1861 \cite{Froude1861}, who examined a ship rolling in waves, and Krylov, who investigated motions with six degrees of freedom \cite{Kriloff1896,Kriloff1898}. In their work, the dynamics of the floating object were modeled as a damped harmonic oscillator, which marked an important step toward developing a general theory. However, the derivation was based on strict assumptions that neglected the effects of diffracted waves and those generated by the ship itself. A large community set out to find an appropriate formulation of the linear floating body problem, and one of the first systematic approaches was developed by John in 1949 for motions with six degrees of freedom.

	In a two-part series, John \cite{John49,John50} gave a general formulation of the linear floating body problem. The floating body problem describes the interaction between a fluid and a partially immersed solid under the assumptions:
	\begin{itemize}
		\item [-] The amplitude of the surface elevation is assumed to be small.
		\item [-] The motion of the solid is assumed to be free or forced with small amplitude. 
		\item [-] The fluid is governed by the linear Bernoulli equation.
	\end{itemize}
	Under these assumptions, John neglected the nonlinear effects due to the variations of the fluid domain to arrive at a set of equations which can be stated, in the two-dimensional case, as
	\begin{equation}\label{FJ1}
		\begin{cases}
			\partial_t \zeta - \partial_z \phi =0 &\mbox{ on } \Gamma^{\rm D}\\
			\partial_t \psi + {\mathtt g}\zeta =0 &\mbox{ on } \Gamma^{\rm D},
		\end{cases}
	\end{equation}
	where ${\mathtt g}$ is the gravity,  $\Gamma^{\rm D}$ consists of two flat finite intervals or half-lines corresponding to the part of the surface of the fluid in contact with the atmosphere when the fluid is at rest, $\zeta$ is the free surface elevation, and
	$\phi$ is the velocity potential. This latter is determined in terms of its trace $\psi$ on $\Gamma^{\rm D}$ and of the velocity ${\mathcal V}^{\rm w}$ of the floating object by solving
	\begin{equation}\label{Eq: Laplace equation}
		\begin{cases}
			\Delta \phi = 0  & \text{in}  \quad \Omega 
			\\ 
			\phi = \psi       & \text{on} \quad \Gamma^{\mathrm{D}}
			\\ 
			\mathbf{n}^{\rm w} \cdot \nabla  \phi  = \mathbf{n}^{\rm w} \cdot {\mathcal V}^{\rm w}& \text{on} \quad \Gamma^{\mathrm{w}}			\\
			\mathbf{n}^{\rm b} \cdot \nabla \phi = 0 & \text{on} \quad \Gamma^{\rm b},
		\end{cases}
	\end{equation}
	\noindent
	where $\Omega$ denotes the fluid domain at rest, while the wetted part of the object is labeled $\Gamma^{\rm w}$, and the bottom of the fluid domain is denoted $\Gamma^{\rm b}$ (see Figure \ref{Figure: Setu-up}). The first equation in \eqref{FJ1} is the linearization of the classical kinematic boundary condition at the free surface, while the second one is the linearization of the trace of Bernoulli's equation at the free surface.

	Since the object is a rigid body, its velocity ${\mathcal V}^{\rm w}$ can be expressed in terms of the time derivative of ${\mathtt X}(t)=(\tilde{x}(t),{\tilde{z}(t),\theta}(t))^{\rm T}$, where $\tilde{x}$ and $\tilde{z}$ denote the horizontal and vertical displacements of the center of mass of the object and $\theta$ its rotation. When the object is freely floating, the vector ${\mathtt X}$ is determined by Newton's equations, the acting forces and torque acting on the object being its weight and the hydrodynamical force/torque exerted by the fluid. These equations are provided in Section \ref{Sec: Hamiltonian reform}. 
	
	The work of John marked a period with several foundational works on the formulation and dynamics of the floating body problem \cite{Haskind46, Ursell49, PetersStoker54, Ursell64, Ogilvie64, Wehausen71}, culminating in the seminal work of Cummins in 1962 \cite{Cummins62}. He was able to link the formulation by Krylov and John, and proposed a description of the motion of the object as a set of coupled integro-differential equations for the degrees of freedom of the object (three in dimension two, and six in dimension three). In the present two-dimensional case, it takes the form
	\begin{equation}\label{Cummins intro}
		({\mathcal M}+{\mathcal M}_{\rm a})\ddot{\mathtt X}
		+   
		\rho{\mathtt g}\int_{0}^t\mathcal{K}(t-\tau)\dot{\mathtt X}(\tau) \: \mathrm{d}\tau 
		+
		{\mathcal C}{\mathtt X} 
		=  
		\mathcal{F},
	\end{equation}
	%
	% better to skip the jokes of Cummins and stay positive
	%
	where $\mathcal{M}$ and $\mathcal{M}_a$ are the mass and added mass matrices, $\mathcal{K}$ is a matricial kernel that accounts for the memory effect of the fluid, $\mathcal{C}{\mathtt X}$ is the hydrostatic force, and $\mathcal{F}$ is the force resulting from the waves acting on the object.
	The matrices $\mathcal{M}, \mathcal{M}_a$ and the kernel ${\mathcal K}$ only depend on the geometry of the object, while the matrix ${\mathcal C}$ also depends on the position of its center of mass. The analytical description of these terms is presented in Section \ref{Sec: Hamiltonian reform}, where we revisit the derivation of F. John's model and generalize it to allow for the presence of a (possibly emerging) topography. The derivation of Cummins' equations in such general configurations is done in Subsection \ref{Subsec: Cummins}. Despite being oversimplified in many respects, these equations remain one of the primary methods used in naval engineering and marine renewable energy applications \cite{newman1988computation,Mavrakos_McIver_97,falcao2010wave,armesto2015comparative,guo2021review}, through widely used computer programs such as WaveAnalysisMIT \cite{WAMIT06}, which enables large-scale numerical simulations of floating structures with complex geometries.

	A drawback of John's model is that it neglects nonlinear, viscous, and rotational effects, which are needed to assess maximum loads on the structure in extreme sea conditions \cite{lannes_NEWS_21}. One way to include these effects is based on computational fluid dynamics (CFD) using versions of the Navier-Stokes equations \cite{Parolini05,eskilsson_CFD_15}. This approach is computationally much more expensive, as one has to deal with highly separated flows with large-scale differences between the floater and mooring systems, coupled with a non-Gaussian wave field \cite{kim_CFD_16}. For these reasons, the CFD project cannot account for a full sea state and is only used in very specific cases. 
	
	A compromise between linear theory and CFD computations was recently put forward in \cite{Lannes17}. Here, the models still neglect viscosity and vorticity but include nonlinear effects. 
	The presence of a body is accounted for in the water wave equations by applying a constraint on the surface elevation at the object's location. This approach therefore relates the floating body problem to the wide literature on congested flows \cite{Degond11,godlewski2018congested,perrin2018overview,maity2019analysis,godlewski2020congested,DalibardPerrin20}. It can be used with vertically averaged models that allow for faster numerical simulations while accounting for nonlinear effects. We direct interested readers to \cite{Bocchi20Nonlin,Bocchi2020,IguchiLannes21,BreschLannesMetivier21,BeckLannes22,Bocchi2023, BeckLannesWeynans25, IguchiLannes25} for both numerical and analytical studies of various vertically averaged models. Despite active study of these models, it remains an open question whether their solutions are close to those of the original physical system. \textit{The first step in addressing this question is to establish the well-posedness of John's problem. This is the first objective of the paper and will in turn be used to justify the Cummins equations.} Moreover, since deriving asymptotic models classically involves a loss of derivative from an asymptotic expansion, see for instance \cite{WWP}, one would need to establish high regularity of the solutions. Therefore, \textit{a second goal of the paper is to investigate the regularity properties} of the solutions to F. John's problem.

	\subsection{Related results}   While in the absence of floating objects and more generally of boundaries, the water waves equations are now well understood, the situation is far more complex in configurations where the free boundary of the fluid meets a solid boundary. T. Alazard, N. Burq and C. Zuily considered in \cite{AlazardBurqZuily_Wall_16} the case of a channel delimited by vertical walls; the surface is then the graph of a function over a finite interval. Their approach is based on a 1910 paper by Boussinesq \cite{Boussinesq1910}, where the problem is transformed into a problem on the torus using a simple reflection and periodization procedure. Extending the solution this way naturally induces singularities around the point of reflection, resulting in a domain with a Lipschitz boundary. However, by imposing the compatibility condition $\partial_x \zeta= 0$ at the contact points at $t=0$ and observing that this condition is trivially propagated by symmetry reasons, the authors were able to improve the regularity of the surface down to a point where the periodic water waves equations are known to be well posed \cite{AlazardBurqZuily14}.
	
	For non vertical walls, for instance in the presence of an emerging bottom, this argument does not work. This configuration was studied in the work of de Poyferré, \cite{dePoyferre19}, who derived \textit{a priori} estimates for the water waves equation in the case of an emerging bottom, provided that the contact angle between the free surface and the emerging bottom remains small. A similar condition is imposed by Ming and Wang \cite{MingWang20,MingWang21,MingWang24}  who proved the well-posedness in the same situation, but in the presence of surface tension. In the related context of corner singularities for the water waves equations, similar conditions also appear \cite{KinseyWu,Wu_19,CordobaEncisoGrubic}. The smallness condition on the contact angles is due to the fact that the analysis of the velocity field implies the resolution of a Laplace problem on a domain with corners. For such elliptic problems, it is known that corner singularities appear and destroy the standard elliptic regularity properties observed in smooth domains \cite{Jerison_Kenig81,Grisvard85,Dauge88,Brown94,Mazya_Kozlov_97,Brown_Lanzini_Capogna08}. There is no smallness assumption on the contact angles in the works of I. Guo and I. Tice \cite{GuoTice1,GuoTice2} who studied the contact problem for the Navier-Stokes equations with surface tension, nor in the work of E. Bocchi, A. Castro and F. Gancedo \cite{BocchiCastroGancedo25} on the Muskat problem with contact points, but this issue is still present in these references, and is the reason why the authors have to work in a low regularity setting.
	
	The issues of regularity discussed above are also at the core of the mathematical analysis of F. John's problem, and a layer of complexity is added by the fact that the free surface is not connected, which imposes to work in non standard functional spaces.
	From the time John published  the second paper in 1950 \cite{John50}, which demonstrated that the system produces a unique solution under the assumption that all motions are time-harmonic with sufficiently large frequency, and other technical assumptions, most subsequent research has concentrated on time-harmonic motions \cite{Mciver96, Mazya_LinearWW_02}. The first well-posedness result of John's problem for unsteady waves in an optimal functional setting and an analysis of the regularity properties of the solutions was only recently established for a fixed object \cite{LannesMing24}.  The case of an object having the shape of a half-disk and allowed to move only vertically in a fluid of infinite depth was recently considered in \cite{Ocqueteau_Tucsnak_25}. Using the explicit expression of the velocity potential in that configuration, the authors proved the well-posedness of F. John's problem at low regularity and assuming that the velocity potential, and not only its gradient, is in $L^2(\Omega)$. This latter assumption was also made in \cite{LannesMing24} to establish the higher regularity properties of the solution. In this paper, we address the case of general geometric configurations of a solid allowed to move along three degrees of freedom, of general initial sea states, and investigate as well the issue of the regularity of the solutions. We also derive and investigate Cummins' equations and, in particular, whether the convolution kernel ${\mathcal K}$ makes sense in all possible configurations, in spite of the possible presence of corner singularities.

	To provide a more comprehensive description of the main results, we must first write down John's equations and make the coupling with Newton's equations precise. This process will be detailed in the following subsection.

	\subsection{The Fritz John floating body problem} We consider the interactions between waves at the surface of a two-dimensional fluid of finite depth with a partially immersed solid object and a possibly emerging bottom, see Figure \ref{Figure: Setu-up}. We are interested in small disturbances of the free surface and small displacements of the solid object; the fluid domain is consequently a small perturbation of a fixed domain $\Omega$ corresponding to the equilibrium configuration where the fluid is at rest and the solid in hydrostatic equilibrium. This domain has corners formed by the emerging bottom and the immersed object.

	\begin{figure}[h]
		\centering
		\begin{tikzpicture}
			\begin{axis}[x=1cm, y=1cm,
				axis lines=middle,
				axis x line shift=1.7, 
				xlabel=\(x\),ylabel=\(z\),
				x label style={anchor=north},
				y label style={anchor=east},
				xticklabel=\empty,
				xmin=0,xmax=12,ymin=-1.7,ymax=0.8,
				xtick={},
				clip=false,
				domain=0:12, 
				smooth,
				ticks=none
				]
				
				% Draw the bottom topography
				\node[label=311:{$\color{brown}\Gamma^{\rm b}$}] (n2) at (11,-0.8) {};
				\draw [brown, very thick,  tension=0.4] plot [smooth] coordinates {(0,0) (0.5,-0.6) (1.2,-1.1) (2,-1.3) (3,-1.1) (5,-1.5) (7,-1.4)   (9.5,-1.6)  (12,-1.5)};
				
				% Fill the bottom part
				\fill[brown!100,nearly transparent] (0,0) -- (0,-1.7) -- (0.5,-1.7) -- (0.5,-0.6) -- cycle;
				\fill[brown!100,nearly transparent] (0.5,-1.7) -- (0.5,-0.6) -- (1.2,-1.1) -- (1.2,-1.7) --cycle;
				\fill[brown!100,nearly transparent] (1.2,-1.1) -- (1.2,-1.7) -- (2,-1.7) -- (2,-1.3) -- cycle;
				\fill[brown!100,nearly transparent] (2,-1.7) -- (2,-1.3) -- (3,-1.1) -- (3,-1.7) -- cycle;
				\fill[brown!100,nearly transparent] (3,-1.1) -- (3,-1.7) -- (5,-1.7) -- (5,-1.5) -- cycle;
				\fill[brown!100,nearly transparent] (5,-1.7) -- (5,-1.5) -- (7,-1.4) -- (7,-1.7) --  cycle;
				\fill[brown!100,nearly transparent] (7,-1.4) -- (7,-1.7) -- (9.5,-1.7) -- (9.5,-1.6) --  cycle;
				\fill[brown!100,nearly transparent] (9.5,-1.7) -- (9.5,-1.6) -- (12,-1.5) -- (12,-1.7) --  cycle;			
				
				% z = 0
				\node[label=311:{$0$}] (n2) at (-0.5,0.4) {};

				% Draw E_-
				\node[label=311:{$\color{blue}\mathcal{E}_-$}] (n2) at (1.7,0.8) {};
				\draw [blue, very thick,  tension=0.4] plot [smooth] coordinates {(0,0) (4.5,0)};

				% Draw E_+ 
				\node[label=311:{$\color{blue}\mathcal{E}_+$}] (n2) at (9.35,0.8) {};
				\draw [blue, very thick,  tension=0.4] plot [smooth] coordinates {(7.5,0) (12,0)};
				
				% Draw object 
				\node[label=311:{$\Gamma^{\rm w}$}] (n2) at (5.7,-0.5) {};
				\draw [black, very thick,  tension=0.4] plot [smooth] coordinates {(4.5,0) (4.8,-0.4) (5,-0.5) (6,-0.6) (7,-0.5) (7.2,-0.4) (7.5,0)};
				
				% Top of object
				\draw [black!80,densely dashed,  thick, tension=0.4] plot [smooth] coordinates {(4.5,0) (4.6,+0.2) (5,+0.3) (6,0.4) (7,0.3) (7.4,0.2) (7.5,0)};
				
				% Fill the object
				\fill[gray!100,nearly transparent] (4.5,0) -- (4.5,0) -- (4.6,0.2) -- (4.8,-0.4) -- cycle;
				\fill[gray!100,nearly transparent] (4.6,0.2) -- (4.8,-0.4) -- (5,-0.5) -- (5,0.3) -- cycle;
				\fill[gray!100,nearly transparent] (5,-0.5) -- (5,0.3) -- (6,0.4) -- (6,-0.6) -- cycle;
				\fill[gray!100,nearly transparent] (6,0.4) -- (6,-0.6) -- (7,-0.5) -- (7,0.3) -- cycle;
				\fill[gray!100,nearly transparent] (7,-0.5) -- (7,0.3) -- (7.4,0.2)  -- (7.2,-0.4) -- cycle;
				\fill[gray!100,nearly transparent] (7.4,0.2)  -- (7.2,-0.4) -- (7.5,0) -- (7.5,0) -- cycle;
				
				% Fill the fluid domain
				\fill[aqua!100,nearly transparent] (0,0) -- (0,0) -- (0.5,0) -- (0.5,-0.6) -- cycle;
				\fill[aqua!100,nearly transparent] (0.5,0) -- (0.5,-0.6) -- (1.2,-1.1) -- (1.2,0) --cycle;
				\fill[aqua!100,nearly transparent] (1.2,-1.1) -- (1.2,0) -- (2,0) -- (2,-1.3) -- cycle;
				\fill[aqua!100,nearly transparent] (2,0) -- (2,-1.3) -- (3,-1.1) -- (3,0) -- cycle;		
				\fill[aqua!100,nearly transparent] (3,-1.1) -- (3,0) -- (4.5,0) -- (4.5,-1.4) -- cycle;
				
				% Under object:
				\fill[aqua!100,nearly transparent] (4.5,0) -- (4.5,-1.4) -- (4.88,-1.48) -- (4.88,-0.455) --  cycle;
				\fill[aqua!100,nearly transparent] (4.88,-1.48) -- (4.88,-0.46) -- (6,-0.6) -- (6,-1.47) --  cycle;
				\fill[aqua!100,nearly transparent] (6,-0.6) -- (6,-1.47) -- (7.18,-1.39) -- (7.18,-0.46) --  cycle;
				\fill[aqua!100,nearly transparent] (7.18,-1.4) -- (7.18,-0.455) -- (7.5,0) -- (7.5,-1.44) --  cycle;
				\fill[aqua!100,nearly transparent] (7.5,-1.41) -- (7.5,0) -- (9.5,0) -- (9.5,-1.6) --  cycle;
				
				% to the right of the object 
				\fill[aqua!100,nearly transparent] (9.5,0) -- (9.5,-1.6) -- (12,-1.5) -- (12,0) --  cycle;	
				
				% increase color
				\node[label=311:{$\color{brown}\Gamma^{\rm b}$}] (n2) at (11,-0.8) {};
				
			\end{axis}
		\end{tikzpicture}
		\caption{The fluid domain $\Omega$ is the shaded light blue color with lower boundary $\Gamma^{\rm b}$ in dark brown. Its upper boundary is comprised of; $\Gamma^{\rm w}$ the wetted part of the object in solid black and $\Gamma^{\rm D}$ which the union of the solid blue lines $\mathcal{E}_-$ and $\mathcal{E}_+$.}
		\label{Figure: Setu-up}
	\end{figure}
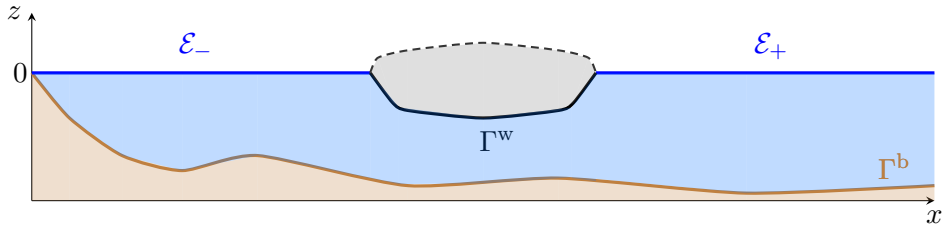
	
	To be precise, we make the following assumptions on $\Omega$ and its boundary $\Gamma$. This assumption allows several configurations represented in Figure \ref{Fig: Admissible config}. In particular, both connected components of $\GD$ can be either finite intervals or half-lines. 
	
	\begin{figure}[h!]
		\centering
		\begin{minipage}{.5\textwidth}
			\centering
			\begin{tikzpicture}
				\begin{axis}[x=1cm, y=1cm,
					axis lines=middle,
					axis x line shift=1.7, 
					xlabel=\(x\),ylabel=\(z\),
					x label style={anchor=north},
					y label style={anchor=east},
					xticklabel=\empty,
					xmin=0,xmax=6,ymin=-1.7,ymax=0.8,
					xtick={},
					clip=false,
					domain=0:12, 
					smooth,
					ticks=none
					]
					
					% Draw the bottom topography
					%	\node[label=311:{$\color{brown}\Gamma^{\rm b}$}] (n2) at (11,-0.8) {};
					\draw [brown, very thick,  tension=0.4] plot [smooth] coordinates {(0,-1.5) (0.5,-1.4) (1.2,-1.3) (2,-1.3) (3,-1.1) (5,-1.5) (6,-1.4)};
					
					% Fill the bottom part
					\fill[brown!100,nearly transparent] (0,-1.5) -- (0,-1.7) -- (0.5,-1.7) -- (0.5,-1.4) -- cycle;
					\fill[brown!100,nearly transparent] (0.5,-1.7) -- (0.5,-1.4) -- (1.2,-1.3) -- (1.2,-1.7) --cycle;
					\fill[brown!100,nearly transparent] (1.2,-1.3) -- (1.2,-1.7) -- (2,-1.7) -- (2,-1.3) -- cycle;
					\fill[brown!100,nearly transparent] (2,-1.7) -- (2,-1.3) -- (3,-1.1) -- (3,-1.7) -- cycle;
					\fill[brown!100,nearly transparent] (3,-1.1) -- (3,-1.7) -- (5,-1.7) -- (5,-1.5) -- cycle;
					\fill[brown!100,nearly transparent] (5,-1.7) -- (5,-1.5) -- (6,-1.4) -- (6,-1.7) --  cycle;
					%	\fill[brown!100,nearly transparent] (7,-1.4) -- (7,-1.7) -- (9.5,-1.7) -- (9.5,-1.6) --  cycle;
					%	\fill[brown!100,nearly transparent] (9.5,-1.7) -- (9.5,-1.6) -- (12,-1.5) -- (12,-1.7) --  cycle;			
					
					% z = 0
					\node[label=311:{$0$}] (n2) at (-0.5,0.4) {};

					% Draw E_-
					%	\node[label=311:{$\color{blue}\mathcal{E}_-$}] (n2) at (1.7,0.8) {};
					\draw [blue, very thick,  tension=0.4] plot [smooth] coordinates {(0,0) (4.5/2,0)};

					% Draw E_+ 
					%	\node[label=311:{$\color{blue}\mathcal{E}_+$}] (n2) at (9.35,0.8) {};
					\draw [blue, very thick,  tension=0.4] plot [smooth] coordinates {(7.5/2,0) (12/2,0)};
					
					% Draw object 
					%\node[label=311:{$\Gamma^{\rm w}$}] (n2) at (5.7,-0.5) {};
					\draw [black, very thick,  tension=0.4] plot [smooth] coordinates {(4.5/2,0) (4.8/2,-0.4/2) (5/2,-0.5/2) (6/2,-0.6/2) (7/2,-0.5/2) (7.2/2,-0.4/2) (7.5/2,0)};
					
					% Top of object
					\draw [black!80,densely dashed,  thick, tension=0.4] plot [smooth] coordinates {(4.5/2,0) (4.6/2,+0.2/2) (5/2,+0.3/2) (6/2,0.4/2) (7/2,0.3/2) (7.4/2,0.2/2) (7.5/2,0)};
					
					% Fill the object
					\fill[gray!100,nearly transparent] (4.5/2,0) -- (4.5/2,0) -- (4.6/2,0.2/2) -- (4.8/2,-0.4/2) -- cycle;
					\fill[gray!100,nearly transparent] (4.6/2,0.2/2) -- (4.8/2,-0.4/2) -- (5/2,-0.5/2) -- (5/2,0.3/2) -- cycle;
					\fill[gray!100,nearly transparent] (5/2,-0.5/2) -- (5/2,0.3/2) -- (6/2,0.4/2) -- (6/2,-0.6/2) -- cycle;
					\fill[gray!100,nearly transparent] (6/2,0.4/2) -- (6/2,-0.6/2) -- (7/2,-0.5/2) -- (7/2,0.3/2) -- cycle;
					\fill[gray!100,nearly transparent] (7/2,-0.5/2) -- (7/2,0.3/2) -- (7.4/2,0.2/2)  -- (7.2/2,-0.4/2) -- cycle;
					\fill[gray!100,nearly transparent] (7.4/2,0.2/2)  -- (7.2/2,-0.4/2) -- (7.5/2,0) -- (7.5/2,0) -- cycle;
					
					% Fill the fluid domain
					\fill[aqua!100,nearly transparent] (0,0) -- (0,-1.5) -- (0.5,-1.4) -- (0.5,0) -- cycle;
					\fill[aqua!100,nearly transparent] (0.5,0) -- (0.5,-1.4) -- (1.2,-1.3) -- (1.2,0) --cycle;
					\fill[aqua!100,nearly transparent] (1.2,-1.3) -- (1.2,0) -- (2,0) -- (2,-1.3) -- cycle;
					\fill[aqua!100,nearly transparent] (2,0) -- (2,-1.3) -- (4.5/2,-1.24) -- (4.5/2,0) -- cycle;		
					%	\fill[aqua!100,nearly transparent] (3,-1.1) -- (3,0) -- (4.5,0) -- (4.5,-1.4) -- cycle;
					
					% Under object:
					\fill[aqua!100,nearly transparent] (4.5/2,0) -- (4.5/2,-1.24) -- (4.88/2,-1.18) -- (4.88/2,-0.455/2) --  cycle;
					\fill[aqua!100,nearly transparent] (4.88/2,-1.18) -- (4.88/2,-0.46/2) -- (6/2,-0.6/2) -- (6/2,-1.08) --  cycle;
					\fill[aqua!100,nearly transparent] (6/2,-0.6/2) -- (6/2,-1.08) -- (7.18/2,-1.2) -- (7.18/2,-0.46/2) --  cycle;
					\fill[aqua!100,nearly transparent] (7.18/2,-1.2) -- (7.18/2,-0.455/2) -- (7.5/2,0) -- (7.5/2,-1.23) --  cycle;
					\fill[aqua!100,nearly transparent] (7.5/2,-1.23) -- (7.5/2,0) -- (10/2,0) -- (10/2,-1.5) --  cycle;
					
					% to the right of the object 
					\fill[aqua!100,nearly transparent] (10/2,0) -- (10/2,-1.5) -- (12/2,-1.4) -- (12/2,0) --  cycle;	
					
					% increase color
					%\node[label=311:{$\color{brown}\Gamma^{\rm b}$}] (n2) at (11,-0.8) {};
					
				\end{axis}
			\end{tikzpicture}

		\end{minipage}%
		\begin{minipage}{.5\textwidth}
			\centering 
			\begin{tikzpicture}
				\begin{axis}[x=1cm, y=1cm,
					axis lines=middle,
					axis x line shift=1.7, 
					xlabel=\(x\),ylabel=\(z\),
					x label style={anchor=north},
					y label style={anchor=east},
					xticklabel=\empty,
					xmin=0,xmax=6,ymin=-1.7,ymax=0.8,
					xtick={},
					clip=false,
					domain=0:12, 
					smooth,
					ticks=none
					]
					
					% Draw the bottom topography
					%	\node[label=311:{$\color{brown}\Gamma^{\rm b}$}] (n2) at (11,-0.8) {};
					\draw [brown, very thick,  tension=0.5] plot [smooth] coordinates {(0,0) (1,-1.4) (1.5,-1.45) (2,-1.35) (3,-1.6) (5,-1.4) (6,0)};
					
					% Fill the bottom part
					\fill[brown!100,nearly transparent] (0,0) -- (0,-1.7) -- (1,-1.7) -- (1,-1.4) -- cycle;

					\fill[brown!100,nearly transparent] (1,-1.7) -- (1,-1.4) -- (1.2,-1.5) -- (1.2,-1.7) --cycle;
					\fill[brown!100,nearly transparent] (1.2,-1.5) -- (1.2,-1.7) -- (2,-1.7) -- (2,-1.35) -- cycle;
					\fill[brown!100,nearly transparent] (2,-1.7) -- (2,-1.35) -- (3,-1.62) -- (3,-1.7) -- cycle;
					\fill[brown!100,nearly transparent] (3,-1.625) -- (3,-1.7) -- (4.8,-1.7) -- (4.8,-1.5) -- cycle;
					\fill[brown!100,nearly transparent] (4.8,-1.5) -- (4.8,-1.7) -- (5.1,-1.7) -- (5.1,-1.36) --  cycle;
					\fill[brown!100,nearly transparent] (5.1,-1.36) -- (5.1,-1.7) -- (5.5,-1.7) -- (5.5,-0.85) --  cycle;
					\fill[brown!100,nearly transparent] (5.5,-0.85) -- (5.5,-1.7) -- (6,-1.7) -- (6,0) -- cycle;			
					
					% z = 0
					\node[label=311:{$0$}] (n2) at (-0.5,0.4) {};

					% Draw E_-
					%	\node[label=311:{$\color{blue}\mathcal{E}_-$}] (n2) at (1.7,0.8) {};
					\draw [blue, very thick,  tension=0.4] plot [smooth] coordinates {(0,0) (4.5/2,0)};

					% Draw E_+ 
					%	\node[label=311:{$\color{blue}\mathcal{E}_+$}] (n2) at (9.35,0.8) {};
					\draw [blue, very thick,  tension=0.4] plot [smooth] coordinates {(7.5/2,0) (12/2,0)};
					
					% Draw object 
					%\node[label=311:{$\Gamma^{\rm w}$}] (n2) at (5.7,-0.5) {};
					\draw [black, very thick,  tension=0.4] plot [smooth] coordinates {(4.5/2,0) (4.8/2,-0.4/2) (5/2,-0.5/2) (6/2,-0.6/2) (7/2,-0.5/2) (7.2/2,-0.4/2) (7.5/2,0)};
					
					% Top of object
					\draw [black!80,densely dashed,  thick, tension=0.4] plot [smooth] coordinates {(4.5/2,0) (4.6/2,+0.2/2) (5/2,+0.3/2) (6/2,0.4/2) (7/2,0.3/2) (7.4/2,0.2/2) (7.5/2,0)};
					
					% Fill the object
					\fill[gray!100,nearly transparent] (4.5/2,0) -- (4.5/2,0) -- (4.6/2,0.2/2) -- (4.8/2,-0.4/2) -- cycle;
					\fill[gray!100,nearly transparent] (4.6/2,0.2/2) -- (4.8/2,-0.4/2) -- (5/2,-0.5/2) -- (5/2,0.3/2) -- cycle;
					\fill[gray!100,nearly transparent] (5/2,-0.5/2) -- (5/2,0.3/2) -- (6/2,0.4/2) -- (6/2,-0.6/2) -- cycle;
					\fill[gray!100,nearly transparent] (6/2,0.4/2) -- (6/2,-0.6/2) -- (7/2,-0.5/2) -- (7/2,0.3/2) -- cycle;
					\fill[gray!100,nearly transparent] (7/2,-0.5/2) -- (7/2,0.3/2) -- (7.4/2,0.2/2)  -- (7.2/2,-0.4/2) -- cycle;
					\fill[gray!100,nearly transparent] (7.4/2,0.2/2)  -- (7.2/2,-0.4/2) -- (7.5/2,0) -- (7.5/2,0) -- cycle;
					
					% Fill the fluid domain
					\fill[aqua!100,nearly transparent] (0,0) -- (0,0) -- (1,-1.4) -- (1,0) -- cycle;
					\fill[aqua!100,nearly transparent] (1,0) -- (1,-1.4) -- (1.2,-1.5) -- (1.2,0) --cycle;
					\fill[aqua!100,nearly transparent] (1.2,-1.5) -- (1.2,0) -- (2,0) -- (2,-1.35) -- cycle;
					\fill[aqua!100,nearly transparent] (2,0) -- (2,-1.35) -- (4.5/2,-1.41) -- (4.5/2,0) -- cycle;		
					%\fill[aqua!100,nearly transparent] (3,-1.1) -- (3,0) -- (4.5,0) -- (4.5,-1.4) -- cycle;
					
					% Under object:
					\fill[aqua!100,nearly transparent] (4.5/2,0) -- (4.5/2,-1.4) -- (4.88/2,-1.48) -- (4.88/2,-0.455/2) --  cycle;
					\fill[aqua!100,nearly transparent] (4.88/2,-1.48) -- (4.88/2,-0.46/2) -- (6/2,-0.6/2) -- (6/2,-1.61) --  cycle;
					\fill[aqua!100,nearly transparent] (6/2,-0.6/2) -- (6/2,-1.61) -- (7.18/2,-1.58) -- (7.18/2,-0.46/2) --  cycle;
					\fill[aqua!100,nearly transparent] (7.18/2,-1.58) -- (7.18/2,-0.455/2) -- (7.5/2,0) -- (7.5/2,-1.59) --  cycle;
					\fill[aqua!100,nearly transparent] (7.5/2,-1.59) -- (7.5/2,0) -- (4.8,0) -- (4.8,-1.5) --  cycle;
					\fill[aqua!100,nearly transparent] (4.8,0) -- (4.8,-1.5) -- (5,-1.4) -- (5,0) -- cycle;
					\fill[aqua!100,nearly transparent] (5,-1.4) -- (5,0) -- (5.2,0) -- (5.2,-1.25) -- cycle;
					\fill[aqua!100,nearly transparent] (5.2,0) -- (5.2,-1.28) -- (6.02,0) -- (6.02,0) -- cycle;
					
				\end{axis}
			\end{tikzpicture} 
		\end{minipage}
		
		\caption{Admissible configurations}
		\label{Fig: Admissible config}
	\end{figure}
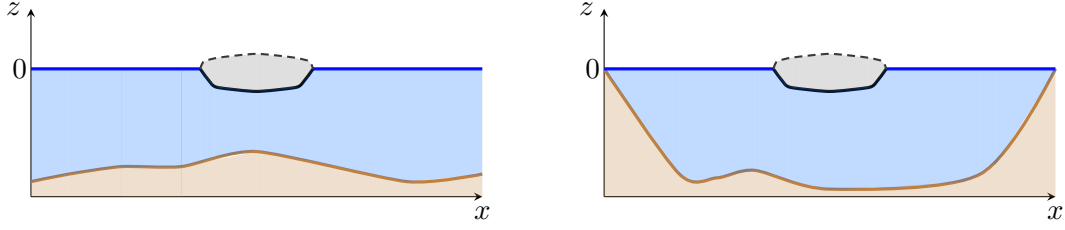

	The analysis could also be extended to the presence of several objects, as in \cite{LannesMing24}, but we just consider a single object for the sake of clarity.
	\begin{assumption}\label{Assumption domain}
		The fluid domain $\Omega$ is delimited from above and from below by two open curves $\Gamma^{\rm top}$ and $\Gamma^{\rm b}$ respectively, that do not intersect, and such that:
		\begin{itemize}
			\item [(1)] There exist $- \infty \leq x_{\rm L} < x_{\rm l} < x_{\rm r} < x_{\rm R} \leq \infty $ such that $\Gamma^{\rm top}$ is a broken line with (possibly infinite) endpoints ${\mathtt c}_{\rm L} = (x_{\rm L}, 0)$ and ${\mathtt c}_{\rm R} = (x_{\rm R}, 0)$ and vertices ${\mathtt c}_{\rm (l)} = (x_{\rm l}, 0)$, and ${\mathtt c}_{\rm (r)} = (x_{\rm r}, 0)$. The segment $({\mathtt c}_{\rm (l)},{\mathtt c}_{\rm (r)})$ which corresponds to the wetted part of the boundary of the object, is a smooth open curve denoted by  $\Gamma^{\rm w}$ and contained in the lower-half plane $\{z<0\}$. All the other segments on $\Gamma^{\rm top}$ are flat and contained in the axis $\{z=0\}$.
			\item [(2)] The bottom $\Gamma^{{\rm b}}$ is the graph on $(x_{\rm L}, x_{\rm R})$ of a smooth and bounded  function $b$ such that $b(x_{\rm L})=0$ if $-\infty < x_{\rm L}$ and $\limsup \limits_{x\rightarrow - \infty} b(x)<0$ if $x_{\rm L} = -\infty$, and similar condition at the other endpoint $x_{\rm R}$.
			\item [(3)]  At each corner, the fluid domain forms an angle strictly larger than $0$ and strictly smaller than $\pi$. 
			\item [(4)]  The boundaries are flat in the vicinity of each corner. 
		\end{itemize}
	\end{assumption}
	We also need the following notations.
	\begin{notation}In the configuration of Assumption \ref{Assumption domain}:\\
		%
		%
		%
		%\begin{itemize}
		%
		%
		%
		- We have $\Gamma^{\rm D} = ({\mathcal E}_-\cup{\mathcal E}_+)\times \{0\}$, where $\mathcal{E}_- = (x_{\rm L}, x_{\rm l})$ and $\mathcal{E}_+ = (x_{\rm r}, x_{\rm R})$. We often identify $\Gamma^{\rm D}$ with ${\mathcal E}_-\cup{\mathcal E}_+$. \\
		- We also write $\Gamma^{\rm N}=\Gamma^{{\rm b}} \cup \Gamma^{\rm w}$, and define $\Gamma^{*} = \Gamma^{\mathrm{D}}\cup \Gamma^{\rm N}$.\\
		- We denote by ${\bf n}^{\rm w}$ the unit outward normal vector to $\Gamma^{\rm w}$, and by ${\bf n}^{\rm b}$ the unit upward (and therefore inward) vector on $\Gamma^{\rm b}$.\\
		- We denote by ${\mathtt C}$ the set of all finite contact points.
		%\end{itemize}

	\end{notation}

	The equations  \eqref{FJ1} and \eqref{Eq: Laplace equation} given above must be complemented by equations describing the motion of the object.
	As mentioned previously, the velocity of the solid can be expressed in terms of the time derivative of ${\mathtt X}(t)=(\tilde{x}(t),{\tilde{z}(t),\theta}(t))^{\rm T}$, where $\tilde{x}$ and $\tilde{z}$ denote the horizontal and vertical displacements of the center of mass of the object with respect to its equilibrium position $G_{\rm eq}=(x_{G,{\rm eq}},z_{G,{\rm eq}})^{\rm T}$, and $\theta$ its rotation. When the object is freely floating, the solid must obey Newton's equations. We show in Appendix \ref{AppderN} that their linear approximation takes the following compact form 
	\begin{equation}\label{Eq: Newtons law's of motion momentum}
		{\mathcal M}\dot{\mathtt V}(t)=-\rho \int_{\Gamma^{\rm w}}\partial_t \phi {\boldsymbol \kappa}-{\mathcal C}{\mathtt X},
	\end{equation}
	where ${\mathtt V}=\dot{\mathtt X}$ and ${\mathcal M}=\mbox{diag}({\mathfrak m},{\mathfrak m},{\mathfrak i})$ with ${\mathfrak m}$ the mass of the object and ${\mathfrak i}$ its moment of inertia, while ${\boldsymbol \kappa}$ and ${\mathcal C}$ are given by
	\begin{equation}\label{defkappaC}
		{\boldsymbol \kappa}=\begin{pmatrix}
			{\bf n}^{\rm w}\cdot {\bf e}_x \\ {\bf n}^{\rm w}\cdot {\bf e}_z \\ -{\bf r}_{G_{\rm eq}}^\perp \cdot {\bf n}^{\rm w}
		\end{pmatrix}
		\quad\mbox{ and }\quad
		{\mathcal C}=\begin{pmatrix}  0 & 0 & 0 \\ 
			0 & c_{zz} & -c_{z\theta} \\
			0 & -c_{\theta z} & c_{\theta\theta}
		\end{pmatrix},
	\end{equation}
	where 
	$${\bf r}_{G_{\rm eq}}(x,z)=(x-x_{G,{\rm eq}},z-z_{G,{\rm eq}})^{\rm T};$$
	the exact expression of the coefficients of ${\mathcal C}$ is not important at this point.  
	
	The system of equations formed by \eqref{FJ1}, \eqref{Eq: Laplace equation}, \eqref{Eq: Newtons law's of motion momentum}, and  the equation for ${\mathcal V}^{\rm w}$ defined in the Appendix by \eqref{eqUsol0}, namely,
	$$
	{\mathcal V}^{\rm w}=(\dot{\tilde{x}},\dot{\tilde{z}})^{\rm T}-\dot{\theta}r_{G_{\rm eq}}^\perp
	$$
	constitutes Fritz John's model \cite{John49} for the wave-structure interactions of small amplitude in the case where the object is partially immersed and freely floating. We refer to Appendix \ref{AppderN} for a self-contained and alternative derivation of this model, and its generalization to the more general configurations allowed by Assumption \ref{Assumption domain}.
	\\

	\subsection{Description of the results and organization of the paper} In this paper, we revisit the derivation of the Cummins equations \cite{Cummins62} in the general configurations allowed by Assumption \ref{Assumption domain} and give the first well-posedness result for \eqref{Cummins intro}.  The rigorous derivation of Cummins' equations is a consequence of the well-posedness of John's problem and of the regularity properties of its solutions. This well-posedness theory is based on several results that are of independent interest.
	
	Section \ref{Sec: Hamiltonian reform} is dedicated to new reformulations of John's problem, as presented in equations \eqref{FJ1}-\eqref{Eq: Newtons law's of motion momentum}. In this section, we introduce the Dirichlet-Neumann map studied in \cite{LannesMing24} and use it to reformulate \eqref{FJ1} as a set of nonlocal equations cast on $\Gamma^{\rm D}$ and inspired by the classical Zakharov-Craig-Sulem formulation of the water-waves problem \cite{Zakharov68,Craig_Sulem_Sulem_92,Craig_Sulem_93}; due to the nonlocality of the equations, there is an additional source term in the equations, which is caused by the motion of the object.  This source term, which makes the motion of the object felt on the entirety of $\GD$, is expressed in terms of Kirchhoff potentials. Through the Kirchhoff potentials, we provide an analytical formulation of the added mass effect $\mathcal{M}_a$ hidden in equation \eqref{Eq: Newtons law's of motion momentum} and identify the energy associated with the system as:
	\begin{equation*} 
		{\mathcal H}(\zeta,\psi,{\mathtt X},{\mathtt V})=\frac{1}{2} {\mathtt g}\int_{\Gamma^{\rm D}}\zeta^2+\frac{1}{2} \int_{\Gamma^{\rm D}} \psi G_0\psi+\frac{1}{2\rho}{\mathcal C} {\mathtt X}\cdot  {\mathtt X}+\frac{1}{2\rho}({\mathcal M}+{\mathcal M}_{\rm a}){\mathtt V}\cdot{\mathtt V}.
	\end{equation*}
	In Subsection \ref{Subsec: Ham struc}, we demonstrate that the aforementioned reformulation of the equations possesses a Hamiltonian structure with Hamiltonian ${\mathcal H}$. Under certain conditions on the coefficients in $\mathcal{C}$, ${\mathcal H}$ can be used to construct a semi-Hilbert space adapted to the mathematical analysis of F. John's problem. We relate these conditions on the matrix ${\mathcal C}$ to classical equilibrium criteria used in ship construction.

	In Section \ref{Sec: Wp John and just Cummins}, we develop an abstract framework for John's problem and establish its well-posedness, which  in turn is used to justify the Cummins equations. The well-posedness is obtained in the semi-Hilbert space associated with the Hamiltonian, and that can be characterized as
	\begin{equation*}
		{\mathbb X}=L^2(\Gamma^{\rm D})\times {\mathbb R}^3\times \dot{H}^{1/2}(\Gamma^{\rm D})\times {\mathbb R}^3,
	\end{equation*}
	where $\dot{H}^{1/2}(\Gamma^{\rm D})$ is the trace space associated with the Beppo-Levi space $\dot{H}^1(\Omega)$. We demonstrate in Subsection \ref{Subsec: Abstract reformulations} that the evolution operator in \eqref{FJabstract} is skew-adjoint with respect to the scalar product associated with ${\mathbb X}$. These elements are utilized in Subsection \ref{Subsec: Well-posedness of Fritz John's model} to establish the well-posedness in the energy space ${\mathbb X}$. For the proof, we apply a duality method on the semi-normed space $L^2([0,T];\mathbb{X})$, following the ideas used in \cite{LannesMing24} for a fixed object. The main difference here is that we have to deal with the lack of control of the semi-norm of ${\mathbb X}$ on the horizontal displacements, due to the degeneracy of the matrix ${\mathcal C}$. 	
	\begin{thm}
		The initial value problem for John's problem is well-posed in the energy space.
	\end{thm}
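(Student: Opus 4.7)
The plan is to recast the coupled system \eqref{FJ1}--\eqref{Eq: Newtons law's of motion momentum} as a first-order abstract Cauchy problem
\[
\partial_t U + {\mathcal A} U = 0, \qquad U(0)=U_0,
\]
on the space ${\mathbb X}=L^2(\Gamma^{\rm D})\times {\mathbb R}^3\times \dot{H}^{1/2}(\Gamma^{\rm D})\times {\mathbb R}^3$ with unknown $U=(\zeta,{\mathtt X},\psi,{\mathtt V})$, and then to solve it by a Lions-type duality argument in $L^2([0,T];{\mathbb X})$. The reformulation is exactly the one produced in Section \ref{Sec: Hamiltonian reform}: the Dirichlet-Neumann operator eliminates the interior velocity potential $\phi$, while the Kirchhoff potentials carry the coupling with the solid through the added mass matrix ${\mathcal M}_{\rm a}$. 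By the Hamiltonian computation of Subsection \ref{Subsec: Ham struc} and the skew-adjointness established in Subsection \ref{Subsec: Abstract reformulations}, the operator ${\mathcal A}$ is skew-symmetric for the scalar product induced by ${\mathcal H}$.

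Because ${\mathcal C}$ is only positive semi-definite---the horizontal translation $\tilde x$ does not appear in ${\mathcal H}$, so the quadratic form on ${\mathbb X}$ is a genuine seminorm---Stone's theorem does not apply directly on a Hilbert space. Instead, following \cite{LannesMing24}, I would define the solution $U\in L^2([0,T];{\mathbb X})$ by the variational identity
\[
-\int_0^T \langle U,\partial_t W+{\mathcal A} W\rangle_{\mathbb X}\,dt = \langle U_0,W(0)\rangle_{\mathbb X}
\]
for all smooth test functions $W$ with $W(T)=0$. Testing this identity against $W$ itself and exploiting skew-adjointness kills the cross term $\langle W,{\mathcal A} W\rangle_{\mathbb X}$ and produces $\tfrac12\|W(0)\|_{\mathbb X}^2$; inserting an exponential-in-time weight $e^{-\lambda t}$ generates an additional interior coercive term $\tfrac{\lambda}{2}\int_0^T e^{-\lambda t}\|W\|_{\mathbb X}^2\,dt$, which is precisely the coercivity required by Lions' projection lemma. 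Existence of a weak solution follows, uniqueness is obtained by time reversal using the Hamiltonian structure, and the conservation law $\frac{d}{dt}{\mathcal H}(U)=0$ upgrades the $L^2$-in-time solution to a continuous curve $U\in C([0,T];{\mathbb X})$ in the semi-Hilbertian sense.

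The main obstacle, and the only genuinely new point compared with the fixed-object analysis of \cite{LannesMing24}, is the degeneracy of the seminorm on ${\mathbb X}$ along the horizontal displacement direction: since the $(1,1)$-entry of ${\mathcal C}$ vanishes, $\tilde x$ lies in the kernel of $\|\cdot\|_{\mathbb X}$ and the a priori estimates on $U$ say nothing about it. I would deal with this by first carrying out the duality argument on the quotient ${\mathbb X}/\ker\|\cdot\|_{\mathbb X}$, which is a genuine Hilbert space on which ${\mathcal A}$ descends to a skew-adjoint operator sharing the same structure, and whose coercivity bound is supplied by the Dirichlet-Neumann coercivity on $\dot H^{1/2}(\Gamma^{\rm D})$ together with the partial positivity of ${\mathcal C}$ on $(\tilde z,\theta)$. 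Once the reduced problem is solved and ${\mathtt V}\in C([0,T];{\mathbb R}^3)$ is recovered from the last component of $U$, the missing horizontal coordinate is reconstructed a posteriori by quadrature,
\[
\tilde x(t)=\tilde x(0)+\int_0^t {\mathtt V}_1(\tau)\,d\tau,
\]
which is unambiguous and continuous in time; this closes the loop and yields the announced well-posedness in the full energy space ${\mathbb X}$.
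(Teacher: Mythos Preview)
Your proposal is correct and follows essentially the same strategy as the paper: abstract reformulation on ${\mathbb X}$, skew-symmetry of the evolution operator, a duality argument in $L^2([0,T];{\mathbb X})$, and special handling of the two-dimensional kernel of the seminorm (constants in $\psi$ and the horizontal displacement $\tilde x$). The only tactical difference is that the paper does not pass to the quotient; it runs the duality directly on the semi-normed space, obtains a weak solution satisfying $\partial_t\widetilde{\bf U}+{\bf A}\widetilde{\bf U}={\bf k}$ for some $K$-valued ${\bf k}$, and then corrects it by $\widetilde{\bf k}(t)=\widetilde{\bf k}(0)-\int_0^t{\bf k}$ (using ${\bf A}\widetilde{\bf k}=0$), which is exactly your quadrature step phrased in-place rather than after a quotient.
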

	\noindent
	Having this result at hand we turn to the derivation of Cummins equations. In Subsection \ref{Subsec: Cummins} we revisit and generalize the derivation within our setting and give an analytical description of the operators appearing in \eqref{Cummins intro}. We then demonstrate that this system is equivalent to John's formulation and present the first well-posedness result, thereby justifying the model in this context.

	\begin{thm}
		The initial value problem for the Cummins equations is well-posed in the energy space.
	\end{thm}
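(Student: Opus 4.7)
The plan is to derive the well-posedness of the Cummins equations as a direct corollary of the preceding well-posedness theorem for John's problem, using the equivalence between the two formulations established in Subsection \ref{Subsec: Cummins}. At a conceptual level, \eqref{Cummins intro} is obtained from John's system by decomposing the velocity potential into a radiation part driven by the body motion (expressed through the Kirchhoff potentials, producing the added mass $\mathcal{M}_{\rm a}$ and the memory kernel $\mathcal{K}$) and a free-wave part driven by the initial sea state $(\zeta_0,\psi_0)$, which is packaged into the forcing $\mathcal{F}$.

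The first step is to fix the precise correspondence between Cauchy data. In the direction \emph{John} $\Rightarrow$ \emph{Cummins}, given an initial state $(\zeta_0,{\mathtt X}_0,\psi_0,{\mathtt V}_0)\in\mathbb X$, one defines $\mathcal F(t)$ from $(\zeta_0,\psi_0)$ through the evolution of John's problem with the object artificially held fixed, and checks by direct computation that the component ${\mathtt X}(t)$ of the corresponding John solution satisfies \eqref{Cummins intro}. In the reverse direction, given $({\mathtt X}_0,{\mathtt V}_0)\in\mathbb R^3\times\mathbb R^3$ and a forcing $\mathcal F$ in the class of those produced from admissible $(\zeta_0,\psi_0)\in L^2(\Gamma^{\rm D})\times\dot H^{1/2}(\Gamma^{\rm D})$, one recovers a unique admissible sea state by inverting this map.

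With this equivalence in hand, existence follows immediately by applying the previous theorem to the reconstructed element of $\mathbb X$ and reading off the ${\mathtt X}$-component, which by construction solves \eqref{Cummins intro} with the prescribed data. Uniqueness is obtained by contradiction: two Cummins solutions sharing the same initial data $({\mathtt X}_0,{\mathtt V}_0)$ and the same forcing $\mathcal F$ would, by the reconstruction procedure, generate two solutions of John's problem with identical initial data in $\mathbb X$; these must coincide by the previous theorem, and projecting back onto the ${\mathtt X}$-component yields equality of the two Cummins solutions. Invertibility of $\mathcal M+\mathcal M_{\rm a}$ (guaranteed since $\mathcal M$ is positive definite and the added mass is positive semi-definite) ensures that this procedure is well defined at the level of the second-order equation.

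The principal obstacle, and the reason the proof is not immediate once the equivalence is written down, is the control of the convolution kernel $\mathcal K$ in the presence of corner singularities, an issue explicitly flagged in the introduction. Concretely, one must verify that $\mathcal K(t)$, once represented through the Kirchhoff potentials and the semigroup associated with John's problem for a fixed object, is locally integrable in $t$, and that the forcing $\mathcal F$ produced by elements of $L^2(\Gamma^{\rm D})\times\dot H^{1/2}(\Gamma^{\rm D})$ lies in a class for which the convolution term in \eqref{Cummins intro} makes sense for $\dot{\mathtt X}$ in the regularity class delivered by the energy estimates. This is where the skew-adjointness of the evolution operator on $\mathbb X$ obtained in Subsection \ref{Subsec: Abstract reformulations}, together with the analytic description of $\mathcal K,\mathcal M_{\rm a},\mathcal F$ given in Subsection \ref{Subsec: Cummins}, plays the decisive role: the uniform-in-time bounds on the John semigroup on $\mathbb X$ translate into the bounds on $\mathcal K$ and $\mathcal F$ needed to close the equivalence in the energy class.
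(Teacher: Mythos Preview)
Your approach is essentially the same as the paper's: establish the equivalence between John's problem and the Cummins formulation via the Duhamel decomposition \eqref{decompzeta}, then read off well-posedness for Cummins from Theorem~\ref{thm: Wp energy space}. Two minor remarks: the paper takes the initial sea state $(\zeta^{\rm in},\psi^{\rm in})$ as part of the data (with ${\mathcal F}^{\rm exc}$ \emph{defined} from it by \eqref{defkernel}), so there is no need to invert the map $(\zeta_0,\psi_0)\mapsto\mathcal F$; and the regularity of the kernel is handled more directly than you suggest, since $\partial_z K_j\in L^2(\Gamma^{\rm D})$ (established in the proof of Proposition~\ref{Prop: Skew-symmetry}) together with the energy conservation for the fixed-object problem gives ${\mathcal K}\in C(\R_+;\mathcal M_3(\R))$ outright, not merely local integrability.
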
 
	
	The second part of the paper concerns higher-order regularity of the solutions. As explained in the introduction, the main restriction arises from the regularity of the elliptic problem with corners in the domain and from the nonlocal effect between the different components of $\Gamma^{\rm D}$. In Section \ref{sectregabstract}, we prove higher regularity in an abstract functional space constructed on iterative powers of the square root of the Dirichlet-Neumann operator. It is in particular shown that the singularities of Kirchhoff potential are an obstruction to higher order regularity of the solution.

	In Section \ref{Sec: Small angles}, we show that this abstract functional framework can be characterized in terms of standard Sobolev spaces if the contact angles at each corner of the domain are assumed to be small. We begin the section with a primer on basic results on trace estimates and elliptic theory on corner domains. In particular, in Subsection \ref{Subsec: trace mapping}, we give a characterization of the trace space associated with $\dot{H}^{s+1}(\Omega)$ for $s>1/2$. 

	\begin{prop}
		We characterize a homogeneous space $\dot{H}^{s+1/2}(\Gamma^{\rm D})$ such that the trace mapping $	\mbox{Tr}^{\rm D}: \phi\in \dot{H}^{s+1}(\Omega) \mapsto \phi_{\vert_{{\Gamma}^{\rm D}}}\mbox{ on }\dot{H}^{s+1/2}(\Gamma^{\rm D})$	is well defined, continuous, onto, and admits a continuous right-inverse for $s>1/2$. 
	\end{prop}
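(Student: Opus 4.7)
The plan is to define $\dot{H}^{s+1/2}(\Gamma^{\rm D})$ intrinsically as a direct sum over the two components of $\Gamma^{\rm D}$, and then to verify continuity and surjectivity of the trace by localizing to the flat interior, the contact points, and (when relevant) the ends at infinity. Since $\Gamma^{\rm D} = \mathcal{E}_-\cup\mathcal{E}_+$ is disconnected, I would set $\dot{H}^{s+1/2}(\Gamma^{\rm D}) = \dot{H}^{s+1/2}(\mathcal{E}_-) \times \dot{H}^{s+1/2}(\mathcal{E}_+)$; on a bounded component this is the usual $H^{s+1/2}$, while on a half-line $\mathcal{E}_\pm$ it is the Beppo--Levi-type homogeneous space with seminorm $\|\partial_x\psi\|_{H^{s-1/2}}$, identified modulo constants so as to be compatible with the definition of $\dot{H}^{s+1}(\Omega)$.

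For continuity of $\mbox{Tr}^{\rm D}$, I would introduce a partition of unity $\{\chi_j\}$ on $\overline{\Omega}$ isolating (i) interior pieces of $\Gamma^{\rm D}$ bounded away from the contact points, (ii) neighborhoods of each contact point $c\in{\mathtt C}$, and (iii) neighborhoods of infinity if $\Omega$ is unbounded. On type (i) charts, a straightening diffeomorphism reduces the problem to the classical flat trace $H^{s+1}(\mathbb{R}^2_+)\to H^{s+1/2}(\mathbb{R})$; on type (iii) charts the same reduction in the homogeneous scale handles the growth at infinity. On a type (ii) chart, by Assumption \ref{Assumption domain}(4), $\Omega$ coincides locally with an infinite sector $S_\omega$ of opening $\omega\in(0,\pi)$, and since the trace is taken on a single edge, a polar change of variables (or an affine map in the flat model) reduces matters to the half-plane case again.

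The right-inverse is built analogously. Given $\psi=(\psi_-,\psi_+)$, I would decompose each $\psi_\pm$ with a partition of unity on $\Gamma^{\rm D}$ compatible with the one above and lift each piece locally. On interior and infinity pieces, a standard homogeneous extension operator (a Poisson-like kernel, or a Fourier multiplier cut-off in the transverse variable) produces the extension into $\dot{H}^{s+1}$. On a corner chart, the fact that the trace is prescribed on only one edge of the sector permits a one-sided product-type lift: after straightening, $(x,z)\mapsto \psi(x)\eta(z)$ with $\eta$ a smooth cutoff transverse to $\Gamma^{\rm D}$ equal to $1$ on $\Gamma^{\rm D}$ and vanishing before reaching the opposite edge $\Gamma^{\rm w}$ or $\Gamma^{\rm b}$. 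Summing these local lifts against the partition of unity yields a continuous right-inverse $\mathcal{L}:\dot{H}^{s+1/2}(\Gamma^{\rm D})\to\dot{H}^{s+1}(\Omega)$.

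The main obstacle lies in the near-corner analysis: one must check that the intrinsic norm on $\dot{H}^{s+1/2}(\mathcal{E}_\pm)$ is equivalent to the trace seminorm inherited from $\dot{H}^{s+1}$ of a sector, and that the one-edge lift is continuous between these spaces uniformly in the corner angle $\omega$. A key simplifying observation is that because the trace is imposed on only one edge, the polygonal-type compatibility conditions between the traces on adjacent edges do not enter, which is precisely why no corner weight appears in the definition of $\dot{H}^{s+1/2}(\Gamma^{\rm D})$; this is consistent with the restriction $s>1/2$, which ensures that the first tangential derivative of the trace along $\Gamma^{\rm D}$ is already well defined in $H^{s-1/2}$. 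The remaining steps — controlling commutators with the angular and spatial cutoffs, and patching the local lifts into a global function in the Beppo--Levi class modulo constants — are then routine bookkeeping.
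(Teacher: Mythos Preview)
Your proposal has a genuine gap in the definition of the target space. You set $\dot{H}^{s+1/2}(\Gamma^{\rm D}) = \dot{H}^{s+1/2}(\mathcal{E}_-)\times\dot{H}^{s+1/2}(\mathcal{E}_+)$ with the product semi-norm, but this semi-norm vanishes on \emph{all} pairs of constants $(c_-,c_+)\in\mathbb{R}^2$, whereas the semi-norm on $\dot{H}^{s+1}(\Omega)$ vanishes only on global constants. Concretely, take $\psi=(0,C)$ with $C\neq 0$: its product semi-norm is zero, yet any $\phi\in\dot{H}^{s+1}(\Omega)$ with these traces must satisfy $\|\nabla\phi\|_{H^s(\Omega)}\gtrsim |C|$, since the two components of $\Gamma^{\rm D}$ are connected through $\Omega$. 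Hence no continuous right-inverse can exist with your definition: the bound $\|\mathbf{E}\psi\|_{\dot{H}^{s+1}(\Omega)}\lesssim|\psi|$ would force $\mathbf{E}(0,C)$ to be a global constant, contradicting the prescribed traces. Surjectivity still holds at the level of sets, but a bounded section does not.

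The paper resolves this by building a coupling between the components directly into the semi-norm (Definition~\ref{defGN}):
\[
|f|_{\dot{H}^{s+1/2}(\Gamma^{\rm D})} = |f|_{\dot{H}^{s+1/2}(\mathcal{E}_-)} + |f|_{\dot{H}^{s+1/2}(\mathcal{E}_+)} + |\overline{f}_+-\overline{f}_-|,
\]
where $\overline{f}_\pm$ are averages over (finite subintervals of) $\mathcal{E}_\pm$. This shrinks the kernel to exactly the global constants, matching $\dot{H}^{s+1}(\Omega)$. Once this is in place, the continuity argument is also much shorter than your sector/partition-of-unity scheme: since $|f|_{\dot{H}^{s+1/2}(\mathcal{E}_\pm)}=|\partial_x f|_{H^{s-1/2}(\mathcal{E}_\pm)}$ for $s>1/2$, one simply applies the standard trace theorem to $\partial_x\phi\in H^s(\Omega)$, while the coupling term $|\overline{f}_+-\overline{f}_-|$ is controlled by the case $s=0$ already treated in \cite{LannesMing24}. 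No corner charts are needed. The right-inverse (Proposition~\ref{Prop: extension high reg}) is then a direct generalization of the $s=0$ construction from that reference.
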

	These elements are used in Subsection \ref{Subsec: Elliptic small angle} to prove elliptic regularity results in the homogeneous functional setting, where we rely on the theory of \cite{Grisvard85, Dauge88, DaugeNBL90} together with the Rellich estimates from \cite{LannesMing24}. The abstract functional spaces introduced in Section \ref{sectregabstract} are then  characterized in terms of standard Sobolev spaces in Subsection \ref{Subsec: Characterization mathcal H}. With these results at hand, we prove in Subsection \ref{Subsec: higher reg for small angle} a higher regularity result in Sobolev spaces for the solution of John’s problem.

	\begin{thm}
		The initial value problem for John's problem is well-posed in higher Sobolev spaces for small contact angles.
	\end{thm}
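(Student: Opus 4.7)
The plan is to obtain higher Sobolev regularity by combining the abstract regularity theorem of Section \ref{sectregabstract} with the Sobolev characterization of the iterated spaces proved in Subsection \ref{Subsec: Characterization mathcal H}. Concretely, I would first differentiate John's equations \eqref{FJ1}, \eqref{Eq: Laplace equation}, \eqref{Eq: Newtons law's of motion momentum} $k$ times in time and view the system as an evolution on the abstract space $\mathbb{X}^k$ built from iterated powers of (the square root of) the Dirichlet-Neumann operator $G_0$. Because $\partial_t$ commutes with the skew-adjoint evolution operator established in Subsection \ref{Subsec: Abstract reformulations}, the well-posedness result in $\mathbb{X}$ upgrades directly to $\mathbb{X}^k$, and it only remains to identify $\mathbb{X}^k$ with a concrete Sobolev space $H^{s}(\Gamma^{\rm D})\times\mathbb{R}^3\times \dot{H}^{s+1/2}(\Gamma^{\rm D})\times\mathbb{R}^3$.

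The second step is precisely this identification, which is the point where the small contact angle assumption enters. Using the elliptic regularity theorem of Subsection \ref{Subsec: Elliptic small angle}, itself based on the Dauge--Grisvard theory and the Rellich estimates of \cite{LannesMing24}, one obtains that for contact angles below the critical value $\pi/2$ the harmonic extension $\phi$ of a trace in $\dot{H}^{s+1/2}(\Gamma^{\rm D})$ lies in $\dot{H}^{s+1}(\Omega)$ without additional singular corner contributions. This is used to show by induction on $k$ that each application of $G_0$ gains exactly one derivative on $\Gamma^{\rm D}$, so that $\mathbb{X}^k$ coincides with the intended Sobolev product and the norms are equivalent.

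The third step is to translate the abstract compatibility conditions needed for $(\zeta_0,\mathtt X_0,\psi_0,\mathtt V_0)$ to lie in $\mathbb{X}^k$ into explicit compatibility conditions on the initial data. Using the equations, every time derivative $\partial_t^j(\zeta,\psi,\mathtt X,\mathtt V)_{|t=0}$ is expressed as a finite combination of applications of $G_0$, of the Kirchhoff potentials and of the hydrostatic restoring map to the initial data. Requiring these expressions to lie in the appropriate Sobolev spaces yields the compatibility conditions, and because they correspond exactly to membership in $\mathbb{X}^k$, they are automatically propagated by the flow.

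The main obstacle is the handling of the nonlocal source term in the reformulated equations, which is built from Kirchhoff potentials and accounts for the motion of the solid through the three components of $\mathtt V$. These potentials carry corner singularities that, through the Dirichlet-Neumann operator, contaminate the trace $\psi$ even if the initial data are smooth; this is the obstruction already isolated in Section \ref{sectregabstract}. In the small-angle regime the singularity exponents given by the Dauge--Grisvard analysis are controlled, and the compatibility conditions above eliminate the lowest-order singular contributions, so that the remaining regular part can be estimated in the Sobolev scale. This is the delicate computational point of the proof, and it is where the small-angle hypothesis is genuinely used rather than being a mere technical convenience.
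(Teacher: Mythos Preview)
Your overall architecture matches the paper's: invoke the abstract higher-order result (Corollary \ref{correg}) and then identify the abstract spaces $\mathbb{X}^n$ with standard Sobolev products via the small-angle elliptic theory. The paper's actual proof is in fact a two-line application of Corollary \ref{correg} together with Proposition \ref{propHn} (identification ${\mathcal H}^{n/2}=H^{n/2}$) and Proposition \ref{Prop: DN high reg small angle} (automatic regularity of $(\partial_z{\bf K})_{\vert_{\Gamma^{\rm D}}}$).

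However, two points in your proposal are conceptually off. First, the critical angle is not $\pi/2$: to obtain regularity at level $n$ one needs angles strictly smaller than $\pi/n$ (see the statements of Propositions \ref{Prop: DN high reg small angle} and \ref{propHn}). The threshold depends on the target Sobolev index, which is the essence of the Dauge--Grisvard theory. Second, and more importantly, your Step 3 and the ``main obstacle'' paragraph import compatibility conditions that do not belong here. The whole point of the small-angle regime is that \emph{no} compatibility conditions are needed: Proposition \ref{propHn} gives ${\mathcal H}^{n/2}(\Gamma^{\rm D})=H^{n/2}(\Gamma^{\rm D})$ as an exact equality of spaces, so membership in $\mathbb{X}^n$ is purely a Sobolev condition on the data. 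Likewise, the Kirchhoff potentials $K_j$ solve a mixed problem with smooth Neumann data and homogeneous Dirichlet data, and under the small-angle assumption Proposition \ref{Prop: DN high reg small angle} gives $\partial_z K_j\in H^{(n-1)/2}(\Gamma^{\rm D})$ directly; there is nothing to ``eliminate'' via compatibility conditions. You are conflating this case with Section \ref{Sec: right angles}, where the angles are exactly $\pi/2$ and compatibility conditions genuinely appear (Definition \ref{Def: Hcc}, Proposition \ref{propHNdroit}). In the small-angle setting your Step 3 should simply be deleted.
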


	Lastly, we consider in Section \ref{Sec: right angles} another configuration where the abstract functional spaces of Section \ref{sectregabstract} can be characterized in a simple way, namely, when all the contact angles are equal to $\pi/2$. We introduce in Subsection \ref{Subsec: mixed pb pi/2} a new function space $\dot{H}_{\rm c.c.}^{s+1/2}(\Gamma^{\rm D})$ consisting in functions of $\dot{H}^{s+1/2}(\Gamma^{\rm D})$ that satisfy some compatibility conditions. If for a function $\psi$ defined on $\GD$, we denote by $\psi^{\mathfrak h}$ its harmonic extension that has zero normal derivative on $\Gamma^{\rm N}$, we show that $\psi^{\mathfrak h}$ has maximal elliptic regularity whenever $\psi\in \dot{H}_{\rm c.c.}^{s+1/2}(\Gamma^{\rm D})$. A loose statement is given below.

	\begin{prop}
		Assume we have right angles at the contact points, $s\geq 0$ and $\psi \in \dot{H}_{\rm c.c.}^{s+1/2}(\Gamma^{\rm D})$. Then the variational solution $\psi^{\mathfrak h}\in \dot{H}^1(\Omega)$ to \eqref{Eq: psi^H2}  enjoys the usual elliptic regularity $\dot{H}^{s+1}(\Omega)$.
	\end{prop}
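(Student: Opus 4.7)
The plan is to reduce the mixed Dirichlet-Neumann problem near each contact point to a pure Dirichlet problem on a smooth domain, exploiting the right-angle hypothesis via an even reflection across the Neumann boundary.

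First I would localize using a smooth partition of unity: write $\psi^{\mathfrak h}=\sum_j \chi_j\,\psi^{\mathfrak h}$, where each cutoff is supported either in the interior of $\Omega$, near a smooth portion of $\Gamma^{\rm D}\cup\Gamma^{\rm N}$ away from ${\mathtt C}$, or in a neighborhood of a single contact point ${\mathtt c}\in{\mathtt C}$. On interior and smooth-boundary patches, classical elliptic regularity for mixed Dirichlet-Neumann problems on smooth boundaries (cf.\ Grisvard \cite{Grisvard85}) immediately gives the required $H^{s+1}$ control. The behavior at infinity, where $\Omega$ is asymptotically a half-plane with Neumann data on the bottom, is handled by the homogeneous elliptic estimates from Subsection \ref{Subsec: Elliptic small angle} and allows one to recover a bound on the homogeneous seminorm of $\dot{H}^{s+1}(\Omega)$ from the local pieces.

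The core of the argument is a neighborhood of a contact point ${\mathtt c}$. By Assumption \ref{Assumption domain}(4) the two boundary arcs meeting at ${\mathtt c}$ are flat, and by the right-angle hypothesis one (Dirichlet) is horizontal and the other (Neumann) is vertical, so $\Omega$ is locally a quarter-plane. I would then reflect $\chi\psi^{\mathfrak h}$ evenly across the vertical Neumann segment to produce an extension $\tilde\psi^{\mathfrak h}$ on the doubled domain. Because the reflection axis is flat and the Neumann datum homogeneous, even reflection preserves harmonicity, so $\tilde\psi^{\mathfrak h}$ is harmonic on the union; and since $\pi/2+\pi/2=\pi$, the doubled domain is locally a half-plane with a straight top edge, on which $\tilde\psi^{\mathfrak h}$ satisfies a pure Dirichlet condition consisting of $\psi$ on one side of ${\mathtt c}$ and its reflection on the other.

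The remaining key step is to show that the glued Dirichlet datum belongs to $H^{s+1/2}$ of the straight segment. Each half inherits this regularity from $\psi\in\dot{H}^{s+1/2}(\Gamma^{\rm D})$, and the gluing across ${\mathtt c}$ is controlled by the Grisvard-Dauge fractional trace theory: the even extension lies in $H^{s+1/2}$ across ${\mathtt c}$ if and only if every odd tangential derivative $\partial_\tau^{2k+1}\psi({\mathtt c})$ of order $2k+1<s$ vanishes, with a Hardy-type weighted vanishing replacing the pointwise condition at the critical values $s+1/2\in 1/2+\N$. This is exactly the content of the compatibility conditions built into the definition of $\dot{H}^{s+1/2}_{\rm c.c.}(\Gamma^{\rm D})$ from Subsection \ref{Subsec: mixed pb pi/2}. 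With the reflected datum in $H^{s+1/2}$, standard Dirichlet elliptic regularity on the smooth half-plane yields $\tilde\psi^{\mathfrak h}\in H^{s+1}$ locally, and restricting to $\Omega$ completes the proof. The main obstacle will be the precise identification of the trace-gluing conditions required after reflection with those encoded in $\dot{H}^{s+1/2}_{\rm c.c.}(\Gamma^{\rm D})$: this is a delicate exercise in trace theory, made more technical than the textbook case by the homogeneous setting (since $\Gamma^{\rm D}$ may contain half-lines), and is where the trace characterization of Subsection \ref{Subsec: trace mapping} plays an essential role, with particular care needed at the half-integer thresholds.
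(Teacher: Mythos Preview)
Your approach and the paper's share the same core mechanism: even reflection across the flat vertical Neumann wall (what the paper calls the Boussinesq symmetrization), together with the observation that the compatibility conditions encoded in $\dot{H}^{s+1/2}_{\rm c.c.}(\Gamma^{\rm D})$ are precisely what guarantee that the reflected Dirichlet datum lies in $H^{s+1/2}$ of the doubled flat boundary. The paper makes this identification explicit in Proposition~\ref{propositionregext}, and your sketch of the trace-gluing argument is correct in spirit.

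There is one genuine slip. You write that ``even reflection preserves harmonicity, so $\tilde\psi^{\mathfrak h}$ is harmonic on the union''. This is false for $\chi\psi^{\mathfrak h}$: the cutoff produces a commutator source $\Delta(\chi\psi^{\mathfrak h})=2\nabla\chi\cdot\nabla\psi^{\mathfrak h}+(\Delta\chi)\psi^{\mathfrak h}$, supported on an annulus that meets the Neumann wall. After reflection you therefore solve a Poisson problem, and you must check that the reflected source has the required $H^{s-1}$ regularity across the reflection axis. This is not automatic (it again needs vanishing of odd normal derivatives of the source on the wall) and forces a bootstrap: from $\psi^{\mathfrak h}\in\dot H^1$ one gets enough regularity of the source to gain one step, then iterates. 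The paper sidesteps this circularity by a different organization: it first builds an explicit extension $\psi^{\rm ext}$ of $\psi$ (Lemma~\ref{LMell1}), harmonic near each corner and satisfying the Neumann condition there, via the Poisson kernel applied to the Boussinesq extension of $\psi$; the difference $u=\psi^{\mathfrak h}-\psi^{\rm ext}$ then has all data supported away from the corners, and a second reflection argument (Lemma~\ref{LMell2}) with an explicit check of the odd-derivative vanishing for the commutator source closes the estimate.

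Both routes work once the commutator issue is handled; yours is arguably more direct. The paper's explicit Poisson-kernel construction has the additional payoff that it feeds directly into the singular decomposition of Theorem~\ref{Thm: sing decomp mixed pb}, where the same extension machinery identifies the logarithmic singularities when the compatibility conditions fail.
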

	A precise statement is given in Proposition \ref{Prop: Elliptic reg DN}. Moreover, we can show that whenever the compatibility conditions are not satisfied, the potential splits into a regular and a singular part. In particular, we identify a space with weak traces at the contact points $\dot{H}^{s+1/2}_{\rm tr}(\Gamma^{\rm D})$, and we have the formal statement:
	\begin{thm}\label{Thm: sing decomp intro}
		Assume we have right angles at the contact points, $s\geq 0$ and $\psi \in \dot{H}^{s+1/2}_{\rm tr}(\Gamma^{\rm D})$. Then the variational solution $\psi^{\mathfrak h}\in \dot{H}^1(\Omega)$ to \eqref{Eq: psi^H2} can be decomposed under the form
		$$
		\psi^{\mathfrak h}=\psi^{\mathfrak h}_{\rm sing}+\psi^{\mathfrak h}_{\rm reg},
		$$
		where $\psi^{\mathfrak h}_{\rm reg}\in\dot{H}^{s+1}(\Omega)$ and $\psi^{\mathfrak h}_{\rm sing}$ is explicitly defined.
	\end{thm}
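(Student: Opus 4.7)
The strategy is to construct $\psi^{\mathfrak h}_{\rm sing}$ explicitly as a finite sum of localized singular harmonic functions attached to the contact points, designed so that the remainder $\psi^{\mathfrak h}_{\rm reg}$ solves the harmonic extension problem from data belonging to the compatible space $\dot{H}^{s+1/2}_{\rm c.c.}(\Gamma^{\rm D})$; its $\dot{H}^{s+1}(\Omega)$ regularity then follows directly from the preceding proposition.

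The analysis begins by localizing the problem. A partition of unity on $\overline{\Omega}$ made of cut-offs supported in small neighborhoods $U_c$ of each $c\in\mathtt{C}$, plus one supported away from $\mathtt{C}$, reduces matters to analyzing the behavior near each contact point; on the piece with smooth boundary, standard elliptic estimates in $\dot{H}^{s+1}(\Omega)$ apply. Near a given $c$, Assumption \ref{Assumption domain}(4) and the right-angle hypothesis allow us to identify $U_c\cap\Omega$ with a quarter-plane $Q$, with $\Gamma^{\rm D}$ and $\Gamma^{\rm N}$ lying on two perpendicular rays. The local model is then the Laplace equation on $Q$ with Dirichlet data $f$ on one side and zero Neumann data on the other. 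Even reflection across the Neumann ray converts this into a pure Dirichlet problem on a half-plane; by the standard Poisson-integral theory, the obstruction to $\dot{H}^{s+1}$ regularity is entirely controlled by the failure of the even extension of $f$ to belong to $H^{s+1/2}(\mathbb R)$, and this failure is encoded in the odd-order values $f^{(2j+1)}(c)$ for $2j+1<s$ — precisely the traces preserved in $\dot{H}^{s+1/2}_{\rm tr}(\Gamma^{\rm D})$ but killed in $\dot{H}^{s+1/2}_{\rm c.c.}(\Gamma^{\rm D})$.

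For each $c\in\mathtt{C}$ and each admissible $j$, let $S_c^{(j)}$ denote the harmonic function on $\Omega$ obtained by Poisson-extending, through the above reflection, the profile $x\mapsto(x-x_c)^{2j+1}$ localized by $\chi_c$; these are the explicit singular functions. Set
$$
\psi^{\mathfrak h}_{\rm sing}=\sum_{c\in\mathtt{C}}\sum_{2j+1<s}\mu_c^{(j)}[\psi]\,S_c^{(j)},
$$
where the $\mu_c^{(j)}[\psi]$ are the moments of $\psi$ at $c$, made meaningful by the definition of $\dot{H}^{s+1/2}_{\rm tr}$. By construction, the trace of $\psi-\psi^{\mathfrak h}_{\rm sing}|_{\Gamma^{\rm D}}$ cancels all the offending odd-order moments at every contact point and therefore belongs to $\dot{H}^{s+1/2}_{\rm c.c.}(\Gamma^{\rm D})$. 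Applying the preceding proposition to the harmonic extension of this compatible trace then yields $\psi^{\mathfrak h}_{\rm reg}:=\psi^{\mathfrak h}-\psi^{\mathfrak h}_{\rm sing}\in\dot{H}^{s+1}(\Omega)$, as required.

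The main obstacle is the rigorous handling of the weak-trace space $\dot{H}^{s+1/2}_{\rm tr}(\Gamma^{\rm D})$ within the homogeneous framework: one must check that the moment functionals $\mu_c^{(j)}$ are continuous on this space, that the $S_c^{(j)}$ themselves lie in $\dot{H}^1(\Omega)$ so the decomposition is meaningful at the energy level, and that their own regularity defect exactly matches the jet being cancelled. A subordinate difficulty is the coordination between the two components $\mathcal{E}_\pm$ of $\Gamma^{\rm D}$, since the traces at the four inner contact points are coupled through the nonlocal Dirichlet–Neumann map; fortunately the localization in Step 2 decouples the singular contributions at leading order, and the off-diagonal coupling produces only $\dot{H}^{s+1}$ corrections that can be absorbed into $\psi^{\mathfrak h}_{\rm reg}$.
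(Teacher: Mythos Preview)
Your overall strategy matches the paper's: decompose $\psi$ into a compatible part plus a finite sum of explicit profiles $\chi_{[2l+1]}(\cdot-x_{\mathtt c})$ carrying the odd-order traces, apply Proposition~\ref{Prop: Elliptic reg DN} to the compatible part, and analyze the harmonic extension of each profile separately. However, your write-up contains a genuine gap at the step ``Applying the preceding proposition to the harmonic extension of this compatible trace then yields $\psi^{\mathfrak h}_{\rm reg}:=\psi^{\mathfrak h}-\psi^{\mathfrak h}_{\rm sing}$.'' As you have defined them, the $S_c^{(j)}$ are localized half-plane Poisson extensions; once multiplied by cut-offs they are neither harmonic in $\Omega$ nor do they satisfy the homogeneous Neumann condition on all of $\Gamma^{\rm N}$. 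Consequently $\psi^{\mathfrak h}-\psi^{\mathfrak h}_{\rm sing}$ is \emph{not} the harmonic extension $(\psi-\psi^{\mathfrak h}_{\rm sing}|_{\Gamma^{\rm D}})^{\mathfrak h}$, and Proposition~\ref{Prop: Elliptic reg DN} does not apply to it directly. The paper avoids this by working the other way round: it first takes the genuine harmonic extensions $\mathbf X^{\mathtt c}_{[2l+1]}:=(\chi_{[2l+1]}(\cdot-x_{\mathtt c}))^{\mathfrak h}$ (so that linearity gives exactly $\psi^{\mathfrak h}=\psi_{\rm c.c.}^{\mathfrak h}+(\mathcal T\psi)^{\mathfrak h}$), and then proves that $\mathbf X^{\mathtt c}_{[2l+1]}-\Theta_{\mathtt c}\exp(z|D|)\chi_{[2l+1]}\in H^\infty(\Omega)$, so the discrepancy you swept under the rug is shown to be smooth and absorbed into $\psi^{\mathfrak h}_{\rm reg}$.

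A second, more substantive omission is that you never actually produce the ``explicit'' singular functions promised by the statement. Saying ``Poisson-extend the profile $(x-x_c)^{2j+1}$'' is not yet explicit: the content of the paper's inner lemma is the computation
\[
\exp(z|D|)\chi_{[2l+1]}(x)=\mathcal S_{2l+1}(x,z)+R_{2l+1}(x,z),
\]
with $R_{2l+1}$ smooth on $\overline{\{z\le 0\}}$ and $\mathcal S_{2l+1}$ given in closed form through the hierarchy $G_{2l-1}$ (in particular exhibiting the logarithmic factor $z^{2k+1}\ln(-z)$). This requires a short induction on iterated primitives of $G_{-1}[z]$ and is where the right-angle hypothesis is genuinely used; your sketch does not address it. Without this step you have a decomposition, but not the \emph{explicit} one the theorem asserts.
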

	Again, we refer the reader to Theorem \ref{Thm: sing decomp mixed pb} for a precise statement and proof. This result is of independent interest and is used in this paper to study the regularity of John’s problem in the case of a vertical wall in the neighborhood of each corner. Specifically, we are able to propagate the compatibility conditions at each corner in time and show that we have higher regularity for restricted motions of the object:

	\begin{thm}
		The initial value problem for John's problem is well-posed in higher Sobolev norms for right angles at the contact points under compatibility conditions on the data, and assuming further that one of the following cases holds
		\begin{itemize}
			\item [-] The object is only allowed to move vertically.
			\item [-] The center of mass $G$ is such that $z_G=0$ and the object is only allowed to move by vertical translation and rotations around $G$.
		\end{itemize}
		
	\end{thm}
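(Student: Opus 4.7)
The plan is to combine the abstract higher-regularity framework of Section \ref{sectregabstract} with the Sobolev characterization via compatibility conditions developed in Section \ref{Sec: right angles}. First I would invoke the abstract well-posedness of John's problem at higher order in the scale of Hilbert spaces built from iterated powers of $G_0^{1/2}$, which applies once one controls the Kirchhoff potentials in the appropriate Sobolev scale. The task then splits into two parts: (i) verify that, for each allowed degree of freedom, the corresponding Kirchhoff potential lies in $\dot{H}^{s+1}(\Omega)$, so that the nonlocal source term in the Zakharov--Craig--Sulem-type formulation does not create corner singularities; and (ii) propagate in time the compatibility conditions encoded in $\dot{H}_{\rm c.c.}^{s+1/2}(\Gamma^{\rm D})$.

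For step (i), I would exploit a geometric cancellation at the corners. At right angles, $\Gamma^{\rm w}$ is vertical near each contact point, so ${\bf n}^{\rm w}$ is horizontal there, and the three components of $\boldsymbol{\kappa}$ evaluated at a contact point $(x_c,0)$ become respectively $\pm 1$, $0$, and a quantity proportional to $z_{G,{\rm eq}}$ (since ${\bf r}_{G_{\rm eq}}^\perp(x_c,0)=(z_{G,{\rm eq}},x_c-x_{G,{\rm eq}})^{\rm T}$ and ${\bf n}^{\rm w}$ is horizontal). In the first case of the theorem only the second component is involved and it vanishes at the corner; in the second case the second and third components are involved and both vanish since $z_{G,{\rm eq}}=0$. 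The Neumann data for each relevant Kirchhoff potential is therefore compatible with the Dirichlet data $0$ on $\Gamma^{\rm D}$ at the corners, so Proposition \ref{Prop: Elliptic reg DN} (equivalently the singular decomposition of Theorem \ref{Thm: sing decomp intro} with trivial singular part) produces the required $\dot{H}^{s+1}(\Omega)$ estimate.

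For step (ii), I would differentiate the evolution equations in time and bootstrap. Writing $\partial_t\psi=-{\mathtt g}\zeta$ together with a kinematic equation of the form $\partial_t\zeta=G_0\psi+\text{source}$, where the source is a linear combination of components of $\dot{\mathtt X}$ multiplied by traces on $\Gamma^{\rm D}$ of the Kirchhoff potentials computed in (i), the restricted-motion hypothesis together with (i) ensures that the source lies in $\dot{H}_{\rm c.c.}^{s+1/2}(\Gamma^{\rm D})$ at each time. Coupling this with the evolution of $\dot{\mathtt X}$ governed by \eqref{Eq: Newtons law's of motion momentum}, one can iterate the energy-type estimate of Sections \ref{Sec: Wp John and just Cummins} and \ref{sectregabstract} while commuting powers of $G_0^{1/2}$, thereby propagating the compatibility class and obtaining the higher Sobolev regularity.

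The main obstacle is to show that the restricted-motion assumption is actually preserved by the coupled fluid--solid dynamics. Newton's equations \eqref{Eq: Newtons law's of motion momentum} involve the added-mass matrix ${\mathcal M}_{\rm a}$, the hydrostatic matrix ${\mathcal C}$, and the hydrodynamic force $-\rho\int_{\Gamma^{\rm w}}\partial_t\phi\,\boldsymbol{\kappa}$, and one must verify that no component along a forbidden degree of freedom is ever generated. Under the stated geometric assumptions, the relevant rows of these operators decouple: this is a consequence of the symmetries induced by verticality of $\Gamma^{\rm w}$ at the contact points and, in the rotation case, of $z_{G,{\rm eq}}=0$, which together make the off-diagonal coupling of the allowed and forbidden degrees of freedom vanish. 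Establishing this decoupling carefully, and showing that it is compatible with the elliptic regularity arguments of step (i), is the most delicate point and is precisely what distinguishes the $C^\infty$-type cases stated in the theorem from the generic horizontal-translation case, which is limited to $C^3$.
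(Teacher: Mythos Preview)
Your steps (i) and (ii) are essentially the paper's strategy: Lemma~\ref{lemsingK1} establishes the regularity of the relevant Kirchhoff potentials, and then Corollary~\ref{correg} combined with the characterization $\mathcal{H}^{n/2}(\Gamma^{\rm D})=H^{n/2}_{\rm c.c.}(\Gamma^{\rm D})$ of Proposition~\ref{propHNdroit} yields the result. One technical correction: in step (i) you invoke Proposition~\ref{Prop: Elliptic reg DN}, but that proposition treats the mixed problem with \emph{homogeneous} Neumann data on $\Gamma^{\rm N}$, whereas the Kirchhoff potentials have nonzero Neumann data $\kappa_j$ on $\Gamma^{\rm w}$. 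The paper instead uses that, by Assumption~\ref{Assumption domain}(4), the walls are vertical in a full \emph{neighborhood} of each corner, so $\kappa_2$ (and $\kappa_3$ when $z_G=0$, after subtracting an explicit smooth function) vanishes near the corners; Lemma~\ref{LMell2} then applies.

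Your ``main obstacle'' reflects a misreading of the statement. The phrase ``the object is only allowed to move vertically'' describes a \emph{constrained} system: one simply discards the forbidden components of ${\mathtt X}$ and works with the reduced problem \eqref{FJabstract vertical motion}. There is nothing to prove about the full dynamics preserving the restriction. Moreover, the decoupling argument you sketch would not work even if it were needed: verticality of $\Gamma^{\rm w}$ near the corners is a purely local condition and implies no global symmetry of the object, so it says nothing about the off-diagonal entries of $\mathcal{M}_{\rm a}$ or of $\int_{\Gamma^{\rm w}}\partial_t\phi\,\boldsymbol{\kappa}$, which depend on the full shape of $\Gamma^{\rm w}$. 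Drop this paragraph entirely.
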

	\noindent 
	We note that the restriction on degrees of freedom of the object is due to the creation of singularities for horizontal motions. More precisely, the Kirchhoff potential corresponding to horizontal translations has a singularity which is described explicitly by Theorem \ref{Thm: sing decomp intro}, and that limits the regularity of the solutions to John's problem and Cummins' equations.

	\subsection{Notations} We provide some notations that will be used consistently throughout the paper. 
	
	\subsubsection{General notations}
	\textcolor{white}{new line}
	\\
	\noindent
	- We let $C>0$ be a universal constant of no importance, and that may change from line to line.\\
	- As a shorthand, we use the notation $a \lesssim b$ to mean $a \leq C\: b$.\\
	- For $s\in \R$, we use the notation $s^-$ to mean $s^- = s-\ve$ for any $\ve \in (0,1)$.\\
	- Let $x$ and $z$ denote the horizontal and vertical coordinates. Let $\partial_x$ and $\partial_z$ be the corresponding partial derivatives where we also write $\nabla =(\partial_x, \partial_z)^{\rm T}$.\\
	- For any tempered distribution $f \in \mathcal{S}'(\R) $ its Fourier transform will be defined by $\mathcal{F}(f)(\xi) = \hat{f} (\xi) = \int_{\R} e^{-  i x \xi } f(x) \: \mathrm{d}x$ and its inverse Fourier transform is defined by $\mathcal{F}^{-1}(f)(x) = \check{f} (x) = \frac{1}{2\pi } \int_{\R} e^{ i x \xi } f(\xi) \: \mathrm{d}\xi$.\\
	- Let $m:\R \rightarrow \mathbb{R}$ be a smooth function. Then we will use the notation $m(\rm{D})$ for a multiplier defined in frequency by $\widehat{m(\mathrm{D}) f}(\xi) = m(\xi) \hat{f}(\xi)$.

	\subsubsection{Function spaces} 
	\textcolor{white}{new line}
	\\
	\noindent
	- For all $s\in {\mathbb R}$, $s\geq 0$, we denote by $H^s(\Omega)$ the standard Sobolev space on $\Omega$.\\
	- For all $s\geq 0$, we define the Beppo-Levi space $\dot{H}^{s+1}(\Omega)$ as
	$$
	\dot{H}^{s+1}(\Omega) =\{\phi\in L^1_{\rm loc}(\Omega) \: : \:  \nabla \phi\in H^s(\Omega)^2\},
	$$
	with associated semi-norm $\Vert \phi\Vert_{\dot{H}^{s+1}(\Omega)} = \Vert \nabla \phi\Vert_{H^s(\Omega)}$.		\\ 
	- Let $I$ be a finite interval or a half-line then for all $s\geq 0$, we denote $H^{s} (I)$ the standard Sobolev space on  $I$.\\% with norm $| f|_{H^s(I)} = | f|_{L^2(I)} +| f|_{\dot{H}^s(I)}$. \\
	- For all $s\geq 0$, and $\Gamma^{\rm D} = (\mathcal{E}_- \cup \mathcal{E}_+)$, we define $H^{s} (\Gamma^{\rm D}) = H^{s} (\mathcal{E}_-) \times H^{s} (\mathcal{E}_+)$.\\
	-  $\dot{H}^s(I)$ is given in Definition \ref{Def: Hom space I}.\\ 
	-  $\dot{H}^{s}(\Gamma^{\rm D})$ is given in Definition \ref{defGN}.\\ 
	- $\dot{\mathcal H}^{n/2}(\Gamma^{\rm D})$ is given in Definition \ref{defhighreg}. \\
	- $	\dot{H}_{\rm c.c.}^{s}(\GD)$ is given in Definition \ref{Def: Hcc}.\\
	- $	\dot{H}_{\rm tr}^{s}(\GD)$ is given in Definition \ref{defT}.\\
	- $\dot{H}^{s}(\R_{2L})$ is given in Definition \ref{Def: HsR 2L}\\
	- $\dot{H}^{s}(\R)$ is given in Definition \ref{Def: HsR}\\

	\section{Hamiltonian reformulations} \label{Sec: Hamiltonian reform}
	
	In this section, we propose several reformulations of F. John's problem \eqref{FJ1}-\eqref{Eq: Newtons law's of motion momentum}. Using the Dirichlet-Neumann operator and the Kirchhoff potentials introduced in Section \ref{sectKirchhoff} and taking advantage of the added mass effect, a convenient reformulation is proposed in Section \ref{sectreform}. We then show that this formulation admits a non-canonical Hamiltonian structure, which can be transformed into a canonical Hamiltonian structure by a simple change of unknowns. In general, this Hamiltonian does not have a sign. We therefore impose conditions that are classical in naval engineering on the position of the center of mass and of the so-called meta-centers, in order to ensure that the Hamiltonian is positive, even if it remains non-definite. Under such conditions, the Hamiltonian can be used to define a semi-normed functional space adapted to the problem.

	\subsection{The Dirichlet-Neumann operator and the Kirchhoff potentials}\label{sectKirchhoff}
	
	The homogeneous Sobolev space $\dot{H}^1(\Omega)$ is a natural functional space to work with since velocity potentials that correspond to velocity fields of finite kinetic energy are in this space. We recall that it is defined as
	$$
	\dot{H}^1(\Omega)=\{\phi \in L^1_{\rm loc}(\Omega), \nabla\phi\in L^2(\Omega)^2 \},
	$$
	with associated semi-norm $\Vert \phi\Vert_{\dot{H}^1(\Omega)}=\Vert \nabla\phi\Vert_{L^2(\Omega)}$. The space $\dot{H}^{1/2}(\Gamma^{\rm D})$ corresponding to the traces on $\Gamma^{\rm D}$ of all functions in $\dot{H}^1(\Omega)$ was introduced in \cite{LannesMing24}, where its is proved that the mapping
	\begin{equation}\label{tracemapping}
		\mbox{Tr}^{\rm D}: \phi\in \dot{H}^1(\Omega) \mapsto \phi_{\vert_{{\Gamma}^{\rm D}}}\mbox{ on }\dot{H}^{1/2}(\Gamma^{\rm D})
	\end{equation}
	is well defined, continuous, onto, and admits a continuous right-inverse. It is not necessary to characterize further this functional space at this step; this will be done later in Section \ref{Sec: Wp John and just Cummins}. We can now define the Dirichlet-Neumann operator.
	\begin{Def}
		Let $\Omega$, $\Gamma^{\rm D}$ and $\Gamma^{\rm N}$ be as in Assumption \ref{Assumption domain}. The Dirichlet-Neumann operator $G_0$ is defined as
		\begin{equation*}%\label{Linear form}
			G_0
			\:
			:
			\:
			\begin{cases}
				\dot{H}^{1/2}(\Gamma^{\mathrm{D}}) & \rightarrow   (\dot{H}^{1/2}(\Gamma^{\mathrm{D}}) )'
				\\
				\psi  &  \mapsto \partial_z  \psi^{\mathfrak{h}}|_{\Gamma^{\rm D}},
			\end{cases} 
		\end{equation*}
		where $(\dot{H}^{1/2}(\Gamma^{\mathrm{D}}) )'$ is the dual of $\dot{H}^{1/2}(\Gamma^{\mathrm{D}})$ and  $\psi^{\mathfrak{h}} \in \dot{H}^1(\Omega)$ is the unique variational solution of
		\begin{equation}\label{Eq: psi^H}
			\begin{cases}
				\Delta \psi^{\mathfrak{h}}= 0 \qquad\hspace{0.3cm} \text{in} \quad \Omega 
				\\ 
				\psi^{\mathfrak{h}} = \psi \qquad \hspace{0.5cm}\text{on} \quad \Gamma^{\rm D}
				\\ 
				\partial_n \psi^{\mathfrak{h}} = 0 \qquad \hspace{0.2cm}\text{on} \quad \Gamma^{\rm N}.
			\end{cases}
		\end{equation}
	\end{Def}
	\begin{remark}
		The existence of a variational solution $\psi^{\mathfrak{h}}\in \dot{H}^1(\Omega)$ to \eqref{Eq: psi^H} for $\psi \in \dot{H}^{1/2}(\Gamma^{\rm D})$ is given by Proposition 9 in \cite{LannesMing24}. Also, the definition of the normal derivative $\partial_z\psi^{{\mathfrak h}}$ on $\Gamma^{\rm D}$ should be understood in its weak variational meaning \cite{LionsMagenes73}. 
	\end{remark}
	We now turn to define the Kirchhoff potentials and the added mass matrix.
	\begin{Def}\label{defKirchhoff}
		Let $\Omega$, $\Gamma^{\rm D}$ and $\Gamma^{\rm N}$ be as in Assumption \ref{Assumption domain} and recall that ${\boldsymbol \kappa}=(\kappa_1,\kappa_2,\kappa_3)^{\rm T}\in {\mathbb R}^3$ is defined in \eqref{defkappaC}. \\
		{\bf i.} We write ${\bf K}=(K_1,K_2,K_3)^{\rm T}$, where $K_j$, $1\leq j\leq 3$ are the \emph{Kirchhoff potentials} defined as the unique solution in $H^1(\Omega)$ of the boundary value problem
		\begin{equation}\label{Eq: phi_w}
			\begin{cases}
				\Delta K_{j} = 0 & \text{in} \quad \Omega, 
				\\ 
				K_{j} = 0 & \text{on} \quad \Gamma^{\rm D},
				\\ \mathbf{n}^{\rm w} \cdot \nabla  K_{j}  = \kappa_j      & \text{on} \quad \Gamma^{\mathrm{w}},
				\\
				\mathbf{n}^{\rm b} \cdot \nabla K_j = 0 & \text{on} \quad \Gamma^{\rm b}.
			\end{cases}
		\end{equation}
		{\bf ii.} The \emph{added mass} matrix ${\mathcal M}_{\rm a}$ is defined as the $3\times 3$ matrix with entries $ \rho\int_\Omega \nabla K_i\cdot \nabla K_j$, for $1\leq i,j\leq 3$.
	\end{Def}   
	
	Using these two definitions, we can decompose the solution $\phi$ to the boundary value problem \eqref{Eq: Laplace equation} with ${\mathcal V}^{\rm w}$ derived in Appendix \ref{AppderN} equation \eqref{eqUsol0}, namely,
	$$
	{\mathcal V}^{\rm w}=(\dot{\tilde{x}},\dot{\tilde{z}})^{\rm T}-\dot{\theta}r_{G_{\rm eq}}^\perp
	$$
	to write the potential under the form
	\begin{equation}\label{decompphi}
		\phi=\psi^{\mathfrak h}+{\mathtt V}\cdot {\bf K},
	\end{equation}
	where we recall that  
	${\mathtt V}(t)=(\dot{\widetilde{x}}(t),\dot{\widetilde{z}}(t),\dot\theta(t))^{\rm T}$.
	
	\subsection{Reformulation of Fritz John's model}\label{sectreform}

	According to the decomposition \eqref{decompphi}, the quantity $\rho\int_{\Gamma^{\rm w}}\partial_t \phi {\boldsymbol \kappa}$ that appears in Newton's equation \eqref{Eq: Newtons law's of motion momentum} can be written
	$$
	\rho \int_{\Gamma^{\rm w}}\partial_t \phi {\boldsymbol \kappa}=  \rho\int_{\Gamma^{\rm w}}\partial_t \psi^{\mathfrak h} {\boldsymbol \kappa}
	+  \rho\int_{\Gamma^{\rm w}}(\dot{\mathtt V}\cdot {\bf K}) {\boldsymbol \kappa}.
	$$
	Recalling that $\kappa_j={\bf n}^{\rm w}\cdot \nabla K_j$ on $\Gamma^{\rm w}$, it follows from Green's identity and the fact that $\partial_t \psi^{\mathfrak h}=\partial_t\psi=-{\mathtt g}\zeta$ on $\Gamma^{\rm D}$ that
	$$
	\rho \int_{\Gamma^{\rm w}}\partial_t \phi {\boldsymbol \kappa}= \rho {\mathtt g} \int_{\Gamma^{\rm D}}\zeta  \partial_z {\bf K}
	+{\mathcal M}_{\rm a} \dot{\mathtt V};
	$$
	the second component corresponds to the added mass effect, and together with \eqref{decompphi}, it allows us to reformulate Fritz John's model \eqref{FJ1}-\eqref{Eq: Newtons law's of motion momentum}  under the form
	\begin{equation}\label{FJ2}
		\begin{cases}
			\partial_t \zeta - G_0\psi = {\mathtt V}\cdot \partial_z{\bf K} &\mbox{ on } \Gamma^{\rm D}\\
			\partial_t \psi + {\mathtt g}\zeta =0 &\mbox{ on } \Gamma^{\rm D},
		\end{cases}
	\end{equation}
	with ${\mathtt V}$ solving the reformulated Newton's equations
	\begin{equation}\label{Newton2}
		\begin{cases}
			{\mathtt X}'&={\mathtt V}\\
			({\mathcal M}+{\mathcal M}_{\rm a})\dot{\mathtt V}&=-\rho{\mathtt g} \int_{\Gamma^{\rm D}} \zeta \partial_z {\bf K}-{\mathcal C}{\mathtt X},
		\end{cases}
	\end{equation}
	where we recall that ${\mathtt X}=(\widetilde{x},\widetilde{z},\theta)^{\rm T}$, ${\mathtt V}=(\widetilde{x}',\widetilde{z}',\theta')^{\rm T}$,  ${\mathcal M}=\mbox{diag}({\mathfrak m},{\mathfrak m},{\mathfrak i})$, while ${\boldsymbol \kappa}$ and ${\mathcal C}$ are given by \eqref{defkappaC} and ${\mathcal M}_{\rm a}$ by Definition \ref{defKirchhoff}. The system of equations \eqref{FJ2}-\eqref{Newton2} is the reformulation of F. John's model that we work with in this article.

	\begin{remark}\label{remextforce}
		It is straightforward to generalize our analysis to the case where an external force and torque ${\mathcal F}^{\rm ext}=((F^{\rm ext})^{\rm T},T^{\rm ext})^{\rm T}$ is applied to the system. The second equation in \eqref{Newton2} must then be replaced by
		$$
		({\mathcal M}+{\mathcal M}_{\rm a})\dot{\mathtt V}=-\rho{\mathtt g} \int_{\Gamma^{\rm D}} \zeta \partial_z {\bf K}-{\mathcal C}{\mathtt X}+{\mathcal F}^{\rm ext}.
		$$
	\end{remark}   
	
	\subsection{Hamiltonian structures}\label{Subsec: Ham struc}
	
	Let us introduce the Hamiltonian ${\mathcal H}(\zeta,{\mathtt X},\psi,{\mathtt V})$ defined as
	\begin{equation}\label{defHam}
		{\mathcal H}(\zeta,\psi,{\mathtt X},{\mathtt V})=\frac{1}{2} {\mathtt g}\int_{\Gamma^{\rm D}}\zeta^2+\frac{1}{2} \int_{\Gamma^{\rm D}} \psi G_0\psi+\frac{1}{2\rho}{\mathcal C} {\mathtt X}\cdot  {\mathtt X}+\frac{1}{2\rho}({\mathcal M}+{\mathcal M}_{\rm a}){\mathtt V}\cdot{\mathtt V};
	\end{equation}
	here, we write by abuse of notation $ \int_{\Gamma^{\rm D}} \psi G_0\psi= \langle \psi, G_0\psi\rangle_{\dot{H}^{1/2}(\Gamma^{\rm D})\times \dot{H}^{1/2}(\Gamma^{\rm D})' }$. The mapping ${\mathcal H}$ is well defined and ${C}^1$ on $L^2(\Gamma^{\rm D})\times {\mathbb R}^3\times \dot{H}^{1/2}(\Gamma^{\rm D})\times {\mathbb R}^3$ and one readily checks that its variational gradient with respect to $(\zeta,{\mathtt X},\psi,{\mathtt V})$ is given in $L^2(\GD)\times \RR^3\times (\dot{H}^{1/2}(\GD))'\times \RR^3$ by
	$$
	\nabla {\mathcal H} {(\zeta,{\mathtt X},\psi,{\mathtt V})}=
	\begin{pmatrix}
		{\mathtt g}\zeta \\
		\frac{1}{\rho}{\mathcal C}{\mathtt X} \\
		G_0\psi \\
		\frac{1}{\rho}({\mathcal M}+{\mathcal M}_{\rm a}){\mathtt V}
	\end{pmatrix}.
	$$
	In order to show the (non-canonical) Hamiltonian structure of the equations \eqref{FJ2}-\eqref{Newton2}, we need to introduce the skew-symmetric matrix $J$ defined as
	$$
	J=\begin{pmatrix}
		{\bf 0}_{4\times 4} & -J_1 \\
		J_1^* & {\bf 0}_{4\times 4}
	\end{pmatrix}
	$$
	with
	$$
	J_1=\begin{pmatrix}
		1 & \rho (\partial_z {\bf K})^{\rm T}({\mathcal M}+{\mathcal M}_{\rm a})^{-1} \\
		{\bf 0}_{3\times 1} & \rho ({\mathcal M}+{\mathcal M}_{\rm a})^{-1}
	\end{pmatrix} 
	$$
	and 
	$$
	J_1^*=\begin{pmatrix}
		1 &  {\bf 0}_{1\times 3} \\
		\rho ({\mathcal M}+{\mathcal M}_{\rm a})^{-1} \ell_{\partial_z {\bf K}}& \rho ({\mathcal M}+{\mathcal M}_{\rm a})^{-1}
	\end{pmatrix},
	$$
	where $\ell_{\partial_z {\bf K}}:L^2(\GD)\to \RR^3$ is defined as $\ell_{\partial_z {\bf K}}\zeta=\int_{\GD}\zeta \dz{\bf K}$.
	It is now straightforward to check the Hamiltonian structure of the equations.
	\begin{prop}
		The equations \eqref{FJ2}-\eqref{Newton2} can be equivalently written under the form
		$$
		\partial t \begin{pmatrix}  \zeta \\ {\mathtt X} \\ \psi  \\ {\mathtt V} \end{pmatrix}
		+J \nabla {\mathcal H} {(\zeta,{\mathtt X},\psi, {\mathtt V})}=0;
		$$
		in particular, for regular solutions, the Hamiltonian ${\mathcal H}$ is a conserved quantity.
	\end{prop}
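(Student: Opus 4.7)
My plan is to verify the proposition by two independent direct computations: matching block-by-block the expression $J\nabla\mathcal{H}$ with the right-hand sides of \eqref{FJ2}--\eqref{Newton2}, and then deducing conservation of $\mathcal{H}$ from the skew-symmetry of $J$ via the chain rule.

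\textbf{Matching the equations.} Starting from the variational gradient $\nabla\mathcal{H}=(\mathtt{g}\zeta, \tfrac{1}{\rho}\mathcal{C}\mathtt{X}, G_0\psi, \tfrac{1}{\rho}(\mathcal{M}+\mathcal{M}_{\rm a})\mathtt{V})^{\rm T}$ given in the excerpt, I would compute $J\nabla\mathcal{H}$ in the two blocks of the stated structure. The top half equals $-J_1\bigl(G_0\psi, \tfrac{1}{\rho}(\mathcal{M}+\mathcal{M}_{\rm a})\mathtt{V}\bigr)^{\rm T}$: using the explicit form of $J_1$, the first (scalar) row yields $G_0\psi+(\partial_z\mathbf{K})^{\rm T}\mathtt{V}$, and the $3\times 1$ second block row reduces to $\mathtt{V}$ after the cancellation $\rho(\mathcal{M}+\mathcal{M}_{\rm a})^{-1}\cdot \tfrac{1}{\rho}(\mathcal{M}+\mathcal{M}_{\rm a})=I$. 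Setting $\partial_t(\zeta,\mathtt{X})^{\rm T}$ equal to minus this quantity exactly recovers the kinematic equation in \eqref{FJ2} and $\dot{\mathtt X}=\mathtt{V}$. The bottom half of $J\nabla\mathcal{H}$ equals $J_1^*\bigl(\mathtt{g}\zeta, \tfrac{1}{\rho}\mathcal{C}\mathtt{X}\bigr)^{\rm T}$: its first row gives $\mathtt{g}\zeta$, which is Bernoulli's equation, while the second block row gives $\rho\mathtt{g}(\mathcal{M}+\mathcal{M}_{\rm a})^{-1}\int_{\GD}\zeta\,\partial_z\mathbf{K}+(\mathcal{M}+\mathcal{M}_{\rm a})^{-1}\mathcal{C}\mathtt{X}$, which on multiplying by $\mathcal{M}+\mathcal{M}_{\rm a}$ reproduces the second equation of \eqref{Newton2}.

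\textbf{Conservation.} Since $\mathcal{H}$ is $\mathcal{C}^1$ on $L^2(\GD)\times\RR^3\times \dot{H}^{1/2}(\GD)\times\RR^3$, for a solution regular enough to legitimize the chain rule,
\begin{equation*}
\frac{d}{dt}\mathcal{H}\bigl(\zeta,\psi,\mathtt{X},\mathtt{V}\bigr)=\bigl\langle \nabla\mathcal{H},\,\partial_t(\zeta,\mathtt{X},\psi,\mathtt{V})^{\rm T}\bigr\rangle=-\bigl\langle \nabla\mathcal{H},\,J\nabla\mathcal{H}\bigr\rangle.
\end{equation*}
It therefore suffices to verify that $J$ is skew with respect to the underlying pairing, that is, that the lower-left block $J_1^*$ is truly the adjoint of the upper-right block $J_1$. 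Reading the blocks off, this reduces to three elementary observations: the diagonal entries $1$ are self-adjoint; the symmetric matrices $(\mathcal{M}+\mathcal{M}_{\rm a})^{-1}$ appearing in both blocks are self-adjoint on $\RR^3$; and the off-diagonal coupling is adjoint because the row operator $\rho(\partial_z\mathbf{K})^{\rm T}(\mathcal{M}+\mathcal{M}_{\rm a})^{-1}$ and the column operator $\rho(\mathcal{M}+\mathcal{M}_{\rm a})^{-1}\ell_{\partial_z\mathbf{K}}$ are transposes through the pairing $\langle \zeta,\mathtt{V}\cdot\partial_z\mathbf{K}\rangle_{L^2(\GD)}=\mathtt{V}\cdot\ell_{\partial_z\mathbf{K}}\zeta$, which is just Fubini. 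Combined with the symmetry of $G_0$ from \cite{LannesMing24} and of $\mathcal{C}$, this gives $\langle\nabla\mathcal{H},J\nabla\mathcal{H}\rangle=0$.

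\textbf{Main subtlety.} The algebra is mechanical; the only nontrivial point is the functional interpretation of the pairings, since $G_0\psi$ is a priori only an element of $(\dot{H}^{1/2}(\GD))'$ and the normal trace $\partial_z\mathbf{K}$ of the Kirchhoff potentials on $\GD$ is, at this stage, defined only in the weak variational sense. For the purely formal verification carried out here this is harmless: symmetry of $G_0$ is already known, and for regular solutions the pairings $\int_{\GD}\zeta\,\partial_z\mathbf{K}$ and $\int_{\GD}\psi\, G_0\psi$ are classical integrals. The delicate question of in which ambient space the equations are solved so that every pairing makes sense in a strong way is deferred to the functional-analytic framework of Section~\ref{Sec: Wp John and just Cummins}; at this point the proposition is a statement about the formal/variational structure of the system.
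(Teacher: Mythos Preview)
Your proposal is correct and matches the paper's approach: the paper does not give an explicit proof of this proposition, treating both the equivalence with \eqref{FJ2}--\eqref{Newton2} and the conservation of $\mathcal{H}$ as direct computations left to the reader. Your block-by-block verification of $J\nabla\mathcal{H}$ and your check that $J_1^*$ is the adjoint of $J_1$ (via the $L^2(\GD)$--$\RR^3$ duality $\langle\zeta,\mathtt{V}\cdot\partial_z\mathbf{K}\rangle=\mathtt{V}\cdot\ell_{\partial_z\mathbf{K}}\zeta$) are exactly the details that the paper omits.
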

	\begin{remark}\label{remstab}
		The Hamiltonian ${\mathcal H}$ corresponds to the total energy of the fluid-solid system in the linear approximation considered here. Its first two components are obviously positive; the last one is also positive since ${\mathcal M}$ is a definite positive matrix and ${\mathcal M}_{\rm a}$ is a Gram-matrix and is therefore symmetric positive. As seen in Remark \ref{analC}, the third component is not always positive. However, under the condition \begin{equation}\label{stabcond}
			c_{zz}c_{\theta \theta}-c_{z\theta}^2>0,
		\end{equation}
		one has ${\mathtt X}\cdot {\mathcal C}{\mathtt X}\geq c_0 ( \widetilde{z}^2 + \theta^2)$ for some $c_0>0$. From the conservation of the Hamiltonian for regular solutions, it then follows that $\vert \zeta \vert_{L^2(\Gamma^{\rm D})}$, $\vert \psi \vert_{\dot{H}^{1/2}}$, $\widetilde{z}$, $\theta$ and ${\mathtt V}$ remain bounded for all times. This is not necessarily the case if \eqref{stabcond} is not satisfied. Condition \eqref{stabcond} corresponds to the stability condition in the classical static stability of ships \cite{Moore67}. Even under this condition, the conservation of the Hamiltonian does not imply a control of the vertical displacement $\tilde{x}$.
	\end{remark}
	
	It was remarked in \cite{Daalen} with formal variational arguments that the wave-structure interactions should have a canonical Hamiltonian structure. This can be deduced from the above analysis for Fritz John's model. Indeed, since $-{\mathtt g}\zeta=\partial_t \psi$ on $\Gamma^{\rm D}$, the second equation in \eqref{Newton2} can be written
	$$
	\big[ ({\mathcal M}+{\mathcal M}_{\rm a}){\mathtt V}+\rho \int_{\Gamma^{\rm D}}\psi \partial_z{\bf K}\big]'=-{\mathcal C}{\mathtt X},
	$$
	which motivates the introduction of the variable ${\mathtt W}:= ({\mathcal M}+{\mathcal M}_{\rm a}){\mathtt V}+\rho \int_{\Gamma^{\rm D}}\psi \partial_z{\bf K}$. Using this variable instead of ${\mathtt V}$, the model \eqref{FJ2}-\eqref{Newton2} can written
	\eqref{FJ1}-\eqref{Eq: Newtons law's of motion momentum}  under the form
	\begin{equation}\label{FJ3}
		\begin{cases}
			\partial_t \zeta - G_0\psi = ({\mathcal M}+{\mathcal M}_{\rm a})^{-1}\big({\mathtt W}-\rho \int_{\Gamma^{\rm D}}\psi \partial_z{\bf K}\big)\cdot \partial_z{\bf K} &\mbox{ on } \Gamma^{\rm D},\\
			\partial_t \psi + {\mathtt g}\zeta =0 &\mbox{ on } \Gamma^{\rm D}
		\end{cases}
	\end{equation}
	with ${\mathtt V}$ solving the reformulated Newton's equations
	\begin{equation}\label{Newton3}
		\begin{cases}
			\dot{\mathtt X}&=({\mathcal M}+{\mathcal M}_{\rm a})^{-1}\big({\mathtt W}-\rho \int_{\Gamma^{\rm D}}\psi \partial_z{\bf K}\big)\\
			\dot{\mathtt W} &=-{\mathcal C}{\mathtt X}.
		\end{cases}
	\end{equation}
	The following proposition shows that \eqref{FJ3}-\eqref{Newton3} has a canonical Hamiltonian structure with canonical conjugate variables $(\zeta,{\mathtt X}^{\rm T})^{\rm T}$ and $(\psi,{\mathtt W}^{\rm T})^{\rm T}$ for the Hamiltonian $\widetilde{\mathcal H}(\zeta,{\mathtt X},\psi,{\mathtt W})$ deduced from 
	${\mathcal H}(\zeta,{\mathtt X},\psi,{\mathtt V})$ by replacing ${\mathtt V}=({\mathcal M}+{\mathcal M}_{\rm a})^{-1}\big({\mathtt W}-\rho \int_{\Gamma^{\rm D}}\psi \partial_z{\bf K}\big)$,
	\begin{align*}
		\widetilde{\mathcal H}(\zeta,{\mathtt X},\psi,{\mathtt W})=&
		\frac{1}{2} {\mathtt g}\int_{\Gamma^{\rm D}}\zeta^2+ \frac{1}{2} \int_{\Gamma^{\rm D}} \psi G_0\psi+\frac{1}{2\rho}{\mathcal C} {\mathtt X}\cdot  {\mathtt X}\\
		&+\frac{1}{2\rho}({\mathcal M}+{\mathcal M}_{\rm a})^{-1}  \big({\mathtt W}-\rho \int_{\Gamma^{\rm D}}\psi \partial_z{\bf K}\big)\cdot \big({\mathtt W}-\rho \int_{\Gamma^{\rm D}}\psi \partial_z{\bf K}\big).
	\end{align*}
	\begin{prop}
		The equations \eqref{FJ3}-\eqref{Newton3} can be equivalently written under the form
		$$
		\partial t \begin{pmatrix}  \zeta \\ {\mathtt X}  \\ \psi \\ {\mathtt W} \end{pmatrix}
		+ \begin{pmatrix} {\bf 0}_{4\times 4} & -{\rm Id}_{4\times 4} \\
			{\rm Id}_{4\times 4} &  {\bf 0}_{4\times 4}
		\end{pmatrix}
		\nabla \widetilde{\mathcal H} {(\zeta,{\mathtt X},\psi,{\mathtt W})}=0.
		$$
		in particular, for regular solutions, the Hamiltonian $\widetilde{\mathcal H}$ is a conserved quantity.
	\end{prop}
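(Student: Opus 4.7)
The statement is a direct verification. My plan is to compute the variational gradient $\nabla\widetilde{\mathcal H}(\zeta,{\mathtt X},\psi,{\mathtt W})$ component by component, multiply by the canonical symplectic matrix $J_{\rm can}=\begin{pmatrix}{\bf 0}_{4\times 4}&-{\rm Id}_{4\times 4}\\{\rm Id}_{4\times 4}&{\bf 0}_{4\times 4}\end{pmatrix}$, and check that the four resulting equations coincide with \eqref{FJ3}--\eqref{Newton3}. Conservation of $\widetilde{\mathcal H}$ along regular solutions will then follow in one line from the skew-symmetry of $J_{\rm can}$.

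For the gradient, I proceed term by term. The derivatives $\partial_\zeta\widetilde{\mathcal H}={\mathtt g}\zeta$ and $\partial_{\mathtt X}\widetilde{\mathcal H}=\tfrac{1}{\rho}{\mathcal C}{\mathtt X}$ come immediately from the elementary quadratic forms $\tfrac12{\mathtt g}\int\zeta^2$ and $\tfrac{1}{2\rho}{\mathcal C}{\mathtt X}\cdot{\mathtt X}$, and the derivative in ${\mathtt W}$ of the last term of $\widetilde{\mathcal H}$ gives $\partial_{\mathtt W}\widetilde{\mathcal H}=\tfrac{1}{\rho}{\mathtt V}$, where ${\mathtt V}:=({\mathcal M}+{\mathcal M}_{\rm a})^{-1}({\mathtt W}-\rho\int_{\GD}\psi\partial_z{\bf K})$, using that ${\mathcal M}+{\mathcal M}_{\rm a}$ is symmetric. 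The only derivative requiring real care is $\partial_\psi\widetilde{\mathcal H}$: the Dirichlet-Neumann form $\tfrac12\langle\psi,G_0\psi\rangle$ contributes $G_0\psi$ by the self-adjointness of $G_0$ recalled in Section \ref{sectKirchhoff}, while applying the chain rule to $\tfrac{1}{2\rho}({\mathcal M}+{\mathcal M}_{\rm a})^{-1}u\cdot u$ with $u={\mathtt W}-\rho\int\psi\partial_z{\bf K}$ produces an additional cross term of the form $-{\mathtt V}\cdot\partial_z{\bf K}$, stemming from the linear dependence of $u$ on $\psi$.

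Once the gradient is in hand, multiplying by $J_{\rm can}$ yields one equation per component: the line for $\zeta$ reproduces $\partial_t\zeta-G_0\psi={\mathtt V}\cdot\partial_z{\bf K}$, the source arising exactly from the cross term in $\partial_\psi\widetilde{\mathcal H}$; the line for ${\mathtt X}$ gives the kinematic identity $\dot{\mathtt X}={\mathtt V}$ after unfolding the definition of ${\mathtt V}$; the line for $\psi$ is the linearized Bernoulli equation $\partial_t\psi+{\mathtt g}\zeta=0$; and the line for ${\mathtt W}$ reproduces Newton's rewritten equation $\dot{\mathtt W}=-{\mathcal C}{\mathtt X}$. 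Matching the overall $\rho$-factors is a bookkeeping exercise, automatic once one notices that the change of variable ${\mathtt V}\mapsto{\mathtt W}$ defined just above is precisely the Legendre-type transform that carries the non-canonical operator $J$ of the preceding proposition into $J_{\rm can}$. The conservation of $\widetilde{\mathcal H}$ for regular solutions is then immediate from $\frac{d}{dt}\widetilde{\mathcal H}=\langle\nabla\widetilde{\mathcal H},\partial_t(\zeta,{\mathtt X},\psi,{\mathtt W})\rangle=-\langle\nabla\widetilde{\mathcal H},J_{\rm can}\nabla\widetilde{\mathcal H}\rangle=0$, by skew-symmetry of $J_{\rm can}$.

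The main technical point I anticipate is the rigorous justification of the variational derivative with respect to $\psi$ in this homogeneous functional setting: the pairings $\langle\psi,G_0\psi\rangle$ and $\int\psi\partial_z{\bf K}$ have to be interpreted in the duality between $\dot{H}^{1/2}(\GD)$ and its dual, and the resulting gradient must define a continuous linear functional on $\dot{H}^{1/2}(\GD)$ so that the Hamiltonian vector field is well defined. This reduces to verifying that $\partial_z{\bf K}$ defines an element of $(\dot{H}^{1/2}(\GD))'$, which is inherited from the $H^1(\Omega)$ regularity of the Kirchhoff potentials stated in Definition \ref{defKirchhoff} together with the continuity of the trace mapping \eqref{tracemapping}.
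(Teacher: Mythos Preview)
Your approach is correct and is essentially what the paper does: the paper provides no proof of this proposition, treating it as a direct computation (just as it did for the preceding non-canonical Hamiltonian structure, prefaced by ``It is now straightforward to check\ldots''). Your explicit computation of $\nabla\widetilde{\mathcal H}$ and identification of each component of $J_{\rm can}\nabla\widetilde{\mathcal H}$ with the corresponding line of \eqref{FJ3}--\eqref{Newton3} is exactly the verification the paper leaves to the reader.
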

	\begin{remark}
		{\bf i.} In \cite{Daalen}, the canonical variable ${\mathtt W}$ is defined as ${\mathtt W}={\mathcal M}{\mathtt V}+\int_{\Gamma^{\rm w}}(\partial_t \phi){\boldsymbol \kappa}$; using the same kind of computations as for the derivation of \eqref{FJ2}-\eqref{Newton2}, one can check that these two definitions are equivalent.\\
		{\bf ii.} Though in canonical Hamiltonian form, the equations \eqref{FJ3}-\eqref{Newton3} are more complicated than \eqref{FJ2}-\eqref{Newton2}, and we therefore prefer the latter for the mathematical analysis.
	\end{remark}

	\section{Well-posedness of F. John's model in the energy space and Justification of Cummins equations} \label{Sec: Wp John and just Cummins}

	In this section, we introduce a semi-normed functional space based on the Hamiltonian introduced in the previous section; this functional setting is well adapted to study the well-posedness of  F. John's model in Subsection \ref{Subsec: Well-posedness of Fritz John's model}. Building on this result, we revisit the derivation made by Cummins \cite{Cummins62} in Subsection \ref{Subsec: Cummins} to derive the Cummins equations and establish the well-posedness of these equations.  However, before we turn to these results, we introduce the functional setting. 
	
	We have already mentioned the space $\dot{H}^{1/2}(\GD)$ in Section \ref{sectKirchhoff} as the range of the trace mapping  $\Phi\in \dot{H}^1(\Omega)\mapsto \Phi_{\vert_{\GD}}$. This space was characterized in \cite{LannesMing24}. To state this characterization and to define the higher regular spaces $\dot{H}^{s+1/2}(\GD)$, for $s\geq 0$, we first need to introduce the space $\dot{H}^{r}(I)$, for $r\geq 0$ and $I$ a finite interval or a half-line. 
	\begin{Def}\label{Def: Hom space I}
		Let $I$ be a finite interval or a half-line.\\
		{\bf i.}  We define $\widetilde{L}_{\rm loc}^1(I) = \{f \in L_{\rm loc}^1(I) \: : \: \exists \tilde{f} \in L_{\rm loc}^1(I), \tilde{f}_{|_{I}} = f \}$.\\
		{\bf ii.} For $0<r<1$, we define $\dot{H}^r(I)$ as
		$$
		\dot{H}^{r}(I)=\{f \in \widetilde{L}_{\rm loc}^1(I), | f|_{\dot{H}^r(I)}< \infty \}
		$$
		with
		$$
		| f|_{\dot{H}^r(I)}^2=
		\begin{cases}
			\displaystyle \int_I \int_I\frac{|  f(x)-  f(y)|^2}{|x-y|^{2r+1}}{\rm d}x{\rm d}y &\mbox{ if } I \mbox{ is a finite interval},\\
			\displaystyle \int_I\int_{I\cap B(y,1)}\frac{|  f(x)-  f(y)|^2}{|x-y|^{2r+1}}{\rm d}x{\rm d}y &\mbox{ if } I \mbox{ is a half-line}.
		\end{cases}
		$$
		{\bf iii.} For all $r \geq 1$, we just take $| f|_{\dot{H}^r(I)}=|\partial_x f\vert_{H^{r-1}(I)}$.
	\end{Def}
	
	\begin{remark}\label{exprequivnorm}
		{\bf i.} When $I$ is a half-line, then $|f|_{\dot{H}^r(I)}$ differs from the standard homogeneous semi-norm defined as $  |f|_{{H}_{\rm hom}^r(I)}^2=\int_I \int_I\frac{|  f(x)-  f(y)|^2}{|x-y|^{2r+1}}{\rm d}x{\rm d}y$. This is due to the screening condition $y\in I\cap B(x,1)$ in the second integral defining the semi-norm. We refer to \cite{Strichartz16,LeoniTice19,StevensonTice20} for an in depth analysis of screened Sobolev spaces.\\
		{\bf ii.} For all $r>0$, we can write $r=k+\sigma$, with $k\in {\mathbb N}$ and $0<\sigma<1$. The semi-norm of $\dot{H}^{r}(I)$ can be equivalently defined as
		$$
		| f|_{\dot{H}^r(I)}^2:=
		\begin{cases}
			\displaystyle\sum_{j=0}^{k-1} | \partial_x^{j+1}  f |_{L^2(I)}^2
			&\mbox{ if } \sigma=0,\\
			\displaystyle\sum_{j=0}^{k-1} | \partial_x^{j+1}  f |_{L^2(I)}^2+\vert \partial_x^k f \vert_{\dot{H}^\sigma(I)}& \mbox{ if } 0<\sigma<1.
		\end{cases}
		$$
	\end{remark}
	Following \cite{LannesMing24}, we also define such homogeneous spaces on domains with various connected components, such as $\Gamma^{\rm D}$. The important fact in the definition below is that the semi-norm associated with $\dot{H}^r(\Gamma^{\rm D})$ relates all the connected components through the constants $\overline{f}_j$. Note also that the choice of the finite interval to define the average when ${\mathcal E}$ is a half-line is of no importance, as shown in \cite{LannesMing24}.
	\begin{Def}\label{defGN}
		Let $\Omega$ be as in Assumption \ref{Assumption domain}; with the same notations, we identify $\Gamma^{\rm D}$ with ${\mathcal E}_-\cup {\mathcal E}_+$.  For  $f\in \widetilde{L}^1_{\rm loc}(\Gamma^{\rm D})$, we denote by $\overline{f}_\pm$ the average of $f$ over ${\mathcal E}_\pm$ if it is a finite interval; if ${\mathcal E}_\pm$ is a half-line, then $\overline{f}_\pm$ denotes the average over any non-empty finite interval contained in ${\mathcal E}_\pm$. 
		For all $s\geq 0$, we then define
		$$
		\dot{H}^{s+1/2}(\Gamma^{\rm D})=\{f\in  \widetilde{L}^1_{\rm loc}(\Gamma^{\rm D}),  \quad f_{\vert_{{\mathcal E}_\pm}}\in \dot{H}^{s+1/2}({\mathcal E}_\pm)\},
		$$
		endowed with the semi-norm
		$$
		\vert f\vert_{\dot{H}^{s+1/2}(\Gamma^{\rm D})}=\vert f\vert_{\dot{H}^{s+1/2}({\mathcal E}_-)}
		+\vert f\vert_{\dot{H}^{s+1/2}({\mathcal E}_+)}+
		\vert \overline{f}_{+}-\overline{f}_-\vert.
		$$
	\end{Def}	
	
	We now show that we may view F. John's model as the evolution of a skew-symmetric operator in this functional setting.
	
	\subsection{Abstract reformulation of F. John's model}\label{Subsec: Abstract reformulations}
	We show here that there exists a semi--Hilbert space ${\mathbb X}$ such that F. John's model \eqref{FJ2}-\eqref{Newton2} can be reformulated in compact form as
	\begin{equation}\label{FJabstract}
		\partial_t {\bf U}+{\bf A}{\bf U}=0
	\end{equation}
	with ${\bf U}=(\zeta,{\mathtt X}^{\rm T},\psi,{\mathtt V}^{\rm T})^{\rm T}\in {\mathbb X}$ and ${\bf A}$ an unbounded skew-symmetric operator on ${\mathbb X}$. We define ${\mathbb X}$ as
	\begin{equation}\label{defX}
		{\mathbb X}=L^2(\Gamma^{\rm D})\times {\mathbb R}^3\times \dot{H}^{1/2}(\Gamma^{\rm D})\times {\mathbb R}^3,
	\end{equation}
	endowed with the semi-scalar product
	$$
	\qquad \langle {\bf U}_1 , {\bf U}_2 \rangle_{\mathbb X}=
	\frac{1}{2} {\mathtt g}\int_{\Gamma^{\rm D}} \zeta_1 \zeta_2
	+ \frac{1}{2} {\mathtt g}\int_{\Gamma^{\rm D}}  \psi_1 G_0\psi_2 
	+\frac{1}{2\rho}{\mathcal C}{\mathtt X}_1\cdot {\mathtt X}_2
	+\frac{1}{2\rho}({\mathcal M}+{\mathcal M}_{\rm a}){\mathtt V}_1\cdot {\mathtt V}_2,
	$$
	for all  ${\bf U}_1, {\bf U}_2 \in {\mathbb X}$, and we write $\Vert \cdot \Vert_{\mathbb X}$ the associated semi-norm;  in particular, for all ${\bf U}\in {\mathbb X}$, one has  $\Vert {\bf U} \Vert_{\mathbb X}^2={\mathcal H}({\bf U})$, with ${\mathcal H}$ the Hamiltonian defined in \eqref{defHam}. According to the comments made in Remark \ref{remstab}, the following assumption is needed to ensure that $\Vert {\bf U} \Vert_{\mathbb X}^2$ is always positive.
	\begin{assumption}\label{hypStab}
		The coefficients of the matrix ${\mathcal C}$ defined in \eqref{defkappaC} are such that \eqref{stabcond} is satisfied, namely,
		$$
		c_{zz}c_{\theta \theta}-c_{z\theta}^2>0.
		$$
	\end{assumption}
	\begin{remark}\label{Remark: semi norm}
		Even under this assumption, $\Vert \cdot \Vert_{\mathbb X}$ is only a semi-norm; indeed, $\Vert {\bf U}\Vert_{\mathbb X}=0$ if and only if there are constants $C_1$ and $C_2$ such that $\psi=C_1$ and $\widetilde{x}=C_2$.
	\end{remark}
	We also define the operator ${\bf A}$ as
	\begin{equation}\label{defA}
		{\bf A}=\begin{pmatrix}
			{\bf 0}_{4\times 4} & A_{1} \\
			A_2 & {\bf 0}_{4\times 4}
		\end{pmatrix}
	\end{equation}
	with
	$$
	A_1=\begin{pmatrix}
		-G_0 & -\partial_z {\bf K}^{\rm T}  \\
		{\bf 0}_{3\times 1} & -{\rm Id}_{3\times 3}
	\end{pmatrix}
	\quad\mbox{ and }\quad
	A_2=\begin{pmatrix}
		{\mathtt g} & {\bf 0}_{1\times 3}  \\
		({\mathcal M}+{\mathcal M}_{\rm a})^{-1}\ell_{\partial_z {\bf K}} & ({\mathcal M}+{\mathcal M}_{\rm a})^{-1}{\mathcal C}
	\end{pmatrix},
	$$
	where we recall that $\ell_{\dz{\bf K}}\zeta=\int_{\GD} \zeta \dz {\bf K}$.
	The following proposition shows that ${\bf A}$ is skew-symmetric for the scalar product associated with ${\mathbb X}$.
	\begin{prop}\label{Prop: Skew-symmetry}
		Let ${\bf A}$ be as defined in \eqref{defA}.\\
		{\bf i. } The equations \eqref{FJ2}-\eqref{Newton2} can be written under the abstract form \eqref{FJabstract}.\\
		{\bf ii.} The domain of ${\bf A}$ defined as $D({\bf A})=\{ {\bf U}\in {\mathbb X}, {\bf A}{\bf U}\in {\mathbb X}\}$ is given by
		$$
		D({\bf A})=    {H}^{1/2}(\Gamma^{\rm D})\times {\mathbb R}^3  \times \dot{H}^1(\Gamma^{\rm D})\times {\mathbb R}^3.
		$$
		{\bf iii.} The operator ${\bf A}$ is skew-symmetric for the scalar product associated with ${\mathbb X}$, that is, 
		$$
		\forall {\bf U}_1, {\bf U}_2 \in {D}({\bf A}), \qquad
		\langle {\bf A}{\bf U}_1,{\bf U}_2 \rangle_{\mathbb X}=
		-\langle {\bf U}_1, {\bf A}{\bf U}_2 \rangle_{\mathbb X}.
		$$
	\end{prop}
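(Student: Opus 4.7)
The plan is to treat the three claims independently. For claim (i), I would substitute into ${\bf A}{\bf U}$ block by block: the four resulting rows of $\partial_t{\bf U}+{\bf A}{\bf U}=0$ reproduce the two equations of \eqref{FJ2}, the kinematic relation $\dot{\mathtt X}={\mathtt V}$, and Newton's equation \eqref{Newton2} after multiplying by $({\mathcal M}+{\mathcal M}_{\rm a})$ and using $\ell_{\partial_z{\bf K}}\zeta=\int_{\GD}\zeta\,\partial_z{\bf K}$. This is a purely formal verification once the block structure of ${\bf A}$ is unfolded.

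For claim (ii), I would require each component of ${\bf A}{\bf U}$ to lie in the appropriate slot of $\mathbb{X}$ and read off $D({\bf A})$ accordingly. The finite-dimensional components impose no new constraint, as they lie in $\RR^3$ automatically once $\zeta\in L^2(\GD)$. The condition ${\mathtt g}\zeta\in\dot{H}^{1/2}(\GD)$, combined with $\zeta\in L^2(\GD)$ already built into $\mathbb{X}$, gives $\zeta\in H^{1/2}(\GD)$. The nontrivial constraint is $-G_0\psi-{\mathtt V}\cdot\partial_z{\bf K}\in L^2(\GD)$: since $\partial_z{\bf K}|_{\GD}$ is a fixed bounded quantity depending only on the geometry, this reduces to $G_0\psi\in L^2(\GD)$, which by elliptic regularity for the mixed boundary value problem \eqref{Eq: psi^H} in a corner domain with boundary flat near the contact points is characterized by $\psi\in\dot{H}^1(\GD)$.

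Claim (iii) is the core computation. I would expand $\langle {\bf A}{\bf U}_1,{\bf U}_2\rangle_\mathbb{X}+\langle {\bf U}_1,{\bf A}{\bf U}_2\rangle_\mathbb{X}$ and verify termwise cancellation using three structural ingredients. First, the self-adjointness of $G_0$, namely
$$\langle\psi_1,G_0\psi_2\rangle=\int_\Omega\nabla\psi_1^{\mathfrak h}\cdot\nabla\psi_2^{\mathfrak h}=\langle\psi_2,G_0\psi_1\rangle,$$
which follows from Green's identity applied to the harmonic extensions using the homogeneous Neumann condition on $\Gamma^{\rm N}$. Second, symmetry of ${\mathcal C}$ (from its definition in \eqref{defkappaC}) and of ${\mathcal M}+{\mathcal M}_{\rm a}$ (with ${\mathcal M}$ diagonal and ${\mathcal M}_{\rm a}$ a Gram matrix). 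Third, the off-diagonal matching identity $(\ell_{\partial_z{\bf K}}\zeta)\cdot{\mathtt V}=\int_{\GD}\zeta(\partial_z{\bf K}\cdot{\mathtt V})$, so that the cross contributions coming from $A_1$ and from $A_2$, once weighted by the scalar product, appear with opposite signs. Grouping the terms of the sum into these three families produces pairwise cancellation and yields the skew-symmetry.

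The main obstacle I expect is justifying the self-adjointness of $G_0$ as a genuine $L^2$-pairing at the regularity of $D({\bf A})$. The identity $\int_\Omega\nabla\psi_1^{\mathfrak h}\cdot\nabla\psi_2^{\mathfrak h}=\int_{\GD}\psi_1\,G_0\psi_2$ is a variational statement in the $\dot{H}^{1/2}\times(\dot{H}^{1/2})'$ duality, whereas the computation in claim (iii) tacitly uses $G_0\psi\in L^2(\GD)$, which is not automatic in a domain with corners. This is precisely where Assumption \ref{Assumption domain}(3)--(4) enters, via the elliptic regularity theory on corner domains that underlies \cite{LannesMing24}: opening angles in $(0,\pi)$ and flat boundary pieces near each corner are exactly what is needed to upgrade the variational solution of \eqref{Eq: psi^H} so that its Neumann trace on $\GD$ is square-integrable. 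Once this regularity is secured, the remaining work is bilinear bookkeeping, the coefficients in $\langle\cdot,\cdot\rangle_\mathbb{X}$ and in ${\bf A}$ being tuned precisely to make all cross-terms cancel.
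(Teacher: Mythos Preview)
Your overall strategy matches the paper's: parts (i) and (iii) are direct computations, and (ii) is where the analytic content lies. The paper's proof singles out exactly the two nontrivial facts you need for (ii), but you are vague on both in a way worth correcting.

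First, you write that $\partial_z{\bf K}|_{\GD}$ is ``a fixed bounded quantity depending only on the geometry''. This is what must be proved, not assumed: the paper establishes $\partial_z K_j\in L^2(\GD)$ by invoking elliptic regularity for the mixed problem \eqref{Eq: phi_w} on a corner domain with angles strictly less than $\pi$, which gives $K_j\in H^{3/2+\epsilon}(\Omega)$ for some $\epsilon>0$, and then applies the trace theorem. Without this step the reduction to $G_0\psi\in L^2(\GD)$ is unjustified, and the fourth component $({\mathcal M}+{\mathcal M}_{\rm a})^{-1}\ell_{\partial_z{\bf K}}\zeta$ is not obviously well defined for $\zeta\in L^2(\GD)$.

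Second, the implication $G_0\psi\in L^2(\GD)\Rightarrow\psi\in\dot H^1(\GD)$ is not a consequence of standard elliptic regularity for \eqref{Eq: psi^H}; that would go the other way. The paper cites Rellich identities (Proposition~3.8 of \cite{LannesMing24}) for this inverse bound, and you should name that mechanism rather than generic ``elliptic regularity''.

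Finally, your ``main obstacle'' paragraph is aimed at the wrong target. At the level of $D({\bf A})$ you already have $G_0\psi_i\in L^2(\GD)$, so the pairing $\int_{\GD}\psi_1 G_0\psi_2$ is a genuine $L^2$ integral and the self-adjointness of $G_0$ needed in (iii) is immediate from the variational identity $\langle\psi_1,G_0\psi_2\rangle=\int_\Omega\nabla\psi_1^{\mathfrak h}\cdot\nabla\psi_2^{\mathfrak h}$. The actual analytic work sits entirely in (ii), via the two facts above.
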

	\begin{proof}
		The first and third points follow from direct computations. For the second point, the only non-trivial things to check are that 
		\begin{itemize}
			\item(i) If $\zeta\in L^2(\Gamma^{\rm D})$ then $\int_{\Gamma^{\rm D}} \zeta \partial_z {\bf K}$ is well defined.
			\item(ii) If $\psi\in \dot{H}^{1/2}(\Gamma^{\rm D})$ and ${\mathtt V}\in {\mathbb R}^3$ are such that $-G_0\psi -\partial_z{\bf K}\cdot {\mathtt V} \in L^2(\Gamma^{\rm D})$ then $\psi \in \dot{H}^{1}(\Gamma^{\rm D})$.
		\end{itemize}
		The first point follows immediately if we can prove that each component $\partial_zK_j$, $1\leq j\leq 3$ of $\partial_z{\bf K}$ is in $ L^2(\Gamma^{\rm D})$. Now, by definition, $K_j$ solves the mixed boundary value problem \eqref{Eq: phi_w}, with smooth Neumann data on $\Gamma^{\rm N}$ and homogeneous Dirichlet data on $\Gamma^{\rm D}$. Since the angles at the corners between $\Gamma^{\rm N}$ and $\Gamma^{\rm D}$ are assumed to be strictly smaller than $\pi$, we know by the elliptic regularity theory on corner domains (see for instance \cite{Dauge88}) that $K_j\in H^{3/2+\epsilon}(\Omega)$ for some $\epsilon>0$. It follows from the trace theorem that $\partial_z K_j \in H^\epsilon(\Gamma^{\rm D})\subset L^2((\Gamma^{\rm D}))$, which proves the claim and therefore point (i).		
		
		Since we now know that $\partial_z{\bf K}\cdot {\mathtt V}\in L^2(\Gamma^{\rm D})$, we are left to prove that if $\psi\in \dot{H}^{1/2}(\Gamma^{\rm D})$
		is such that $G_0\psi \in L^2(\Gamma^{\rm D})$ then $\psi\in \dot{H}^1(\Gamma^{\rm D})$. This was proved using Rellich's identities in Proposition 3.8 of \cite{LannesMing24}.
	\end{proof}

	\subsection{Well-posedness of Fritz John's model in the energy space}
	\label{Subsec: Well-posedness of Fritz John's model}
	We now apply the theory from the previous section to demonstrate the well-posedness of the floating body problem described by \eqref{FJabstract} within the energy space defined by \eqref{defX}.

	The following theorem proves the well-posedness in ${\mathbb X}$ of the initial value problem associated with \eqref{FJabstract}, with further time regularity if the initial data is in $D({\bf A})$, where we recall that by Proposition \ref{Prop: Skew-symmetry}, one has $D({\bf A})=    {H}^{1/2}(\Gamma^{\rm D})\times {\mathbb R}^3  \times \dot{H}^1(\Gamma^{\rm D})\times {\mathbb R}^3$.	
	\begin{thm}\label{thm: Wp energy space}
		Let $\Omega$ and $\Gamma$ be as in Assumption \ref{Assumption domain}, $\mathbb{X}$  defined by \eqref{defX}, and suppose the coefficients of $\mathcal{C}$ satisfy Assumption \ref{hypStab}. Then for any $\mathbf{U}^{\rm in} = (\zeta^{\rm in}, ({\mathtt X}^{\rm in})^{\rm T},\psi^{\rm in},({\mathtt V}^{\rm in})^{\rm T})^{\rm T}  \in \mathbb{X}$, there is a unique solution  
		\begin{equation*}
			\mathbf{U} = (\zeta, {\mathtt X}^{\rm T}, \psi, {\mathtt V}^{\rm T})^{\rm T} \in C(\R_+;\mathbb{X}) 
		\end{equation*} 
		to \eqref{FJabstract} with initial condition ${\bf U}(0)={\bf U}^{\rm in}$.
		Moreover, we have that ${\mathtt V}=\dot{X} \in  C^1(\R_+; \R^3)$, and the solution conserves the energy
		\begin{equation}\label{Eq: Energy est}
			\forall \: t \geq 0 \quad   \Vert {\bf U}(t)\Vert_{\mathbb X}=\Vert {\bf U}^{\rm in}\Vert_{\mathbb X}.
		\end{equation}
		If in addition ${\bf U}^{\rm in}\in D({\mathbf A})$, then  ${\bf U}\in C^1(\R_+;\mathbb{X})$ and ${\mathtt V}=\dot{X} \in C^2(\R_+; \R^3)$. 
	\end{thm}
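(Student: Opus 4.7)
The plan is to use the skew-symmetric structure of $\mathbf{A}$ from Proposition \ref{Prop: Skew-symmetry} and adapt the classical semigroup/duality approach to the present semi-Hilbert setting. First, identifying the kernel $\mathcal{N}$ of the semi-norm — finite-dimensional by Remark \ref{Remark: semi norm}, consisting of vectors with $\psi$ constant on $\Gamma^{\rm D}$, arbitrary $\tilde{x}$, and all other components zero — I would check that $\mathbf{A}(\mathcal{N}) = \{0\}$: constants are annihilated by $G_0$, and the first column of $\mathcal{C}$ vanishes (so $\tilde x$ feels no hydrostatic restoring force), while $\mathtt V=0$ produces no forcing on $\zeta$. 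Hence $\mathbf{A}$ descends to a densely defined skew-symmetric operator $\dot{\mathbf{A}}$ on the Hilbert quotient $\dot{\mathbb{X}} := \mathbb{X}/\mathcal{N}$.

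Next I would prove that $\pm \dot{\mathbf{A}}$ are maximally dissipative by solving the resolvent equation $(I + \mathbf{A})\mathbf{U} = \mathbf{F}$ in $\dot{\mathbb{X}}$. Using the equations for $\mathtt{X}$ and $\zeta$ to eliminate those components algebraically, the problem reduces to a variational problem for $(\psi,\mathtt{V}) \in \dot{H}^{1/2}(\Gamma^{\rm D}) \times \mathbb{R}^3$ whose bilinear form is the Dirichlet energy $\langle G_0 \psi, \varphi\rangle$ plus the quadratic forms associated with $\mathcal{C}$ and $\mathcal{M} + \mathcal{M}_{\rm a}$, with cross couplings given by the integrals $\ell_{\partial_z \mathbf{K}}$ that are well defined since $\partial_z \mathbf{K} \in L^2(\Gamma^{\rm D})$ (established in Proposition \ref{Prop: Skew-symmetry} via the corner-domain elliptic estimate $K_j \in H^{3/2+\epsilon}(\Omega)$ of Dauge). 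Assumption \ref{hypStab} gives coercivity of $\mathcal{C}$ on the $(\tilde z, \theta)$ subspace and, combined with the positive-definiteness of $\mathcal{M} + \mathcal{M}_{\rm a}$ and the identification of the bilinear form with the Hamiltonian quadratic form on the quotient, Lax--Milgram produces a unique solution modulo $\mathcal{N}$. Stone's theorem then furnishes a strongly continuous group $e^{-t \dot{\mathbf{A}}}$ of isometries on $\dot{\mathbb{X}}$, with $C^1$ trajectories through any point of $D(\dot{\mathbf{A}})$.

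Finally, I would lift the quotient solution back to $\mathbb{X}$ by prescribing the representative in $\mathcal{N}$ via the equations themselves: integrating $\dot{\tilde x} = (\mathtt{V})_1$ and, on a fixed sub-interval of $\Gamma^{\rm D}$, integrating the mean of $\partial_t\psi = -\mathtt{g}\zeta$ determines the two missing scalars uniquely from the initial data $\mathbf{U}^{\rm in}$. Energy conservation \eqref{Eq: Energy est} transfers from the unitarity of the group on the quotient because $\|\cdot\|_{\mathbb{X}} = \|[\cdot]\|_{\dot{\mathbb{X}}}$. The extra $C^1$ regularity of $\mathtt{V}=\dot{\mathtt X}$ (resp.\ $C^2$ for initial data in $D(\mathbf{A})$) follows from Newton's equation \eqref{Newton2}, whose right-hand side is controlled by $(\zeta, \mathtt{X}) \in C(\mathbb{R}_+; L^2(\Gamma^{\rm D}) \times \mathbb{R}^3)$ (resp.\ in $C^1$). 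The main obstacle lies in the coercivity step: because the semi-norm degenerates in two directions and $\mathcal{C}$ is itself degenerate in $\tilde x$, one must take care that the Kirchhoff cross-couplings $\ell_{\partial_z\mathbf{K}}$ do not re-introduce the degenerate directions $\mathcal{N}$ in the variational problem; this is precisely where the identification of $\mathcal{N}$ with $\ker\mathbf{A}$ established in the first step is used to close the Lax--Milgram estimate cleanly on the quotient.
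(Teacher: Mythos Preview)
Your strategy is genuinely different from the paper's. The paper does \emph{not} pass to the quotient Hilbert space and invoke Stone's theorem; instead it follows the duality method of \cite{LannesMing24}: first an a priori estimate on strong solutions (immediate from skew-symmetry), then existence of a weak solution via Hahn--Banach/Riesz on the quotient of the space $L^2([0,T];\mathbb X)$ by the adherence of zero, then a correction step where the weak solution $\widetilde{\bf U}$ is adjusted by an explicit element of the kernel $K\simeq L^2([0,T];\R^2)$ so that the equation holds almost everywhere. The regularity $\mathbf U\in C([0,T];\mathbb X)$ then follows by approximating with time-mollified strong solutions. This route never needs to solve the resolvent equation; only the energy estimate is used.

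Your semigroup route is in principle viable, but the Lax--Milgram step has a gap as written. After eliminating $\zeta$ and $\mathtt X$ in $(I+\mathbf A)\mathbf U=\mathbf F$, the reduced equation for $\psi$ is not just ``Dirichlet energy plus cross couplings'': the identity contributes a term $\frac{1}{\mathtt g}\psi$, so the bilinear form contains $\int_{\Gamma^{\rm D}}\psi\varphi$. When $\Gamma^{\rm D}$ is unbounded this is neither bounded nor defined on $\dot H^{1/2}(\Gamma^{\rm D})\times\dot H^{1/2}(\Gamma^{\rm D})$, so coercivity in the semi-norm you name cannot close. The fix is to shift the unknown, e.g.\ set $\widetilde\psi=\psi-F_\psi$ (which must lie in $L^2$ because $\mathtt g\zeta=F_\psi-\psi$), and run Lax--Milgram on $H^{1/2}(\Gamma^{\rm D})\times\R^3$; the identity term then becomes $\int\widetilde\psi\widetilde\varphi$, the right-hand side splits into an $L^2$ pairing with $F_\zeta$ and a $(\dot H^{1/2})'$ pairing with $G_0 F_\psi$, and coercivity holds in the full $H^{1/2}$ norm. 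One must then recover $\psi\in\dot H^1$ from $G_0\psi\in L^2$ via the Rellich estimate (Proposition~3.8 of \cite{LannesMing24}), exactly as in the proof of Proposition~\ref{Prop: Skew-symmetry}. With this repair your route goes through; the paper's duality argument simply sidesteps the whole resolvent question, which is why it was preferred there.
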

	\begin{remark}
		{\bf i.} We recall that the conserved energy coincides with the Hamiltonian, that is, $\mathcal{H}(\mathbf{U})(t) = \Vert {\bf U}(t)\Vert_{\mathbb X}^2$.\\
		{\bf ii.} By Duhamel's formula, the theorem can be extended to the non-homogeneous case
		\begin{equation}\label{FJnonhom}
			\begin{cases}
				\partial_t {\bf U}+{\bf A}{\bf U}={\bf F},\\
				{\bf U}_{\vert_{t=0}}={\bf U}^{\rm in}.
			\end{cases}
		\end{equation}
		with ${\bf F}\in C({\mathbb R}_+;{\mathbb X})$; the solution then satisfies the energy estimate
		$$
		\Vert {\bf U}(t)\Vert_{\mathbb X}\leq \Vert {\bf U}^{\rm in}\Vert_{\mathbb X}+\int_0^T \Vert {\bf F}(t)\Vert_{\mathbb X}\:{\rm d}t.
		$$
		This allows for instance to cover the case where an external force is applied to the floating object, see Remark \ref{remextforce}.
	\end{remark} 
	
	\begin{proof}
		%We will give the proof in two separate steps. %For the first part, %we note that it is enough to prove that ${\bf U}=(\zeta,{\mathtt X}^{\rm T},\psi,{\mathtt V}^{\rm T})^{\rm T}\in C(\R_+; {\mathbb X})$, since the $C^1$ regularity in time of ${\mathtt X}$ and ${\mathtt V}$ is deduced from the ODE. 
		The proof is inspired by Theorem 4 in \cite{LannesMing24}; the main difference is that the adherence of zero in the semi-normed space ${\mathbb X}$ is isomorphic to $\RR^2$ here, while it was isomorphic to $\RR$ in \cite{LannesMing24}. This means that there is an underdeterminacy of one additional constant, corresponding to horizontal translations. We mainly focus on this aspect of the proof and only sketch the other points: a priori estimates and a duality argument for the construction of a weak solution, and an approximation argument to deduce the regularity of the solutions. The first step is provided by the lemma, which shows that regular enough solutions satisfy an energy estimate; such solutions are called strong solutions.
		\begin{lemma}
			For any $T>0$, there exists $C>0$ such that for all $\mathbf{V} \in H^1([0,T];\mathbb{X})\cap L^2([0,T];D({\bf A}))$ there holds, for all $0\leq t \leq T$,
			\begin{equation}\label{Eq: A priori estimate}
				|\mathbf{V}(t)|_{\mathbb{X}}^2 
				\leq C\big(
				|\mathbf{V}(0)|_{\mathbb{X}}^2 
				+
				\int_0^t 	|(\partial_t +\mathbf{A})\mathbf{V}(s)|_{\mathbb{X}}^2 \: \mathrm{d}s\big).
			\end{equation}
			Moreover, if $(\partial_t +\mathbf{A})\mathbf{V}=0$ then $|\mathbf{V}(t)|_{\mathbb{X}}=|\mathbf{V}(0)|_{\mathbb{X}}$ for all $0\leq t \leq T$.
		\end{lemma}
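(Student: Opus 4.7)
The plan is to run the standard skew-symmetric energy identity for $\mathbf{A}$ and then close with Grönwall. Set $f := (\partial_t + \mathbf{A})\mathbf{V} \in L^2([0,T];\mathbb{X})$. Since $\mathbf{V} \in H^1([0,T];\mathbb{X}) \cap L^2([0,T];D(\mathbf{A}))$ and $\langle \cdot, \cdot\rangle_{\mathbb{X}}$ is a continuous bilinear form on $\mathbb{X}$, a standard density/Lions-Magenes argument gives that $t \mapsto \|\mathbf{V}(t)\|_{\mathbb{X}}^2$ is absolutely continuous on $[0,T]$ with
\begin{equation*}
\frac{\mathrm{d}}{\mathrm{d}t}\|\mathbf{V}(t)\|_{\mathbb{X}}^2 = 2\langle \mathbf{V}(t), \partial_t \mathbf{V}(t)\rangle_{\mathbb{X}}
\qquad \text{for a.e. } t \in [0,T].
\end{equation*}
I would quote this as a preliminary fact; no novelty is needed here.

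The core of the argument is Proposition \ref{Prop: Skew-symmetry}: since $\mathbf{V}(t) \in D(\mathbf{A})$ for a.e.\ $t$, we have $\langle \mathbf{V}(t), \mathbf{A}\mathbf{V}(t)\rangle_{\mathbb{X}} = 0$ by skew-symmetry, hence
\begin{equation*}
\frac{\mathrm{d}}{\mathrm{d}t}\|\mathbf{V}(t)\|_{\mathbb{X}}^2 = 2\langle \mathbf{V}(t), f(t)\rangle_{\mathbb{X}}.
\end{equation*}
The semi-scalar product is positive semi-definite, so the Cauchy--Schwarz inequality $|\langle \mathbf{V}, f\rangle_{\mathbb{X}}| \leq \|\mathbf{V}\|_{\mathbb{X}} \|f\|_{\mathbb{X}}$ remains valid. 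Combined with Young's inequality, this yields
\begin{equation*}
\frac{\mathrm{d}}{\mathrm{d}t}\|\mathbf{V}(t)\|_{\mathbb{X}}^2 \leq \|\mathbf{V}(t)\|_{\mathbb{X}}^2 + \|f(t)\|_{\mathbb{X}}^2,
\end{equation*}
and Grönwall's lemma then gives \eqref{Eq: A priori estimate} with $C = e^T$. When $f = 0$, the identity $\frac{\mathrm{d}}{\mathrm{d}t}\|\mathbf{V}(t)\|_{\mathbb{X}}^2 = 0$ yields conservation of the semi-norm directly, without any use of Grönwall.

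The only subtle point is the chain rule in the semi-normed setting. The semi-definiteness of $\langle \cdot,\cdot\rangle_{\mathbb{X}}$ does not obstruct it, since the proof of the chain rule for $\|\cdot\|_{H}^2$ in a Hilbert triple only uses the continuous bilinear structure, which is available here. One way to avoid any ambiguity is to work on the quotient Hilbert space $\mathbb{X}/\mathrm{ker}\|\cdot\|_{\mathbb{X}}$ (which, by Remark \ref{Remark: semi norm}, amounts to quotienting by constants in $\psi$ and in $\widetilde{x}$): both $\mathbf{A}$ and the semi-scalar product descend to this quotient, and the standard Hilbert-space chain rule applies there, giving the identity above. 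This reduction to a genuine Hilbert space is the one place where the non-trivial two-dimensional kernel — slightly larger than in \cite{LannesMing24} — needs a word of justification, but since the operator $\mathbf{A}$ preserves that kernel, the argument goes through unchanged. No other step involves a real difficulty.
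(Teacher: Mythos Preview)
Your proof is correct and follows the same approach as the paper's: both rely on the skew-symmetry of $\mathbf{A}$ established in Proposition \ref{Prop: Skew-symmetry} to run the energy identity. The paper's proof is a one-liner deferring everything to that proposition, whereas you spell out the chain rule, Cauchy--Schwarz, Young, and Gr\"onwall steps, and correctly handle the semi-norm issue by passing to the quotient (indeed $\mathbf{A}\widetilde{\mathbf{k}}=0$ for $\widetilde{\mathbf{k}}$ in the kernel, as the paper itself uses in the proof of Lemma \ref{Lemma: weak sol}).
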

		
		\begin{proof}
			Since ${\bf V}$ has enough regularity to use the skew-symmetry of the operator ${\bf A}$, this is a direct consequence of the third point of  Proposition \ref{Prop: Skew-symmetry}.
		\end{proof}
		The next step is to construct a weak solution to the non-homogeneous initial boundary value problem \eqref{FJnonhom}. This is where we have to lift the underdeterminacy related to horizontal translations.	
		\begin{lemma}\label{Lemma: weak sol}
			Let $T>0$, ${\bf F}\in L^2([0,T];{\mathbb X})$ and ${\bf U}^{\rm in} \in  {\mathbb X}$. There exist a weak solution ${\bf U} \in L^2([0,T]; {\mathbb X})$ to  \eqref{FJnonhom}, which also solves \eqref{FJnonhom} almost everywhere.
		\end{lemma}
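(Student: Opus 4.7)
The plan is to construct the weak solution by a duality argument in the semi-Hilbert space $L^2([0,T];\mathbb{X})$, adapting the approach of Theorem 4 in \cite{LannesMing24} to the present setting. The essential structural observation is that, although $\|\cdot\|_{\mathbb{X}}$ is only a semi-norm whose kernel is the $2$-dimensional subspace described in Remark \ref{Remark: semi norm} (spanned by the constant-$\psi$ mode and the horizontal-translation mode $\widetilde{x}$), a direct inspection of each term in the definition of $\langle\cdot,\cdot\rangle_{\mathbb{X}}$ shows that the semi-scalar product itself vanishes whenever one of its arguments lies in this kernel. This is what makes the duality construction intrinsically well-defined on the quotient.

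I would first introduce the space of backward test functions
$$\mathcal{V} = \{{\bf V} \in H^1([0,T];\mathbb{X}) \cap L^2([0,T];D({\bf A})) : {\bf V}(T) = 0\},$$
and apply the a priori estimate \eqref{Eq: A priori estimate} to the time-reversed function $\widetilde{\bf V}(t) = {\bf V}(T-t)$, yielding
$$\sup_{t\in[0,T]}\|{\bf V}(t)\|_{\mathbb{X}}^2 \leq C \int_0^T \|(-\partial_t + {\bf A}){\bf V}(s)\|_{\mathbb{X}}^2\, ds.$$
On the range $R := \{(-\partial_t + {\bf A}){\bf V} : {\bf V} \in \mathcal{V}\}\subset L^2([0,T];\mathbb{X})$ I then define
$$\Lambda\bigl((-\partial_t + {\bf A}){\bf V}\bigr) := \langle{\bf U}^{\rm in},{\bf V}(0)\rangle_{\mathbb{X}} + \int_0^T \langle{\bf F}(s),{\bf V}(s)\rangle_{\mathbb{X}}\,ds.$$
This is well defined on $R$: if $(-\partial_t + {\bf A}){\bf V}=0$ with ${\bf V}(T)=0$, then the estimate above forces $\|{\bf V}(t)\|_{\mathbb{X}}=0$ for every $t$, so ${\bf V}(t)$ lies pointwise in the kernel of the semi-norm and both pairings vanish by the structural property above. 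Cauchy--Schwarz and the same estimate give $|\Lambda(g)|\leq C(\|{\bf U}^{\rm in}\|_{\mathbb{X}} + \|{\bf F}\|_{L^2([0,T];\mathbb{X})})\,\|g\|_{L^2([0,T];\mathbb{X})}$ for $g\in R$. Passing to the pointwise-a.e.\ quotient of $L^2([0,T];\mathbb{X})$ by its kernel yields a genuine Hilbert space; Hahn--Banach extends $\Lambda$ and Riesz representation produces ${\bf U}\in L^2([0,T];\mathbb{X})$ satisfying the weak formulation
$$\int_0^T \langle{\bf U}(s),(-\partial_t+{\bf A}){\bf V}(s)\rangle_{\mathbb{X}}\,ds = \langle{\bf U}^{\rm in},{\bf V}(0)\rangle_{\mathbb{X}} + \int_0^T \langle{\bf F}(s),{\bf V}(s)\rangle_{\mathbb{X}}\,ds$$
for all ${\bf V}\in\mathcal{V}$.

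Finally, to obtain the a.e.\ equation, I would test the identity against ${\bf V}(t)=\chi(t){\bf W}$ with $\chi\in C^\infty_c((0,T))$ and ${\bf W}\in D({\bf A})$, and use the skew-symmetry of ${\bf A}$ (Proposition \ref{Prop: Skew-symmetry}) to rewrite the result as a distributional identity on $(0,T)$; since ${\bf F}\in L^2([0,T];\mathbb{X})$, this yields $\partial_t {\bf U}+{\bf A}{\bf U}={\bf F}$ almost everywhere as an equation in $\mathbb{X}$. The main obstacle, compared with \cite{LannesMing24}, is the enlarged $2$-dimensional kernel of the semi-norm: one has to check, carefully and simultaneously, that both the constant-$\psi$ mode and the horizontal-translation mode $\widetilde{x}$ are annihilated in each pairing appearing in the construction (in particular in the initial-datum pairing $\langle{\bf U}^{\rm in},{\bf V}(0)\rangle_{\mathbb{X}}$, where the extra degree of freedom is most visible), so that $\Lambda$ really descends to the quotient and Riesz representation produces a meaningful ${\bf U}$ in the ambient semi-normed space.
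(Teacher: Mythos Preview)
Your duality construction is correct and matches the paper's approach up to the point where you obtain ${\bf U}\in L^2([0,T];\mathbb{X})$ satisfying the weak formulation. The gap is in the final step. Testing against $\chi(t){\bf W}$ and using skew-symmetry only gives you
\[
\langle \partial_t{\bf U}(s)+{\bf A}{\bf U}(s)-{\bf F}(s),\,{\bf W}\rangle_{\mathbb X}=0 \quad\text{for a.e. }s\text{ and all }{\bf W}\in D({\bf A}),
\]
which, precisely because $\langle\cdot,\cdot\rangle_{\mathbb X}$ is only a semi-inner product, says nothing more than that $\partial_t{\bf U}+{\bf A}{\bf U}-{\bf F}$ lies pointwise in the kernel $K$ of the semi-norm. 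You have therefore proved $\partial_t{\bf U}+{\bf A}{\bf U}={\bf F}+{\bf k}$ a.e.\ for some ${\bf k}\in L^2([0,T];K)$, not $\partial_t{\bf U}+{\bf A}{\bf U}={\bf F}$. The same issue affects the initial condition: the weak formulation fixes ${\bf U}(0)$ only up to an element of $K$. Since the Riesz representative is itself determined only modulo $K$, a generic choice of ${\bf U}$ in its equivalence class will \emph{not} satisfy the equation exactly.

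The paper closes this gap by an explicit correction: observing that ${\bf A}\widetilde{\bf k}=0$ for every $\widetilde{\bf k}\in K$, one replaces the Riesz representative $\widetilde{\bf U}$ by ${\bf U}=\widetilde{\bf U}+\widetilde{\bf k}$ with $\widetilde{\bf k}(t)=({\bf U}^{\rm in}-\widetilde{\bf U}(0))-\int_0^t{\bf k}(s)\,{\rm d}s$, which lies in $K$ for each $t$; this simultaneously kills the defect ${\bf k}$ in the equation (since $\partial_t\widetilde{\bf k}=-{\bf k}$) and restores the exact initial condition. Your structural observation that the kernel is annihilated in every pairing is exactly what makes the Riesz step work, but it is also exactly why the weak formulation cannot by itself pin down the equation beyond ``modulo $K$''; you need this additional selection step.
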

		\begin{proof}
			The space $L^2([0,T];{\mathbb X})$ is not a Hilbert space. The adherence $K$ of zero for the $L^2([0,T];{\mathbb X})$ semi-norm consists, by Remark \ref{Remark: semi norm}, of all $(\zeta,{\mathtt X}^{\rm T}, \psi, {\mathtt V}^{\rm T})^{\rm T}\in L^2([0,T];{\mathbb X})$ such that $\zeta, \widetilde{z},\theta,{\mathtt V}$ vanish identically, while  $\psi=C_1$ and $\widetilde{x}=C_2$ with $C_1,C_2\in L^2([0,T];{\mathbb R})$. This means that if ${\bf V}_1,{\bf V}_2\in L^2([0,T];{\mathbb X})$ are such that $\Vert {\bf V}_1-{\bf V}_2\Vert_{\mathbb X}=0$ then ${\bf V}_1={\bf V}_2$ up to two functions of time, while in the case of a fixed object considered in \cite{LannesMing24} it is up to only one function of time. The quotient space $L^2([0,T];{\mathbb X})/K$ is a Hilbert space and, as in Lemma 9 in \cite{LannesMing24}, one can use a duality method based on the energy estimate of Lemma \ref{Eq: A priori estimate} to obtain that there exists $\widetilde{\bf U}\in L^2([0,T]; {\mathbb X})$ such that for all ${\bf V} \in H^1([0,T];{\mathbb X})\cap L^2([0,T];D({\bf A}))$ such that ${\bf V}(T)=0$, there holds
			$$
			\int_0^T \langle {\bf F}(t),{\bf V}(t)\rangle_{\mathbb X}{\rm d}t +\langle {\bf U}^{\rm in},{\bf V}(0)\rangle_{{\mathbb X}}=
			\int_0^T \langle \widetilde{\bf U}(t),(-\partial_t -{\bf A}){\bf V}(t)\rangle_{{\mathbb X}}{\rm d}t.
			$$
			However, since $L^2([0,T];{\mathbb X})$ is only semi-normed, this does not imply that $\widetilde{U}$ solves \eqref{FJnonhom} almost everywhere; we only get that there is an element ${\bf k}\in K$ such that
			$$
			\partial_t \widetilde{\bf U}+{\bf A}\widetilde{\bf U}={\bf F}+{\bf k}
			$$
			almost everywhere. Remark now that for all $\widetilde{\bf k}\in K$, one has ${\bf A}\widetilde{\bf k}=0$ and therefore
			$$
			\partial_t (\widetilde{\bf U}+\widetilde{\bf k})+{\bf A}(\widetilde{\bf U}+\widetilde{\bf k})={\bf F}+\partial_t\widetilde{\bf k}+{\bf k}
			$$
			holds almost everywhere. Choosing $\widetilde{\bf k} = \widetilde{\bf k}(0)-\int_0^t{\bf k}(s){\rm d}s$, with $\widetilde{\bf k}(0)={\bf U}^{\rm in}-\widetilde{\bf U}(0)$ (which is constant since $\Vert {\bf U}^{\rm in}-\widetilde{\bf U}(0)\Vert_{\mathbb X}=0$), the function ${\bf U}=\widetilde{\bf U}+\widetilde{\bf k}$ is a weak solution satisfying the conditions of the lemma.  
		\end{proof}

		For the conclusion, one can show, using mollifiers in time as in Lemma 10 of \cite{LannesMing24}, that a weak solution furnished by Lemma \ref{Lemma: weak sol} is the limit in $C([0,T];{\mathbb X})$ of a sequence of strong solutions, from which one deduces that it is unique, belongs to $C([0,T];{\mathbb X})$, and satisfies the energy estimate. In order to prove that ${\mathtt V}$ belongs to $ C^1$ rather than $ C^0$, we use the second equation in \eqref{Newton2} to observe first that $\dot{\mathtt V}\in C^0$ and therefore ${\mathtt V}\in C^1$. 
		
		The proof of the additional regularity under the assumption that ${\bf U}^{\rm in}\in D({\bf A})$ is a particular case of Corollary \ref{correg} below, and we therefore refer to the proof of this corollary. 
		
	\end{proof}

	\subsection{Derivation and well-posedness  of the Cummins equations}\label{Subsec: Cummins}

	In this section, we rigorously derive the Cummins equations \cite{Cummins62} from John's problem. In doing so, we make explicit the excitation force associated with the wave field and prove the well-posedness of these integro-differential equations by using our result on Fritz John's model. To be precise, we  
	%	 
	%	
	%	In addition to the Kirchhoff potentials ${\bf K}=(K_1,K_2,K_3)^{\rm T}$ introduced in Definition \ref{defKirchhoff}, let us 
	introduce the radiative potentials ${\boldsymbol \zeta}^{\rm rad}=(\zeta^{\rm rad}_1,\zeta^{\rm rad}_2,\zeta^{\rm rad}_3)^{\rm T} \in C(\R^+;L^2(\GD)^3)$ and ${\boldsymbol \psi}^{\rm rad}=(\psi^{\rm rad}_1,\psi^{\rm rad}_2,\psi^{\rm rad}_3)^{\rm T} \in C(\R^+;\dot{H}^{1/2}(\GD)^3)$ defined as the solution of the initial value problems
	$$ 
	\begin{cases}
		\partial_t \zeta^{\rm rad}_j -G_0\psi^{\rm rad}_j=0,\\
		\partial_t \psi^{\rm rad}_j+{\mathtt g}\zeta^{\rm rad}_j=0
	\end{cases}
	\quad\mbox{ and }\quad
	(\zeta^{\rm rad}_j,\psi^{\rm rad}_j)_{\vert_{t=0}}=(\partial_z K_j,0),
	$$
	for $j=1,2,3$ and where ${\bf K}=(K_1,K_2,K_3)^{\rm T}$ are the Kirchhoff potentials introduced in Definition \ref{defKirchhoff}. We note that since $\partial_z K_j\in L^2(\GD)$ (see the proof of Proposition \ref{Prop: Skew-symmetry}), the solution $(\zeta^{\rm rad}_j,\psi^{\rm rad}_j)$ to the above system is well defined and unique in $C(\R^+;L^2(\GD)\times \dot{H}^{1/2}(\GD))$ by Theorem 4.5 of \cite{LannesMing24}, whence our assertion that ${\boldsymbol \zeta}^{\rm rad}_j \in C(\R^+;L^2(\GD)^3)$. Similarly, for all $(\zeta^{\rm in},\psi^{\rm in})\in L^2(\GD)\times \dot{H}^{1/2}(\GD)$, we define the exciting wave field $(\zeta^{\rm exc},\psi^{\rm exc})\in C(\R^+; L^2(\Gamma^{\rm D})\times  \dot{H}^{1/2}(\Gamma^{\rm D}))$ as the solution to 
	$$
	\begin{cases}
		\partial_t \zeta^{\rm exc} -G_0\psi^{\rm exc}=0,\\
		\partial_t \psi^{\rm exc}+{\mathtt g}\zeta^{\rm exc}=0
	\end{cases}
	\quad\mbox{ with }\quad
	(\zeta^{\rm exc},\psi^{\rm exc})_{\vert_{t=0}}=(\zeta^{\rm in},\psi^{\rm in}).
	$$
	Consider now ${\bf U}=(\zeta,{\mathtt X},\psi,{\mathtt V})\in C(\R^+;{\mathbb X})$ to be the unique solution furnished by Theorem \ref{thm: Wp energy space} to the Cauchy problem associated with \eqref{FJ2}-\eqref{Newton2} and with initial data ${\bf U}^{\rm in}=(\zeta^{\rm in},{\mathtt X}^{\rm in},\psi^{\rm in},{\mathtt V}^{\rm in})\in {\mathbb X}$. With the notations just introduced and using the Duhamel formulation associated with \eqref{eqA}, we can decompose the surface elevation as
	\begin{equation}\label{decompzeta}
		\zeta(t,\cdot)=\zeta^{\rm exc}(t,\cdot)+\int_0^t {\boldsymbol \zeta}^{\rm rad}(t-\tau)\cdot \dot{\mathtt X}(\tau){\rm d}\tau
	\end{equation}
	and plug it into \eqref{Newton2} to obtain the Cummins equation,
	\begin{equation}\label{Cummins}
		({\mathcal M}+{\mathcal M}_{\rm a})\ddot{\mathtt X}+\rho {\mathtt g}\int_0^t {\mathcal K}(t-\tau)\dot{\mathtt X}(\tau){\rm d}\tau+{\mathcal C}{\mathtt X}={\mathcal F}^{\rm exc},
	\end{equation}
	where the kernel ${\mathcal K}\in C(\R^+;{\mathcal M}_3(\R))$ and the excitation force ${\mathcal F}^{\rm exc}\in C(\R^+;\R^3)$ are given by 
	\begin{equation}\label{defkernel}
		{\mathcal K}(t)=\int_{\GD} \partial_z{\bf K}(x,0)\otimes {\boldsymbol \zeta}^{\rm rad}(t,x){\rm d}x
		\quad\mbox{ and }\quad
		{\mathcal F}^{\rm exc}(t)=-\rho{\mathtt g}\int_{\GD}\zeta^{\rm exc}(t,x)\partial_z{\bf K}(x,0){\rm d}x,
	\end{equation}
	\begin{remark}
		Recalling that by construction $\partial_z{\bf K}(x,z=0)={\bf \zeta}^{\rm rad}(t=0,x)$, we can rewrite the kernel as
		$$
		{\mathcal K}(t)=\int_{\GD} {\boldsymbol \zeta}^{\rm rad}(0,x)\otimes {\boldsymbol \zeta}^{\rm rad}(t,x){\rm d}x;
		$$
		this shows that for $t\geq 0$ small enough, one has ${\mathcal K}(t)>0$.
	\end{remark}
	\begin{remark}
		The original derivation of \cite{Cummins62} was done under the assumption of the fluid being initially at rest, with an arbitrary and unspecified exciting force, and without topography. Here, the exciting force corresponds to the wave field that would be generated by the initial data $\zeta^{\rm in}$ and $\psi^{\rm in}$ if the solid were fixed, which is the situation considered in \cite{LannesMing24}. Our result also generalizes the Cummins equations in the presence of non trivial topography, with a possibly emerging bottom.
	\end{remark}
	%	 
	%	%
	%	%
	The Cummins equations \eqref{Cummins} are integro-differential equations on the position/rotation vector ${\mathtt X}$. The following theorem proves its well-posedness.
	\begin{thm}\label{Theorem: Cummins sim F. John}
		Let $\Omega$ and $\Gamma$ be as in Assumption \ref{Assumption domain}, and suppose the coefficients of $\mathcal{C}$ satisfy Assumption \ref{hypStab}. Let $(\zeta^{\rm in},\psi^{\rm in})\in L^2(\GD)\times \dot{H}^{1/2}(\GD)$ and ${\mathcal K}\in C(\R^+;{\mathcal M}_3(\R))$ and ${\mathcal F}_{\rm exc}\in C(\R^+;\R^3)$ be as defined in \eqref{defkernel}. Then for any ${\mathtt X}^{\rm in}\in \R^3$ and ${\mathtt V}^{\rm in}\in \R^3$, there exists a unique solution ${\mathtt X}\in C^2(\R^+;\R^3)$ to \eqref{Cummins} satisfying the initial conditions ${\mathtt X}(0)={\mathtt X}^{\rm in}$ and $\dot{\mathtt X}(0)={\mathtt V}^{\rm in}$. 
		
		If in addition $(\zeta^{\rm in},\psi^{\rm in})\in H^{1/2}(\GD)\times \dot{H}^{1}(\GD)$, then ${\mathtt X}\in C^3(\R^+;\R^3)$.
		
	\end{thm}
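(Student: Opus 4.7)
The plan is to exploit the equivalence between \eqref{Cummins} and F.~John's model that underlies the derivation preceding the statement, and to transfer both existence and uniqueness from Theorem \ref{thm: Wp energy space}.

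For existence, I would form the John initial datum $\mathbf{U}^{\rm in}=(\zeta^{\rm in},(\mathtt{X}^{\rm in})^{\rm T},\psi^{\rm in},(\mathtt{V}^{\rm in})^{\rm T})^{\rm T}\in \mathbb{X}$ and apply Theorem \ref{thm: Wp energy space} to obtain $\mathbf{U}=(\zeta,\mathtt{X}^{\rm T},\psi,\mathtt{V}^{\rm T})^{\rm T}\in C(\mathbb{R}_+;\mathbb{X})$ with $\mathtt{V}=\dot{\mathtt X}\in C^1(\mathbb{R}_+;\mathbb{R}^3)$. The computation of Subsection \ref{Subsec: Cummins} shows that the $\mathtt{X}$-component of any such John solution automatically satisfies \eqref{Cummins} with the prescribed initial data, and $\dot{\mathtt X}\in C^1$ yields $\mathtt{X}\in C^2(\mathbb{R}_+;\mathbb{R}^3)$.

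For uniqueness, given any Cummins solution $\mathtt{X}\in C^2(\mathbb{R}_+;\mathbb{R}^3)$ matching the prescribed initial data, I would set $\mathtt{V}:=\dot{\mathtt X}$ and define
\begin{equation*}
\zeta(t) := \zeta^{\rm exc}(t) + \int_0^t \boldsymbol{\zeta}^{\rm rad}(t-\tau)\cdot \dot{\mathtt{X}}(\tau)\,\mathrm{d}\tau, \qquad
\psi(t) := \psi^{\rm exc}(t) + \int_0^t \boldsymbol{\psi}^{\rm rad}(t-\tau)\cdot \dot{\mathtt{X}}(\tau)\,\mathrm{d}\tau,
\end{equation*}
and verify that $(\zeta,\mathtt{X}^{\rm T},\psi,\mathtt{V}^{\rm T})^{\rm T}$ solves \eqref{FJ2}-\eqref{Newton2}. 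Differentiating under the integral sign, and using $\boldsymbol{\zeta}^{\rm rad}(0)=\partial_z\mathbf{K}$, $\boldsymbol{\psi}^{\rm rad}(0)=0$ together with the evolution equations for the exciting and radiative potentials, the first two equations of \eqref{FJ2} follow directly. For the Newton equation, I would swap the $\tau$- and $x$-integrals in the Volterra term of \eqref{Cummins}, use the tensor-product structure of \eqref{defkernel} to write $\mathcal{K}(t-\tau)\dot{\mathtt X}(\tau)=\int_{\GD}\partial_z\mathbf{K}(x)\,\boldsymbol{\zeta}^{\rm rad}(t-\tau,x)\cdot\dot{\mathtt X}(\tau)\,\mathrm{d}x$, and recognize $\int_0^t\boldsymbol{\zeta}^{\rm rad}(t-\tau)\cdot\dot{\mathtt X}(\tau)\,\mathrm{d}\tau=\zeta(t)-\zeta^{\rm exc}(t)$; combined with the definition of $\mathcal{F}^{\rm exc}$, this produces exactly the right-hand side of \eqref{Newton2}. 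Since $\boldsymbol{\zeta}^{\rm rad}\in C(\mathbb{R}_+;L^2(\GD)^3)$ and $\boldsymbol{\psi}^{\rm rad}\in C(\mathbb{R}_+;\dot{H}^{1/2}(\GD)^3)$, the reconstructed tuple lies in $C(\mathbb{R}_+;\mathbb{X})$, so any two Cummins solutions with identical Cauchy data yield two John solutions sharing the same $\mathbf{U}^{\rm in}$; Theorem \ref{thm: Wp energy space} then forces them to coincide.

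For the $C^3$ regularity under the stronger assumption $(\zeta^{\rm in},\psi^{\rm in})\in H^{1/2}(\GD)\times \dot{H}^1(\GD)$, the lifted initial datum $\mathbf{U}^{\rm in}$ belongs to $D(\mathbf{A})$ thanks to the characterization of this domain in Proposition \ref{Prop: Skew-symmetry}; the last part of Theorem \ref{thm: Wp energy space} then upgrades $\dot{\mathtt X}=\mathtt{V}$ from $C^1$ to $C^2$, hence $\mathtt{X}\in C^3(\mathbb{R}_+;\mathbb{R}^3)$. The only step requiring genuine care is the Fubini swap in the reconstruction argument, where the tensor-product structure of $\mathcal{K}$ must be correctly contracted against $\dot{\mathtt X}$; all remaining verifications reduce to Leibniz's rule and to the linearity of the exciting and radiative Cauchy problems.
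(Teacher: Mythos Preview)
Your proposal is correct and follows essentially the same approach as the paper: establish the equivalence between solutions of \eqref{Cummins} and of \eqref{FJ2}--\eqref{Newton2} via the Duhamel decomposition \eqref{decompzeta} (and its analogue for $\psi$), then invoke Theorem \ref{thm: Wp energy space} for both existence and uniqueness. You supply more detail in the reconstruction step and the $C^3$ upgrade than the paper does, but the strategy is identical.
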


	\begin{proof}
		We have proved in the derivation of the Cummins equations that if ${\bf U}=(\zeta,{\mathtt X},\psi,{\mathtt V})\in C(\R^+;{\mathbb X})$ solves \eqref{FJ2}-\eqref{Newton2} then ${\mathtt X}$ solves the Cummins equation. The reverse is true, since if ${\mathtt X}$ solves the Cummins equation, one just has to take ${\mathtt V}=\dot{\mathtt X}$, $\zeta$ given by \eqref{decompzeta} and $\psi$ by a similar decomposition formula to obtain a solution to \eqref{FJ2}-\eqref{Newton2}. The result therefore follows from Theorem \ref{thm: Wp energy space}. 
	\end{proof}

	In general, the solution to the Cummins equation is no more regular than $C^2(\R^+;\R^3)$ if $(\zeta^{\rm in},\psi^{\rm in})\in L^2(\GD)\times \dot{H}^{1/2}(\GD)$ and $C^3(\R^+;\R^3)$ if $(\zeta^{\rm in},\psi^{\rm in})\in H^{1/2}(\GD)\times \dot{H}^{1}(\GD)$.  This is proved in Section \ref{Sec: right angles}, where we also show that there is a singularity in the Kirchhoff potentials for horizontal translations at higher regularity. We first study the issue of higher order regularity in an abstract functional framework.

	\section{Abstract higher order regularity in the general case}\label{sectregabstract}

	In order to study higher order regularity of solutions to \eqref{FJabstract}, it is convenient to introduce a scale of functional spaces adapted to the problem.
	We know by Corollary 2 in \cite{LannesMing24} that $G_0$ admits a self-adjoint realization on $L^2(\Gamma^{\rm D})$ with domain $H^1(\Gamma^{\rm D})$. Since it is also positive, it admits a square root, denoted $G_0^{1/2}$, which is such that $\vert G_0^{1/2} \psi \vert_{L^2(\Gamma^{\rm D})}^2=\langle \psi, G_0\psi \rangle_{\dot{H}^{1/2}\times (\dot{H}^{1/2})'}$. Since this latter expression makes sense for $\psi\in \dot{H}^{1/2}(\Gamma^{\rm D})$, we still use  the notation $\vert G_0^{1/2} \psi \vert_{L^2(\Gamma^{\rm D})}$ for such $\psi$ even though they are not necessarily in $L^2(\Gamma^{\rm D})$. More generally, if $n$ is an odd integer, we write
	$$
	\forall n=2l+1,\qquad \vert G_0^{n/2}\psi \vert_{L^2(\Gamma^{\rm D})}^2=\langle G_0^l\psi, G_0 (G_0^l \psi) \rangle_{\dot{H}^{1/2}\times (\dot{H}^{1/2})'},
	$$
	which is well defined whenever $G_0^l \psi \in \dot{H}^{1/2}(\Gamma^{\rm D})$; by abuse of language, we then say that $G_0^{n/2}\psi\in L^2(\Gamma^{\rm D})$. We can now introduce the appropriate functional spaces.
	\begin{Def}\label{defhighreg}
		Let $\Omega$ and $\Gamma$ be as in Assumption \ref{Assumption domain}, and $n\in {\mathbb N}^*$. \\
		{\bf i.} We define
		\begin{align*}
			\dot{\mathcal H}^{n/2}(\Gamma^{\rm D}) &=\{ f\in \dot{H}^{1/2}(\Gamma^{\rm D}), \quad \forall 1\leq j\leq n, \quad G_0^{j/2}f \in L^2(\Gamma^{\rm D})\},\\
			{\mathcal H}^{n/2}(\Gamma^{\rm D}) &= L^2(\Gamma^{\rm D})\cap \dot{\mathcal H}^{n/2}(\GD),
		\end{align*}
		respectively endowed with their canonical semi-norm and norm.\\
		{\bf ii.} The space ${\mathbb X}^n$ is defined as
		$$
		{\mathbb X}^n={\mathcal H}^{n/2}(\Gamma^{\rm D})\times {\mathbb R}^3\times \dot{\mathcal H}^{(n+1)/2}(\Gamma^{\rm D})\times {\mathbb R}^3
		$$
		and is endowed with the semi-norm
		$$
		\Vert {\bf U}\Vert_{{\mathbb X}^n}=\Vert {\bf U}\Vert_{\mathbb X}+\sum_{j=1}^n ( \vert G_0^{j/2}\zeta \vert_{L^2(\Gamma^{\rm D})} +\vert G_0^{(j+1)/2}\psi \vert_{L^2(\Gamma^{\rm D})}),
		$$
		where ${\bf U}=(\zeta,{\mathtt X}^{\rm T},\psi, {\mathtt V}^{\rm T})^{\rm T}$.
	\end{Def}
	
	We implement the strategy used in \cite{LannesMing24}. In particular, we use the energy estimate \eqref{Eq: Energy est} to deduce a control on higher-order time derivatives and trade them for higher-order spatial regularity for initial data in ${\mathbb X}^n$. However, due to the motion of the object there is an additional source of singularity compared with the case of a fixed object considered in \cite{LannesMing24}. More precisely, the regularity of the Kirchhoff potentials ${\bf K}=(K_1,K_2,K_3)^{\rm T}$ introduced in Definition \ref{defKirchhoff} is a limitation to the regularity of the solution. 
	\begin{cor}\label{correg}
		Let $\Omega$, and $\Gamma$ be as in Assumption \ref{Assumption domain}, and suppose the coefficients of $\mathcal{C}$ satisfy Assumption \ref{hypStab}. Let also $n\in {\mathbb N}^*$ and assume that 
		$$
		(\partial_z{\bf K})_{\vert_{\Gamma^{\rm D}}} \in {\mathcal H}^{(n-1)/2}(\Gamma^{\rm D}). 
		$$
		Then for all ${\bf U}^{\rm in}\in {\mathbb X}^n$, there exists a unique solution ${\bf U}\in \cap_{j=0}^n C^j({\mathbb R}_+;{\mathbb X}^{n-j})$
		to \eqref{FJabstract} with initial condition ${\bf U}(0)={\bf U}^{\rm in}$,
		and one also has  ${\mathtt V}=\dot{\mathtt X}\in C^{n+1}({\mathbb R}_+;{\mathbb R}^3)$; moreover, there exists $C>0$ independent of ${\bf U}^{\rm in}$ such that
		$$
		\forall t\geq 0, \qquad
		\sum_{j=0}^n \Vert \partial_t^j {\bf U}\Vert_{{\mathbb X}^{n-j}}(t) +\vert {\mathtt V}^{(n+1)}(t)\vert \leq C \Vert {\bf U}^{\rm in}\Vert_{{\mathbb X}^n}.
		$$
	\end{cor}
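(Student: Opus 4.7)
The plan is to implement, in the present setting with a moving object, the strategy of \cite{LannesMing24}: use the skew-symmetry of $\mathbf{A}$ on $\mathbb{X}$ (Proposition \ref{Prop: Skew-symmetry}) to obtain energy conservation for $\partial_t^j\mathbf{U}$ by applying Theorem \ref{thm: Wp energy space} to the time-differentiated equations, and then trade time regularity for spatial regularity using the equations \eqref{FJ2}--\eqref{Newton2} themselves. The new difficulty, compared with the fixed object case, is that the off-diagonal blocks of $\mathbf{A}$ involve the Kirchhoff trace $\partial_z\mathbf{K}$, which is precisely why we must assume $(\partial_z\mathbf{K})_{|_{\Gamma^{\rm D}}}\in\mathcal{H}^{(n-1)/2}(\Gamma^{\rm D})$.

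The first step is to show, by induction on $j$, that $(-\mathbf{A})^j\mathbf{U}^{\rm in}\in\mathbb{X}$ for all $0\leq j\leq n$ whenever $\mathbf{U}^{\rm in}\in\mathbb{X}^n$. Each application of $\mathbf{A}$ contributes one extra power of $G_0$ acting on the potential components (requiring one more half order of spatial regularity, controlled by the definition of $\mathbb{X}^n$) and one pairing with $\partial_z\mathbf{K}$, either pointwise through the term $\partial_z\mathbf{K}^{\rm T}\mathbf{V}$ in $A_1$ or via the functional $\ell_{\partial_z\mathbf{K}}\zeta$ in $A_2$. Controlling the $\mathbb{X}$-semi-norm of $(-\mathbf{A})^j\mathbf{U}^{\rm in}$ therefore amounts to repeated estimates of products and pairings that involve $\partial_z\mathbf{K}$ in spaces $\mathcal{H}^{k/2}(\Gamma^{\rm D})$ with $k\leq n-1$, which is exactly the assumption in the statement.

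Once this inductive step is in place, Theorem \ref{thm: Wp energy space} applied to the initial data $(-\mathbf{A})^j\mathbf{U}^{\rm in}$ furnishes solutions $\mathbf{U}_j\in C(\R_+;\mathbb{X})$, and by linearity, uniqueness and the semigroup property (at the level of $L^2([0,T];\mathbb{X})/K$ via Lemma \ref{Lemma: weak sol}), one identifies $\mathbf{U}_j=\partial_t^j\mathbf{U}$. Energy conservation then yields $\|\partial_t^j\mathbf{U}(t)\|_{\mathbb{X}}\leq C\|\mathbf{U}^{\rm in}\|_{\mathbb{X}^n}$ for each $j\leq n$. At this stage we have $\mathbf{U}\in\bigcap_{j=0}^n C^j(\R_+;\mathbb{X})$, and the extra regularity $\mathtt{V}\in C^{n+1}$ is obtained by differentiating the second equation in \eqref{Newton2}, which, since the right-hand side only involves $\zeta$ and $\mathtt{X}$ through linear operations and the (fixed) Kirchhoff data, is immediately one time-derivative smoother.

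The final step upgrades time regularity to spatial regularity in the $\mathbb{X}^{n-j}$ scale. Using the first equation in \eqref{FJ2} one writes $G_0\psi=\partial_t\zeta-\mathtt{V}\cdot\partial_z\mathbf{K}$, and the second gives $-{\mathtt g}\zeta=\partial_t\psi$; iterating these identities expresses $G_0^{k/2}\zeta$ and $G_0^{(k+1)/2}\psi$ as linear combinations of time derivatives $\partial_t^{k'}\mathbf{U}$ (already controlled in $\mathbb{X}$) and products $\mathtt{V}^{(k')}\cdot\partial_z\mathbf{K}$, whose $\mathcal{H}^{\cdot/2}(\Gamma^{\rm D})$-norms are estimated through the Kirchhoff assumption. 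The main obstacle is precisely this last bookkeeping: each derivative identity mixes the degrees of freedom of the object with boundary values on $\Gamma^{\rm D}$, and one must carefully verify that the spaces $\mathcal{H}^{k/2}(\Gamma^{\rm D})$ are stable under the operations involved (multiplication by $\partial_z\mathbf{K}$, pairing against smooth test data, commutation with $G_0$), which is why the hypothesis on $\partial_z\mathbf{K}$ is sharp; a secondary but routine point is to propagate the modding-out of the degenerate directions (constants in $\psi$ and in $\tilde{x}$) from the existence argument of Theorem \ref{thm: Wp energy space} at each differentiation level.
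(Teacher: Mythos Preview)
Your proposal is correct and follows the same two-phase strategy as the paper: first obtain time regularity $\mathbf{U}\in C^n(\R_+;\mathbb{X})$ via energy conservation for the time-differentiated problem, then trade time derivatives for space regularity in the $\mathbb{X}^{n-j}$ scale using the structure of $\mathbf{A}$ and the Kirchhoff hypothesis.

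There is one tactical difference worth flagging. You work throughout with the full homogeneous system $\partial_t\mathbf{U}+\mathbf{A}\mathbf{U}=0$ on $\mathbb{X}$, showing $(-\mathbf{A})^j\mathbf{U}^{\rm in}\in\mathbb{X}$ and then invoking Theorem~\ref{thm: Wp energy space} for each of these data. The paper instead isolates the subsystem $U=(\zeta,\psi)^{\rm T}$, writes it as the \emph{non-homogeneous} equation $\partial_t U+AU=F$ with source $F=(\mathtt{V}\cdot\partial_z\mathbf{K},0)^{\rm T}$, and appeals to the fixed-object well-posedness result of \cite{LannesMing24} (Theorem~4 there) for this two-component problem. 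The induction then alternates: knowing $\mathtt{V}\in C^{j+1}$ gives $\partial_t^{j+1}F\in C([0,T];L^2\times\dot{H}^{1/2})$, which yields $U\in C^{j+1}$ via a Taylor-remainder construction $\widetilde{U}$ and uniqueness; then Newton's equation \eqref{Newton2} upgrades $\mathtt{V}$ to $C^{j+2}$. Your route is more self-contained (it stays within the paper's own Theorem~\ref{thm: Wp energy space}), but the identification $\mathbf{U}_j=\partial_t^j\mathbf{U}$ that you pass over with ``semigroup property'' still needs the same Taylor-type argument the paper writes out explicitly, since Theorem~\ref{thm: Wp energy space} by itself only gives $C^0$ solutions. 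For the space-regularity step both proofs are the same: the paper states and iterates the inequality $\Vert\mathbf{U}\Vert_{\mathbb{X}^k}\lesssim\sum_{k'\leq k}\Vert(-\mathbf{A})^{k'}\mathbf{U}\Vert_{\mathbb{X}}$, which is precisely your bookkeeping with $G_0\psi=\partial_t\zeta-\mathtt{V}\cdot\partial_z\mathbf{K}$ and $-\mathtt{g}\zeta=\partial_t\psi$, and the continuity $\mathbf{A}:\mathbb{X}^m\to\mathbb{X}^{m-1}$ under the assumption $(\partial_z\mathbf{K})_{|_{\Gamma^{\rm D}}}\in\mathcal{H}^{(n-1)/2}(\Gamma^{\rm D})$.
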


	\begin{proof} 
		It is convenient to isolate the equations satisfied by $U = (\zeta, \psi)^{\rm T}$, namely, \eqref{FJ2}, which we can rewrite in compact form as
		\begin{equation}\label{eqA}
			\qquad \quad 
			\partial_t U+\opA U=F, 
		\end{equation}
		with
		\begin{equation*}
			\opA 
			= 
			\begin{pmatrix}
				0 & -G_0
				\\ 
				{\mathtt g} & 0
			\end{pmatrix}
			\quad\mbox{ and }\quad
			F
			=
			\begin{pmatrix}
				{\mathtt V}\cdot \partial_z{\bf K}
				\\
				0
			\end{pmatrix}.
		\end{equation*}	
		We will use the fact for all $U^{\rm in}\in L^2(\Gamma^{\rm D})\times \dot{H}^{1/2}(\Gamma^{\rm D})$ and  $F\in C([0,T];L^2(\Gamma^{\rm D})\times \dot{H}^{1/2}(\Gamma^{\rm D}))$, there exists a unique solution $U\in C([0,T];L^2(\Gamma^{\rm D})\times \dot{H}^{1/2}(\Gamma^{\rm D}) )$ to \eqref{eqA} with initial value $U^{\rm in}$; this is shown in Theorem 4 of \cite{LannesMing24} and can also easily be deduced from the proof of Theorem \ref{thm: Wp energy space}.
		Let us also notice that if $U$ is a regular solution to \eqref{eqA}, one can use the equation to express time derivatives in terms of space derivatives, so that for all $n\geq 1$, there holds				%
		$$
		\partial_t^n U = (-\opA)^n U + \sum \limits_{k=0}^{n-1} (-\opA)^k( \partial_t^{n-1-k}F).
		$$
		In particular, at $t=0$, one should have $\partial_t^n U =(\partial_t^n U)^{\rm in}$, with
		$$
		(\partial_t^n U)^{\rm in}=(-\opA)^n U^{\rm in} + \sum \limits_{k=0}^{n-1} (-\opA)^k( \partial_t^{n-1-k}F)_{\vert_{t=0}}.
		$$			
		We divide the proof of the corollary in two steps. We first prove the time regularity, namely, that $\mathbf{U}\in C^n([0,T];{\mathbb X})$ and that moreover $ {\mathtt V}\in  C^{n+1}({\mathbb R})$. In a second step, we deduce space regularity. \\
		
		\noindent
		{\bf Step 1.}				
		Under the assumptions of the corollary, we prove by a finite induction that for all $0\leq j \leq n$, one has
		\begin{equation}\label{HR}
			\mathbf{U}\in C^j([0,T];{\mathbb X}) \quad \mbox{ and }\quad {\mathtt V}  \in C^{j+1}({\mathbb R}).
		\end{equation}
		For $j=0$, this is exactly the result stated in Theorem \ref{thm: Wp energy space}. Let $0\leq j\leq n-1$ and assume that \eqref{HR} holds for all integers $j'\leq j$; we now prove that it also holds for $j+1$. From \eqref{HR}$_{j}$, we know that ${\mathtt V}\in C^{j+1}({\mathbb R})$;  since in addition $\partial_z{\bf K}\in L^2(\Gamma^{\rm D})$ (this was established in the proof of Proposition \ref{Prop: Skew-symmetry}) we deduce from the above expression for $F$ that $\partial_t^{j+1}F\in C([0,T]; L^2 \times \dot{H}^{1/2})$. Under the assumptions made in the statement of the corollary, we also get that $(\partial_t^{j+1}U)^{\rm in}\in L^2\times\dot{H}^{1/2}$. As explained above, there exists therefore a unique solution $U_{j+1}\in C([0,T];L^2\times \dot{H}^{1/2} )$ to the initial value problem
		\begin{equation}\label{eqUjp1}
			\begin{cases}
				\partial_t U_{j+1}+AU_{j+1}=\partial_t^{j+1} F,\\
				(U_{j+1})_{\vert_{t=0}}=(\partial_t^{j+1} U)^{\rm in}.
			\end{cases}
		\end{equation}
		The function $\widetilde{U}$ defined as
		$$
		\widetilde{U}(t,x)=U^{\rm in}(x)+\dots+\frac{1}{(j)!}t^j(\partial_t^{j}U)^{\rm in}(x)+\int_0^t \frac{(t-s)^j}{j!}U_{j+1}(s,x){\rm d}s
		$$	
		belongs therefore to $C^{j+1}([0,T]; L^2(\Gamma^{\rm D})\times \dot{H}^{1/2}(\Gamma^{\rm D}) )$. Since moreover $\partial_t^{j+1}\widetilde{U}=U_{j+1}$, one deduces after substitution in \eqref{eqUjp1} that
		$$
		\begin{cases}
			\partial_t (\partial_t^{j+1} \widetilde{U})+A (\partial_t^{j+1}\widetilde{U})=\partial_t^{j+1} F,\\
			(\partial_t^{j+1}\widetilde{U})_{\vert_{t=0}}=(\partial_t^{j+1} U)^{\rm in}.
		\end{cases}
		$$
		Integrating $j+1$ times in time and remarking that by construction all the boundary terms cancel at $t=0$, one gets that $\widetilde{U}$ solves the same initial value problem as $U$, so that, by uniqueness, $U=\widetilde{U}$, so that $U\in C^{j+1}([0,T];L^2(\Gamma^{\rm D})\times \dot{H}^{1/2}(\Gamma^{\rm D}) )$. The fact that ${\mathtt V}\in  C^{j+2}$ is then obtained as in the proof of Theorem \ref{thm: Wp energy space}, using \eqref{Newton2}. This ends the proof of the induction. \\
		
		\noindent
		{\bf Step 2.} Space regularity. From the previous step, we have a solution $\mathbf{U}\in C^n([0,T];{\mathbb X})$ to \eqref{FJabstract}  with initial data ${\bf U}^{\rm in}$. We readily deduce that for all $0\leq j+k\leq n$, $(-{\bf A})^k \partial_t^j {\bf U} \in C([0,T];{\mathbb X})$ also solves \eqref{FJabstract}  but with initial data $(-{\bf A})^{j+k} {\bf U}^{\rm in}$, and satisfies the energy estimate, namely,
		$$
		\forall t\geq 0, \qquad \Vert (-{\bf A})^k \partial_t^j {\bf U}(t)\Vert_{\mathbb X}=\Vert (-{\bf A})^{j+k} {\bf U}^{\rm in}\Vert_{\mathbb X}.
		$$ 
		Now, from the definition of ${\bf A}$ and using the assumption that $(\partial_z{\bf K})_{\vert_{\Gamma^{\rm D}}}\in {\mathcal H}^{(n-1)/2}(\Gamma^{\rm D})$, one readily checks that ${\bf A}: {\mathbb X}^m\to {\mathbb X}^{j-1}$ is a well defined and continuous operator for all $1\leq m\leq n$. It follows that 
		\begin{equation}\label{piku}
			\forall t\geq 0, \qquad \Vert (-{\bf A})^k \partial_t^j {\bf U}(t)\Vert_{\mathbb X} \leq \Vert (-{\bf A})^{j+k} {\bf U}^{\rm in}\Vert_{\mathbb X}\lesssim \Vert {\bf U}^{\rm in}\Vert_{{\mathbb X}^n}.
		\end{equation}
		In order to obtain the estimate of the corollary, it is therefore enough to prove that
		\begin{equation}\label{ineqspacereg}
			\forall 1\leq k \leq n, \quad \Vert {\bf U} \Vert_{{\mathbb X}^k}\lesssim \sum_{0\leq k'\leq k} \Vert (-{\bf A})^{k'}{\bf U}\Vert_{{\mathbb X}}.
		\end{equation}
		In order to prove this inequality, let us first remark that
		\begin{align*}
			\Vert {\bf U} \Vert_{{\mathbb X}^k}
			&=\Vert {\bf U}\Vert_{{\mathbb X}}+\sum_{1\leq k'\leq k}\big(\vert G_0^{(k'-1)/2}\zeta\vert_{\dot{H}^{1/2}}+\vert G_0^{(k'-1)/2}G_0\psi\vert_{L^2}\big)\\
			&\lesssim \Vert {\bf U}\Vert_{{\mathbb X}}+\sum_{1\leq k'\leq k}\big(\vert G_0^{(k'-1)/2}\zeta\vert_{\dot{H}^{1/2}}+\vert G_0^{(k'-1)/2}(G_0\psi+\partial_z{\bf K}\cdot {\mathtt V}\vert_{L^2}\big)+\Vert \partial_z {\bf K}\Vert_{{\mathcal H}^{n-1/2}} |{\mathtt V}|.
		\end{align*}
		Remarking that the first and third components of $(-{\bf A}){\bf U}$ are respectively given by $G_0\psi+\partial_z{\bf K}\cdot {\mathtt V}$ and $-{\mathtt g}\zeta$, this implies that
		$$
		\Vert {\bf U} \Vert_{{\mathbb X}^k} \lesssim \Vert (-{\bf A}){\bf U} \Vert_{{\mathbb X}^{k-1}}+(1+\Vert \partial_z {\bf K}\Vert_{{\mathcal H}^{n-1/2}})\Vert {\bf U}\Vert_{{\mathbb X}}.
		$$
		Iterating this inequality, and using the assumption that $\Vert \partial_z {\bf K}\Vert_{{\mathcal H}^{n-1/2}}<\infty$, one readily obtains \eqref{ineqspacereg}, which concludes the proof of the corollary. %we from the equation that $\mathbf{A}\mathbf{U} \in C([0,T]; \mathbb{X})$, so that ${\mathbf{U}}\in C([0,T]; \mathbb{X}^1)$, while the equations for the ODEs gives  $\mathtt{V}\in C^2([0,T])$. 
		
	\end{proof}

	\section{The case of small angles at the contact points} 
	\label{Sec: Small angles}
	
	In the previous section, we established higher order regularity, but in the abstract functional space ${\mathbb X}^n$. In general, these spaces are of finite codimension and defined through the cancellation of various linear forms used to define the coefficients of the singular terms in the singular expansion of variational solutions for elliptic equations in corner domains; see for instance \cite{DalibardMR} in a different context. When the contact angles are small, the analysis is simpler and we can characterize ${\mathbb X}^n$ in terms of more standard Sobolev spaces. To do so, we need some basic properties on the trace mapping $\mathrm{Tr}^{\rm D}: \dot{H}^{1}(\Omega)\rightarrow \dot{H}^{1/2}(\Gamma^{\rm D})$ defined in \eqref{tracemapping}. This is done in Subsection \ref{Subsec: trace mapping}. Moreover, we need to revisit elliptic estimates provided in \cite{Grisvard85, DaugeNBL90} for small angles in the homogeneous functional setting, this is done in Subsection \ref{Subsec: Elliptic small angle}. The characterization of the spaces ${\mathcal H}^{n/2}$ and $\dot{\mathcal H}^{(n+1)/2}$, and therefore of ${\mathbb X}^n$, in terms of standard Sobolev spaces is then derived in Subsection  \ref{Subsec: Characterization mathcal H}.
	These results are then used in Subsection \ref{Subsec: higher reg for small angle} where we show higher order regularity of John's problem when the contact angles are small. Lastly, we stated the corresponding results for the Cummins equations in Subsection \ref{Subsec: Cummins higher reg for small angle}. % Cummins or a remark?

	\subsection{Properties of the trace mapping $\mathrm{Tr}^{\rm D}$}\label{Subsec: trace mapping}
	
	We prove here that the restriction to $\dot{H}^{s+1}(\Omega)$ of the trace mapping   $\mathrm{Tr}^{\rm D}: \dot{H}^{1}(\Omega)\rightarrow \dot{H}^{1/2}(\Gamma^{\rm D})$ defined in \eqref{tracemapping}, is continuous with values in $\dot{H}^{s+1/2}(\Gamma^{\rm D})$ for all $s>1/2$,.
	\begin{prop}\label{Prop: trace est}
		Let $\Omega$ and $\Gamma$ be as in Assumption \ref{Assumption domain}, and let $s>1/2$.  The trace mapping $\mathrm{Tr}^{\rm D}: F \in \dot{H}^{s+1}(\Omega)\mapsto F_{|_{\Gamma^{\rm D}}} \in  \dot{H}^{s+1/2}(\Gamma^{\rm D})$ is well defined and continuous.
	\end{prop}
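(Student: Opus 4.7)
\medskip
\noindent\textbf{Proof sketch.} The plan is to split the target seminorm according to Definition \ref{defGN},
$$
\vert F\vert_{\dot{H}^{s+1/2}(\Gamma^{\rm D})}=\vert F\vert_{\dot{H}^{s+1/2}({\mathcal E}_-)}+\vert F\vert_{\dot{H}^{s+1/2}({\mathcal E}_+)}+\vert \overline{F}_+-\overline{F}_-\vert,
$$
and to bound each of the three pieces separately by $\Vert F\Vert_{\dot{H}^{s+1}(\Omega)}=\Vert \nabla F\Vert_{H^s(\Omega)}$.

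For the two seminorms on ${\mathcal E}_\pm$, I would exploit the fact that $s>1/2$ makes the exponent $s+1/2$ strictly larger than $1$, so Definition \ref{Def: Hom space I}~(iii) reduces them to $\vert\partial_x F\vert_{H^{s-1/2}({\mathcal E}_\pm)}$. Since ${\mathcal E}_\pm$ lies on the horizontal axis $\{z=0\}$, the tangential derivative $\partial_x$ commutes with the restriction to ${\mathcal E}_\pm$, and the task becomes controlling $\vert(\partial_x F)_{\vert{\mathcal E}_\pm}\vert_{H^{s-1/2}({\mathcal E}_\pm)}$ by $\Vert\partial_x F\Vert_{H^s(\Omega)}$, which is at most $\Vert\nabla F\Vert_{H^s(\Omega)}$. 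Under Assumption \ref{Assumption domain}, the boundary of $\Omega$ is smooth except at the finitely many contact points, and flat in a neighbourhood of each of them; in particular $\Omega$ is uniformly Lipschitz and admits a Stein-type universal extension $E:H^s(\Omega)\to H^s(\mathbb{R}^2)$. Composing $E$ with the classical flat trace on $\{z=0\}$ and restricting to ${\mathcal E}_\pm$ delivers the desired estimate. An equivalent, more pedestrian route is to localize with a partition of unity: on the interior portion of ${\mathcal E}_\pm$, $\Omega$ locally coincides with a half-plane and the standard half-space trace theorem applies; near a contact point, Assumption \ref{Assumption domain}~(4) provides a sector of opening angle strictly less than $\pi$, on which the trace on a single flat face lies in $H^{s-1/2}$ of that face (see e.g.\ \cite{Grisvard85,Dauge88}).

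For the constant part $\vert\overline{F}_+-\overline{F}_-\vert$, the plan is to piggyback on the base case established in \cite{LannesMing24}: the trace mapping $\mathrm{Tr}^{\rm D}:\dot{H}^1(\Omega)\to \dot{H}^{1/2}(\Gamma^{\rm D})$ is continuous, so by Definition \ref{defGN} with $s=0$ one has $\vert\overline{F}_+-\overline{F}_-\vert\leq \vert \mathrm{Tr}^{\rm D}F\vert_{\dot{H}^{1/2}(\Gamma^{\rm D})}\lesssim \Vert\nabla F\Vert_{L^2(\Omega)}$. Combining this with the continuous embedding $H^s(\Omega)\hookrightarrow L^2(\Omega)$ gives $\vert\overline{F}_+-\overline{F}_-\vert\lesssim \Vert F\Vert_{\dot{H}^{s+1}(\Omega)}$, which is exactly what was needed.

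The main obstacle I anticipate lies in the trace step for large $s$: the classical trace theorem on Lipschitz domains provides $H^{s-1/2}(\partial\Omega)$-traces only in the range $1/2<s<3/2$, and for $s\geq 3/2$ compatibility conditions across the corners become an issue for the global trace on $\partial \Omega$. The key observation that makes the present statement go through for all $s>1/2$ is that we are restricting the trace to \emph{one} flat face ${\mathcal E}_\pm$ at a time, so no compatibility condition across corners is triggered; this is precisely the content of the Stein extension argument (or of the localized sectorial trace argument) sketched above, both of which are uniform in $s>1/2$.
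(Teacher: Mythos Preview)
Your proof is correct and follows essentially the same approach as the paper. The paper also reduces to the base case $\mathrm{Tr}^{\rm D}:\dot{H}^1(\Omega)\to\dot{H}^{1/2}(\Gamma^{\rm D})$ from \cite{LannesMing24} and, for the higher-order part, uses exactly your observation that $\vert f\vert_{\dot{H}^{s+1/2}({\mathcal E}_\pm)}=\vert\partial_x f\vert_{H^{s-1/2}({\mathcal E}_\pm)}$ together with the standard trace theorem applied to $\partial_x F\in H^s(\Omega)$; the paper is simply more terse about the latter step, while you spell out the Stein-extension/localization justification that makes it work for all $s>1/2$.
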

	\begin{proof}
		% The case $s=0$ is provided by Theorem 1 in \cite{LannesMing24}, which takes care of the novelties of the function space $\dot{H}^{s+1/2}(\Gamma^{\rm D})$, that is the control of the difference of mean of the two components.  Now, 
		For  $s >  1/2$  one has by Proposition 5 in \cite{LannesMing24} that $\dot{H}^{s+1/2}(\Gamma^{\rm D}) = \dot{H}^{1/2}(\Gamma^{\rm D})\cap \big{(}\dot{H}^{s+1/2}(\mathcal{E}_-)\times \dot{H}^{s+1/2}(\mathcal{E}_+)\big{)}$. %where $\dot{H}^{s+1/2}(\mathcal{E}_\pm)$ %is characterized by functions $f $ such that $\partial_x f_{\pm} \in H^{s-1/2}(\mathcal{E}_\pm)$ 
		Since $\mathrm{Tr}^{\rm D}: \dot{H}^{1}(\Omega)\rightarrow \dot{H}^{1/2}(\Gamma^{\rm D})$ is well defined and continuous by Theorem 1 in \cite{LannesMing24}, and recalling that $\vert f \vert_{\dot{H}^{s+1/2}(\mathcal{E}_\pm)}=\vert \partial_x f\vert_{H^{s-1/2}({\mathcal E}_\pm)}$, the result follows from the estimate $\vert \partial_x F_{|_{\Gamma^{\rm D}}} \vert_{H^{s-1/2}({\mathcal E}_\pm)}\lesssim \Vert \partial_x F\Vert_{H^s(\Omega)}$ which is a consequence of the trace theorem in standard Sobolev spaces since $s>1/2$.
	\end{proof}  
	The fact that the mapping considered in Proposition \ref{Prop: trace est} is onto is a direct consequence of the following proposition, whose proof is omitted as it is a straightforward generalization of the proof of Theorem 2.21 in \cite{LannesMing24}, which corresponds to the special case $s=0$.     
	\begin{prop}\label{Prop: extension high reg}
		Let $\Omega$ and $\Gamma$ be as in Assumption \ref{Assumption domain}, and let $s\geq 0$. Then there exist a continuous mapping $\mathbf{E}: \dot{H}^{s+1/2}(\Gamma^{\rm D}) \rightarrow \dot{H}^{s+1}(\Omega)$ such $\mathrm{Tr}^{\rm D} \circ \mathbf{E} = \mathrm{Id}_{\dot{H}^{s+1/2}(\Gamma^{\rm D}) \rightarrow \dot{H}^{s+1/2}(\Gamma^{\rm D})}$.
	\end{prop}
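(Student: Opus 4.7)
The approach is to follow the proof strategy of Theorem 2.21 in \cite{LannesMing24}, which handles the case $s=0$, and upgrade each building block to higher regularity. The $s=0$ proof must already grapple with the three structural difficulties of the problem: (i) the two disconnected components ${\mathcal E}_\pm$ of $\GD$ with a jump term $\vert\overline{f}_+ - \overline{f}_-\vert$ appearing in the semi-norm of Definition \ref{defGN}, (ii) the fact that ${\mathcal E}_\pm$ may be half-lines, forcing genuinely homogeneous spaces, and (iii) the corners where $\GD$ meets $\Gamma^{\rm N}$. All three issues are already present at $s=0$, so for higher $s$ one essentially needs higher-order versions of the elementary extension tools used in the $s=0$ argument.

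First I would reduce to the zero-jump case. Pick a smooth function $\chi \in C^\infty(\overline\Omega;[0,1])$ that equals $0$ in a neighborhood of ${\mathcal E}_-$ and $1$ in a neighborhood of ${\mathcal E}_+$, with $\nabla\chi$ compactly supported in $\Omega$; then $(\overline{\psi}_+ - \overline{\psi}_-)\chi \in \dot{H}^{s+1}(\Omega)$ for every $s\geq 0$, with semi-norm controlled by $\vert\overline{\psi}_+ - \overline{\psi}_-\vert$. Subtracting its trace from $\psi$ reduces the problem to extending a function in $\dot{H}^{s+1/2}(\GD)$ whose components have matching averages, which in turn reduces (modulo an additive constant, which is harmless for $\dot{H}^{s+1}(\Omega)$) to extending two zero-mean traces $\psi_\pm$ on ${\mathcal E}_\pm$ independently.

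For each such local trace, apply a partition of unity on ${\mathcal E}_\pm$ separating pieces supported away from the corners from pieces supported near the corners. In the smooth-interior case, a standard trace extension theorem on a flat half-space produces an $H^{s+1}$ function in a half-neighborhood with the prescribed trace. Near a corner, Assumption \ref{Assumption domain}(4) guarantees that both $\GD$ and $\Gamma^{\rm N}$ are flat in a neighborhood; after an affine change of variables the corner becomes a model sector of opening angle $\omega\in(0,\pi)$, and reflection across $\GD$ (possibly composed with an extension to the full plane) produces a continuous lift in the appropriate Sobolev space. The required semi-norm bound follows from Remark \ref{exprequivnorm} together with continuity of the standard trace operator in the sector. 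These local pieces are then glued via a partition of unity on $\Omega$, producing the global extension.

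The main technical obstacle, and the only place where the $s=0$ argument of \cite{LannesMing24} requires real care, is the bookkeeping of constants when multiplying local extensions by cutoffs. Since $\dot{H}^{s+1/2}({\mathcal E}_\pm)$ only controls derivatives, the local extensions are determined only modulo constants, and naively multiplying by a cutoff $\chi_k$ introduces a term of the form (constant)$\cdot\nabla\chi_k$ whose $L^2(\Omega)$ norm is not automatically controlled when ${\mathcal E}_\pm$ is a half-line. The remedy, used at $s=0$, is to subtract the local average of each extension before applying the cutoff, so that the product is controlled in the full homogeneous norm. Once this is set up at $s=0$, the same organization works verbatim for $s\geq 0$, with higher-order versions of the flat-boundary and model-sector trace extension theorems replacing their $s=0$ counterparts. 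This is why the generalization is "straightforward" and is omitted in the text.
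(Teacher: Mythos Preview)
Your proposal is correct and aligns with the paper's own treatment: the paper omits the proof entirely, stating only that it is a straightforward generalization of the proof of Theorem 2.21 in \cite{LannesMing24} (the $s=0$ case), and your outline follows precisely that strategy, correctly identifying the jump reduction, the local extension pieces, and the handling of constants under cutoffs as the ingredients to be upgraded.
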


	\subsection{Elliptic estimates for corner domains with small angles}\label{Subsec: Elliptic small angle}
	We now present two elliptic regularity results for the problem with mixed boundary conditions and to the Neumann problem in the case of small angles. We extend classical results provided by \cite{Dauge88,Grisvard85} to unbounded domains and, in the case of the mixed problem, we allow for Dirichlet data in $\dot{H}^{s+1/2}(\Gamma^{\rm D})$ -- and not only in the standard Sobolev space ${H}^{s+1/2}(\Gamma^{\rm D})$.  
	\begin{prop}\label{Prop: DN high reg small angle}      Let $\Omega$ and $\Gamma$ be as in Assumption \ref{Assumption domain}, and let $s>1/2$. If the angles at each corner point between $\Gamma^{\rm N}$ and $\Gamma^{\rm D}$  are strictly smaller than $\frac{\pi}{2s}$ then for all  
		$\psi \in \dot{H}^{s+1/2}(\Gamma^{\rm D})$, the variational solution $\psi^{\mathfrak h}$ to the mixed boundary value problem \eqref{Eq: psi^H} is in $\dot{H}^{s+1}(\Omega)$ and there is a constant $C>0$ independent of $\psi$ such that 
		%		%
		\begin{equation}\label{Eq: est on u small angle mixed pb}
			\frac{1}{C} |\psi |_{\dot{H}^{s+1/2}(\Gamma^{\rm D})} \leq \| \nabla \psi^{\mathfrak h}\|_{H^{s} (\Omega )} \leq C |\psi|_{\dot{H}^{s+1/2}(\Gamma^{\rm D})}.
			%+ |g|_{H^{\frac{(n-1)}{2}}(\Gamma^{\rm N})}.
		\end{equation}
		In particular,  one has $G_0\psi\in {H}^{s-1/2}(\GD)$ and here exists another constant $C'>0$ independent of $\psi$ such that
		\begin{equation}\label{Eq: control of G_0 hi reg}
			\vert G_0 \psi\vert_{{H}^{s-1/2}(\GD)}\leq C' \vert \psi \vert_{\dot{H}^{s+1/2}(\GD)}.
		\end{equation} 
	\end{prop}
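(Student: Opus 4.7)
The plan is to prove the two-sided estimate \eqref{Eq: est on u small angle mixed pb} by separate arguments for the two inequalities, then to deduce \eqref{Eq: control of G_0 hi reg} as a corollary of the upper bound via the trace theorem. The lower bound is essentially immediate: by definition $\mathrm{Tr}^{\rm D}\psi^{\mathfrak h}=\psi$, so Proposition \ref{Prop: trace est} gives $|\psi|_{\dot H^{s+1/2}(\GD)}\lesssim \|\nabla \psi^{\mathfrak h}\|_{H^s(\Omega)}$. The content of the proposition is therefore the upper bound, i.e.\ the interior/corner elliptic regularity for the mixed problem \eqref{Eq: psi^H}.

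I would first reduce to a problem with homogeneous Dirichlet data. Using the continuous right inverse $\mathbf{E}:\dot H^{s+1/2}(\GD)\to \dot H^{s+1}(\Omega)$ from Proposition \ref{Prop: extension high reg}, I set $\psi^{\rm e}=\mathbf{E}\psi$ and $w:=\psi^{\mathfrak h}-\psi^{\rm e}$, so that
\begin{equation*}
\begin{cases}
\Delta w = -\Delta \psi^{\rm e} & \text{in } \Omega,\\
w = 0 & \text{on } \GD,\\
\partial_n w = -\partial_n \psi^{\rm e} & \text{on } \Gamma^{\rm N}.
\end{cases}
\end{equation*}
Since $\psi^{\rm e}\in \dot H^{s+1}(\Omega)$, the right-hand side lies in $H^{s-1}(\Omega)$, and, because $\Gamma^{\rm N}$ is smooth away from the corners, the standard trace theorem yields locally $\partial_n\psi^{\rm e}\in H^{s-1/2}$. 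Proving $\nabla w\in H^{s}(\Omega)$ together with the estimate $\|\nabla w\|_{H^s(\Omega)}\lesssim |\psi|_{\dot H^{s+1/2}(\GD)}$ will then conclude the upper bound.

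The core step is elliptic regularity on the corner domain. Away from the corners, boundary straightening and classical Agmon--Douglis--Nirenberg regularity give $H^{s+1}$ control. Near each corner the boundaries are flat by Assumption~\ref{Assumption domain}(4), with one side carrying a Dirichlet and one a Neumann condition. The associated singular exponents for this mixed Dirichlet--Neumann wedge of opening $\omega$ are $(2k+1)\pi/(2\omega)$, $k\ge 0$; the assumption $\omega<\pi/(2s)$ is precisely what forces the smallest exponent $\pi/(2\omega)$ to exceed $s$, so that no singular term appears in the Kondrat'ev--Grisvard--Dauge decomposition and the variational solution enjoys full $H^{s+1}$ regularity in a neighborhood of the corner; see \cite{Grisvard85, Dauge88, DaugeNBL90}. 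I would localize with a smooth cutoff supported near each corner to apply these results, combine with the interior/smooth-boundary estimates via a partition of unity, and absorb a lower-order $L^2$-like remainder using the Rellich estimate recalled in the proof of Proposition \ref{Prop: Skew-symmetry}. The main technical obstacle is that the references work in bounded domains and standard Sobolev spaces, whereas here $\Omega$ may be unbounded and the natural space is homogeneous. This I handle by a further cutoff separating a bounded neighborhood of the corners and the ``infinity'' part of $\Omega$: on the latter the boundary is smooth and the homogeneous estimate $\|\nabla \psi^{\mathfrak h}\|_{H^s}\lesssim$ (data) follows directly from standard elliptic theory, while on the former only finite-size patches near each corner are involved, reducing everything to the cited bounded-domain results.

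Having proved $\psi^{\mathfrak h}\in \dot H^{s+1}(\Omega)$ with the quantitative bound \eqref{Eq: est on u small angle mixed pb}, the estimate \eqref{Eq: control of G_0 hi reg} on $G_0\psi=\partial_z\psi^{\mathfrak h}|_{\GD}$ is immediate: since $\GD\subset\{z=0\}$ is smooth and $s>1/2$, the classical trace theorem applied to $\nabla\psi^{\mathfrak h}\in H^s(\Omega)$ gives $\partial_z\psi^{\mathfrak h}|_{\GD}\in H^{s-1/2}(\GD)$ with $|G_0\psi|_{H^{s-1/2}(\GD)}\lesssim \|\nabla\psi^{\mathfrak h}\|_{H^s(\Omega)}\lesssim |\psi|_{\dot H^{s+1/2}(\GD)}$, as claimed.
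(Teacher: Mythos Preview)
Your proposal is correct and follows essentially the same approach as the paper: reduce to homogeneous Dirichlet data via the extension $\mathbf{E}$ of Proposition~\ref{Prop: extension high reg}, invoke the Grisvard--Dauge corner regularity theory (the paper cites Theorem~8.13 and Section~8.C of \cite{DaugeNBL90} and Theorem~5.1.3.1 of \cite{Grisvard85}) under the angle condition $\omega<\pi/(2s)$, and deduce both the lower bound and the $G_0$ estimate from the trace theorem. The paper's write-up is more streamlined---it applies the cited theorems directly without spelling out the partition-of-unity localization or the cutoff near infinity---while you make these steps explicit; your mention of absorbing a lower-order remainder via a Rellich estimate is unnecessary here since the cited elliptic estimates already give the full bound in terms of the data.
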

	\begin{remark}
		The estimate \eqref{Eq: control of G_0 hi reg} also holds for $s=1/2$ as a consequence of Rellich identity, see Proposition 3.7 in \cite{LannesMing24}.
	\end{remark}

	\begin{proof} 
		The extension ${\bf E}\psi\in \dot{H}^{s+1}(\Omega)$ of $\psi$ furnished by Proposition \ref{Prop: extension high reg}, satisfies by the trace theorem
		\begin{equation}\label{Eq: est on extension}
			|\mathbf{n} \cdot (\nabla {\bf E}\psi)_{|_{\Gamma^{\rm N}}}|_{H^{s - 1/2}(\Gamma^{\rm N})} \lesssim \|\nabla {\bf E}\psi\|_{H^{s}(\Omega)} \lesssim |\psi|_{\dot{H}^{s + 1/2}(\Gamma^{\rm D})},
		\end{equation}
		Then $v = \psi^{\mathfrak h}-{\bf E}\psi$ solves
		\begin{equation*}
			\begin{cases}
				\Delta v =  -\Delta {\bf E}\psi  &\mbox{ in }\Omega\\
				v = 0 & \mbox{ on }\Gamma^{\rm D}\\
				\partial_{\rm n}v = - \partial_{\rm n}{\bf E}\psi  & \mbox{ on }\Gamma^{\rm N};
			\end{cases}
		\end{equation*}
		since the angles of the contact angles are smaller than $\frac{\pi}{2s}$, we can conclude by Theorem 8.13 and Section 8.C of \cite{DaugeNBL90} (see also Theorem 5.1.3.1 in \cite{Grisvard85}) that $v \in H^{s+1}(\Omega)$ and that one has 
		\begin{align*}
			\|  v \|_{H^{s+1}(\Omega)}& \lesssim \|\nabla {\bf E}\psi\|_{H^s(\Omega)} + |\partial_{\rm n} {\bf E}\psi|_{H^{s-1/2}(\Gamma^{\rm N})}\notag
			\\
			& 
			\lesssim  \vert \psi\vert_{\dot{H}^{s+1/2}(\GD)}.
		\end{align*}
		Together with Proposition \ref{Prop: trace est}, this easily implies the second inequality in \eqref{Eq: est on u small angle mixed pb}. The first one follows from the fact that $\vert \psi\vert_{\dot{H}^{s+1/2}}=\vert \partial_x\psi\vert_{H^{s-1/2}}$ and the trace theorem applied to the function $\partial_x\psi^{\mathfrak h}$ in standard Sobolev spaces since $s>1/2$.
		
		Finally, \eqref{Eq: control of G_0 hi reg} follows directly from the estimate  \eqref{Eq: est on u small angle mixed pb} and the trace theorem.
		%		Then use \eqref{Eq: est on extension} and Poincaré inequality to conclude that the upper bound in \eqref{Eq: est on u small angle mixed pb}. Moreover, the same estimate holds in the fractional case by interpolation, and the domain can be curved away from the corner since standard elliptic estimates applies there. On the other hand, for the lower bound we simply use  the continuity of the trace mapping provided by Proposition \ref{Prop: trace est}. 
		%
		%        Lastly, estimate \eqref{Eq: control of G_0 hi reg} is a consequence of standard trace estimates and \eqref{Eq: est on u small angle mixed pb}. 
		%		
		%		
		%		 
		%		
	\end{proof}

	The next result concerns the reverse inequality, where we show improved regularity of the Dirichlet data through the control of the Dirichlet-Neumann map. A similar result was provided in \cite{LannesMing24} for $s = n/2$ with $n\in \N^{\ast}$ (see Proposition 3.10 and proof of Corollary 4.14). However, when $s\geq 1$ it was also assumed that $\psi \in L^2(\Gamma^{\rm D})$ when $\Gamma^{\rm D}$ is unbounded. This assumption is removed here.
	\begin{prop}\label{Prop: NN high reg small angle}
		Let $\Omega$ and $\Gamma$ be as in Assumption \ref{Assumption domain}, and let $s\geq 1$. If the angles at each corner point between $\Gamma^{\rm N}$ and $\Gamma^{\rm D}$  are strictly smaller than $\frac{\pi}{s}$ then
		for all $\psi \in \dot{H}^{1/2}(\GD)$ such that $G_0\psi\in H^{s-1/2}(\GD)$, one actually has $\psi \in \dot{H}^{s+1/2}(\GD)$ and there holds 
		\begin{equation}\label{Eq: psi higher reg small angle}
			|\psi|_{\dot{H}^{s+1/2}(\Gamma^{\rm D})}  \leq C\big{(} |\psi|_{\dot{H}^{1/2}(\Gamma^{\rm D})}+|G_0 \psi|_{H^{s-1/2}(\Gamma^{\rm D})}\big{)},
		\end{equation}
		for some constant $C>0$ independent of $\psi$. 
	\end{prop}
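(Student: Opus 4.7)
\emph{Overall strategy.} The plan is to read \eqref{Eq: psi^H} as a pure Neumann boundary value problem for $\psi^{\mathfrak h}$ with Neumann datum $G_0\psi$ on $\Gamma^{\rm D}$ and $0$ on $\Gamma^{\rm N}$, upgrade the elliptic regularity of $\psi^{\mathfrak h}$ using the corner theory of Dauge--Grisvard under the assumed angle condition, and then transfer the gained regularity to the Dirichlet trace via Proposition \ref{Prop: trace est}. More precisely, since the definition of $G_0$ gives $\partial_z\psi^{\mathfrak h}_{|_{\Gamma^{\rm D}}} = G_0\psi$, the variational characterization of $\psi^{\mathfrak h}$ is unchanged if one prescribes Neumann data on $\Gamma^{\rm D}$ instead of the Dirichlet data $\psi$. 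The estimate \eqref{Eq: psi higher reg small angle} will then follow by combining
$$
\|\nabla \psi^{\mathfrak h}\|_{H^s(\Omega)} \lesssim \|\nabla\psi^{\mathfrak h}\|_{L^2(\Omega)} + |G_0\psi|_{H^{s-1/2}(\Gamma^{\rm D})}
$$
with the bound $|\psi|_{\dot H^{s+1/2}(\Gamma^{\rm D})} \lesssim \|\nabla\psi^{\mathfrak h}\|_{H^s(\Omega)}$ from Proposition \ref{Prop: trace est} and the identity $\|\nabla\psi^{\mathfrak h}\|_{L^2(\Omega)} \simeq |\psi|_{\dot H^{1/2}(\Gamma^{\rm D})}$ that is built into the definition of $\dot H^{1/2}(\Gamma^{\rm D})$.

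\emph{Key steps.} I first localize via a partition of unity $\{\chi_k\}$ subordinated to a covering of $\overline{\Omega}$ consisting of one cutoff around each corner, finitely many cutoffs where the boundary is smooth, and, if $\Omega$ is unbounded, additional cutoffs at infinity chosen so that the boundary is flat on their support (permissible by Assumption \ref{Assumption domain}). Near each corner the problem reduces to a Neumann problem on a sector of opening $\omega$, whose singular exponents are $\{k\pi/\omega:k\ge 1\}$; the hypothesis $\omega<\pi/s$ forces $\pi/\omega>s$, so no singular term contributes at the $H^{s+1}$ level, and the Dauge--Grisvard theory \cite{Dauge88,Grisvard85} yields $\chi_k\psi^{\mathfrak h}\in H^{s+1}$ with a bound of the form $|G_0\psi|_{H^{s-1/2}(\mathrm{supp}\,\chi_k\cap\Gamma^{\rm D})}$ plus a lower-order remainder supported where $\nabla\chi_k\neq 0$. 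For cutoffs located in the smooth part of the boundary (or purely interior), classical elliptic regularity in Lipschitz subdomains gives the same type of estimate. A finite induction on the regularity level, summing these local bounds, produces the displayed global estimate on bounded parts of $\Omega$.

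\emph{Main obstacle and conclusion.} The delicate point is the treatment of unbounded components of $\Gamma^{\rm D}$, for which the corresponding result in \cite{LannesMing24} required the auxiliary assumption $\psi\in L^2(\Gamma^{\rm D})$. The new input here is that the homogeneous framework makes this assumption unnecessary: the norm $\|\nabla\psi^{\mathfrak h}\|_{H^s(\Omega)}$ depends only on derivatives, so the bootstrap only needs to be initiated from an $L^2$-bound on $\nabla\psi^{\mathfrak h}$, never on $\psi^{\mathfrak h}$ itself. On the support of the cutoffs at infinity the domain is, by Assumption \ref{Assumption domain}, asymptotically a half-plane (or a straight strip when both the bottom and the free surface extend to $\pm\infty$), on which the Neumann problem with $L^2$-gradient solution has well-known elliptic regularity in homogeneous Sobolev spaces (either by Fourier analysis on the half-plane, or by the reflection/extension techniques of \cite{Grisvard85} adapted to homogeneous norms). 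The local estimate on $\|\nabla(\chi\psi^{\mathfrak h})\|_{H^s}$ then involves only $|G_0\psi|_{H^{s-1/2}}$ and $\|\nabla\psi^{\mathfrak h}\|_{L^2}$ on the support of $\chi$ and $\nabla\chi$, with no norm of $\psi$ appearing. Summing the local estimates yields the displayed global bound on $\|\nabla\psi^{\mathfrak h}\|_{H^s(\Omega)}$, and combining with the trace estimate of Proposition \ref{Prop: trace est} gives \eqref{Eq: psi higher reg small angle}.
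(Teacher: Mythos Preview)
Your overall strategy---recast \eqref{Eq: psi^H} as a pure Neumann problem for $\psi^{\mathfrak h}$, invoke Dauge--Grisvard under the angle condition $\omega<\pi/s$, and recover the boundary regularity via Proposition \ref{Prop: trace est}---is exactly the paper's route. The induction/interpolation scheme and the identification of the unbounded case as the delicate point are also correct.

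However, there is a genuine gap in your treatment at infinity. You assert that ``on the support of the cutoffs at infinity the domain is, by Assumption \ref{Assumption domain}, asymptotically a half-plane (or a straight strip)''. This is a misreading: Assumption \ref{Assumption domain}(2) only requires the bottom to be bounded away from zero at infinity, not flat. The bottom function $b$ may oscillate or vary smoothly for arbitrarily large $x$, so neither Fourier analysis on a half-plane nor reflection across a flat bottom is available. The paper handles this with a dedicated Poincar\'e-type lemma (Lemma \ref{Lemma: Poincare type}): it introduces the rescaled vertical average $\widetilde{\phi}(x)=\int_{-1}^0 \phi(x,-b(x)z)\,\mathrm{d}z$ and shows $\|\phi-\widetilde{\phi}\|_{L^2}+\|\partial_x^2\widetilde{\phi}\|_{H^{s-1}}\lesssim\|\nabla\phi\|_{H^{s-1/2}}$. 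This lets one split $\phi$ into a one-dimensional piece $\widetilde{\phi}$ (whose gradient is controlled directly), a bounded-domain piece $\chi_+(\phi-\langle\phi\rangle)$, and a strip piece $(1-\chi_+)(\phi-\widetilde{\phi})$ which, after extension to a full strip with varying bottom, enjoys standard elliptic regularity.

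A second, related point you gloss over: when you localize via $\chi_k$, the commutator $\Delta(\chi_k\psi^{\mathfrak h})$ produces a term $(\Delta\chi_k)\psi^{\mathfrak h}$ requiring an $L^2$ bound on $\psi^{\mathfrak h}$ itself on $\operatorname{supp}\nabla\chi_k$, not just on $\nabla\psi^{\mathfrak h}$. Your claim that ``no norm of $\psi$ appearing'' is not automatic; it is precisely achieved in the paper by working with $\chi_+(\phi-\langle\phi\rangle)$ and $(1-\chi_+)(\phi-\widetilde{\phi})$, i.e.\ by subtracting suitable means before cutting off, so that Poincar\'e-type inequalities convert the zero-order terms into gradient terms. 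Without this device, your local estimates would involve $\|\psi^{\mathfrak h}\|_{L^2}$ on bounded sets, which is not controlled by the right-hand side of \eqref{Eq: psi higher reg small angle} in the homogeneous setting.
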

	\begin{remark}
		The case $s=1/2$ is proved using a Rellich identity in Proposition 3.8 in \cite{LannesMing24}.
	\end{remark}
	\begin{proof}[Proof of Proposition \ref{Prop: NN high reg small angle}] We prove the proposition in the configuration of Figure \ref{Figure: Setu-up}, where $x_{\rm L}$ is finite and $x_{\rm R}=\infty$; the adaptation to the other admissible configurations is straightforward.
		The proof is based on elliptic estimates for a Neumann problem, and in order to work in a homogeneous functional setting, we need the following Poincaré-type inequality.
		
		\begin{lemma}\label{Lemma: Poincare type}
			Let $\Omega$ and $\Gamma$ be as in Assumption \ref{Assumption domain}, with $x_{\rm L}>-\infty$ and $x_{\rm R}=\infty$. Suppose further, $a>x_{\rm r}$ such that $\Gamma^{\rm w}$ is located on the half-plane $\{x<a\}$. Let also  $s\geq 1$. If $u \in \dot{H}^{s+1/2}(\Omega)$ is harmonic then its rescaled vertical average $\widetilde{u}$ defined for all $x>a$ as
			\begin{equation*} 
				\widetilde{ u }(x) = \int_{-1}^{0}  u (x,-b(x)z) \: \mathrm{d}z,
			\end{equation*}
			satisfies the estimate
			\begin{equation}
				\|u - \widetilde{u}\|_{{L^2(\Omega \cap  \{x>a\})}} + \|\partial_x^{2}\widetilde{u}\|_{H^{s-1}(\Omega \cap  \{x>a\})} \color{black} \leq C \| \nabla u\|_{H^{s-1/2}(\Omega)},
			\end{equation}
			for some constant $C>0$ independent of $u$.
		\end{lemma}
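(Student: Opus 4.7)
The plan is to prove the two inequalities by two complementary arguments: a fibre-wise vertical Poincaré inequality for the first, and an explicit identity for $\widetilde{u}''$ obtained from harmonicity and Leibniz's rule for the second. Throughout I will use that, since $x>a>x_{\rm r}$ and $\limsup_{x\to\infty}b(x)<0$, the depth $|b(x)|$ is bounded above and away from zero on $(a,\infty)$, so that norms on $\Omega\cap\{x>a\}$ of functions depending only on $x$ reduce to norms on $(a,\infty)$.

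For the first inequality, at each fixed $x>a$ the function $u(x,\cdot)-\widetilde{u}(x)$ has zero vertical mean, which is the very definition of $\widetilde{u}(x)$ after the rescaling $y=-b(x)z$. The standard one-dimensional Poincaré inequality on $(b(x),0)$ then gives $\int_{b(x)}^0|u(x,y)-\widetilde{u}(x)|^2\,{\rm d}y\le C|b(x)|^2\int_{b(x)}^0|\partial_y u(x,y)|^2\,{\rm d}y$; integrating in $x\in(a,\infty)$ and using the boundedness of $b$ yields the first inequality, controlled by $\|\partial_y u\|_{L^2(\Omega)}$ and \emph{a fortiori} by $\|\nabla u\|_{H^{s-1/2}(\Omega)}$.

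The second inequality relies on an identity for $\widetilde{u}''$. Let $I(x):=\int_{b(x)}^0 u(x,y)\,{\rm d}y=|b(x)|\widetilde{u}(x)$. Two applications of Leibniz's rule yield $I''(x)=\int_{b(x)}^0\partial_x^2 u\,{\rm d}y+(\text{boundary contributions at }y=b(x))$; harmonicity converts the bulk integral via $\partial_x^2 u=-\partial_y^2 u$ into boundary values of $\partial_y u$ at $y=0$ and $y=b(x)$. After simplification and using $|b|''=-b''$ to regroup the $b''\,u(x,b(x))$ and $|b|''\widetilde{u}$ contributions (each separately non-invariant under $u\mapsto u+c$, but whose sum is gauge-invariant as both sides of the final identity must be), one arrives at
\[
|b|\,\widetilde{u}''=-\partial_y u(\cdot,0)+\bigl(1-(b')^2\bigr)\partial_y u|_{\Gamma^{\rm b}}-2b'\partial_x u|_{\Gamma^{\rm b}}-b''\bigl(u|_{\Gamma^{\rm b}}-\widetilde{u}\bigr)-2|b|'\widetilde{u}'.
\]

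It then remains to estimate each term on the right in $H^{s-1}((a,\infty))$ by $\|\nabla u\|_{H^{s-1/2}(\Omega)}$. The three boundary-trace terms involving $\nabla u$ lie in $H^{s-1/2}\hookrightarrow H^{s-1}$ by the standard trace theorem applied to $\nabla u\in H^{s-1/2}(\Omega)$, which is legitimate since $s-1/2\ge 1/2$. The lower-order correction $\widetilde{u}'$ admits the representation $\widetilde{u}'(x)=\int_{-1}^0[\partial_x u-b'z\,\partial_y u](x,-b(x)z)\,{\rm d}z$ and is bounded in $H^{s-1/2}$ by a fibre-wise averaging argument. The main obstacle is the trace $(u-\widetilde{u})|_{\Gamma^{\rm b}}$, since $u$ is only defined modulo constants in the homogeneous setting and hence admits no direct trace bound in $H^{s-1}$; I will control it by combining the $L^2$ estimate from the first part with a higher-order bound $\|u-\widetilde{u}\|_{H^{s-1/2}(\Omega)}\lesssim\|\nabla u\|_{H^{s-1/2}(\Omega)}$ obtained from the vertical Poincaré inequality coupled with the representation formula for $\widetilde{u}'$, after which the trace theorem on $\Gamma^{\rm b}$ concludes the argument. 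Assembling all the bounds yields the second inequality.
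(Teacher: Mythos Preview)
Your proof is correct and follows the same overall plan as the paper—fibre-wise vertical Poincar\'e for the first term, an identity for $\partial_x^2\widetilde u$ from harmonicity for the second—but implements the second step differently. The paper first flattens the region via $u^\sharp(x,z)=u(x,-b(x)z)$ on $(a,\infty)\times(-1,0)$; the transformed Laplace equation then reads $\partial_x^2 u^\sharp+\partial_z\bigl(p_1\cdot\nabla u^\sharp\bigr)+p_2\cdot\nabla u^\sharp=0$, and integrating in $z\in(-1,0)$ expresses $\partial_x^2\widetilde u$ as a boundary trace of $\nabla u^\sharp$ at $z=-1$ plus a bulk integral $\int_{-1}^0 p_2\cdot\nabla u^\sharp\,{\rm d}z$, both of which are estimated directly by the trace theorem and Minkowski, yielding the bound in one line. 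Your physical-coordinate identity is more explicit but produces the extra zeroth-order contribution $b''\bigl(u|_{\Gamma^{\rm b}}-\widetilde u\bigr)$; controlling it forces you through the auxiliary bound $\|u-\widetilde u\|_{H^{s-1/2}}\lesssim\|\nabla u\|_{H^{s-1/2}}$, whose justification (vertical Poincar\'e applied to $\partial_x^j(u^\sharp-\widetilde u)$, combined with $\partial_z(u^\sharp-\widetilde u)=\partial_z u^\sharp$) is itself most naturally carried out in the straightened strip anyway. So your route works but adds a step that the paper's change of variables sidesteps by producing an expression for $\partial_x^2\widetilde u$ that involves only $\nabla u^\sharp$ and no bare $u^\sharp$ term.
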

		
		\begin{proof}[Proof of Lemma  \ref{Lemma: Poincare type}]
			The first term is bounded by the gradient in $L^2(\Omega)$ using  a simple version of the Poincaré inequality,
			\begin{equation*}
				\|u - \widetilde{u}\|_{{L^2(\Omega \cap  \{x>a\})}} \lesssim  \| \nabla u \|_{{L^2(\Omega)}}.
			\end{equation*}
			For the second term, we use the fact that $u$ is harmonic on $\Omega\cap \{x>a\}$ to get that the function $u^\sharp$ defined on $(a,\infty)\times (-1,0)$ by $u^\sharp(x,z)=u(x,-b(x)z)$ satisfies an elliptic equation of the form
			$$
			\partial_x^2 u^\sharp + \partial_z (p_1(z,b)\cdot \nabla u^\sharp)+p_2(z,b)\cdot \nabla u^\sharp =0,
			$$
			where $p_1$ and $p_2$ are two vectors satisfying
			$$
			p_1(0,b)=0 \quad \mbox { and }\quad \Vert p_j \Vert_{W^{l-1}(\Omega\cap \{x>a\})}\leq C(\vert b\vert_{W^{l,\infty}},\inf_{x>a} \frac{1}{b(x)}),
			$$
			the exact expression of these two vectors being of no importance. Since $\widetilde{u}(x)=\int_{-1}^0 u^\sharp(x,z){\rm d}z$, we deduce that
			\begin{align*}
				\partial_x^{2}\widetilde{u}= \int_{-1}^0 \partial_x^2 u^\sharp 
				= \big[ p_1(-1,b)(\nabla u^\sharp)_{\vert_{z=-1}}\big] - \int_{-1}^0 p_2\cdot \nabla u^\sharp,
			\end{align*}
			where we used the fact that $p_1(0,b)=0$. Using the trace theorem to control the first term of the right-hand side, we get that
			$$
			\Vert \partial_x^{2}\widetilde{u}\Vert_{H^{s-1}(\Omega\cap \{x>a\})}
			\lesssim \Vert \nabla u \Vert_{H^{s-1/2}(\Omega)}+ \Vert \nabla u \Vert_{H^{s-1}(\Omega)},
			$$
			where we used also the fact that $\Vert \nabla u^\sharp \Vert_{H^r((a,\infty)\times (-1,0))}\lesssim \Vert \nabla u \Vert_{H^r(\Omega)}$, with $r=s-1/2,s$.
			To conclude with, it follows from the above, and from the fact the that $H^{s-1/2}(\Omega)$ is continuously embedded in $H^{s-1}(\Omega)$.
		\end{proof}

		% it would be better to have this proof in the section 2 - but then people will discard the paper as trivial before they get to the good stuff..  
		%\textcolor{gray}{The case $s=1/2$ is proved using the Rellich identity in Proposition 3.8 in \cite{LannesMing24}, so that we focus on the case $s>1/2$.} 
		To prove the estimate for $s\geq 1$, it is enough to show that if $\phi$ is variational solution of  
		\begin{equation*}
			\begin{cases}
				\Delta \phi =  0  &\mbox{ in }\Omega\\
				\partial_{\rm n} \phi = G_0\psi  & \mbox{ on }\Gamma^{\rm D}\\
				\partial_{\rm n}\phi = 0  & \mbox{ on }\Gamma^{\rm N},
			\end{cases}
		\end{equation*}
		then it satisfies the following bound
		\begin{equation}\label{Eq: grad psi h in Hs}
			\| \nabla \phi\|_{H^{s} (\Omega )} \leq C \big{(} 		\| \nabla \phi\|_{H^{s-1/2} (\Omega )} + \vert G_0 \psi \vert_{H^{s-1/2}(\Gamma^{\rm D})} \big{)}.
		\end{equation} 
		Indeed, since $\vert G_0 \psi \vert_{\dot{H}^{1/2}(\GD)'} \lesssim \vert \psi\vert_{\dot{H}^{1/2}(\GD)}$, we have that $\| \nabla \phi\|_{L^2(\Omega )} \lesssim  \vert  \psi \vert_{\dot{H}^{1/2}(\Gamma^{\rm D})}$, and we may use induction and  the interpolation inequality $\|\nabla u\|_{H^{s-1/2}(\Omega)}\lesssim \|\nabla u\|_{H^{s-1}(\Omega)}^{1/2}\|\nabla u\|_{H^{s}(\Omega)}^{1/2}$  to find that
		\begin{equation*}
			\| \nabla \phi\|_{H^{s} (\Omega )} \leq C \big{(}  \vert  \psi \vert_{\dot{H}^{1/2}(\Gamma^{\rm D})} + 	\vert G_0 \psi \vert_{H^{s-1/2}(\Gamma^{\rm D})} \big{)},
		\end{equation*}
		for $s\geq 1$. Using the trace estimate in Proposition \ref{Prop: trace est}, we deduce that $\phi_{\vert_{\GD}}$ is bounded in $\dot{H}^{1/2}(\GD)$ by the right-hand side of \eqref{Eq: psi higher reg small angle}. Now, we just have to observe that $\phi=\psi^{\mathfrak h}+c_0$ for some constant $c_0\in {\mathbb R}$ to obtain that $\psi=\psi^{\mathfrak h}_{\vert_{\GD}}=\phi_{\vert_{\GD}}+c_0$ also satisfies \eqref{Eq: psi higher reg small angle}.

		To prove \eqref{Eq: grad psi h in Hs}, we introduce a smooth cut-off function $\chi_{+}$   such that $\chi_{+}=1$ for $x<a$ and $\chi_{+}=0$ for $x>a+1$, for some $a>x_{\rm r}$.  Moreover, to keep control of $\nabla \phi$, we will introduce corrections of the mean. In particular, denoting $\Omega_a=\Omega\cap \{x<a+1\}$, we write $\langle  \phi\rangle$ the average of $ \phi$ over $\Omega_a$, 
		\begin{equation} \label{defavv}
			\langle  \phi \rangle = \frac{1}{|\Omega_a |} \int_{\Omega_a} \phi(x,z) \: \mathrm{d}x \mathrm{d}z,
		\end{equation} 
		for $x>a$, we also recall that  $\widetilde{ \phi}$ denotes the rescaled vertical average of $ \phi$. We can now decompose $ \phi$ as 
		\begin{align*}
			\phi
			= & \big[\chi_{+} \langle  \phi\rangle + (1-\chi_{+})\widetilde{ \phi}]
			+  
			\chi_{+} (\phi-	\langle \phi\rangle) 
			+
			(1-\chi_{+}) (\phi - \widetilde{\phi})
			\\ 
			= : & \phi_{\rm 0}+
			\phi_{\rm I} + \phi_{\rm II}.
		\end{align*} 

		\noindent
		{\bf Step 0.} \textit{Estimate on $ \phi_{\rm 0}=\chi_{+} \langle  \phi\rangle + (1-\chi_{+})\widetilde{ \phi}$}. We now use that $\phi_{\rm 0}$ only depends on the horizontal variable and observe that	for $s\geq 1$ there holds,
		\begin{align*}
			\Vert \nabla \phi_{\rm 0} \Vert_{H^{s}(\Omega)} 
			\lesssim
			\|  \nabla \phi\|_{{H}^{s-1}(\Omega)}
			+
			\| \langle \phi\rangle -\widetilde{\phi}\|_{L^2(\Omega\cap \mathrm{supp}(\partial_x \chi_+))}   +
			\|\partial_x^{2}\widetilde{\phi}\|_{H^{s-1}(\Omega \cap  \{x>a\})}.
		\end{align*}
		Then using that $\partial_x\chi_{+}$ is supported in $\{a<x<a+1\}$, the usual Poincaré inequality $	\Vert \langle \phi \rangle - \phi\Vert_{L^2(\Omega_a )}  \leq C  	\Vert  \nabla \phi\Vert_{L^2(\Omega_a )} $,  and Lemma \ref{Lemma: Poincare type}  we find that
		\begin{equation*}
			\Vert \nabla \phi_{\rm 0} \Vert_{{H}^{s}(\Omega)} \leq C 	\Vert \nabla \phi\Vert_{{H}^{s-1/2}(\Omega)}.
		\end{equation*}
		\\  
		\noindent
		{\bf Step 1.} \textit{Estimate of $\phi_{\rm I}= \chi_{+} (\phi -	\langle \phi \rangle) $.}  Using the support of $\chi_+$, there is a bounded domain $\widetilde{\Omega}\subset \Omega$ whose boundary $\partial \widetilde{\Omega} = \Gamma^{\rm D} \cap \{x<a+2\} \cup \widetilde{\Gamma}^{\rm N}$  coincides with the boundary of $\Omega$ for $x \in  \mathrm{supp}(\chi_{+})$ and is smooth for $x\in(a+1,a+2)$. Then we have that $\phi_{\rm I}$ satisfies
		$$
		\begin{cases}
			\Delta \phi_{\rm I}= f_{\rm I}  &\mbox{ in }\widetilde{\Omega} \\
			\partial_{\rm n} \phi_{\rm I} =   \chi_+G_0 \psi & \mbox{ on } \Gamma^{\rm D} \cap \{x<a+2\},\\
			\partial_{\rm n} \phi_{\rm I} = 0 & \mbox{ on } \widetilde{\Gamma}^{\rm N} ,
		\end{cases}
		$$
		with  
		$$f_{\rm I}=  -\chi_{+}''(\phi-\langle \phi \rangle) - 2 \chi_{+}'\partial_x\phi.$$
		On bounded domains with contact angles smaller than $\frac{\pi}{s}$ we may again apply  \cite{Grisvard85,DaugeNBL90} to find that
		\begin{align*}
			\Vert \nabla \phi_{\rm I}\Vert_{H^s(\Omega_a)} 
			\lesssim &
			\Vert f_{\rm I}\Vert_{H^{s-1}(\Omega_a)}
			+
			\vert \chi_+G_0 \psi \vert_{H^{s-1/2}(\Gamma^{\rm D} \cap \{x<a+2\})}
			\\ 
			\lesssim &
			\|\phi\|_{H^{s-1/2}(\Omega)}  
			+
			\vert G_0 \psi \vert_{H^{s-1/2}(\Gamma^{\rm D})},
		\end{align*}
		where we also used Lemma \ref{Lemma: Poincare type}  in the last estimate.
		\\

		\noindent
		{\bf Step 2.} \textit{Estimate of $\phi_{\rm II}=(1-\chi_{+}) (\phi - \widetilde{\phi)}$.} Let $\widetilde{b}$ be an extension of $b_{\vert_{x>a}}$ to $\R$ such that $\widetilde{b}$ never vanishes and is equal to some negative constant $b_0$ for $x<a-1$. We may extend the domain $\Omega \cap \{x>a\}$ to the strip $\mathcal{S} = \{(x,z), \widetilde{b}(x)<z<0\}$. Still denoting by $\phi_{\rm II}$ the extension by $0$ of $\phi_{\rm II}$ to ${\mathcal S}$, we have
		$$
		\begin{cases}
			\Delta \phi_{\rm II}= f_{\rm II}  &\mbox{ in }\mathcal{S}\\
			\partial_{\rm n} \phi_{\rm II} = (1- \chi_+)G_0 \psi & \mbox{ on } \R\times \{0\},\\
			\partial_{\rm n} \phi_{\rm II}= 0 & \mbox{ on } \widetilde{\Gamma}^{\rm b},
		\end{cases}
		$$
		with  $\widetilde{\Gamma}^{\rm b}=\{(x,z), z=\widetilde{b}(x)\}$ and
		$$f_{\rm II}=  -\chi_{+}''(\phi-\widetilde{\phi}) - 2 \chi_{+}'\partial_x(\phi-\widetilde{\phi}) +(1-\chi_{+}) \partial_x^2\widetilde{\phi}.$$
		Applying classical regularity estimates for this elliptic problem on a smooth strip gives
		\begin{align*}
			\Vert \nabla \phi_{\rm II}\Vert_{H^s(\mathcal S)}\lesssim
			\Vert f_{\rm II}\Vert_{H^{s-1}(\Omega)}
			+
			\vert (1- \chi_+)G_0 \psi \vert_{H^{s-1/2}(\R)}.
		\end{align*}
		To conclude, we use the support of $\chi_+$ and we estimate $f_{\rm II}$ using Lemma \ref{Lemma: Poincare type} to deduce that
		$$
		\Vert \nabla \phi_{\rm II} \Vert_{H^s(\Omega)}\lesssim
		\|\nabla \phi\|_{H^{s-1/2}(\Omega)} 
		%\|\nabla u\|_{H^{l-1}(\Omega)} 
		+
		\vert G_0 \psi \vert_{H^{s-1/2}(\Gamma^{\rm D})},
		$$
		which concludes the proof of \eqref{Eq: grad psi h in Hs}.
		
	\end{proof}

	\subsection{Characterization of  $\dot{\mathcal H}^{n/2}(\Gamma^{\rm D}) $ and ${\mathcal H}^{n/2}(\Gamma^{\rm D})$ for small angles}\label{Subsec: Characterization mathcal H}
	
	The spaces $\dot{\mathcal H}^{n/2}(\Gamma^{\rm D}) $ and ${\mathcal H}^{n/2}(\Gamma^{\rm D}) $ are related, but not always identical, to $\dot{H}^{n/2}(\Gamma^{\rm D}) $ and ${H}^{n/2}(\Gamma^{\rm D}) $. They are always the same for $n=1,2$; when $n$ is higher, this also remains true if a smallness assumption is made on the contact angles. In general, they are different due to the lack of elliptic regularity in corner domains, see for instance Section \ref{Sec: right angles} when the contact angles are $\pi/2$.
	\begin{prop}\label{propHn}
		Let $\Omega$ and $\Gamma$ be as in Assumption \ref{Assumption domain}.\\
		{\bf i.} For $n=1,2$, one has $\dot{\mathcal H}^{n/2}(\Gamma^{\rm D})= \dot{H}^{n/2}(\Gamma^{\rm D})$ and ${\mathcal H}^{n/2}(\Gamma^{\rm D})={H}^{n/2}(\Gamma^{\rm D})  $.\\
		{\bf ii.} If $n\geq 3$ and the angles at the contact points between $\Gamma^{\rm N}$ and $\Gamma^{\rm D}$ are all strictly smaller than $\pi/(n-1)$, then the same conclusion holds. 
	\end{prop}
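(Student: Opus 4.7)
The plan is to iterate the two elliptic regularity results Propositions \ref{Prop: DN high reg small angle} and \ref{Prop: NN high reg small angle}, together with their $s=1/2$ Rellich counterparts from \cite{LannesMing24}. Roughly, Proposition \ref{Prop: DN high reg small angle} trades one full spatial derivative on $\GD$ for one power of $G_0$, while Proposition \ref{Prop: NN high reg small angle} performs the reverse exchange; iterating these yields the identification of $\dot{\mathcal H}^{n/2}(\GD)$ with $\dot{H}^{n/2}(\GD)$. The base case $n=1$ of part (i) is definitional: $|G_0^{1/2}f|_{L^2}^2=\langle f,G_0 f\rangle_{\dot{H}^{1/2}\times(\dot{H}^{1/2})'}=|\nabla f^{\mathfrak h}|_{L^2(\Omega)}^2$ is equivalent to $|f|_{\dot{H}^{1/2}(\GD)}^2$ by the trace characterization of \cite{LannesMing24}. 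For $n=2$, the extra condition $G_0 f\in L^2(\GD)$ is handled by Proposition \ref{Prop: DN high reg small angle} at $s=1/2$ (Rellich, see the remark following the statement) for $\dot{H}^1\subset\dot{\mathcal H}^1$, and by Proposition \ref{Prop: NN high reg small angle} at $s=1$ (angle condition $<\pi$ is automatic from Assumption \ref{Assumption domain}(3)) for the reverse inclusion. Intersecting with $L^2(\GD)$ yields ${\mathcal H}^{n/2}=H^{n/2}$ in both cases.

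For part (ii) with $n\geq 3$ and all contact angles strictly smaller than $\pi/(n-1)$, I would iterate as follows. For $\dot{H}^{n/2}\subset\dot{\mathcal H}^{n/2}$, apply Proposition \ref{Prop: DN high reg small angle} with $s=(n-1)/2$ to go from $f\in\dot{H}^{n/2}$ to $G_0 f\in H^{(n-2)/2}$; the angle requirement here is exactly the binding condition $<\pi/(n-1)$. Reapplying the proposition with $s=(n-3)/2,(n-5)/2,\ldots$ — each a strictly weaker angle bound — yields $G_0^k f\in H^{(n-2k)/2}$ for $1\leq k\leq\lfloor n/2\rfloor$, from which all conditions $G_0^{j/2}f\in L^2$ for $j=1,\ldots,n$ follow via the embedding $H^r\hookrightarrow L^2\cap\dot{H}^r$. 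For the reverse inclusion, work from the top down: the hypothesis provides $G_0^{\lfloor n/2\rfloor}f$ in $L^2$ and (via the odd index just below) a $\dot{H}^{1/2}$-type control at the next-lower level. Applying Proposition \ref{Prop: NN high reg small angle} successively with $s=1,3/2,2,\ldots$, each step gains one full derivative on $G_0^{\lfloor n/2\rfloor-1}f,G_0^{\lfloor n/2\rfloor-2}f,\ldots$, ending with $f\in\dot{H}^{n/2}$. A direct index check shows that the tightest angle bound on this backward chain is $<2\pi/(n-1)$, strictly weaker than $<\pi/(n-1)$, so the stated hypothesis is amply sufficient. Intersecting with $L^2(\GD)$ gives ${\mathcal H}^{n/2}=H^{n/2}$.

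The principal obstacle will be a careful bookkeeping between the semi-normed homogeneous spaces $\dot{H}^r(\GD)$ of Definition \ref{defGN} (with their inter-component constants $|\overline{f}_+-\overline{f}_-|$) and the standard Sobolev spaces $H^r(\GD)$. At each iteration one must verify that the iterated trace $G_0^k f$ actually lies in $\dot{H}^{1/2}(\GD)$ so that Proposition \ref{Prop: NN high reg small angle} is applicable, and that Proposition \ref{Prop: DN high reg small angle} simultaneously delivers the $L^2$ component and the semi-norm component needed to feed the next iteration. The sharpness of the angular threshold $\pi/(n-1)$ is driven entirely by the first step of the forward chain; all subsequent steps in both directions impose strictly weaker constraints.
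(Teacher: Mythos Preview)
Your proposal is correct and follows essentially the same route as the paper: the paper organizes the argument as an induction on $n$ (one application of Proposition \ref{Prop: DN high reg small angle} at $s=(n-1)/2$ and one of Proposition \ref{Prop: NN high reg small angle} at the same $s$, feeding the result through the inductive hypothesis $\dot{H}^{(n-2)/2}=\dot{\mathcal H}^{(n-2)/2}$), whereas you unwind this into a direct iteration; the angle thresholds you identify ($\pi/(n-1)$ forward, $2\pi/(n-1)$ backward) match exactly. One small slip: for the $n=2$ reverse inclusion you need the $s=1/2$ Rellich case (Proposition 3.8 of \cite{LannesMing24}), not Proposition \ref{Prop: NN high reg small angle} at $s=1$, since the latter would require $G_0\psi\in H^{1/2}$ rather than merely $L^2$.
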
		
	%%% 
	\begin{remark}
		In the case $n=3$, one has $\dot{\mathcal H}^{3/2}(\GD)\subset \dot{H}^{3/2}(\GD)$. However, the reverse inclusion is false. See Section \ref{Sec: Characterization with vertical walls} below when the contact angles are $\pi/2$. In fact, we will see in the proof below that for $n\geq 3$ that there holds $\dot{\mathcal H}^{n/2}(\GD)\subset \dot{H}^{n/2}(\GD)$ if the angles at the contact points are strictly less than $\frac{2\pi}{n-1}$.

	\end{remark}

	\begin{proof}
		
		For the proof we note that $n=1$ is obvious, and the case $n=2$ was established in the proof of Proposition \ref{Prop: Skew-symmetry}. By induction it is therefore enough to assume that $\dot{H}^{j/2}(\GD)= \dot{\mathcal H}^{j/2}(\GD)$ for $j=1,\dots, n-1$ and show that this implies $\dot{H}^{n/2}(\GD)= \dot{\mathcal H}^{n/2}(\GD)$.

		Let us first prove that $\dot{H}^{n/2}(\GD)\subset \dot{\mathcal H}^{n/2}(\GD)$. To that end, we let $\psi\in \dot{H}^{n/2}(\GD)$ and use \eqref{Eq: control of G_0 hi reg}:
		\begin{equation*}
			\vert G_0 \psi\vert_{{H}^{(n-2)/2}(\GD)}\leq C' \vert \psi \vert_{\dot{H}^{n/2}(\GD)},
		\end{equation*} 
		for $n\geq 3$ and angles strictly less than $\frac{\pi}{n-1}$. Since we have by the induction assumption that ${H}^{(n-2)/2}(\GD)={\mathcal H}^{(n-2)/2}(\GD)$, we also have that $G_0^{k/2}(G_0\psi)\in L^2(\GD)$ for all $1\leq k\leq n-2$ or equivalently that $G_0^{k/2}\psi\in L^2(\Gamma^{\rm D})$ for all $3\leq k\leq n$. Since this property also holds for $k=1,2$, we have proved that $\psi \in 	\dot{\mathcal H}^{n/2}(\GD)$. 
		
		Next, we turn to the proof of  $\dot{\mathcal H}^{n/2}(\GD)\subset \dot{H}^{n/2}(\GD)$. In this case, we suppose $\psi \in \dot{H}^{1/2}(\GD)$ such that $G_0^{k/2}\psi\in L^2(\GD)$ for all $1\leq k\leq n$. This implies that $G_0^{k/2}(G_0\psi)\in L^2(\GD)$ for all $1\leq k\leq n-2$, that is, $G_0\psi \in \dot{\mathcal H}^{(n-2)/2}(\GD)$ and, using the induction assumption, $G_0\psi \in \dot{H}^{(n-2)/2}(\GD)$. Since we also get from the case $n=2$ that $G_0\psi\in L^2(\GD)$, we deduce that 
		$G_0\psi \in {H}^{(n-2)/2}(\GD)$. We can therefore use Proposition \ref{Prop: NN high reg small angle} to conclude that $\psi \in  \dot{H}^{n/2}(\GD)$ for angles strictly smaller than $\frac{2\pi}{n-1}$. Combining the two inclusions, concludes the proof of the proposition.
	\end{proof}

	Now, with this proposition at hand we can provide higher regularity result in standard Sobolev spaces for the solution provided by Theorem \ref{thm: Wp energy space}.

	\subsection{Higher order well-posedness result of John's problem in ${\mathbb X}^n$}\label{Subsec: higher reg for small angle}
	
	%\subsubsection{Higher order well-posedness of John's problem for small angles}
	Finally, we remark that when the contact angles are small enough, then the assumption on ${\bf K}$ made in the statement of Corollary \ref{correg} is automatically satisfied, and the estimate can be provided in standard Sobolev spaces rather than ${\mathbb X}^n$. We write
	$$
	{\mathbb X}^n_{\rm Sob}=H^{n/2}(\Gamma^{\rm D})\times {\mathbb R}^3\times \dot{H}^{(n+1)/2}(\Gamma^{\rm D})\times {\mathbb R}^3,
	$$
	where ${\mathcal H}^{n/2}(\Gamma^{\rm D})$ and $\dot{\mathcal H}^{(n+1)/2}(\Gamma^{\rm D})$ in the definition of ${\mathbb X}^n$ (see Definition \ref{defhighreg}) have been replaced by the Sobolev spaces ${H}^{n/2}(\Gamma^{\rm D})$ and $\dot{H}^{(n+1)/2}(\Gamma^{\rm D})$.
	\begin{cor}\label{correg2}
		Let $\Omega$, and $\Gamma$ be as in Assumption \ref{Assumption domain}, and assume the coefficients of $\mathcal{C}$ satisfy Assumption \ref{hypStab}. Let also $n\in {\mathbb N}^*$, and suppose that the angles at the contact points between $\Gamma^{\rm N}$ and $\Gamma^{\rm D}$ are smaller than $\pi/n$. Then for all ${\bf U}^{\rm in}\in {\mathbb X}_{\rm Sob}^n$, there exists a unique solution ${\bf U}\in \cap_{j=0}^n C^j({\mathbb R}_+;{\mathbb X}_{\rm Sob}^{n-j})$
		to \eqref{FJabstract} with initial condition ${\bf U}(0)={\bf U}^{\rm in}$,
		and one also has  ${\mathtt V}=\dot{\mathtt X}\in C^{n+1}({\mathbb R}_+;{\mathbb R}^3)$; moreover, there exists $C>0$ independent of ${\bf U}^{\rm in}$ such that
		$$
		\forall t\geq 0, \qquad
		\sum_{j=0}^n \Vert \partial_t^j {\bf U}\Vert_{{\mathbb X}_{\rm Sob}^{n-j}}(t) +\vert {\mathtt V}^{(n+1)}(t)\vert \leq C \Vert {\bf U}^{\rm in}\Vert_{{\mathbb X}_{\rm Sob}^n}.
		$$
	\end{cor}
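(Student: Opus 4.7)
The strategy is to deduce the corollary directly from Corollary \ref{correg} in two moves: first, identify the abstract spaces $\mathbb{X}^{n-j}$ with the Sobolev spaces $\mathbb{X}^{n-j}_{\rm Sob}$ so that the statement and estimate translate from one functional setting to the other; second, verify the standing hypothesis of Corollary \ref{correg}, namely $(\partial_z {\bf K})_{\vert_{\Gamma^{\rm D}}} \in \mathcal{H}^{(n-1)/2}(\Gamma^{\rm D})$, which is not automatic.

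\emph{Step 1: identification of the functional spaces.} For every integer $0\leq j\leq n$, I want to invoke Proposition \ref{propHn} with $m=n-j$ and $m=n-j+1$ to obtain $\mathcal{H}^{m/2}(\Gamma^{\rm D})=H^{m/2}(\Gamma^{\rm D})$ and $\dot{\mathcal{H}}^{(m+1)/2}(\Gamma^{\rm D})=\dot{H}^{(m+1)/2}(\Gamma^{\rm D})$, with equivalent norms/semi-norms. For $m\leq 2$ there is nothing to check; for $m\geq 3$ the requirement is that the contact angles be strictly smaller than $\pi/(m-1)$, and the strictest instance corresponds to $m=n+1$, which demands exactly angles strictly smaller than $\pi/n$. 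This is our assumption, so the identification $\mathbb{X}^{n-j}=\mathbb{X}^{n-j}_{\rm Sob}$ with equivalent semi-norms holds uniformly in $j$.

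\emph{Step 2: regularity of the Kirchhoff potentials.} I now verify the hypothesis on $\partial_z{\bf K}$. The potential $K_j$ solves the mixed problem \eqref{Eq: phi_w} with zero Dirichlet datum on $\Gamma^{\rm D}$, smooth Neumann datum $\kappa_j$ on the bounded smooth curve $\Gamma^{\rm w}$, and zero Neumann datum on $\Gamma^{\rm b}$. By the elliptic regularity theory on corner domains \cite{Grisvard85,Dauge88,DaugeNBL90}, the sole obstruction to $H^{s+1}$-regularity comes from the contact corners, and is lifted as soon as every angle is strictly smaller than $\pi/(2s)$. Applying this with $s=n/2$, our hypothesis on the angles yields $K_j\in H^{n/2+1}$ on a neighborhood of each corner. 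Away from the corners, $\Gamma^{\rm w}$ is smooth and, far from the object, the data vanish on both $\Gamma^{\rm D}$ and $\Gamma^{\rm b}$, so standard interior/boundary elliptic regularity (together with the fact that $\kappa_j$ is compactly supported, which forces $\nabla K_j\to 0$ at infinity) gives global control $\nabla K_j\in H^{n/2}(\Omega)$. The trace theorem in standard Sobolev spaces then provides $(\partial_z K_j)_{\vert_{\Gamma^{\rm D}}}\in H^{(n-1)/2}(\Gamma^{\rm D})$, which by Step 1 coincides with $\mathcal{H}^{(n-1)/2}(\Gamma^{\rm D})$.

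\emph{Step 3: conclusion.} With the hypothesis of Corollary \ref{correg} verified, its conclusion furnishes, for every ${\bf U}^{\rm in}\in\mathbb{X}^n$, a unique solution ${\bf U}\in\cap_{j=0}^n C^j(\mathbb{R}_+;\mathbb{X}^{n-j})$ with ${\mathtt V}=\dot{\mathtt X}\in C^{n+1}(\mathbb{R}_+;\mathbb{R}^3)$ and the stated quantitative estimate. Translating each instance $\mathbb{X}^{n-j}\leftrightarrow\mathbb{X}^{n-j}_{\rm Sob}$ via Step 1 produces the statement of the corollary. The only genuine obstacle is Step 2, more precisely getting an \emph{optimal} angle condition for the Kirchhoff problem: the mildness of the angle threshold $\pi/n$ required for $n$-th order Sobolev regularity is crucial, since a weaker condition would already force $\partial_z{\bf K}$ to fall short of $\mathcal{H}^{(n-1)/2}(\Gamma^{\rm D})$ and block the induction on which Corollary \ref{correg} relies.
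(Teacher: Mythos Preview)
Your proof is correct and follows essentially the same approach as the paper's own argument: identify $\mathbb{X}^{n-j}$ with $\mathbb{X}^{n-j}_{\rm Sob}$ via Proposition~\ref{propHn} (the strictest instance being $m=n+1$, yielding the threshold $\pi/n$), check that $(\partial_z{\bf K})_{\vert_{\Gamma^{\rm D}}}\in H^{(n-1)/2}(\Gamma^{\rm D})$ by elliptic regularity on corner domains, and then invoke Corollary~\ref{correg}. The paper phrases Step~2 as a consequence of Proposition~\ref{Prop: DN high reg small angle}, which strictly speaking treats the harmonic extension $\psi^{\mathfrak h}$ rather than the Kirchhoff problem~\eqref{Eq: phi_w}; your direct appeal to \cite{Grisvard85,Dauge88,DaugeNBL90} for the mixed problem with smooth compactly supported Neumann data and zero Dirichlet data is arguably the more accurate citation, but the underlying mechanism is identical.
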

	\begin{remark}
		The smallness assumption on the contact angles is a \emph{sufficient} condition to ensure that $(\partial_z{\bf K})_{\vert_{\Gamma^{\rm D}}}\in {\mathcal H}^{(n-1)/2}(\Gamma^{\rm D})$, but it is not necessary, as shown in Section \ref{Sec: right angles} when the contact angles are $\pi/2$.
	\end{remark}
	\begin{remark}
		The result is a generalization of Corollary 4.14 in \cite{LannesMing24}, where the object is assumed to be at rest. It also removes the unnecessary assumption made in \cite{LannesMing24} that $\psi \in L^2(\Gamma^{\rm D})$ when $\GD$ is unbounded.
	\end{remark}
	\begin{proof}
		This is a direct consequence of Corollary \ref{correg}. Indeed, under the assumption that the contact angles are smaller than $\pi/n$, we can deduce from Proposition \ref{propHn} that ${\mathcal H}^{n/2}(\Gamma^{\rm D})={H}^{n/2}(\Gamma^{\rm D})$ and ${\mathcal H}^{(n+1)/2}(\Gamma^{\rm D})={H}^{(n+1)/2}(\Gamma^{\rm D})$. The assumption that $(\partial_z{\bf K})_{\vert_{\Gamma^{\rm D}}}\in {\mathcal H}^{(n-1)/2}(\Gamma^{\rm D})$ can then be restated as
		$(\partial_z{\bf K})_{\vert_{\Gamma^{\rm D}}}\in {H}^{(n-1)/2}(\Gamma^{\rm D})$ and is therefore a consequence of Proposition \ref{Prop: DN high reg small angle} under the smallness assumption on the angles.
	\end{proof}

	\subsection{Higher order well-posedness of the Cummins equtions  in ${\mathbb X}^n$}\label{Subsec: Cummins higher reg for small angle}
	Lastly, we comment on the regularity of the Cummins equations in the same setting. This follows directly from Corollary \ref{correg}.
	\begin{cor}\label{corregCummins}
		Let $\Omega$, and $\Gamma$ be as in Assumption \ref{Assumption domain}, and suppose the coefficients of $\mathcal{C}$ satisfy Assumption \ref{hypStab}. Let also $n\in {\mathbb N}^*$ and assume that 
		$$
		(\partial_z{\bf K})_{\vert_{\Gamma^{\rm D}}} \in {\mathcal H}^{(n-1)/2}(\Gamma^{\rm D})
		\quad\mbox{ and }\quad
		(\zeta^{\rm in},\psi^{\rm in})\in {\mathcal H}^{n/2}(\GD)\times \dot{\mathcal H}^{(n+1)/2}(\GD).
		$$
		Then the kernel ${\mathcal K}$ and the exciting force ${\mathcal F}_{\rm exc}$ defined in 		\eqref{defkernel} satisfy
		${\mathcal K}\in C^{n}(\R^+;{\mathcal M}_3(\R))$ and ${\mathcal F}_{\rm exc}\in C^{n}(\R^+;\R^3)$, 
		and one has ${\mathtt X}\in C^{n+2}(\R^+;\R^3)$. 
	\end{cor}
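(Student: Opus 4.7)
The plan is to treat the three regularity claims separately, applying Corollary \ref{correg} to the appropriate Cauchy problem in each case; the only genuinely delicate point is the extra derivative needed for the kernel $\mathcal{K}$, which comes from a duality argument based on the self-adjointness of the Dirichlet--Neumann operator $G_0$.

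The regularity of $\mathtt{X}$ is immediate. Under the hypotheses, the initial datum $(\zeta^{\rm in},\mathtt{X}^{\rm in},\psi^{\rm in},\mathtt{V}^{\rm in})$ lies in $\mathbb{X}^n$, and $(\partial_z\mathbf{K})_{|_{\GD}}\in\mathcal{H}^{(n-1)/2}(\GD)$ is exactly the hypothesis required to apply Corollary \ref{correg} to John's system \eqref{FJ2}--\eqref{Newton2}. The conclusion gives $\mathtt{V}=\dot{\mathtt{X}}\in C^{n+1}(\mathbb{R}^+;\mathbb{R}^3)$, hence $\mathtt{X}\in C^{n+2}(\mathbb{R}^+;\mathbb{R}^3)$ after one further integration. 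For $\mathcal{F}^{\rm exc}$, the pair $(\zeta^{\rm exc},\psi^{\rm exc})$ solves the pure wave problem (the object-free special case of John's system, whose regularity is the scalar version of Corollary \ref{correg} already present in \cite{LannesMing24}) with initial datum in $\mathcal{H}^{n/2}(\GD)\times\dot{\mathcal{H}}^{(n+1)/2}(\GD)$. This yields $\zeta^{\rm exc}\in C^n(\mathbb{R}^+;L^2(\GD))$, and since $\partial_z\mathbf{K}\in L^2(\GD)$, the defining integral in \eqref{defkernel} shows $\mathcal{F}^{\rm exc}\in C^n(\mathbb{R}^+;\mathbb{R}^3)$.

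The interesting step is the kernel $\mathcal{K}$. For each $j$, the same pure-wave regularity result applied to $(\zeta^{\rm rad}_j,\psi^{\rm rad}_j)$ with initial datum $(\partial_z K_j,0)\in\mathcal{H}^{(n-1)/2}(\GD)\times\dot{\mathcal{H}}^{n/2}(\GD)$ provides only $\zeta^{\rm rad}_j\in C^{n-1}(\mathbb{R}^+;L^2(\GD))$, so direct pairing against $\partial_z\mathbf{K}$ yields merely $\mathcal{K}\in C^{n-1}$. To recover the missing derivative, I would use the wave equation to replace $\partial_t^n\zeta^{\rm rad}_j$ by a power of $G_0$ acting on $\zeta^{\rm rad}_j$ (for $n=2k$ even) or on $\psi^{\rm rad}_j$ (for $n=2k+1$ odd):
\[
\partial_t^{2k}\zeta^{\rm rad}_j=(-\mathtt{g})^k G_0^k\zeta^{\rm rad}_j,\qquad \partial_t^{2k+1}\zeta^{\rm rad}_j=(-\mathtt{g})^k G_0^{k+1}\psi^{\rm rad}_j,
\]
and then split the $G_0$ power symmetrically via self-adjointness,
\[
\int_{\GD}\partial_z K_i\cdot G_0^k\zeta^{\rm rad}_j=\int_{\GD}G_0^{k/2}\partial_z K_i\cdot G_0^{k/2}\zeta^{\rm rad}_j,
\]
with the analogous split in the odd case. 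For $n\geq 2$, both factors live in $C^0(\mathbb{R}^+;L^2(\GD))$: indeed $\partial_z K_i$ and $\zeta^{\rm rad}_j(t)$ lie in $\dot{\mathcal{H}}^{(n-1)/2}(\GD)$ while $\psi^{\rm rad}_j(t)$ lies in $\dot{\mathcal{H}}^{n/2}(\GD)$, and the arithmetic $k/2\leq(n-1)/2$ for $n\geq 2$ (respectively $(k+1)/2\leq(n-1)/2$ for $n\geq 3$ in the odd case) is satisfied in the required ranges.

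The main obstacle is the endpoint $n=1$, where the symmetric split would demand $\partial_z\mathbf{K}\in\dot{\mathcal{H}}^{1/2}(\GD)$, which is strictly stronger than the assumed $L^2$ regularity. For that value of $n$, the claim $\mathtt{X}\in C^3$ is already covered by Theorem \ref{Theorem: Cummins sim F. John}, and the $C^1$ regularity of $\mathcal{K}$ and $\mathcal{F}^{\rm exc}$ is recovered from a separate conservation argument using that $(\zeta^{\rm in},\psi^{\rm in})\in H^{1/2}(\GD)\times\dot{H}^1(\GD)$ under the identification $\mathcal{H}^{1/2}=H^{1/2}$ and $\dot{\mathcal{H}}^1=\dot{H}^1$ from Proposition \ref{propHn}, which restores enough regularity on $\psi^{\rm exc}$ and $\psi^{\rm rad}_j$ for the pairings with $\partial_z\mathbf{K}$ to be controlled without further splitting.
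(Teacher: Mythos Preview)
Your overall strategy matches the paper's approach (the paper simply says the corollary ``follows directly from Corollary~\ref{correg}''), and your treatment is considerably more detailed. The arguments for ${\mathtt X}\in C^{n+2}$ and ${\mathcal F}^{\rm exc}\in C^n$ are correct. For the kernel, your self-adjointness splitting
\[
\partial_t^n{\mathcal K}_{ij}(t)=(-{\mathtt g})^{\lfloor n/2\rfloor}\int_{\GD}G_0^{a}\partial_zK_i\cdot G_0^{b}(\cdot)
\]
with $a+b=\lceil n/2\rceil$ is the right idea and your arithmetic shows it works for $n\geq 2$.

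There is, however, a genuine gap in your treatment of the endpoint $n=1$. You write that the $C^1$ regularity of ${\mathcal K}$ ``is recovered from a separate conservation argument using that $(\zeta^{\rm in},\psi^{\rm in})\in H^{1/2}(\GD)\times\dot H^1(\GD)$\ldots which restores enough regularity on $\psi^{\rm exc}$ and $\psi^{\rm rad}_j$.'' But the kernel ${\mathcal K}$ is defined entirely through the Kirchhoff potentials and the radiative problem with initial data $(\partial_zK_j,0)$; it is completely independent of $(\zeta^{\rm in},\psi^{\rm in})$. No assumption on the initial wave field can improve the regularity of $\psi^{\rm rad}_j$, and hence your proposed fix does not apply. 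The difficulty you identified is real: with only $\partial_z{\bf K}\in L^2(\GD)$ one has $\psi^{\rm rad}_j\in C^0(\R^+;\dot H^{1/2}(\GD))$, so $\partial_t\zeta^{\rm rad}_j=G_0\psi^{\rm rad}_j\in C^0(\R^+;(\dot H^{1/2})')$, and the pairing with $\partial_zK_i$ would require $\partial_zK_i\in\dot H^{1/2}(\GD)$, which is not part of the hypothesis at $n=1$. You should either find a different argument for this endpoint (for instance, exploiting the extra $H^\epsilon$ regularity of $\partial_zK_i$ established in the proof of Proposition~\ref{Prop: Skew-symmetry}, or an interpolation), or flag that the case $n=1$ for ${\mathcal K}$ is not covered by your method.
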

	\begin{remark}
		{\bf i.} As observed in Corollary \ref{correg2}, if the contact angles are strictly smaller than $\pi/n$, then one can replace ${\mathcal H}^{(n-1)/2}(\GD)$, ${\mathcal H}^{n/2)}(\GD)$ and $\dot{\mathcal H}^{(n+1)/2}(\GD)$ in the statement by the Sobolev spaces ${H}^{(n-1)/2}(\GD)$, ${H}^{n/2)}(\GD)$ and $\dot{H}^{(n+1)/2}(\GD)$.\\
		{\bf ii.} The spaces ${\mathcal H}^{n/2}(\GD)$ and $\dot{\mathcal H}^{(n+1)/2}(\GD)$ are also characterized in the next section in the case where the contact angles are $\pi/2$; they differ from standard Sobolev spaces by the presence of compatibility conditions. %As shown in Corollary \ref{Cor: vertical walls}, a consequence of this fact is that the regularity of the motion of the object is higher for vertical translation than for horizontal ones.
	\end{remark} 
	
	\section{The case of right angles at the contact points}\label{Sec: right angles}
	
	We have established in Corollary \ref{correg} the propagation of the ${\mathbb X}^n$ regularity under an assumption on the Kirchhoff potentials. However, it is in general not obvious to translate this regularity into a regularity measured in standard Sobolev spaces. In the previous section, this was done in the case of small angles. We pursue here this program in the case where the contact angles are assumed to be equal to $\pi/2$. In such a configuration, we are able to describe the corner singularities of the harmonic extension $\psi^{\mathfrak h}$ and to characterize the spaces ${\mathbb X}^n$ for all $n\in {\mathbb N}$.

	We derive in Subsection \ref{Subsec: mixed pb pi/2} new elliptic estimates in the homogeneous functional setting for the harmonic extension $\psi^{\mathfrak h}$; in particular, we provide in Subsection \ref{Subsubsec: Non comp data pi/2} a singularity decomposition of this harmonic extension that could be of independent interest. These results are then used in  Subsection \ref{Subsec: Neumann pb pi/2} to establish higher order ellipticity properties of the Dirichlet-Neumann operator $G_0$. The space ${\mathcal H}^{n/2}$ and $\dot{\mathcal H}^{(n+1)/2}$ involved in the definition of ${\mathbb X}^n$ can then be easily characterized in Subsection \ref{Sec: Characterization with vertical walls}. The applications of these results to the space regularity of the solution to F. John's problem is then proposed in  Subsection \ref{Subsec: John high reg pi/2}. In particular, the singularity decomposition is used to show the lack of regularity in some cases.

	\subsection{Higher order  regularity for the mixed problem}\label{Subsec: mixed pb pi/2}
	
	We are interested here in the mixed problem \eqref{Eq: psi^H} that defines the harmonic extension $\psi^{\mathfrak h}$, namely, 
	\begin{equation}\label{Eq: psi^H2}
		\begin{cases}
			\Delta \psi^{\mathfrak{h}}= 0 \qquad\hspace{0.3cm} \text{in} \quad \Omega ,
			\\ 
			\psi^{\mathfrak{h}} = \psi \qquad \hspace{0.5cm}\text{on} \quad \Gamma^{\rm D},
			\\ 
			\partial_n \psi^{\mathfrak{h}} = 0 \qquad \hspace{0.2cm}\text{on} \quad \Gamma^{\rm N}.
		\end{cases}
	\end{equation}
	As already said, it was shown in \cite{LannesMing24} that this problem admits variational solutions in $\dot{H}^1(\Omega)$ for data $\psi\in\dot{H}^{1/2}(\GD)$. 
	Let us now consider a more regular data $\psi \in \dot{H}^{s+1/2}(\GD)$, with $s\geq 0$ and $\dot{H}^{s+1/2}(\GD)$ as defined in Section \ref{Sec: Wp John and just Cummins}. Due to the corner singularities of $\Omega$, it is in general false that $\nabla\psi^{\mathfrak h}\in H^s(\Omega)$. Indeed, it is well known \cite{BorsukKondratiev,Grisvard85,Dauge88,Grisvard92,Mazya_Kozlov_97} that singular terms arise in general at the corners. It is possible to impose conditions on the data in order to cancel the coefficients of these singularities, hereby ensuring that $\nabla\psi^{\mathfrak h}\in H^s(\Omega)$, but such conditions are nonlocal and it is difficult to assess whether a given boundary data $\psi$ satisfies them (see \cite{Grisvard85,DaugeNBL90}, and \cite{DalibardMarbachRax_25} in a related context). 
	
	However, the case of contact angles equal to $\pi/2$ is a special case, not covered for instance in \cite{Grisvard85}. In terms of the Mellin transform, the singularity does not come from the spectral problem but from the lack of injectivity modulo polynomials in the sense of \cite{Dauge88}. We show below with elementary tools that the conditions to remove the singularities can be reduced to simple compatibility conditions. We are also able to provide an explicit description of the singularities when these conditions are not satisfied.
	
	\subsubsection{The case of compatible data}
	
	We recall that ${\mathtt C}$ denotes the set of all finite contact points.
	\begin{Def}\label{Def: Hcc}
		Let $\Omega$ be as in Assumption \ref{Assumption domain} and assume moreover that all the angles at the contact points are equal to $\pi/2$. Let $s\geq 0$.\\
		{\bf i.} We say that $\psi\in \dot{H}^{s+1/2}(\GD)$  is compatible at order $s$ if the following holds for all ${\mathtt c}=(x_{\mathtt c},0)\in {\mathtt C}$,
		$$
		\begin{cases}
			\partial_x^{2l+1} \psi ({\mathtt c})=0 & \mbox{ for all }l\in {\mathbb N} \mbox{ such that } 2l+1 <s
			\\
			| \cdot-x_{\mathtt c}|^{-1/2}\partial_x^s \psi \in L^2(\GD) & \mbox{ if } s\in 2{\mathbb N}+1.
		\end{cases}
		$$
		{\bf ii.} We denote by $\dot{H}^{s+1/2}_{\rm c.c.}(\GD)$ the set of all $\psi\in \dot{H}^{s+1/2}(\GD)$ that are compatible at order $s$, endowed with the semi-norm
		$$
		\vert \psi\vert_{\dot{H}_{\rm c.c.}^{s+1/2}(\GD)}=\begin{cases}
			\vert \psi \vert_{\dot{H}^{s+1/2}(\GD)} &\mbox{ if }s\not\in 2{\mathbb N}+1,\\
			\vert \psi \vert_{\dot{H}^{s+1/2}(\GD)} + \sum_{{\mathtt c}\in {\mathtt C}}\big\vert |\cdot - x_{\mathtt c}|^{-1/2}\partial_x^s \psi \big\vert_{L^2(\GD)}   &\mbox{ if }s\in 2{\mathbb N}+1.
		\end{cases}
		$$
		{\bf iii.} We similarly define ${H}^{s+1/2}_{\rm c.c.}(\GD)$ by replacing  $\dot{H}^{s+1/2}(\GD)$ by ${H}^{s+1/2}(\GD)$ everywhere in point {\bf ii}.
	\end{Def}	
	
	The proposition below shows that if the data $\psi\in \dot{H}_{\rm c.c.}^{s+1/2}(\GD)$ then $\nabla\psi^{\mathfrak h}$ has the maximal regularity, namely $\nabla\psi^{\mathfrak h}\in H^s(\Omega)$. 
	\begin{prop}\label{Prop: Elliptic reg DN}
		Let $\Omega$, $\Gamma^{\rm D}$ and $\Gamma^{\rm N}$ be as in Assumption \ref{Assumption domain}, and assume moreover that all the contact angles are equal to $\pi/2$.
		Let $s\geq 0$ and  $\psi\in \dot{H}_{\rm c.c.}^{s+1/2}(\Gamma^{\rm D})$. Then the variational solution $\psi^{\mathfrak h}\in \dot{H}^1(\Omega)$ to 
		\eqref{Eq: psi^H2} satisfies $\nabla\psi^{\mathfrak h}\in H^s(\Omega)$ and there exists a constant $C>0$ independent of $\psi$ such that
		$$
		\Vert \nabla\psi^{\mathfrak h}\Vert_{H^s(\Omega)} \leq C \vert \psi \vert_{\dot{H}^{s+1/2}_{\rm c.c.}(\GD)}.
		$$ 
		In particular, one has for any $s\geq 1/2$ and $G_0\psi\in {H}^{s-1/2}_{\rm c.c.}(\GD)$ that there exists another constant $C'>0$ independent of $\psi$ such that
		$$
		\vert G_0\psi\vert_{{H}^{s-1/2}_{\rm c.c.}(\GD)}\leq C' \vert \psi \vert_{\dot{H}^{s+1/2}_{\rm c.c.}(\GD)}.
		$$
	\end{prop}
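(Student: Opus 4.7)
The plan is to exploit the very special nature of the right angle: at each contact point the Neumann side and the Dirichlet side meet orthogonally, so Schwarz reflection across the Neumann side turns the mixed problem locally into a pure Dirichlet problem on a smooth half-plane, for which we can apply standard elliptic regularity. The compatibility conditions in Definition \ref{Def: Hcc} are then exactly the conditions needed for the reflected Dirichlet datum to have the correct Sobolev regularity across the reflection point.

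\textbf{Step 1 (localization).} Using a smooth partition of unity $\{\chi_{\mathtt c}\}_{{\mathtt c}\in {\mathtt C}} \cup \{\chi_0\}$ subordinated to a cover by small neighborhoods of each contact point (where by Assumption \ref{Assumption domain}(4) both $\Gamma^{\rm D}$ and $\Gamma^{\rm N}$ are flat) and a set bounded away from ${\mathtt C}$, I would split $\psi^{\mathfrak h} = \chi_0 \psi^{\mathfrak h} + \sum_{\mathtt c} \chi_{\mathtt c} \psi^{\mathfrak h}$. The piece $\chi_0 \psi^{\mathfrak h}$ solves a mixed problem with smooth boundary (or, at corners away from ${\mathtt C}$ which are the topographic corners between two Neumann parts, angles strictly smaller than $\pi$ between two Neumann boundaries) so the result of Proposition \ref{Prop: DN high reg small angle} together with classical interior/Neumann elliptic regularity (or the results of \cite{DaugeNBL90}) gives $\Vert \nabla(\chi_0 \psi^{\mathfrak h})\Vert_{H^s(\Omega)}\lesssim \vert\psi\vert_{\dot H^{s+1/2}(\GD)}$. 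The work is therefore concentrated near each corner ${\mathtt c}$.

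\textbf{Step 2 (reflection at a right-angle corner).} After a translation/reflection, assume the corner is at the origin, with $\GD$ locally given by $\{z=0,\,x>0\}$ and $\Gamma^{\rm N}$ locally given by $\{x=0,\,z<0\}$. Define
\begin{equation*}
\widetilde{\psi^{\mathfrak h}}(x,z) = \psi^{\mathfrak h}(|x|,z), \qquad \widetilde{\psi}(x) = \psi(|x|).
\end{equation*}
The homogeneous Neumann condition $\partial_x \psi^{\mathfrak h}\vert_{x=0}=0$ guarantees, via integration by parts against test functions on the reflected half-disk, that $\widetilde{\psi^{\mathfrak h}}$ is variationally harmonic on the whole (smooth) half-disk $\{z<0\}$ in a neighborhood of the origin, with Dirichlet datum $\widetilde\psi$ on $\{z=0\}$. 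On this smooth domain the classical half-space elliptic regularity yields
\begin{equation*}
\Vert \nabla\widetilde{\psi^{\mathfrak h}}\Vert_{H^s}\lesssim \vert\widetilde\psi\vert_{H^{s+1/2}} + (\text{lower-order terms supported away from the corner}),
\end{equation*}
the lower-order terms being absorbed by Step 1.

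\textbf{Step 3 (even extensions and compatibility).} The central analytic point is the equivalence
\begin{equation*}
\widetilde\psi \in H^{s+1/2}_{\mathrm{loc}}(\{z=0\}\text{ near } 0) \iff \psi \in \dot H^{s+1/2}_{\mathrm{c.c.}}(\GD)\text{ near the corner}.
\end{equation*}
This is a classical characterization of the even extension operator between Sobolev spaces on the half-line and the line: for non-integer order, the vanishing of $\partial_x^{2l+1}\psi$ at the endpoint for $2l+1<s$ is exactly what is needed to ensure $\widetilde\psi\in H^{s+1/2}(\R)$; at the borderline exponents $s\in 2\N+1$, where $s+1/2$ lies half an integer above an odd integer, the regularity of $\partial_x^s\widetilde\psi$ requires in addition that its odd extension be in $H^{1/2}$, which by the standard Lions--Magenes Hardy characterization amounts to $|\cdot|^{-1/2}\partial_x^s\psi\in L^2$ near the endpoint --- precisely the extra condition in Definition \ref{Def: Hcc}. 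Combining this equivalence with Step 2 closes the estimate $\Vert \nabla \psi^{\mathfrak h}\Vert_{H^s(\Omega)}\lesssim \vert\psi\vert_{\dot H^{s+1/2}_{\mathrm{c.c.}}(\GD)}$.

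\textbf{Step 4 (consequence for $G_0\psi$).} Since $G_0\psi = \partial_z\psi^{\mathfrak h}\vert_{\GD}$, the standard trace theorem applied to $\nabla \psi^{\mathfrak h}\in H^s(\Omega)$ (valid for $s\geq 1/2$) gives $G_0\psi\in H^{s-1/2}(\GD)$ with the right estimate. To upgrade this to $H^{s-1/2}_{\mathrm{c.c.}}(\GD)$ it is enough to verify the compatibility conditions at each corner, and for this I use again the reflected picture: $\partial_z\widetilde{\psi^{\mathfrak h}}\vert_{z=0}$ is precisely the even extension of $G_0\psi$ across the corner point. By the very same equivalence of Step 3 applied with exponent $s-1/2$ in place of $s+1/2$, the membership of this even extension in $H^{s-1/2}_{\mathrm{loc}}(\R)$ near $0$ delivers the required vanishing of odd derivatives of $G_0\psi$ and, when $s\in 2\N+3/2$, the Hardy condition on $\partial_x^{s-1/2}G_0\psi$. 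Summing over contact points and combining with Step 3 closes the second estimate.

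\textbf{Main obstacle.} The technical heart of the argument is Step 3, and more specifically the borderline case $s\in 2\N+1$: one has to be careful that the Hardy-type term appearing in the semi-norm $\vert\cdot\vert_{\dot H^{s+1/2}_{\mathrm{c.c.}}(\GD)}$ is both necessary and sufficient for $\widetilde\psi$ to lie in $H^{s+1/2}$ near the origin, and that it is controlled by --- and controls --- the $H^{s+1/2}$ seminorm of the extension. Away from this critical case the characterization of even extensions is standard, but at the critical exponent one must reduce to odd extensions after $s$-fold differentiation and invoke the Lions--Magenes trace theory carefully. Once this local equivalence is firmly established, the rest of the proof is a routine piecing together of local estimates via the partition of unity.
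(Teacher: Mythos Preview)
Your approach is correct and rests on the same core idea as the paper: even (Schwarz) reflection across the vertical Neumann wall, which the paper calls the \emph{Boussinesq symmetrization}. Your Step~3 --- the equivalence between the compatibility conditions of Definition~\ref{Def: Hcc} and the $H^{s+1/2}$ regularity of the even extension, including the Lions--Magenes Hardy condition at the critical exponents --- is exactly Proposition~\ref{propositionregext} in the paper's appendix.

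The organization, however, differs. You reflect $\psi^{\mathfrak h}$ directly near each corner and invoke local half-space boundary regularity for the pure Dirichlet problem on the reflected half-disk. The paper instead first builds an explicit extension $\psi^{\rm ext}\in\dot H^{s+1}(\Omega)$ by applying the Poisson kernel to the Boussinesq extension of $\psi$ (Lemma~\ref{LMell1}); this $\psi^{\rm ext}$ is already harmonic and has the correct Neumann trace near the corners, so $u=\psi^{\mathfrak h}-\psi^{\rm ext}$ solves a mixed problem with data supported away from the corners, whose regularity (Lemma~\ref{LMell2}) is then obtained, again by reflection, but now with trivial corner data. Your route is shorter; the paper's buys more, because the explicit Poisson extension is reused immediately in the proof of Theorem~\ref{Thm: sing decomp mixed pb} to compute the singular part of $\psi^{\mathfrak h}$ when the compatibility conditions fail --- which is the main payoff of the section.

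A few minor slips to clean up: in Step~1, Proposition~\ref{Prop: DN high reg small angle} is not the right reference (it assumes small angles), and in this geometry there are no Neumann--Neumann corners since $\Gamma^{\rm b}$ and $\Gamma^{\rm w}$ do not meet --- away from ${\mathtt C}$ the boundary is smooth and classical local regularity suffices. In Step~2 the ``lower-order terms'' from local boundary regularity are not supported away from the corner but are genuinely lower order ($\|\nabla\psi^{\mathfrak h}\|_{L^2}$, controlled by $|\psi|_{\dot H^{1/2}}$). In Step~4 the borderline exponent for the Hardy condition on $G_0\psi\in H^{s-1/2}_{\rm c.c.}$ is $s-1\in 2\N+1$, i.e.\ $s\in 2\N+2$, with the weight on $\partial_x^{s-1}G_0\psi$, not $s\in 2\N+3/2$.
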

	\begin{proof}
		The higher regularity result is based on two lemmas. The first one constructs an extension $\psi^{\rm ext}$ of $\psi$ which is harmonic near the corners and satisfies the Neumann boundary condition on $\Gamma^{\rm N}$ near the corners, and such that the mapping $\psi \mapsto \psi^{\rm ext}$ is well defined and continuous with respect to homogeneous semi-norms.
		The proof of this lemma is based on an extension of $\psi$ to the real line and an analysis of the singularities of the Poisson kernel associated with this extension. The proof is postponed to Appendix \ref{appcompatible} for the sake of clarity. We denote $\Omega_{x_{\rm l},x_{\rm r}}$ the portion of the fluid domain located below the segment $(x_{\rm l},x_{\rm r})\times\{0\}$,
		$$
		\Omega_{x_{\rm l},x_{\rm r}}=\Omega \cap \{(x,z), x_{\rm l}<x<x_{\rm r}\}.
		$$
		\begin{lemma}\label{LMell1}
			Under the assumptions of the proposition, there exists $h_1>0$ such that for all $s\geq 0$ and $\psi\in \dot{H}_{\rm c.c.}^{s+1/2}(\Gamma^{\rm D})$, there exists a function $\psi^{\rm ext}\in \dot{H}^{s+1}(\Omega)$, harmonic in $\Omega\backslash\overline{\Omega_{x_{\rm l},x_{\rm r}}}$,  such that $\psi^{\rm ext}=\psi$ on $\Gamma^{\rm D}$ and $\partial_n \psi^{\rm ext}=0$ on $\Gamma^{\rm N}\cap \{z>-h_1\}$, and such that
			\begin{equation}\label{Eq: Est on nabla psi ext in Hk}
				\Vert \nabla \psi^{\rm ext}\Vert_{H^s(\Omega)}
				\leq C \vert \psi\vert_{\dot{H}_{\rm c.c.}^{s+1/2}(\Gamma^{\rm D})},
			\end{equation}
			for some constant $C$ independent of $\psi$.
		\end{lemma}

		Having the extension in hand, we will now use it to study $u = \phi - \psi^{\rm ext}$:
		\begin{equation}\label{Eq: Laplace for u = phi - psi_ext}
			\begin{cases}
				\Delta u = h  &\mbox{ in }\Omega\\
				u =0 & \mbox{ on }\Gamma^{\rm D}\\
				\partial_{\rm n}u =g  & \mbox{ on }\Gamma^{\rm N},
			\end{cases}
		\end{equation}
		where $h = -\Delta \psi^{\rm ext}$ and $g = -	\partial_{\rm n} \psi^{\rm ext}$ which are both supported away from the corner. In other words, we simply need to prove that the elliptic regularity follows if the data is supported far from the corner. We state this as a general result in the next lemma.
		\begin{lemma}\label{LMell2}
			Under the assumptions of the proposition, let $s \geq 1$,  and $h\in H^{s-1}(\Omega)$ and $g\in H^{s-1/2}(\Gamma^{\rm N})$. Let also $u \in H^1(\Omega)$ be the variational solution of the elliptic boundary value problem
			$$
			\begin{cases}
				\Delta u =h  &\mbox{ in }\Omega\\
				u =0 & \mbox{ on }\Gamma^{\rm D}\\
				\partial_{\rm n} u = g  & \mbox{ on }\Gamma^{\rm N}.
			\end{cases}
			$$
			Then if $g$ is supported on $\Gamma^{\rm N}\cap \{z < -h_1\}$ for some $h_1>0$ and if $h$ is supported in $\Omega_{x_{\rm l},x_{\rm r}}:$, one has $u\in H^{s+1}(\Omega)$ and moreover
			$$
			\Vert u \Vert_{H^{s+1}(\Omega)} \leq C \big( \Vert h\Vert_{H^{s-1}(\Omega_{x_{\rm l},x_{\rm r}})}+ \vert g\vert_{H^{s-1/2}(\Gamma^{\rm N})} ),
			$$
			for some $C>0$ independent of $g$ and $h$.
		\end{lemma}
		%%%%%
		\begin{proof}[Proof of the lemma]
			We do the proof in the case where $-\infty <x_{\rm L}$ so that ${\mathcal E}_-$ is finite, and $x_{\rm R}=\infty$ so that ${\mathcal E}_+$ is infinite. The other configurations are treated similarly. Note also that by interpolation, it is sufficient to prove the lemma for $s=k$, with $k\in {\mathbb N}^*$.

			Taking $h_1>0$ smaller if necessary, we can assume that $\Gamma^{\rm N}\cap \{z>-h_1\}$ is a union of vertical segments. Let $\theta$ be a smooth compactly supported function on ${\mathbb R}_-$ such that $\theta(z)=1$ for $-\frac{h_1}{2}\leq z\leq 0$ and  $\theta(z)=0$ for
			$z\leq - h_1$, and let $\chi_{-}$ be as in the proof of Lemma \ref{LMell1}. We can then write
			\begin{align*}
				u
				& =
				\theta \chi_{-} u+ \theta (1-\chi_{-}) u +(1-\theta) u
				\\ 
				& =:
				u_{\rm I}+u_{\rm II}+u_{\rm III}.
			\end{align*} 
			We prove the three components are in $H^{k+1}(\Omega)$ and satisfy the estimate of the lemma. To do so, we proceed by induction and show that if $u\in H^{l}(\Omega)$ for some $1\leq l< k$, then the three components are in $u\in H^{l+1}(\Omega)$.\\

			\noindent
			{\bf Step 1.} \textit{Estimate of $u_{\rm I}$}. The function $u_{\rm I}$ solves the elliptic boundary value problem
			$$
			\begin{cases}
				\Delta u_{\rm I}=h_{\rm I}  &\mbox{ in } (x_{\rm L},x_{\rm l})\times (-h_1,0)\\
				u_{\rm I}=0 & \mbox{ on }   (x_{\rm L},x_{\rm l})\times \{0\} \quad\mbox{ and }\quad (x_{\rm L},x_{\rm l})\times \{-h_1\} \\
				\partial_{\rm n}u_{\rm I}=0  & \mbox{ on } \{x_{\rm L}\}\times (-h_1,0) \cup \{x_{\rm l}\}\times (-h_1,0),
			\end{cases}
			$$
			with
			$$h_{\rm I}=2\theta'  \partial_z u + \theta''  u,$$
			where we used the fact that $\chi_{-}$ is constant and $h$ identically vanishes on $\Omega_- = \Omega\cap \{x\in {\mathcal E}_-\}$. Letting $L=x_{\rm l}-x_{\rm L}$, the strategy is to use the  Boussinesq symmetrization introduced in Section \ref{sectBouss} to extend $u_{\rm I}$ and $h_{\rm I}$ to the periodic strip  ${\mathcal S}_{-} = {\mathbb R}/{(2L{\mathbb Z})}\times (-h_1,0)$ where the elliptic theory is standard.\\
			The Boussinesq symmetrizations of $u_{\rm I}$ and $h_{\rm I}$, respectively denoted by $u_{\rm I}^{\rm B}$ and $h_{\rm I}^{\rm B}$ are obtained by taking the even reflection of $u_{\rm I}$ and $h_{\rm I}$ across $\{x=x_{\rm L}\}$ and then the $2L$-periodization of the resulting functions. Remark now that on $\{x=x_{\rm l}\}$, one has  for all integer $m$ such that $2m+1<l-1$,
			\begin{align*}
				\partial_x^{2m+1}h_{\rm I}&=2\theta'  \partial_z \partial_x^{2m+1}u + \theta''  \partial_x^{2m+1}u\\
				&=2\theta'  (-1)^m\partial_z \partial_z^{2m}\partial_xu + \theta'' (-1)^m \partial_z^{2m}\partial_xu\\
				&=0,
			\end{align*}
			where we used the fact that $u$ is harmonic on ${\mathcal S}_{-}$ and that $\partial_{\rm n} u=\partial_x u=0$ on
			$	\{x_{\rm l}\}\times (-H_1,0)$. We can then deduce as in Section \ref{sectBouss} that $h_{\rm I}^{\rm B} \in H^{l-1}({\mathcal S}_-)$. Since $u_{\rm I}^{\rm B}$ is in $H^1({\mathcal S}_-)$ and is obviously a variational solution of 
			$$
			\begin{cases}
				\Delta u_{\rm I}^{\rm B}=h_{\rm I}^{\rm B}  &\mbox{ in } {\mathcal S}_-,\\
				u_{\rm I}=0 & \mbox{ on }   {\mathbb R}/(2L{\mathbb Z})\times \{0\} \quad\mbox{ and }\quad  {\mathbb R}/(2L{\mathbb Z})\times \{-h_1\} ,
			\end{cases}
			$$
			we can use standard elliptic regularity on this periodic strip to deduce that $u_{\rm I}^{\rm B} \in H^{l+1}({\mathcal S}_-)$ and therefore that $u_{\rm I}\in H^{l+1}(x_{\rm L},x_{\rm l})\times (-h_1,0)$, with the estimate
			$$
			\Vert u_{\rm I}\Vert_{H^{l+1}(\Omega)}\lesssim  \Vert u_{\rm I}\Vert_{H^{l}(\Omega)}.
			$$

			\noindent
			{\bf Step 2.} \textit{Estimate of $u_{\rm II}$}. Since $u_{\rm I}$ solves an elliptic value problem on the half strip, namely,
			$$
			\begin{cases}
				\Delta u_{\rm II}=h_{\rm II}  &\mbox{ in } (x_{\rm r},\infty)\times (-h_1,0)\\
				u_{\rm II}=0 & \mbox{ on }   (x_{\rm r}, \infty)\times \{0\} \mbox{ and }\quad (x_{\rm r}, \infty)\times \{-h_1\},\\
				\partial_{\rm n}u_{\rm II}=0 & \mbox{ on } \{x_{\rm r}\}\times (-h_1,0),
			\end{cases}
			$$
			with $h_{\rm II}=2 \theta' \partial_z u +  \theta''  u$, the method of Step 1 can easily be adapted to get
			$$
			\Vert u_{\rm II} \Vert_{H^{l+1}(\Omega)} \lesssim \Vert u\Vert_{H^{l}(\Omega)}.
			$$
			{\bf Step 3}. \textit{Estimate of $u_{\rm III}$}. There exists a smooth domain $\widetilde{\Omega}\subset \Omega$ whose boundary coincides with the boundary of $\Omega$ for $z<-\frac{h_1}{2}$ and a function $g^{\rm ext} \in H^{k-1/2}(\partial\widetilde{\Omega})$  that coincides with $(1-\theta)g$ for  $z<-\frac{h_1}{2}$ and is equal to $0$ for $z>-h_1/2$; the function $u_{\rm III}$ then solves a Neumann problem on a smooth domain, namely,
			$$
			\begin{cases}
				\Delta u_{\rm III}=h_{\rm III}  &\mbox{ in } \widetilde{\Omega}\\
				\partial_{\rm n}u_{\rm III}=g^{\rm ext}  & \mbox{ on }\partial\widetilde{\Omega},
			\end{cases}
			$$
			where $h_{\rm III}=h+2\theta'  \partial_z u + \theta''u$ is in $H^{l-1}(\widetilde{\Omega})$ if $u \in H^{l}(\Omega)$. Using standard elliptic regularity theory and the Poincaré inequality, we deduce that $u_{\rm III} \in H^{l+1}(\widetilde{\Omega})$ and therefore in $H^{l+1}({\Omega})$ since it identically vanishes for $z>-\frac{h_1}{2}$. Moreover, we have that $g^{\rm ext}$ satisfies
			$$\vert g^{\rm ext}\vert_{H^{l-1/2}(\partial\widetilde{\Omega})}\lesssim \vert g\vert_{H^{k-1/2}(\Gamma^{\rm N})},$$ 
			and since $h$ is supported in $\Omega_{x_{\rm l},x_{\rm r}}$, one readily gets the estimate
			$$
			\Vert u_{\rm III} \Vert_{H^{l+1}(\Omega)} \lesssim \Vert h\Vert_{H^{k-1}(\Omega_{x_{\rm l},x_{\rm r}})}+ \vert g\vert_{H^{k-1/2}(\Gamma^{\rm N})}
			+\Vert u \Vert_{H^{l}(\Omega)}.
			$$
			{\bf Step 4}. \textit{Conclusion}. From the three previous step, we conclude that $u \in H^{l+1}(\Omega)$ if it is in $H^{l}(\Omega)$, if $1\leq l < k$. Starting from the variational solution ($l=1$), a finite induction and the estimates of the three steps yield the result.
		\end{proof}

		We can now proceed to the proof of the proposition. We make the decomposition $\phi=\psi^{\rm ext}+u$ where $u$ solves \eqref{Eq: Laplace for u = phi - psi_ext} with $h=-\Delta\psi^{\rm ext}$ satisfies the assumption of Lemma \ref{LMell2} as well as
		%		%
		%		% 
		%		%
		%		%
		$$
		\Vert h\Vert_{H^{s-1}(\Omega_{x_{\rm l},x_{\rm r}})}
		\leq
		\Vert \nabla\psi^{\rm ext}\Vert_{H^{s}(\Omega)};
		$$	 
		since moreover $g=-\partial_{\rm n}\psi^{\rm ext}$ we get  by a classical trace estimate  that
		\begin{align*}
			\vert \partial_{x}\psi^{\rm ext}\vert_{H^{s-1/2}(\Gamma^{\rm N})}\lesssim \| \nabla \psi^{\rm ext}\|_{H^{s}(\Omega)}.
		\end{align*} 
		By Lemma \ref{LMell2}, we therefore get that for all $s\geq 1$, one has
		$$
		\Vert u\Vert_{H^{s+1}(\Omega)} \lesssim \| \nabla \psi^{\rm ext}\|_{H^{s}(\Omega)}.
		$$
		This estimate is also true for $s=0$ which corresponds to the variational estimate; it holds also for $s\in (0,1)$ by interpolation. With Lemma \ref{LMell1}, we therefore get that for all $s\geq 0$ we have
		$$
		\Vert u\Vert_{H^{s+1}(\Omega)} \lesssim \vert \psi \vert_{\dot{H}^{s+1/2}_{\rm c.c.}}.
		$$
		%			
		%		 
		%		%
		which concludes the proof of the first part of Proposition \ref{Prop: Elliptic reg DN}.
		
		For the second part, we recall from Proposition 3.8 in \cite{LannesMing24} that it holds in a more general setting when $s=1/2$. For $s>1/2$ we use that $\partial_x \psi^{\mathfrak h}=0$ on $\Gamma^{\rm N}\cap \{z>-h_1\}$, together with the trace theorem on polygons (see more specifically Theorem 1.5.2.4 in \cite{Grisvard85} for the special case of a quadrant) that $G_0\psi=(\partial_z\psi^{\mathfrak h})_{\vert_{\GD}}$ belongs to ${H}_{\rm c.c.}^{s-1/2}(\GD)$ and that $\vert G_0\psi \vert_{{H}_{\rm c.c.}^{s-1/2}(\GD)}\lesssim \Vert \nabla \psi^{\mathfrak h}\Vert_{H^s(\Omega)}$; together with the first point of the proof, this concludes the proof of the proposition.
		
	\end{proof}

	\subsubsection{The case of non compatible data}  \label{Subsubsec: Non comp data pi/2}
	
	We have shown in the previous section that for all $s\geq 0$, the variational solution $\psi^{\mathfrak h}$ to the elliptic problem \eqref{Eq: psi^H2} satisfies $\nabla\psi^{\mathfrak h}\in H^{s}(\Omega)^2$ if $\psi \in \dot{H}^{s+1/2}_{\rm c.c.}(\GD)$. We recall that if $2l+1\leq s<2l+3$, with 	$l\in {\mathbb N}$, then $\psi \in  \dot{H}^{s+1/2}_{\rm c.c.}(\GD)$ if $\psi \in \dot{H}^{s+1/2}(\GD)$ and $\psi^{(2k+1)}({\mathtt c})=0$ for all $1\leq k\leq l$ and all finite corner ${\mathtt c}$ and where, as usual, in the case $s=2l+1$, the condition $\psi^{(s)}({\mathtt c})=0$ must be understood in the weak sense $|x-{\mathtt c}|^{-1/2}\psi^{(s)}\in L^2(\GD)$.

	In this section, we consider boundary data $\psi \in \dot{H}^{s+1/2}(\GD)$ such that the traces $\psi^{(2k+1)}({\mathtt c})$ are well defined, but not necessarily zero. We denote by $\dot{H}^{s+1/2}_{\rm tr}(\GD)$ the space of such functions. Note also that the definition below makes sense in the case $s\in 2{\mathbb N}+1$ because the constants $\alpha_{\mathtt c}$ involved in the definition are obviously unique. We also recall that ${\mathtt C}$ denotes the set of all finite contact points.
	\begin{Def}
		Let $\Omega$ and $\Gamma^{\rm D}$ be as in Assumption \ref{Assumption domain}, and assume moreover that all the contact angles are equal to $\pi/2$.
		Let also $s\geq 0$ and introduce the space $\dot{H}^{s+1/2}_{\rm tr}(\GD)$ defined as
		$$
		\dot{H}^{s+1/2}_{\rm tr}(\GD)=\dot{H}^{s+1/2}(\GD) \quad\mbox{ if }\quad s\not\in 2{\mathbb N}+1,
		$$
		endowed with its canonical semi-norm, and
		\begin{align*}
			\dot{H}^{2l+3/2}_{\rm tr}(\GD)=
			\{\psi \in \dot{H}^{2l+3/2}(\GD), &\forall {\mathtt c}=(x_{\mathtt c},0)\in {\mathtt C}, \exists \alpha_{\mathtt c}\in \R,\\
			&  |x-x_{\mathtt c}|^{-1/2}(\partial_x^{2l+1}\psi-\alpha_{\mathtt c})\in L^2(\GD)\},
		\end{align*}
		for all $l\in {\mathbb N}$,  and endowed with the semi-norm 
		$$
		\vert \psi \vert_{\dot{H}^{2l+3/2}_{\rm tr}(\GD)}=\vert \psi \vert_{\dot{H}^{2l+3/2}(\GD)}+\sum_{{\mathtt c}\in {\mathtt C}} \big( \vert x-x_{\mathtt c}|^{-1/2}(\partial_x^{2l+1}\psi-\alpha_{\mathtt c})\vert_{L^2(\GD)}+\vert \alpha_{\mathtt c}\vert\big);
		$$
		in the latter case, we write by abuse of notation $\alpha_{\mathtt c}=\partial_x^{2l+1}\psi({\mathtt c})$.
	\end{Def}
	Clearly, the space $\dot{H}^{s+1/2}_{\rm c.c.}(\GD)$ corresponds to the elements in $\dot{H}^{s+1/2}_{\rm tr}(\GD)$ such that for all finite corner ${\mathtt c}$, one has $\partial_x^{2l+1}\psi({\mathtt c})=0$ for all $l\in {\mathbb N}$ such that $2l+1\leq s$. In order to write the supplementary space of  $\dot{H}^{s+1/2}_{\rm c.c.}(\GD)$ in  $\dot{H}^{s+1/2}_{\rm tr}(\GD)$, we introduce some notations.
	\begin{notation}\label{notachi}
		{\bf i.} We denote by $\chi_{[0]}$ a smooth, even, compactly supported function, equal to $1$ in a neighborhood of the origin, and with support contained in $(-L/2,L/2)$, with $L \leq \min \{|{\mathcal E}_- |,|{\mathcal E}_+| \}$.  
		\\
		{\bf ii.} For all $l\in {\mathbb N}$, we define $\chi_{[2l+1]}$ as
		$$
		\forall x\in {\mathbb R},  \qquad \chi_{[2l+1]}(x)=\frac{1}{(2l+1)!} |x|^{2l+1}\chi_{[0]}(x);
		$$
		the function $\chi_{[2l+1]}$ admits right and left derivatives of any order at $0$, and $\partial_x^k\chi_{[2l+1]}(0^\pm)=0$ except if $k=2l+1$, in which case $\partial_x^k\chi_{[2l+1]}(0^\pm)=\pm 1$.
	\end{notation}

	We can now define the supplementary space  ${\mathbb V}^{s+1/2}$ of  $\dot{H}^{s+1/2}_{\rm c.c.}(\GD)$ in  $\dot{H}^{s+1/2}_{\rm tr}(\GD)$.
	\begin{Def}\label{defT}
		Let $\Omega$ and $\Gamma^{\rm D}$ be as in Assumption \ref{Assumption domain}, and assume moreover that all the contact angles are equal to $\pi/2$.
		Let also $s\geq 0$.\\ 
		{\bf i.} We define the finite dimensional space ${\mathbb V}^{s+1/2}$ as
		$$
		{\mathbb V}^{s+1/2}={\rm span}\{ \chi_{[2l+1]}(\cdot-x_{\mathtt c})_{\vert_{\GD}}, {\mathtt c}=(x_{\mathtt c},0)\in {\mathtt C}, 2l+1\leq s\},
		$$
		endowed with the norm $\vert f\vert_{{\mathbb V}^{s+1/2}}=\sum_{{\mathtt c}\in {\mathtt C}}\sum_{2l+1\leq s} |\partial_x^{2l+1} f(x_{\mathtt c}) \vert$.\\
		{\bf ii.} We define on $\dot{H}_{\rm tr}^{s+1/2}(\GD)$ a mapping ${\mathcal T}$ by
		$$
		{\mathcal T}: \begin{array}{lcl}
			\dot{H}_{\rm tr}^{s+1/2}(\GD) & \to & {\mathbb V}^{s+1/2}\\
			\psi &\mapsto & \sum_{{\mathtt c}\in {\mathtt C}}\sum_{2l+1\leq s}\sigma({\mathtt c}) \partial_x^{2l+1}\psi(x_{\mathtt c})\chi_{[2l+1]}(\cdot-x_{\mathtt c})_{\vert_{\GD}},
		\end{array}
		$$
		where $\sigma({\mathtt c})=1$ if ${\mathtt c}$ is the left extremity of a connected component of $\GD$, that is, if $x_{\mathtt c}=x_{\rm L}$ or $x_{\mathtt c}=x_{\rm r}$, while $\sigma({\mathtt c})=-1$ if $x_{\mathtt c}=x_{\rm R}$ or $x_{\mathtt c}=x_{\rm l}$.
	\end{Def}
	The proof of the following proposition is straightforward.
	\begin{prop}\label{propVs}
		Let $\Omega$ and $\Gamma^{\rm D}$ be as in Assumption \ref{Assumption domain}, and assume moreover that all the contact angles are equal to $\pi/2$.
		Let also $s\geq 0$. Then one has
		$$
		\dot{H}^{s+1/2}_{\rm tr}(\GD)=\dot{H}^{s+1/2}_{\rm c.c.}(\GD)\oplus {\mathbb V}^{s+1/2},
		$$
		and for al $\psi\in \dot{H}^{s+1/2}_{\rm tr}(\GD)$, the corresponding decomposition is given by
		$$
		\psi = \psi_{\rm c.c.}+{\mathcal T}\psi \quad \mbox{ with }\quad \psi_{\rm c.c.}=\psi -{\mathcal T}\psi \in H^{s+1/2}_{\rm c.c.}(\GD)
		\quad \mbox{ and }\quad {\mathcal T}\psi \in {\mathbb V}^{s+1/2}.
		$$
		In particular, there is a constant $C>0$ independent of $\psi$ such that 
		$$
		\frac{1}{C}\vert \psi\vert_{\dot{H}^{s+1/2}_{\rm tr}(\GD)} \leq  \vert \psi_{\rm c.c.}\vert_{\dot{H}^{s+1/2}_{\rm c.c.}(\GD)}
		+\vert {\mathcal T}\psi\vert_{{\mathbb V}^{s+1/2}} \leq C \vert \psi\vert_{\dot{H}^{s+1/2}_{\rm tr}(\GD)}. 
		$$
	\end{prop}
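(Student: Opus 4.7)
The plan is to construct the decomposition explicitly by setting $\psi_{\rm c.c.} := \psi - {\mathcal T}\psi$ and verifying, through a direct Taylor analysis of the basis functions $\chi_{[2l+1]}$ at the origin, that ${\mathcal T}$ is well defined on $\dot{H}^{s+1/2}_{\rm tr}(\GD)$, that it acts as the identity on ${\mathbb V}^{s+1/2}$, and that the residue lies in $\dot{H}^{s+1/2}_{\rm c.c.}(\GD)$. The direct sum structure and norm equivalence will then follow formally.

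First I would check that for any $\psi\in \dot{H}^{s+1/2}_{\rm tr}(\GD)$ each coefficient $\partial_x^{2l+1}\psi(x_{\mathtt c})$ appearing in the definition of ${\mathcal T}\psi$ is well defined and bounded by $\vert \psi\vert_{\dot{H}^{s+1/2}_{\rm tr}(\GD)}$. For $2l+1<s$ the Sobolev embedding $H^{s-1/2-2l}({\mathcal E}_\pm)\hookrightarrow C^0$ (valid since $s-1/2-2l>1/2$) applied to $\partial_x^{2l+1}\psi$ yields a continuous pointwise trace. In the borderline case $s=2l+1$ the weak trace $\alpha_{\mathtt c}$ is provided directly by the definition of $\dot{H}^{s+1/2}_{\rm tr}(\GD)$. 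Hence ${\mathcal T}:\dot{H}^{s+1/2}_{\rm tr}(\GD)\to{\mathbb V}^{s+1/2}$ is continuous.

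Second, I would compute the one-sided derivatives of $\chi_{[2l+1]}$ at $0$. Since $\chi_{[0]}$ is even and smooth, its Taylor series at $0$ contains only even powers, and therefore on $\{x>0\}$ the function $\chi_{[2l+1]}(x)= x^{2l+1}\chi_{[0]}(x)/(2l+1)!$ expands as $\sum_{k\geq 0} a_k x^{2l+1+2k}$; on $\{x<0\}$ one has $-\sum_{k\geq 0}a_k x^{2l+1+2k}$. This yields the identity $\partial_x^{2k+1}\chi_{[2l+1]}(0^\pm)=\pm\delta_{k,l}$, while all even-order derivatives vanish at $0^\pm$. The sign $\pm$ is precisely compensated by the factor $\sigma({\mathtt c})$ in the definition of ${\mathcal T}$, so that for $v=\chi_{[2l+1]}(\cdot-x_{\mathtt c})\in{\mathbb V}^{s+1/2}$ one recovers ${\mathcal T}v=v$; by linearity ${\mathcal T}^2={\mathcal T}$ on ${\mathbb V}^{s+1/2}$.

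The main obstacle will be the third step, showing that $\psi_{\rm c.c.}=\psi-{\mathcal T}\psi$ lies in $\dot{H}^{s+1/2}_{\rm c.c.}(\GD)$. For indices $2l+1<s$, the vanishing $\partial_x^{2l+1}\psi_{\rm c.c.}(x_{\mathtt c})=0$ follows from Steps 1--2. The delicate case is $s=2l+1$: using the expansion above, $\sigma({\mathtt c})\partial_x^s\chi_{[2l+1]}(x-x_{\mathtt c})=1+O(|x-x_{\mathtt c}|)$ on the side of ${\mathtt c}$ lying in $\GD$, so $\partial_x^s{\mathcal T}\psi - \alpha_{\mathtt c}=O(|x-x_{\mathtt c}|)$ near ${\mathtt c}$. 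Combined with the hypothesis $|x-x_{\mathtt c}|^{-1/2}(\partial_x^s\psi-\alpha_{\mathtt c})\in L^2$, this places $|x-x_{\mathtt c}|^{-1/2}\partial_x^s\psi_{\rm c.c.}$ in $L^2(\GD)$, with the desired continuity estimate. One must also verify away from the corners that $\psi_{\rm c.c.}\in\dot{H}^{s+1/2}(\GD)$, which is immediate since both $\psi$ and ${\mathcal T}\psi$ belong to that space.

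To conclude, the decomposition $\psi=\psi_{\rm c.c.}+{\mathcal T}\psi$ is unique: if $\psi=\psi_1+v_1$ with $\psi_1\in\dot{H}^{s+1/2}_{\rm c.c.}(\GD)$ and $v_1\in{\mathbb V}^{s+1/2}$, applying ${\mathcal T}$ and using ${\mathcal T}\psi_1=0$ (since all odd-order traces of $\psi_1$ vanish, including the weak one in the borderline case) together with ${\mathcal T}v_1=v_1$ forces $v_1={\mathcal T}\psi$. The norm equivalence then follows from Steps 1 and 3 (upper bound on $\vert\psi_{\rm c.c.}\vert_{\dot{H}^{s+1/2}_{\rm c.c.}}+\vert{\mathcal T}\psi\vert_{{\mathbb V}^{s+1/2}}$ by $\vert\psi\vert_{\dot{H}^{s+1/2}_{\rm tr}}$) and the triangle inequality (reverse direction).
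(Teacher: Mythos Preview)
Your proof is correct and follows exactly the approach the paper has in mind; in fact the paper omits the argument entirely, simply stating that the proof is straightforward. Your verification that ${\mathcal T}$ is a continuous projection with kernel $\dot{H}^{s+1/2}_{\rm c.c.}(\GD)$ is precisely the intended content, and the only minor simplification is that since $\chi_{[0]}\equiv 1$ in a neighborhood of the origin (not merely $\chi_{[0]}(0)=1$), one has $\sigma({\mathtt c})\partial_x^{s}\chi_{[2l+1]}(\cdot-x_{\mathtt c})=1$ \emph{exactly} near ${\mathtt c}$ in the borderline case $s=2l+1$, rather than $1+O(|x-x_{\mathtt c}|)$; this makes Step~3 even cleaner.
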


	The singularities in the variational solution $\psi^{\mathfrak h}$ arise if ${\mathcal T}\psi\neq 0$. They are expressed in terms of the singular functions introduced in the following definition.
	\begin{Def}\label{Def: Generating function}
		{\bf i.} We define on $\R$ smooth functions $G_{2l-1}$, $l\in {\mathbb N}$,  through the relations
		$$
		G_{-1}(x)=- \frac{1}{\pi} \frac{1}{1+x^2}\quad\mbox{ and for all }l\in {\mathbb N}^*, \quad
		\begin{cases}
			\frac{{\rm d}^{2l}}{({\rm d}x)^{2l}}G_{2l-1} =2G_{-1}\\
			\frac{{\rm d}^k}{({\rm d}x)^k}G_{2l-1}(0)=0, \quad 0\leq k\leq 2l-1.
		\end{cases}
		$$
		{\bf ii.} For all $l\in {\mathbb N}$ and $z<0$, we define 
		$$
		\forall x\in \R, \qquad G_{2l-1}[z](x)=z^{2l-1}G_{2l-1}(\frac{x}{z}).
		$$
		{\bf iii.} For all $l\in {\mathbb N}$, we denote by ${\mathcal S}_{2l+1}$ de function defined for all $x\in \R$ and $z<0$ by
		$$
		{\mathcal S}_{2l+1}(x,z)=
		G_{2l+1}[z](x)+2G_{-1}(0)\ln (-z)\sum_{0\leq k'\leq l} (-1)^{l-k'}\frac{x^{2k'}z^{2(l-k')+1}}{(2k')!(2(l-k')+1)!}.
		$$
		{\bf iv.} For all ${\mathtt c}\in {\mathtt C}$, we denote by $\Theta_{\mathtt c}$ a smooth function with compact support in $\overline{\Omega}$, equal to $1$ in the neighborhood of ${\mathtt c}$ and whose support is small enough to have  $\Theta_{\mathtt c}\chi_{[0]}(\cdot -x_{\mathtt c})=\Theta_{\mathtt c}$.
		
	\end{Def} 
	
	%%%%%%%

	We can now state the main result of this section.
	\begin{thm}\label{Thm: sing decomp mixed pb}
		Let $\Omega$ and $\Gamma^{\rm D}$ be as in Assumption \ref{Assumption domain}, and assume moreover that all the contact angles are equal to $\pi/2$.
		Let also $s\geq 0$ and $\psi \in \dot{H}^{s+1/2}_{\rm tr}(\Gamma^{\rm D})$.Then the variational solution $\psi^{\mathfrak h}\in \dot{H}^1(\Omega)$ to \eqref{Eq: psi^H2} can be decomposed under the form
		$$
		\psi^{\mathfrak h}=\psi^{\mathfrak h}_{\rm sing}+\psi^{\mathfrak h}_{\rm reg}
		$$
		where $\psi^{\mathfrak h}_{\rm sing} \in H^{2l+2^-}(\Omega)^2$ for $l \in \N$ such that $2l+1\leq s$ and is defined by
		\begin{equation}\label{Eq: Sing psi}
			\forall (x,z)\in \Omega, \qquad \psi^{\mathfrak h}_{\rm sing}(x,z)= \sum_{{\mathtt c}\in {\mathtt C}}\sum_{2l+1\leq s}\sigma({\mathtt c})\partial_x^{2l+1}\psi(x_{\mathtt c}) \Theta_{{\mathtt c}}(x,z){\mathcal S}_{2l+1}(x-x_{\mathtt c},z),
			% should also depend on cut off to localize close to corner. will check proof.
		\end{equation}
		with $\sigma({\mathtt c})$ as defined in Definition \ref{defT},
		while $\nabla\psi^{\mathfrak h}_{\rm reg}\in {H}^{s}(\Omega)^2$ and satisfies, for some constant $C>0$ independent of $\psi$,
		$$
		\Vert \nabla\psi^{\mathfrak h}_{\rm reg}\Vert_{H^s(\Omega)}\leq C \vert \psi \vert_{\dot{H}^{s+1/2}_{\rm tr}(\GD)}.
		$$ 
	\end{thm}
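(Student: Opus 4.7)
The plan is to combine the decomposition of the data provided by Proposition \ref{propVs} with the sharp estimate of Proposition \ref{Prop: Elliptic reg DN} for compatible data, treating the singular contributions arising from the complementary space ${\mathbb V}^{s+1/2}$ by explicit local computation. Write $\psi = \psi_{\rm c.c.} + {\mathcal T}\psi$ with $\psi_{\rm c.c.}\in \dot{H}^{s+1/2}_{\rm c.c.}(\Gamma^{\rm D})$ and ${\mathcal T}\psi \in {\mathbb V}^{s+1/2}$, and denote by $\psi^{\mathfrak h}_{\rm c.c.}$ and $({\mathcal T}\psi)^{\mathfrak h}$ the corresponding variational harmonic extensions, so that $\psi^{\mathfrak h} = \psi^{\mathfrak h}_{\rm c.c.} + ({\mathcal T}\psi)^{\mathfrak h}$ by linearity of the variational problem \eqref{Eq: psi^H2}. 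Proposition \ref{Prop: Elliptic reg DN} applied to $\psi^{\mathfrak h}_{\rm c.c.}$ furnishes a contribution to $\psi^{\mathfrak h}_{\rm reg}$ satisfying the announced bound, so the whole analysis reduces to the structure of $({\mathcal T}\psi)^{\mathfrak h}$, and further, by linearity, to the harmonic extension of a single building block $\chi_{[2l+1]}(\cdot - x_{\mathtt c})_{|_{\Gamma^{\rm D}}}$ for each finite corner ${\mathtt c}$ and each $l\in {\mathbb N}$ with $2l+1\leq s$.

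Fix such a corner ${\mathtt c}=(x_{\mathtt c},0)$. Because the contact angle equals $\pi/2$ and $\Gamma^{\rm N}$ is flat near ${\mathtt c}$ by Assumption \ref{Assumption domain}(4), $\Omega$ coincides in a neighborhood of ${\mathtt c}$ with a quadrant bounded by a horizontal piece of $\Gamma^{\rm D}$ and a vertical piece of $\Gamma^{\rm N}$. Even reflection across that vertical wall converts the mixed problem into a pure Dirichlet problem on the lower half-plane whose datum is the even extension of $\chi_{[2l+1]}(\cdot-x_{\mathtt c})$ across $x_{\mathtt c}$, which is just $\chi_{[2l+1]}(\cdot-x_{\mathtt c})$ itself since this function is even. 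The harmonic extension is therefore given by the Poisson integral
\begin{equation*}
f^{\rm P}(x,z) = \int_{\R} G_{-1}[z](x-y)\,\chi_{[2l+1]}(y-x_{\mathtt c})\,{\rm d}y,
\end{equation*}
and, cutting off by $\Theta_{\mathtt c}$, one matches $\Theta_{\mathtt c} f^{\rm P}$ to the true variational solution of \eqref{Eq: psi^H2} modulo a smooth source supported away from ${\mathtt c}$, to which Proposition \ref{Prop: Elliptic reg DN} (combined with the localization lemmas \ref{LMell1}--\ref{LMell2} already established for the compatible case) applies.

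The heart of the argument is the extraction of the explicit singular part of $f^{\rm P}$ at $(x_{\mathtt c},0)$. The function $\chi_{[2l+1]}$ is smooth except that $\partial_x^{2l+1}\chi_{[2l+1]}(0^\pm)=\pm 1$, so integrating by parts $2l+1$ times in $y$ transfers all the smoothness of the Poisson kernel $G_{-1}[z]$ onto polynomial factors and reduces the analysis to the harmonic extension of the sign function, which is (up to normalization) $\arctan(x/z)$. Iterated integration in $x$ against the prescribed vanishing conditions $\partial_x^k G_{2l-1}(0)=0$ for $0\leq k\leq 2l-1$ produces $G_{2l+1}[z]$ by construction, and the logarithmic contributions $\ln(x^2+z^2)$ generated by the antidifferentiation of $\arctan(x/z)$ survive at $x=0$ as $\ln(-z)$ weighted by homogeneous harmonic polynomials; the precise polynomial weights in the formula for ${\mathcal S}_{2l+1}$ are then uniquely determined by the harmonicity constraint $\Delta {\mathcal S}_{2l+1}=0$ together with the recursion defining $G_{2l+1}$. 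The announced regularity $\psi^{\mathfrak h}_{\rm sing}\in H^{2l+2^-}(\Omega)$ follows directly from the $(2l+1)$-homogeneity of $G_{2l+1}[z](x)$ in $(x,z)$ and the fact that the $\ln(-z)$ factor yields precisely the $\epsilon$-loss of regularity.

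Once ${\mathcal S}_{2l+1}$ is identified, setting $\psi^{\mathfrak h}_{\rm sing}$ as in \eqref{Eq: Sing psi}, one verifies that the trace of $\psi^{\mathfrak h}-\psi^{\mathfrak h}_{\rm sing}$ on $\Gamma^{\rm D}$ belongs to $\dot{H}^{s+1/2}_{\rm c.c.}(\Gamma^{\rm D})$ by design of ${\mathcal S}_{2l+1}$ (it subtracts off precisely the jumps in odd derivatives at each corner), while the cutoffs $\Theta_{\mathtt c}$ introduce only smooth sources supported away from the corners. The remainder $\psi^{\mathfrak h}_{\rm reg}$ thus solves a mixed elliptic problem of the form \eqref{Eq: Laplace for u = phi - psi_ext}, and the gradient bound $\Vert \nabla \psi^{\mathfrak h}_{\rm reg}\Vert_{H^s(\Omega)}\leq C\vert\psi\vert_{\dot{H}^{s+1/2}_{\rm tr}(\Gamma^{\rm D})}$ follows from Proposition \ref{Prop: Elliptic reg DN} together with Lemma \ref{LMell2}. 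The main obstacle will be the third step: carrying out the singular expansion of $f^{\rm P}$ with the right bookkeeping of polynomial and logarithmic contributions so that ${\mathcal S}_{2l+1}$ is \emph{exactly} harmonic and captures the entire corner singularity; every other step is a systematic application of the results already established earlier in Section \ref{Subsec: mixed pb pi/2}.
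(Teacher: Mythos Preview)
Your proposal is correct and follows essentially the same approach as the paper: decompose $\psi=\psi_{\rm c.c.}+{\mathcal T}\psi$ via Proposition~\ref{propVs}, apply Proposition~\ref{Prop: Elliptic reg DN} to the compatible part, reduce the singular part to the harmonic extension of each building block $\chi_{[2l+1]}(\cdot-x_{\mathtt c})$, identify that extension locally with the half-plane Poisson integral $\exp(z|D|)\chi_{[2l+1]}$, and extract the explicit singularity ${\mathcal S}_{2l+1}$. The only notable difference is organizational: where you describe the singular computation as ``integrating by parts $2l+1$ times'' to reduce to the harmonic extension of the sign function, the paper instead differentiates $2l+2$ times (using $\partial_x^{2l+2}\chi_{[2l+1]}=2\delta+r_{2l+1}$ with $r_{2l+1}$ smooth) to reach the Poisson kernel $G_{-1}[z]$ itself, and then integrates back via an explicit induction relation $a_{2n+1}''=-a_{2n-1}$ on the $z$-dependent coefficients, which cleanly isolates the $\ln(-z)$ terms and the harmonic polynomial weights; this bookkeeping device is what makes the ``main obstacle'' you flag entirely mechanical.
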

	
	\begin{remark}\label{Remark: Sing decomp}
		One can compute
		$$
		G_1(x)=-\frac{2}{\pi}\big(x\arctan(x)-\frac{1}{2}\ln(x^2+1)\big)
		$$
		and therefore
		$$
		G_1[z](x)=-\frac{2}{\pi}\big(x\arctan(\frac{x}{z})-\frac{1}{2}z\ln((\frac{x}{z})^2+1)\big).
		$$
		Since $G_{-1}(0)=-\frac{1}{\pi}$, the first singular function ${\mathcal S}_1$ takes the form
		\begin{align*}
			{\mathcal S}_1(x,z)&=G_1[z](x) - \frac{2}{\pi} z\ln (-z) \\
			&=\frac{1}{\pi}\big(z\ln(x^2+z^2)- 2x\arctan(\frac{x}{z})\big).
		\end{align*}  
	\end{remark}

	%%%%%%%

	\begin{proof} 
		According to Proposition \ref{propVs}, we can write
		$$
		\psi^{\mathfrak h}=\psi_{\rm c.c.}^{\mathfrak h}+({\mathcal T}\psi)^{\mathfrak h}.
		$$
		We know by Proposition \ref{Prop: Elliptic reg DN} that $\nabla\psi_{\rm c.c.}^{\mathfrak h}\in H^s(\Omega)$ and that $\Vert \nabla\psi_{\rm c.c.}^{\mathfrak h}\Vert_{H^s(\Omega)}\lesssim \vert \psi_{\rm c.c.}\vert_{\dot{H}^{s+1/2}_{\rm c.c.}}$, so that we just need to decompose $({\mathcal T}\psi)^{\mathfrak h}$ into a singular and a regular part belonging to $H^{s+1}(\Omega)$. By definition of ${\mathcal T}\psi$, we just need to perform such a decomposition on the functions ${\mathbf X}_{[2l+1]}^{\mathtt c}:=\big(\chi_{[2l+1]}(\cdot-x_{\mathtt c})\big)^{\mathfrak h}$, $l\in {\mathbb N}$, where ${\mathtt c}=(x_{\mathtt c},0)$ is a finite corner. 
		Let us for instance assume that ${\mathtt c}={\mathtt c}_{\rm r}=(x_{\rm r},0)$ is the contact point at the right of the object; the other contact points can be treated similarly. Without loss of generality, we can assume that $x_{\rm r}=0$. \\
		Proceeding as in the proof of Lemma \ref{LMell1}, one gets that
		$$
		{\mathbf X}^{{\mathtt c}_{\rm r}}_{[2l+1]} - (1-\chi_-)\exp(z|D|)\chi_{[2l+1]}(\cdot ) \in H^\infty(\Omega).
		$$
		We also get from classical arguments (Lemma 5.1.1.1 in \cite{Grisvard85} for instance) that the singularities are located at the corners, namely, $(1-\sum_{{\mathtt c}\in {\mathtt C}}\Theta_{\mathtt c}) {\mathbf X}^{{\mathtt c}_{\rm r}}_{[2l+1]}\in H^\infty(\Omega)$; observing that 
		$(\sum_{{\mathtt c}\in {\mathtt C}}\Theta_{\mathtt c})(1-\chi_-)=\Theta_{{\mathtt c}_{\rm r}}$, we deduce that
		\begin{equation}\label{decompreg}
			{\mathbf X}^{{\mathtt c}_{\rm r}}_{[2l+1]}-\Theta_{{\mathtt c}_{\rm r}}\exp(z|D|)\chi_{[2l+1]}(\cdot ) \in H^\infty(\Omega).
		\end{equation}
		We now need the following lemma; we recall that ${\mathcal S}_{2l+1}(x,z)$ has been defined in Definition \ref{Def: Generating function}.
		\begin{lemma}
			Let $l \in \N$. Then there exists a function $R_{2l+1}\in C^\infty(\{(x,z)\in \R^2, z\leq 0\})$ such that there holds	
			$$
			\big(\exp(z | D|) \chi_{[2l+1]}\big)(x)={\mathcal S}_{2l+1}(x,z) 
			+R_{2l+1}(x,z).
			$$
		\end{lemma}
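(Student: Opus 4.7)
The plan is to trade the corner singularity for an identity between explicit functions by taking $(2l+2)$ derivatives in $x$; the rest is recovered from harmonicity and the algebraic structure of ${\mathcal S}_{2l+1}$.

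\textbf{Step 1: matching of top derivatives.} Using the Leibniz rule and the distributional identity $(|x|^{2l+1})^{(2l+2)}=2\,(2l+1)!\,\delta_0$, together with the fact that every derivative of $\chi_{[0]}$ vanishes in a neighborhood of the origin, one obtains
\[
\chi_{[2l+1]}^{(2l+2)}=2\delta_0+\rho,\qquad \rho\in C_c^\infty(\R).
\]
Since $\exp(z|D|)\delta_0=G_{-1}[z]$ is the Poisson kernel and the Poisson extension of a smooth compactly supported function is of class $C^\infty(\{z\leq 0\})$, we obtain $\partial_x^{2l+2}\bigl(\exp(z|D|)\chi_{[2l+1]}\bigr)=2G_{-1}[z]+\exp(z|D|)\rho$. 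On the other side, the polynomial--$\ln(-z)$ correction in ${\mathcal S}_{2l+1}$ has $x$-degree $2l$ and is annihilated by $\partial_x^{2l+2}$, while the self-similarity identity $\partial_x^{2l+2}G_{2l+1}[z](x)=z^{-1}G_{2l+1}^{(2l+2)}(x/z)$ combined with the defining relation $G_{2l+1}^{(2l+2)}=2G_{-1}$ yields $\partial_x^{2l+2}G_{2l+1}[z]=2G_{-1}[z]$. Subtracting,
\[
\partial_x^{2l+2}R_{2l+1}(x,z)=\exp(z|D|)\rho(x)\in C^\infty(\{z\leq 0\}).
\]

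\textbf{Step 2: integration and harmonicity.} Let $\Phi\in C^\infty(\{z\leq 0\})$ be the antiderivative of $\exp(z|D|)\rho$ obtained by integrating $2l+2$ times in $x$ from the basepoint $x=0$. Then
\[
R_{2l+1}(x,z)=\Phi(x,z)+\sum_{k=0}^{2l+1}a_k(z)\,x^k
\]
for unique functions $a_k\in C^\infty((-\infty,0))$. A short self-similarity computation shows that ${\mathcal S}_{2l+1}$ is harmonic on $\{z<0\}$: the defect $\Delta G_{2l+1}[z]$ is of the form $z^{-1}$ times a polynomial in $x$, and the Laplacian of the polynomial--$\ln(-z)$ correction is designed to cancel it exactly---this is the purpose of the coefficients $(-1)^{l-k'}/((2k')!(2(l-k')+1)!)$, as illustrated in the $l=0$ case of Remark~\ref{Remark: Sing decomp}. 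Since the Poisson extension is harmonic, so is $R_{2l+1}$ on $\{z<0\}$. Substituting the decomposition into $\Delta R_{2l+1}=0$ and equating coefficients of each $x^k$ produces a triangular system of second-order ODEs linking $a_k''(z)$ to $a_{k+2}(z)$ and Taylor coefficients at $x=0$ of the smooth function $\Delta\Phi(\cdot,z)$. A downward recursion reduces the smoothness at $z=0$ of every $a_k$ to smoothness of the top two, $a_{2l}(z)$ and $a_{2l+1}(z)$.

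\textbf{Step 3 (main obstacle): smoothness of $a_{2l}$ and $a_{2l+1}$ at $z=0$.} These two coefficients are extracted from
\[
a_k(z)=\tfrac{1}{k!}\bigl(\partial_x^k\exp(z|D|)\chi_{[2l+1]}(0,z)-\partial_x^k{\mathcal S}_{2l+1}(0,z)-\partial_x^k\Phi(0,z)\bigr),\quad k\in\{2l,\,2l+1\}.
\]
The Poisson term $\partial_x^k\exp(z|D|)\chi_{[2l+1]}(0,z)=\int G_{-1}[z](-y)\,\chi_{[2l+1]}^{(k)}(y)\,dy$ is computed via the Leibniz expansion $\chi_{[2l+1]}^{(k)}=c_k\,\mathrm{sgn}(y)^{k-2l-1}|y|^{2l+1-k}\chi_{[0]}(y)+(\text{smooth, compactly supported})$, together with $\int G_{-1}[z](-y)\mathrm{sgn}(y)\chi_{[0]}(y)\,dy=0$ (by odd symmetry) and the rescaling identity $\int G_{-1}[z](-y)|y|^s\chi_{[0]}(y)\,dy=\alpha_s(-z)^s+\beta_s z^{s+1}\ln(-z)+(\text{smooth in }z)$. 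The term $\partial_x^k{\mathcal S}_{2l+1}(0,z)$ is read off from the explicit form of ${\mathcal S}_{2l+1}$ together with the Taylor expansion of $G_{2l+1}$ at zero. The technical heart of the argument---and the main obstacle---is verifying that the $z^m\ln(-z)$ contributions from the two expressions cancel exactly: this is precisely what the coefficients $(-1)^{l-k'}/((2k')!(2(l-k')+1)!)$ in the polynomial--$\ln(-z)$ correction are calibrated to achieve. Once $a_{2l}$ and $a_{2l+1}$ extend smoothly to $z=0$, the harmonic recurrence of Step 2 propagates smoothness to all lower $a_k$, giving $R_{2l+1}\in C^\infty(\{(x,z)\in\R^2:z\leq 0\})$.
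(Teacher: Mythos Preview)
Your overall architecture is the same as the paper's: differentiate $2l+2$ times in $x$ to land on $2G_{-1}[z]+\text{(smooth)}$, then integrate back using harmonicity and recover the polynomial-in-$x$ integration constants via a second-order ODE recursion in $z$. The difference is in how the singular part is handled, and that is where your argument has a gap.

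In Step 3 you correctly identify that everything hinges on the smoothness at $z=0$ of the top two Taylor coefficients $a_{2l}(z)$ and $a_{2l+1}(z)$, and you state that the $z^m\ln(-z)$ contributions from $\partial_x^k\exp(z|D|)\chi_{[2l+1]}(0,z)$ and $\partial_x^k{\mathcal S}_{2l+1}(0,z)$ ``cancel exactly'' because the coefficients in ${\mathcal S}_{2l+1}$ are ``calibrated to achieve'' this. But this is the content of the lemma, not a proof of it; you have reduced the statement to an equivalent one and then asserted it. You also do not carry out the ``short self-similarity computation'' showing $\Delta{\mathcal S}_{2l+1}=0$ in Step 2, which for general $l$ is not immediate. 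A minor point: since $\chi_{[2l+1]}$ and ${\mathcal S}_{2l+1}$ are even in $x$, so is $R_{2l+1}$, and hence all odd-index $a_k$ vanish identically; in particular $a_{2l+1}\equiv 0$ and only $a_{2l}$ needs analysis. Exploiting this parity would have halved your work.

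The paper avoids the verification entirely by running the argument in the other direction. Instead of taking ${\mathcal S}_{2l+1}$ as given and checking cancellation, it \emph{derives} the integration constants: writing $\alpha(z)=\partial_x^{2(l-k)}\bigl(\exp(z|D|)\chi_{[2l+1]}\bigr)(0,z)$ at each stage and applying the Laplacian to the integrated identity, it obtains the recursion $a_{2n+1}''=-a_{2n-1}$ with seed $a_{-1}(z)=2G_{-1}[z](0)\sim z^{-1}$. Any solution of this recursion is then shown to have the form $2G_{-1}(0)(-1)^{n+1}\tfrac{z^{2n+1}}{(2n+1)!}\ln(-z)+p_{2n+1}(z)$ with $p_{2n+1}$ polynomial. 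The logarithmic part is thus produced by the analysis rather than matched against a predetermined expression, so no cancellation needs to be checked and the harmonicity of ${\mathcal S}_{2l+1}$ comes for free. To close your argument you would need either to perform the explicit computation you describe in Step 3, or to adopt this derivation-based viewpoint.
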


		\begin{proof}[Proof of the lemma]
			Recall that for all $z<0$, we have the explicit Fourier transform
			$$
			{\mathcal F}\big[ 
			G_{-1}{[z]}(\cdot)\big](\xi)=\exp( z|\xi|),
			$$
			so that
			$$
			\exp(z | D|) \chi_{[2l+1]} =  
			G_{-1}{[z]}*\chi_{[2l+1]}.
			$$
			Remarking that $\partial_x^{2l+2}\chi_{[2l+1]} =2\delta +r_{2l+1}$, where $\delta$ denotes the Dirac mass and $r_{2l+1}$ a smooth compactly supported function,
			we obtain that
			\begin{align*}
				\partial_x^{2l+2}\big(\exp(z | D|) \chi_{[2l+1]}\big)(x)
				&=%-(\frac{2}{\pi})^{\frac{1}{2}} 
				(G_{-1}{[z]}(\cdot)* \big(\partial_x^{2l+2}\chi_{[2l+1]})\big)(x) \\
				&=%-2(\frac{2}{\pi})^{\frac{1}{2}} 
				g(x,z),
			\end{align*}
			where $g(x,z):=2G_{-1}[z](x)+ (G_{-1}{[z]}*r_{2l+1})(x)$. Let us now introduce the induction relation for sequences of functions $(a_{2n+1})_{n\geq -1}$ defined on the half-line $\{z<0\}$,
			\begin{equation}\label{induc}
				a_{-1}(z)=2G_{-1}[z](0),\quad \mbox{ and for all}\quad  n\in {\mathbb N}, \quad a_{2n+1}''(z)=-a_{2n-1}(z);
			\end{equation}
			note that if $(a_{2n+1})_n$ and $(\widetilde{a}_{2n+1})_n$ are two sequences of functions satisfying \eqref{induc}, then for all $n\in {\mathbb N}$, $a_{2n+1}-\widetilde{a}_{2n+1}$ is a polynomial in $z$ of order $2n+1$.
			We want to prove by induction that there exists a sequence of smooth functions $a_{2k+1}$ satisfying \eqref{induc} and such that for all $0\leq k\leq l+1$, one has for all $z<0$,
			$$
			\partial_x^{2(l+1-k)}\big(\exp(z | D|) \chi_{[2l+1]}\big)=\sum_{0\leq k'<k} a_{2(k-k')-1}(z)\frac{x^{2k'}}{(2k')!}+I^{2k}(g(\cdot,z)),
			$$
			where we denote by $I$ the integration operator defined for all $f\in L^1_{\rm loc}(\R)$ by $I(f)(x)=\int_0^x f$.
			If the above property is true for $k\leq l$, then integrating twice with respect to $x$ and remarking that
			$\partial_x^{2(l-k)+1}\big(\exp(z | D|) \chi_{l}\big)(0)=0$ for all $z<0$ because $\partial_x^{2(l-k)}\big(\exp(z | D|) \chi_{l}\big)$ is regular and even, one obtains
			\begin{equation}\label{induchyp}
				\partial_x^{2(l-k)}\big(\exp(z | D|) \chi_{l}\big)=
				\alpha(z)+\sum_{0\leq k'<k} a_{2(k-k')-1}\frac{x^{2k'+2}}{(2k'+2)!}+I^{2k+2}(g(\cdot,z)),
			\end{equation}
			where $
			\alpha(z)=\partial_x^{2(l-k)}\big(\exp(z | D|) \chi_{l}\big)(0)$.  			
			Since the left-hand side is harmonic, and			 using the fact that since $g$ is also harmonic
			and satisfies $\partial_xg(0,z)=0$, as can easily be checked from the definition of $g$, one has
			$$
			\Delta [I^{2(k+1)}(g(\cdot,z))](x)=\frac{x^{2k}}{(2k)!}g(0,z),%+\frac{x^{2k+1}}{(2k+1)!}\partial_xg(0,z),
			$$
			we obtain after applying the Laplace operator to the above identity that
			\begin{align*}
				0=&\sum_{0\leq k'<k} a_{2(k-k')-1}\frac{x^{2k'}}{(2k')!}
				+\alpha''(z)+\sum_{0\leq k'<k} a''_{2(k-k')-1}\frac{x^{2(k'+1)}}{(2(k'+1))!}+\frac{x^{2k}}{(2k)!} g(0,z).
			\end{align*}
			Recalling that $g(0,z)=2G_{-1}[z](0)=a_{-1}(z)$, we can rearrange this identity as
			\begin{align*}
				\alpha''(z)&=-\big[a_{2k-1}+\sum_{1\leq k'\leq k} (a_{2(k-k')-1}+a''_{2(k-k')+1})\frac{x^{2(k')}}{(2(k'))!}\big]\\
				&=-a_{2k-1},
			\end{align*}
			since the coefficients $a_{-1},\dots a_{2k-1}$ satisfy \eqref{induc}. It follows that if we set $a_{2k+1}=\alpha$, then \eqref{induc} is satisfied up to $n=k+1$. The relation \eqref{induchyp} then shows that the induction assumption is satisfied at $k+1$. Since it is also trivially satisfied at $k=0$, we deduce that it holds for all $1\leq k\leq l+1$. In particular, for $k=l+1$,
			$$
			\big(\exp(z | D|) \chi_{[2l+1]}\big)=\sum_{0\leq k'\leq l} a_{2(l+1-k')-1}\frac{x^{2k'}}{(2k')!}+G_{2l+1}[z]+I^{2l+2}(G_{[-1]}[z]*r_{2l+1}).
			$$
			Now, considering successive primitives of the function $z^{-1}$, one gets that any solution of the induction relation \eqref{induc}   is necessarily of the form
			$$
			a_{2n+1}(z)=2G_{-1}(0)(-1)^{n+1}\frac{z^{2n+1}}{(2n+1)!}\ln (-z)+p_{2n+1}(z),
			$$
			for all $n\in {\mathbb N}$, $a_{2n+1}$ and where $p_{2n+1}$ is a polynomial of order $2n+1$. Since $I^{2l+2}(G_{[-1]}[z]*r_{2l+1})$ is a smooth function on the closed lower half-plane, this concludes the proof of the lemma.
		\end{proof}
		It follows from \eqref{decompreg} and the lemma that
		$$
		{\mathbf X}^{{\mathtt c}_{\rm r}}_{[2l+1]} - \Theta_{{\mathtt c}_{\rm r}}{\mathcal S}_{2l+1}(\cdot-{\mathtt c}_{\rm r}) \in H^\infty(\Omega),
		$$
		from which one easily concludes that \eqref{Eq: Sing psi} holds true. Moreover, we observe from the lemma that $\psi^{\mathfrak{h}}_{\rm sing}$ has the same regularity as $\Theta_{{\mathtt c} }\exp(z | D|) \chi_{[2l+1]}$ and since $\partial_x^{2l+1}\chi_{[2l+1]} \in H^{1/2^-}(\R)$, we deduce that $\psi^{\mathfrak{h}}_{\rm sing} \in H^{2l+2^-}(\Omega)$ for $2l+1\leq s$. 
		
	\end{proof}

	\subsection{Higher order ellipticity of the Dirichlet-Neumann operator}\label{Subsec: Neumann pb pi/2}
	
	It was proved in \cite{LannesMing24} that if $\psi\in \dot{H}^{1/2}(\Gamma^{\rm D})$ is such that $G_0\psi\in L^2(\GD)$, then one has $\psi\in \dot{H}^1(\GD)$, with the estimate
	$$
	\vert \psi \vert_{\dot{H}^1(\GD)}\leq C \big( \vert \psi \vert_{\dot{H}^{1/2}(\GD)}+ \vert G_0\psi \vert_{L^2(\GD)}\big).
	$$
	This result was established through a Rellich identity which does not allow us to extend this ellipticity property to more regular spaces. The following proposition shows that $G_0\psi$ remains elliptic of order one on $H_{\rm c.c.}^{s-1/2}(\GD)$ for all $s\geq 1/2$; the above inequality corresponds to the particular case $s=1/2$.

	\begin{prop}\label{Prop: Elliptic reg Neumann}
		Let $\Omega$, $\Gamma^{\rm D}$ and $\Gamma^{\rm N}$ be as in Assumption \ref{Assumption domain}, and assume moreover that all the contact angles are equal to $\pi/2$.
		Let also $s\geq 1$.  For all $ \psi\in \dot{H}^{1/2}(\Gamma^{\rm D})$ such that $G_0\psi \in H_{\rm c.c.}^{s-\frac{1}{2}}(\Gamma^{\rm D})$,  one actually has  $\psi \in \dot{H}_{\rm c.c.}^{s+1/2}(\Gamma^{\rm D})$ and there holds
		$$
		\vert \psi\vert_{\dot{H}_{\rm c.c.}^{s+1/2}(\GD)}\leq C \big( \vert \psi \vert_{\dot{H}^{1/2}(\GD)}+ \vert G_0\psi \vert_{{H}_{\rm c.c.}^{s-1/2}(\GD)}\big),
		$$
		for some constant $C>0$ independent of $\psi$. 			
	\end{prop}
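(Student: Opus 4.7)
The plan is to induct on $s\geq 1/2$, the base case $s=1/2$ being established via a Rellich identity in Proposition~3.8 of \cite{LannesMing24}. Assume inductively that the statement holds up to order $s-1$ (with non-integer values filled in by interpolation between integer steps), so that $\psi\in \dot H^{s-1/2}_{\rm c.c.}(\GD)$ and $\nabla\psi^{\mathfrak h}\in H^{s-1}(\Omega)$ by Proposition~\ref{Prop: Elliptic reg DN}; to conclude it remains to gain one more derivative. A partition of unity reduces this to a local analysis near each finite contact point ${\mathtt c}\in {\mathtt C}$, since away from the corners the domain is smooth and standard Neumann elliptic regularity (applied as in Proposition~\ref{Prop: NN high reg small angle}) handles the estimate directly.

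The main idea is that at a contact angle of $\pi/2$, the corner is removed by a Boussinesq-type even reflection across the (vertical) wall. After translating ${\mathtt c}$ to the origin and using Assumption~\ref{Assumption domain}~(4) to shrink to a neighborhood in which the fluid coincides with the quadrant $\{x>0,\,z<0\}$ with $\Gamma^{\rm w}$ along $\{x=0\}$ and $\GD$ along $\{z=0\}$, set
\begin{equation*}
\phi^{e}(x,z):=\psi^{\mathfrak h}(|x|,z).
\end{equation*}
The Neumann condition $\partial_x \psi^{\mathfrak h}=0$ on the wall is precisely the compatibility needed for $\phi^{e}$ to extend harmonically across $\{x=0\}$. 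With a cutoff $\chi$ localizing to a smaller half-disk (analogously to Lemma~\ref{LMell2}), $\chi\phi^{e}$ then solves a Neumann problem on a smooth domain whose Neumann datum on $\{z=0\}$ is the even extension $(G_0\psi)^{e}$, modulo commutator terms of order $s$ in $\phi^{e}$ that are absorbed by the induction hypothesis.

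The decisive algebraic observation is the classical characterization: for any $r>0$, the even extension of $g\in H^r([0,\infty))$ belongs to $H^r(\R)$ if and only if $g^{(2l+1)}(0)=0$ for every $2l+1<r-1/2$, together with the weighted integrability $|x|^{-1/2}g^{(r-1/2)}\in L^2$ near $0$ in the borderline case $r-1/2\in 2\N+1$. Applied with $r=s-1/2$, these conditions are exactly the definition of $H^{s-1/2}_{\rm c.c.}(\GD)$ (compatibility at order $s-1$ in Definition~\ref{Def: Hcc}); applied with $r=s+1/2$ they give $\dot H^{s+1/2}_{\rm c.c.}(\GD)$. Consequently $(G_0\psi)^{e}\in H^{s-1/2}$ locally, so standard Neumann elliptic regularity on the smooth half-disk yields $\phi^{e}\in H^{s+1}$ locally, whence $\psi^{\mathfrak h}\in H^{s+1}$ near ${\mathtt c}$, and its Dirichlet trace on $\{z=0\}$ is an even function in $H^{s+1/2}(\R)$, i.e.\ $\psi\in \dot H^{s+1/2}_{\rm c.c.}(\GD)$ locally with the desired estimate.

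Gluing the near-corner estimates to the far-field bound via the partition of unity produces the global statement; the $|\psi|_{\dot H^{1/2}(\GD)}$ term on the right absorbs the lower-order remainders through the base case and the variational inequality $\|\nabla\psi^{\mathfrak h}\|_{L^2(\Omega)}\lesssim |\psi|_{\dot H^{1/2}(\GD)}$, while the unboundedness of $\GD$ is handled as in Proposition~\ref{Prop: NN high reg small angle} through cutoffs and the Poincaré-type Lemma~\ref{Lemma: Poincare type}. The main technical obstacle is the treatment of the borderline cases $s\in 2\N+1$ (for the conclusion) and $s\in 2\N+2$ (for the hypothesis): there pointwise derivatives at the corner are unavailable and one must instead propagate the weighted $|x|^{-1/2}\partial_x^{2l+1}\in L^2$ integrability through the reflection and trace steps, which requires the refined trace theorems on polygonal domains (for instance Theorem~1.5.2.4 of \cite{Grisvard85}).
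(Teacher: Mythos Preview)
Your approach is correct and rests on the same core mechanism as the paper's proof---namely, that a $\pi/2$ contact angle permits an even reflection across the vertical wall, and that the compatibility conditions of Definition~\ref{Def: Hcc} are precisely what make even extensions preserve Sobolev regularity (Proposition~\ref{propositionregext}). The difference is organizational. You reflect the harmonic extension $\psi^{\mathfrak h}$ itself across the wall and apply standard Neumann regularity to the resulting problem on a smooth half-space, with the even extension $(G_0\psi)^{e}$ as Neumann datum; the induction then bootstraps one derivative at a time. The paper instead first constructs an explicit Neumann lift $F$ of $G_0\psi$ via a Poisson-type kernel applied to the Boussinesq extension of the boundary data (Lemma~\ref{Lemma: extension Neumann}), so that the residual $u=\psi^{\mathfrak h}-F$ has homogeneous Neumann data on $\GD$ and data supported away from the corners; regularity for $u$ is then obtained in one shot by a separate localization-and-reflection argument (Lemma~\ref{Lemma: Neumann Neumann away from corner}). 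Your route is more direct and avoids building the lift $F$ explicitly; the paper's route is more modular, decouples the extension construction from the regularity step, and yields Lemma~\ref{Lemma: extension Neumann} as a reusable byproduct. Both handle the unbounded-domain issue through the same far-field devices (cutoffs, vertical averaging, Lemma~\ref{Lemma: Poincare type}), and both recover the compatibility conditions on $\psi$ at the end via the trace theorem on quadrants (Theorem~1.5.2.4 in \cite{Grisvard85}).
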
  
	
	\begin{proof}
		As in proof of Proposition \ref{Prop: Elliptic reg DN}, we consider the case where ${\mathcal E}_-= (x_{\rm L},x_{\rm l})$  is finite while  ${\mathcal E}_+ = (x_{\rm r}, \infty)$  is a half-line; the adaptation to the other possible configurations is straightforward. The proof relies on a Neumann extension property and on a regularity result for an elliptic problem with Neumann boundary conditions, with data that vanish near the corners.

		The proof of the following extension lemma is postponed to Appendix \ref{appcompatible} for the sake of clarity.
		\begin{lemma}\label{Lemma: extension Neumann} 
			Under the assumptions of the proposition, 			there exists $h_1>0$ such that for all $s\geq 1/2$ and  all  $ f\in H_{\rm c.c.}^{s - \frac{1}{2}}(\Gamma^{\rm D})$, there exists a function $F\in H^{s+1}(\Omega)$ such that $\Delta F$ is in $H^s(\Omega)$ and vanishes outside $\Omega_{x_{\rm l},x_{\rm r}}$, and  such that $\partial_{\rm n}  F= f$ on $\Gamma^{\rm D}$ and $\partial_{\rm n} F = 0$ on $ \Gamma^{\rm N}\cap \{z > -h_1\}$, and satisfying
			\begin{equation}\label{Eq: Est on nabla F ext in Hk} 
				\| F \|_{H^{s+1}(\Omega) } 
				\leq C | f|_{H^{s-1/2}_{\rm c.c.}(\Gamma^{\rm D})}  
			\end{equation}
			for a constant $C>0$ independent of $f$.
		\end{lemma}

		Using the extension provided in the lemma, with $f = G_0\psi$, we may now study $u = \psi^{\mathfrak h} - F$,
		which solves an elliptic problem of the form
		\begin{equation}\label{BVPNeumann}
			\begin{cases}
				\Delta u = \nabla \cdot \mathbf{h}  &\mbox{ in }\Omega,\\
				\partial_{\rm n} u =0 & \mbox{ on }\Gamma^{\rm D},\\
				\partial_{\rm n}u =n\cdot {\bf h}  & \mbox{ on }\Gamma^{\rm N},
			\end{cases}
		\end{equation}
		where $n=n^{\rm w}$ on $\Gamma^{\rm w}$ and $n=n^{\rm b}$ on $\Gamma^{\rm b}$. If $\nabla\cdot {\bf h} \in L^2(\Omega)$, then the normal traces ${\bf h}\cdot n$ on $\GD$ and $\Gamma^{\rm N}$ are well defined in the weak sense. In particular,  the normal trace ${\bf h}\cdot n$ on $\Gamma^{\rm D}$ makes sense in $\dot{H}^{1/2}(\GD)'$ so that the problem \eqref{BVPNeumann} is variational in $\dot{H}^1(\Omega)$ and the solution satisfies
		\begin{equation}\label{estk0}
			\Vert \nabla u\Vert_{L^2(\Omega)} \lesssim \Vert {\bf h}\Vert_{L^2(\Omega)} 
			+\Vert \nabla\cdot {\bf h}\Vert_{L^2(\Omega)}.
			%+ \vert ({\bf h}\cdot n)_{\vert_{\GD}}\vert_{\dot{H}^{1/2}(\GD)'}.
		\end{equation} 
		The following lemma shows that elliptic regularity is true for \eqref{BVPNeumann} in a configuration where $\nabla\cdot{\bf h}$ and ${\bf h}\cdot n$ are supported far from the corners. 
		
		\begin{lemma}\label{Lemma: Neumann Neumann away from corner}
			Under the assumptions of the proposition, there exists $h_1>0$ such that for all $s\geq 1$ and  $ \mathbf{h}\in H^{s}(\Omega)$ such that $\nabla \cdot \mathbf{h}$ is supported in $\Omega_{x_{\rm l},x_{\rm r}}:=\Omega\cap \{x_{\rm l}<x<x_{\rm r}\}$ and  $n\cdot {\bf h}$ is supported on $\Gamma^{\rm N}\cap \{z < -h_1\}$, the solution $u\in \dot{H}^1(\Omega)$ to  \eqref{BVPNeumann} satisfies  $\nabla u\in H^{s}(\Omega)^2$, and moreover
			$$
			\Vert \nabla u \Vert_{\dot{H}^{s}(\Omega)} \leq C 
			\Vert  \mathbf{h}\Vert_{H^{s}(\Omega)},
			%+\vert ({\bf h}\cdot n)_{\vert_{\GD}}\vert_{\dot{H}^{1/2}(\GD)'}
			$$
			for some constant $C$ independent of ${\bf h}$.
		\end{lemma}
		
		\begin{proof}[Proof of Lemma \ref{Lemma: Neumann Neumann away from corner}] The proof is similar to the one of Lemma \ref{LMell2}, where we consider the same configuration as in Lemma \ref{Lemma: extension Neumann}. The key difference with the proof of Lemma \ref{LMell2}, is that we do not have the Poincaré inequality at our disposal and will therefore require some adaptations. 
			
			It is enough to prove the case when $s=k$ is an integer, since the general case follows by interpolation.  As usual, we take $h_1>0$ small enough in order for $\Gamma^{\rm N}\cap \{z>-h_1\}$ to be a union of vertical segments. We also introduce suitable cut-off functions to correct the solution $u$ by its mean (on a two dimensional domain or on a vertical line), so that we can apply \lq\lq Poincaré-Wirtinger type\rq\rq\: estimates for an unbounded domain. Let 	$\theta$ be a smooth compactly supported function on ${\mathbb R}_-$ such that $\theta(z)=1$ for $-\frac{h_1}{2}\leq z\leq 0$ and  $\theta(z)=0$ for
			$z\leq - h_1$.
			Let also $\chi_{-}$ be a smooth cut-off function such that $\chi_{-}=1$ on ${\mathcal E}_-$ and $\chi_{-}=0$ on ${\mathcal E}_+$, and $\chi_{+}$ be another cut-off function such that $\chi_{+}=1$ for $x<a$ and $\chi_{+}=0$ for $x>a+1$, for some $a>x_{\rm r}$.
			
			Denoting $\Omega_a=\Omega\cap \{x<a+1\}$, we define the averaged quantities $\widetilde{u}$ and $\langle u\rangle$ as in Lemma \ref{Lemma: Poincare type} and \eqref{defavv} respectively. We can now decompose $u$ as 
			\begin{align*}
				u
				= & \big[\chi_{+} \langle u\rangle + (1-\chi_{+})\widetilde{u}]+
				\theta \chi_{-} (u-	\langle u\rangle) 
				+
				\theta \chi_{+} (1-\chi_{-}) (u-	\langle u \rangle) \\
				&+ 
				(1-\theta) \chi_{+} (u-	\langle u \rangle) 
				+
				(1-\chi_{+}) (u - \widetilde{u})
				\\ 
				= & u_{\rm 0}+
				u_{\rm I} + u_{\rm II} + u_{\rm III} + u_{\rm IV}.
			\end{align*} 
			In order to prove that $u$ satisfies the estimate of the lemma for $s=k$ with $k\in {\mathbb N}$, $k\geq 1$, we proceed by induction. We therefore assume that $\nabla u\in H^{l-1}(\Omega)$, for $1\leq l$. We show in steps Steps 0 to 4 that under this assumption, the five components in the above decomposition have their gradient in $H^{l}(\Omega)$. This allows us to conclude in Step 5 by a simple induction.
			\\

			\noindent
			{\bf Step 0.} \textit{Estimate on $u_{\rm 0}=\chi_{+} \langle u\rangle + (1-\chi_{+})\widetilde{u}$}. Proceeding exactly as in Step 0 of the proof of Proposition \ref{Prop: NN high reg small angle}, we obtain that
			$\Vert \nabla u_{\rm 0} \Vert_{{H}^{l}(\Omega)}\lesssim \Vert \nabla u \Vert_{H^{l-1/2}(\Omega)}$. 
			\\

			\noindent
			{\bf Step 1.} \textit{Estimate of $u_{\rm I}=\theta \chi_{-} (u-	\langle u\rangle) $}. The function $u_{\rm I}$ solves the elliptic boundary value
			$$
			\begin{cases}
				\Delta u_{\rm I}=h_{\rm I}  &\mbox{ in } (x_{\rm L},x_{\rm l})\times (-h_1,0)\\
				\partial_{\rm n}u_{\rm I}=0 & \mbox{ on }   (x_{\rm L},x_{\rm l})\times \{0\} \quad\mbox{ and }\quad (x_{\rm L},x_{\rm l})\times \{-h_1\} \\
				\partial_{\rm n}u_{\rm I}=0  & \mbox{ on } \{x_{\rm L}\}\times (-h_1,0) \cup \{x_{\rm L}\}\times (-h_1,0),
			\end{cases}
			$$
			with
			$$h_{\rm I}=2\theta'  \partial_z u + \theta'' ( u- \langle u \rangle),$$
			where we used the fact that $\nabla\cdot {\bf h}=0$ on $ (x_{\rm L},x_{\rm l})\times (-h_1,0)$.
			We let $L=x_{\rm l}-x_{\rm L}$,  and consider as in the proof of Lemma \ref{LMell2} the Boussinesq extensions $u_{\rm I}^{\rm B}$ and $h_{\rm I}^{\rm B}$ (see Section \ref{sectBouss}) of  $u_{\rm I}$ and $h_{\rm I}$;  on the periodic strip  ${\mathcal S}_{-} = \R/(2L{\mathbb Z})\times (-h_1,0)$ these extensions solve
			\begin{equation*}
				\begin{cases}
					\Delta u_{\rm I}^{\rm B}=h_{\rm I}^{\rm B}  &\mbox{ in } {\mathcal S}_{-}\\
					\partial_{\rm n}u_{\rm I}^{\rm B}=0 & \mbox{ on }  \R/(2L{\mathbb Z})\times \{0\} \quad\mbox{ and }\quad \R/(2L{\mathbb Z})\times \{-h_1\}.
				\end{cases}
			\end{equation*}
			For all $1\leq l' \leq l$, estimates on $\nabla \partial_x^{l'} u^{\rm B}_{\rm II} $ can be obtained as follows
			\begin{align*} 
				\| \nabla \partial_x^{l'} u^{\rm B}_{\rm I} \|_{L^2({\mathcal S}_{-})}^2
				\lesssim & 
				\Big{|}
				\int_{{\mathcal S}_{-}} \partial_x^{l'-1} h_{\rm I}^{\rm B}  \partial_x^{l'+1} u^{\rm B}_{\rm I}
				\Big{|} 
				\\ 
				\lesssim &  
				\Big{(}
				\| \partial_z  \partial_x^{l'-1} u \|_{L^2(R_-)} 
				+
				\|  \partial_x^{l'-1}  ( u - \langle u\rangle)\|_{L^2(R_-)}  
				\Big{)}
				\| \partial_x^{l'+1} u^{\rm B}_{\rm I} \|_{L^2({\mathcal S}_{-})},
			\end{align*}
			where $R_-$ is the finite box $R_-=(x_{\rm L},x_{\rm l})\times (-h_1,0))$. Since $R_-\subset \Omega_a$, we can use Poincaré's inequality as in Step 0 to control $\partial_x^{l'-1}  ( u - \langle u\rangle)$ when $l'=1$; all the other terms are controlled directly to obtain $\| \nabla \partial_x^{l'} u^{\rm B}_{\rm I} \|_{L^2({\mathcal S}_{-})} \lesssim \Vert \nabla u\Vert_{H^{l-1}(\Omega)}$. The control of the terms $\nabla\partial_x^{l'}\partial_z^{l''} u^{\rm B}$ with $l'+l''\leq l$ is obtained classically using the equation, so that we finally obtain 
			\begin{align*}
				\| \nabla u_{\rm I} \|_{{H}^{l}(\Omega)}
				& \lesssim  
				\|  \nabla u  \|_{{H}^{l-1}(\Omega)}.
			\end{align*}

			\noindent
			{\bf Step 2.} \textit{Estimate of $u_{\rm II}=\theta \chi_{+} (1-\chi_{-}) (u-	\langle u \rangle)$}. The function $u_{\rm II}$ solves the elliptic boundary value problem
			$$
			\begin{cases}
				\Delta u_{\rm II}=h_{\rm II}  &\mbox{ in } (x_{\rm r},\infty)\times (-h_1,0)
				\\
				\partial_{\rm n}u_{\rm II}=0 & \mbox{ on }   (x_{\rm r}, \infty)\times \{0\} \mbox{ and }\quad (x_{\rm r}, \infty)\times \{-h_1\},\\
				\partial_{\rm n}u_{\rm II}=0  & \mbox{ on } \{x_{\rm r}\}\times (-h_1,0),
			\end{cases}
			$$
			with
			$$
			h_{\rm II} =
			2\nabla(\theta \chi_{+}) \cdot \nabla   u 
			+
			[\theta''\chi_{+}+\theta \chi_{+}''] ( u- \langle u\rangle).
			$$
			The proof is the same as in the previous step, and we obtain %where we note that \eqref{Eq: Poincare 1} is still valid as $\chi_{+}$ is supported on a finite domain. We therefore conclude, for $\Omega^{\rm r} = (x_{\rm r},\infty)\times (-H_1,0)$, that there holds
			$ \Vert u_{\rm II} \|_{{H}^{l}(\Omega)}
			\lesssim 
			%	\Big{(}\|   \nabla u^{\rm B} \|_{\dot{H}^l({\mathcal S}_{-})}
			%	+	
			%	\sqrt{\nu} \| \nabla \partial_x^l u^{\rm B} \|_{L_{\rm per}^2({\mathcal S}_{-})}
			%	\Big{)}
			%				 \|\nabla u\|_{L^2(\Omega)}
			%				+
			\|  \nabla u  \|_{{H}^{l-1}(\Omega)}$.\\

			\noindent
			{\bf Step 3.}  \textit{Estimate of $u_{\rm III}=(1-\theta) \chi_{+} (u-	\langle u \rangle) $.} We argue as in Step 3. of the proof of Lemma \ref{LMell2} where one can find a regular bounded domain $\widetilde{\Omega}\subset \Omega$ whose boundary coincides with the boundary of $\Omega$ for $z<-\frac{h_1}{2}$ and $x \in  \mathrm{supp}(\chi_{+})$. Also let $g^{\rm ext} \in H^{k-\frac{1}{2}}(\partial\widetilde{\Omega})$ be a function that coincides with $(1-\theta) n\cdot {\bf h}$ for  $z<-\frac{h_1}{2}$ and is equal to $0$ for $z>-h_1/2$. The function $u_{\rm III}$ then solves a Neumann problem on a smooth domain, namely,
			$$
			\begin{cases}
				\Delta u_{\rm III}=h_{\rm III}  &\mbox{ in } \widetilde{\Omega}\\
				\partial_{\rm n}u_{\rm III}=g^{\rm ext}  & \mbox{ on }\partial\widetilde{\Omega},
			\end{cases}
			$$
			where 
			$$h_{\rm III}= (1-\theta)\chi_{+}\nabla \cdot \mathbf{h}+2\nabla\big{(} (1-\theta)\chi_{+} \big{)} \cdot \nabla u 
			+
			\big{(} 
			(1-\theta)\chi_{+}'' - \theta'' \chi_{+}
			\Big{)}(u -\langle u \rangle).$$
			The standard elliptic regularity for elliptic problems with Neumann boundary conditions on regular bounded domains then implies %\cite{Some classical referance}:
			\begin{align*}
				\| \nabla u_{\rm III} \|_{H^{l}(\widetilde \Omega)}
				& \lesssim  
				\| h_{\rm III} \|_{H^{l-1}(\widetilde \Omega)} + |g^{\rm ext}|_{H^{l-\frac{1}{2}}(\partial \widetilde{\Omega})}.
			\end{align*}
			Since $u_{\rm III}=0$ on $\Omega\backslash \widetilde{\Omega}$, from the definition of $h_{\rm III}$ and $g^{\rm ext}$, and from the trace theorem, we deduce
			\begin{align*}
				\| \nabla u_{\rm III} \|_{H^{l}(\Omega)} \lesssim  \|\nabla u\|_{H^{l-1}(\Omega)} 
				+ 
				\Vert  \mathbf{h}\Vert_{H^{l}(\Omega)}  .
				%+|g|_{H^{l-\frac{1}{2}}(\Gamma^{\rm N})}.
			\end{align*}

			\noindent
			{\bf Step 4.}  \textit{Estimate of $u_{\rm IV}=(1-\chi_{+}) (u - \widetilde{u})$.} Proceeding as in Step 2 of the proof of Proposition \ref{Prop: NN high reg small angle}, we obtain
			$$
			\Vert \nabla u_{\rm IV}\Vert_{H^l(\Omega)}\lesssim
			\|\nabla u\|_{H^{l-1/2}(\Omega)} 
			%\|\nabla u\|_{H^{l-1}(\Omega)} 
			+
			\Vert  \mathbf{h}\Vert_{H^{l}(\Omega)}.
			$$ 
			
			\noindent
			{\bf Step 5.} {\it Conclusion.}
			We deduce from the previous steps that
			$$
			\Vert \nabla u \Vert_{H^l(\Omega)}\lesssim 
			\|\nabla u\|_{H^{l-1/2}(\Omega)} +
			%\|\nabla u\|_{H^{l-1}(\Omega)} 
			%+
			\Vert  \mathbf{h}\Vert_{H^{l}(\Omega)}.
			$$
			From the interpolation inequality $\|\nabla u\|_{H^{l-1/2}(\Omega)}\lesssim \|\nabla u\|_{H^{l-1}(\Omega)}^{1/2}\|\nabla u\|_{H^{l}(\Omega)}^{1/2}$ and Young's inequality, we infer that
			$$
			\Vert \nabla u \Vert_{H^l(\Omega)}\lesssim 
			\|\nabla u\|_{H^{l-1}(\Omega)} +
			%\|\nabla u\|_{H^{l-1}(\Omega)} 
			%+
			\Vert  \mathbf{h}\Vert_{H^{l}(\Omega)}.
			$$
			By a finite induction on $1\leq l\leq k$ and starting from  \eqref{estk0} that corresponds to $k=0$, we obtain that 
			$$
			\Vert \nabla u \Vert_{H^k(\Omega)}\lesssim 
			\Vert  \mathbf{h}\Vert_{H^{k}(\Omega)}, 
			$$
			which concludes the proof of the lemma.

		\end{proof}

		Using Lemma \ref{Lemma: Neumann Neumann away from corner}, with ${\bf h}=-\nabla F$ and $F$ the Neumann extension of $f=G_0\psi$ given by Lemma \ref{Lemma: extension Neumann}, we get that $\psi^{\mathfrak h}=u+F$ satisfies $\nabla \psi^{\mathfrak h}\in H^s(\Omega)^2$
		and that 
		\begin{align*}
			\Vert \nabla \psi^{\mathfrak h}\Vert_{H^s(\Omega)}&\lesssim \Vert F \Vert_{H^{s+1}(\Omega)} +\vert G_0\psi \vert_{\dot{H}^{1/2}(\GD)'}\\
			&\lesssim \vert G_0\psi \vert_{H^{s-1/2}_{\rm c.c.}(\Gamma^{\rm D})}+\vert \psi\vert_{\dot{H}^{1/2}(\GD)}.
		\end{align*}
		By the trace theorem on polygons (e.g. Theorem 1.4.9 in \cite{Grisvard85}), we deduce that $\psi=(\psi^{\mathfrak h})_{\vert_{\GD}}$ is in $\dot{H}^{s+1/2}_{\rm c.c.}(\GD)$ and that 
		$\vert \psi\vert_{\dot{H}^{s+1/2}_{\rm c.c.}(\GD)} \lesssim \Vert \nabla \psi^{\mathfrak h}\Vert_{H^s(\Omega)}$, from which the proposition follows. 		%

	\end{proof}

	\subsection{Characterization of $\dot{\mathcal{H}}^{n/2}(\Gamma^{\rm D})$ and  $\mathcal{H}^{n/2}(\Gamma^{\rm D})$ when the contact angles are $\pi/2$}
	\label{Sec: Characterization with vertical walls}
	In Section \ref{Subsec: Well-posedness of Fritz John's model} we saw that the natural function spaces associated with the Fritz John's model are $\dot{\mathcal{H}}^{n/2}(\Gamma^{\rm D})$ and  $\mathcal{H}^{n/2}(\Gamma^{\rm D})$ defined as
	\begin{align*}
		\dot{\mathcal H}^{n/2}(\Gamma^{\rm D}) &=\{ f\in \dot{H}^{1/2}(\Gamma^{\rm D}), \quad \forall 1\leq j\leq n, \quad G_0^{j/2}f \in L^2(\Gamma^{\rm D})\},\\
		{\mathcal H}^{n/2}(\Gamma^{\rm D}) &= L^2(\Gamma^{\rm D})\cap \dot{\mathcal H}^{n/2}(\GD).
	\end{align*}	
	For small angles, we were able to show that   $\dot{\mathcal H}^{n/2}(\Gamma^{\rm D}) =\dot{H}^{n/2}(\Gamma^{\rm D})$ and  ${\mathcal H}^{n/2}(\Gamma^{\rm D}) =H^{n/2}(\Gamma^{\rm D})$. However this is not the case in general. In the case where the contact angles are assumed to be equal to $\pi/2$, these spaces are different because of compatibility conditions, as shown in the proposition below.	
	\begin{prop}\label{propHNdroit}
		Let $\Omega$, $\Gamma^{\rm D}$ and $\Gamma^{\rm N}$ be as in Assumption \ref{Assumption domain}, and assume moreover that all the contact angles are equal to $\pi/2$. Then for all $n\in {\mathbb N}^*$, one has $\dot{\mathcal H}^{n/2}(\Gamma^{\rm D}) =\dot{H}_{\rm c.c.}^{n/2}(\Gamma^{\rm D})$ and  ${\mathcal H}^{n/2}(\Gamma^{\rm D}) =H_{\rm c.c.}^{n/2}(\Gamma^{\rm D})$. 
	\end{prop}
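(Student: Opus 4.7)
The plan is to proceed by induction on $n$, using the compatibility-preserving elliptic estimates of Propositions \ref{Prop: Elliptic reg DN} and \ref{Prop: Elliptic reg Neumann} as the crucial tools that turn regularity for $G_0\psi$ into regularity for $\psi$ (and vice versa) while keeping track of the conditions at the corners.

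For the base cases $n=1,2$, one observes that with $s=0$ or $s=1/2$ the definition of $\dot{H}^{s+1/2}_{\rm c.c.}(\GD)$ reduces to $\dot{H}^{s+1/2}(\GD)$: no integer of the form $2l+1$ satisfies $2l+1<s$, and $s\notin 2{\mathbb N}+1$. Combined with Proposition \ref{propHn}(i) (which gives $\dot{\mathcal H}^{n/2}=\dot{H}^{n/2}$ and ${\mathcal H}^{n/2}=H^{n/2}$ for $n=1,2$ in all admissible geometries), this yields the claim at these orders.

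For the inductive step, assume $\dot{\mathcal H}^{k/2}=\dot{H}^{k/2}_{\rm c.c.}$ and ${\mathcal H}^{k/2}=H^{k/2}_{\rm c.c.}$ for every $1\le k\le n-1$ with $n\ge 3$, and set $s=(n-1)/2\ge 1$. To show $\dot{H}^{n/2}_{\rm c.c.}\subset \dot{\mathcal H}^{n/2}$, take $\psi\in \dot{H}^{s+1/2}_{\rm c.c.}(\GD)$; Proposition \ref{Prop: Elliptic reg DN} yields $G_0\psi\in H^{s-1/2}_{\rm c.c.}(\GD)=H^{(n-2)/2}_{\rm c.c.}(\GD)$, which by the induction hypothesis equals ${\mathcal H}^{(n-2)/2}(\GD)$. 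Translating the condition $G_0^{k/2}(G_0\psi)\in L^2$ for $0\le k\le n-2$ into $G_0^{j/2}\psi\in L^2$ for $2\le j\le n$, and observing that $G_0^{1/2}\psi\in L^2$ is equivalent to $\psi\in \dot{H}^{1/2}(\GD)$, we conclude $\psi\in \dot{\mathcal H}^{n/2}(\GD)$. For the reverse inclusion, let $\psi\in \dot{\mathcal H}^{n/2}(\GD)$. Unpacking the definition at odd order shows that $G_0\psi\in \dot{H}^{1/2}(\GD)\cap L^2(\GD)$ and $G_0^{k/2}(G_0\psi)\in L^2(\GD)$ for $1\le k\le n-2$, hence $G_0\psi\in {\mathcal H}^{(n-2)/2}(\GD)$. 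By the induction hypothesis, $G_0\psi\in H^{(n-2)/2}_{\rm c.c.}(\GD)$. Applying Proposition \ref{Prop: Elliptic reg Neumann} (whose hypothesis $s\ge 1$ is satisfied since $n\ge 3$) produces $\psi\in \dot{H}^{n/2}_{\rm c.c.}(\GD)$.

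The identity ${\mathcal H}^{n/2}(\GD)=H^{n/2}_{\rm c.c.}(\GD)$ then follows by intersecting both sides of the homogeneous equality with $L^2(\GD)$ and using that $H^{n/2}_{\rm c.c.}(\GD)=L^2(\GD)\cap \dot{H}^{n/2}_{\rm c.c.}(\GD)$ by Definition \ref{Def: Hcc}(iii). The argument contains no serious obstacle, as the delicate analysis—describing exactly how compatibility conditions at corners are converted between Dirichlet data and Neumann data at order $\pi/2$—has already been absorbed into Propositions \ref{Prop: Elliptic reg DN} and \ref{Prop: Elliptic reg Neumann}; the only point requiring mild care is the bookkeeping showing that ``$G_0^{n/2}\psi\in L^2$'' for odd $n$, as defined in the introduction to Section \ref{sectregabstract}, implicitly carries the intermediate regularity $G_0^l\psi\in \dot{H}^{1/2}(\GD)$ needed to iterate the induction.
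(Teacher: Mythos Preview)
Your proof is correct and follows essentially the same approach as the paper: an induction on $n$ that uses Proposition \ref{Prop: Elliptic reg DN} for the inclusion $\dot{H}^{n/2}_{\rm c.c.}\subset \dot{\mathcal H}^{n/2}$ and Proposition \ref{Prop: Elliptic reg Neumann} for the reverse inclusion, mirroring the argument of Proposition \ref{propHn}. The paper's printed proof is a terse sketch referring back to that earlier argument, and your write-up simply spells out the details.
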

	\begin{proof} % simply refer to the case of small angle - give a short argument 
		We need to show that $\dot{\mathcal H}^{n/2}(\Gamma^{\rm D}) =\dot{H}_{\rm c.c.}^{n/2}(\Gamma^{\rm D})$ for $n\geq 3$.  However, arguing as in the proof of Proposition \ref{propHn} we have that the inclusion $\dot{H}^{n/2}_{\rm c.c.}(\GD)\subset \dot{\mathcal H}^{n/2}(\GD)$ follows by the second point in Proposition \ref{Prop: Elliptic reg DN}. On the other hand, we saw that the reverse inclusion $\dot{\mathcal H}^{n/2}(\GD)\subset \dot{H}^{n/2}_{\rm c.c.}(\GD)$ follows by the estimate in Proposition \ref{Prop: Elliptic reg Neumann}  and therefore completes the proof. 
	\end{proof}

	\subsection{Higher order regularity of the Fritz John's problem with vertical walls}\label{Subsec: John high reg pi/2}
	%	\textcolor{red}{Cummins - keep same structure? - no, could write a remark ... }\\
	In Corollary \ref{correg2} we proved a higher regularity of the floating body problem in $\mathbb{X}^n$ under the assumption that the Kirchhoff's potentials introduced in Definition \ref{defKirchhoff} satisfy $(\partial_z \mathbf{K}){|_{\Gamma^{\rm D}}} \in \mathcal{H}^{(n-1)/2}(\Gamma^{\rm D})$. In the case of vertical walls, we can use Proposition \ref{propHNdroit} to identify 
	${\mathcal H}^{(n-1)/2}$ with $H_{\rm c.c.}^{(n-1)/2}(\Gamma^{\rm D})$, and it possible to check whether this condition is satisfied. It turns out that the situation is different for the different components of ${\bf K}$. In the statement of the following lemma, we use the notation $ H^\infty_{\rm c.c.}(\Omega):=\bigcap\limits_{n\geq 1} H_{\rm c.c.}^{(n-1)/2}(\Gamma^{\rm D})$.
	\begin{lemma}\label{lemsingK1}
		Let $\Omega$ and $\Gamma^{\rm D}$ be as in Assumption \ref{Assumption domain}, and assume moreover that all the contact angles are equal to $\pi/2$. Let the Kirchhoff potentials $K_1$, $K_2$, and $K_3$ be defined as in Definition \ref{defKirchhoff}. Then 
		$$
		(\partial_z K_1){|_{\Gamma^{\rm D}}} \notin {H}^{1/2}(\Gamma^{\rm D})
		\quad\mbox{ and }\quad
		(\partial_z K_2){|_{\Gamma^{\rm D}}} \in H^\infty_{\rm c.c.}(\Omega);$$ 
		for $K_3$, the result depends on the position of the center of mass $G=(x_{G},z_{G})$,
		\begin{align*}
			(\partial_z K_3){|_{\Gamma^{\rm D}}} \notin {H}^{1/2}(\Gamma^{\rm D}) &\mbox{ if }z_{\rm G}\neq 0,\\
			(\partial_z K_3){|_{\Gamma^{\rm D}}}  \in H^\infty_{\rm c.c.}(\Omega) &\mbox{ if }z_{\rm G}\neq 0.
		\end{align*}   
	\end{lemma}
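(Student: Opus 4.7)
The argument is local near each contact point $\mathtt c\in \mathtt C$. By the right-angle assumption combined with point (4) of Assumption \ref{Assumption domain}, $\Omega$ coincides in a neighborhood of $\mathtt c$ with a quadrant in which $\Gamma^{\rm D}$ is horizontal and $\Gamma^{\rm w}$ is a vertical segment; in particular the outward normal $\mathbf{n}^{\rm w}$ is constant and horizontal in that neighborhood, say $\mathbf{n}^{\rm w}=\mp\mathbf{e}_x$. Plugging this into \eqref{defkappaC} we immediately read off the boundary data of the three Kirchhoff problems \eqref{Eq: phi_w} at the corner: $\kappa_1(\mathtt c)=\mp 1\neq 0$ identically along the wall; $\kappa_2\equiv 0$ identically along the wall near $\mathtt c$; and $\kappa_3(\mathtt c)=\pm z_G$, which moreover satisfies $\kappa_3(x_{\mathtt c},z)=\mp z$ along the vertical wall when $z_G=0$. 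The key observation, which I exploit below, is that for the mixed problem in a quadrant with homogeneous Dirichlet on the horizontal side, the standard right-angle compatibility condition for $C^1$ smoothness at the corner is precisely that the Neumann datum vanish at $\mathtt c$.

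For the regular cases (case $K_2$ in all configurations, case $K_3$ when $z_G=0$), I use a reflection argument. For $K_2$, the Neumann datum vanishes identically near $\mathtt c$, so $K_2$ can be odd-reflected across the horizontal Dirichlet side and then even-reflected across the vertical Neumann side, yielding a harmonic function on a full neighborhood of $\mathtt c$; hence $K_2\in C^\infty$ there, and the composed reflection forces $K_2$ to be odd in $\eta$ and even in $\xi$, which implies $(\partial_zK_2)|_{\Gamma^{\rm D}}$ is even in $\xi-x_{\mathtt c}$ near $\mathtt c$, so all odd $\partial_x$-derivatives vanish at $\mathtt c$; this exactly matches the compatibility conditions of $H^\infty_{\rm c.c.}$. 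For $K_3$ when $z_G=0$, I observe that the explicit harmonic polynomial $xz$ vanishes on $\Gamma^{\rm D}$ and satisfies $\partial_\xi(\xi\eta)=\eta$, thus absorbing the linear Neumann datum $\kappa_3=\mp\eta$ along the wall. Writing $K_3=\pm \chi_{\mathtt c}xz+R$ for a suitable cutoff $\chi_{\mathtt c}\equiv 1$ near $\mathtt c$, the remainder $R$ has homogeneous Neumann datum near $\mathtt c$ and the same reflection argument applies, giving the compatibility conditions of $H^\infty_{\rm c.c.}$ for the restriction of $\partial_z K_3$ to $\Gamma^{\rm D}$.

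For the singular cases (case $K_1$ always, case $K_3$ when $z_G\neq 0$), I exhibit an explicit logarithmic singularity. Up to orientation, the singular function $\mathcal S_1$ of Remark \ref{Remark: Sing decomp} is (after exchanging the roles of Dirichlet and Neumann sides via conjugate harmonics) precisely the canonical singular solution of the mixed problem in a quadrant with constant non-zero Neumann datum and zero Dirichlet datum. Concretely, the harmonic function $u(\xi,\eta)=\tfrac12\eta\log(\xi^2+\eta^2)+\xi\arctan(\eta/\xi)$ vanishes on $\{\eta=0\}$ and satisfies $\partial_\xi u(0,\eta)=-\pi/2$ for $\eta<0$ as well as $\partial_\eta u(\xi,0)=\log|\xi|+1$. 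Setting $K_j^{\rm sing}(\xi,\eta)=-\tfrac{2\kappa_j(\mathtt c)}{\pi}u(\xi,\eta)$, the difference $R_j:=K_j-\chi_{\mathtt c}K_j^{\rm sing}(\cdot-\mathtt c)$ satisfies a mixed problem with zero Dirichlet datum on $\Gamma^{\rm D}$ and a Neumann datum on $\Gamma^{\rm w}$ equal near $\mathtt c$ to $\kappa_j-\kappa_j(\mathtt c)$, which vanishes at $\mathtt c$, plus a smooth source term coming from the commutator $[\Delta,\chi_{\mathtt c}]$ that is localized strictly away from the corner. The reflection/elliptic-regularity argument of the regular case applies to $R_j$, yielding $R_j\in H^2$ locally near $\mathtt c$ and hence $(\partial_z R_j)|_{\Gamma^{\rm D}}\in H^{1/2}$ near $x_{\mathtt c}$. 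The explicit formula above for $\partial_\eta u(\xi,0)$ then gives $(\partial_z K_j)|_{\Gamma^{\rm D}}(x)=-\tfrac{2\kappa_j(\mathtt c)}{\pi}(\log|x-x_{\mathtt c}|+1)+\textrm{(an $H^{1/2}_{\rm loc}$ remainder)}$; since $\log|x-x_{\mathtt c}|\notin H^{1/2}_{\rm loc}$ and $\kappa_j(\mathtt c)\neq 0$, this forces $(\partial_zK_j)|_{\Gamma^{\rm D}}\notin H^{1/2}(\Gamma^{\rm D})$.

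The main obstacle is the passage from smoothness to the compatibility conditions of $H^\infty_{\rm c.c.}$: having $(\partial_z K_j)|_{\Gamma^{\rm D}}$ smooth near $\mathtt c$ is not enough; all odd $x$-derivatives at $\mathtt c$ must vanish. This is ensured by the symmetry of the even/odd double reflection in the regular case, but one must verify that for $K_3$ when $z_G=0$ the subtraction of the explicit particular solution $\pm\chi_{\mathtt c}xz$ is compatible with the required parity, i.e.\ that $\partial_z(xz)|_{\eta=0}=x$ combines correctly with the reflective parity of the remainder $R$ so that all odd derivatives of the sum at $x_{\mathtt c}$ cancel. A short direct computation in the local coordinates $(\xi,\eta)$ together with the parity $R(\xi,\eta)=R(-\xi,\eta)=-R(\xi,-\eta)$ settles this point.
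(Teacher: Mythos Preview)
Your strategy coincides with the paper's: localize at each corner, observe that on the vertical wall $\kappa_2\equiv 0$ (so $K_2$ reflects to a smooth function with the right parity), $\kappa_1\equiv\mp 1\neq 0$ (producing a log singularity), and $\kappa_3=\mp(z-z_G)$ falls into one case or the other according to whether $z_G\neq 0$. The one genuine difference is in how the singularity of $K_1$ is extracted. The paper first subtracts the smooth lift $K_{1,1}(x,z)=(x-x_{\rm l})\chi_{[0]}(x-x_{\rm l})+(x-x_{\rm r})\chi_{[0]}(x-x_{\rm r})$ of the Neumann datum; this transfers the incompatibility to a non-compatible \emph{Dirichlet} trace on $\Gamma^{\rm D}$ (since $\partial_xK_{1,1}(\mathtt c,0)=1$), and Theorem~\ref{Thm: sing decomp mixed pb} is then invoked to read off the $\mathcal S_1$ singularity of the corresponding harmonic extension. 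You instead write down directly a harmonic $u$ that already satisfies the zero Dirichlet and constant Neumann conditions at the corner. In the quadrant $\xi>0,\ \eta<0$ one checks $\mathcal S_1=\tfrac{2}{\pi}u+\xi$, so the two singular parts agree up to a harmonic polynomial and the trace formula $\partial_z K_1\sim\tfrac{2}{\pi}\log|x-x_{\mathtt c}|$ is the same. Your route is self-contained but essentially re-derives the content of Theorem~\ref{Thm: sing decomp mixed pb} in this special case; the paper's route reuses that already-proved machinery.

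There is, however, a real problem in your last paragraph, concerning $K_3$ with $z_G=0$. After subtracting $\xi\eta$ the remainder $R$ does indeed reflect to be even in $\xi$ and odd in $\eta$, so $(\partial_\eta R)\vert_{\eta=0}$ is even in $\xi$ and all its odd $\xi$-derivatives at the corner vanish. But $\partial_\eta(\xi\eta)\vert_{\eta=0}=\xi$ is \emph{odd} in $\xi$; an odd function cannot be cancelled by an even one. A direct Taylor expansion of a smooth harmonic $K_3$ at $\mathtt c$ under the constraints $K_3(\xi,0)=0$ and $\partial_\xi K_3(0,\eta)=\eta$ gives $a_{11}=1$ for the coefficient of $\xi\eta$, hence $\partial_\xi(\partial_z K_3)(\mathtt c)=1\neq 0$. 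Your ``short direct computation'' therefore does not yield the cancellation you claim, and the first compatibility condition of Definition~\ref{Def: Hcc} fails. (The paper's own proof is terse at this step --- it writes ``one then concludes the proof as we did for $K_1$'' --- and does not visibly resolve this issue either; the conclusion $(\partial_zK_3)\vert_{\Gamma^{\rm D}}\in H^\infty_{\rm c.c.}$ in the case $z_G=0$ appears to require a closer look.)
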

	\begin{remark}
		The proof provides the explicit expression of the singularity of $\partial_z K_1$ near the contact points. This is a great numerical interest to get a better precision on the computation of the right-hand side in Newton's equation \eqref{Newton2}.
	\end{remark}
	\begin{proof}
		We first consider the regularity of $K_2$. By definition,  $K_2$ is harmonic and vanishes on $\GD$, and its normal derivative vanishes on $\Gamma^{\rm b}$. Since the walls are assumed to be vertical near the contact points, one also has  $\partial_{\rm n}K_2=\pm{\bf e}_x\cdot{\bf e}_z=0$  on $\Gamma^{\rm N}\cap \{z>-h_1\}$ for $h_1>0$ small enough. It is therefore a direct consequence of  Lemma \ref{LMell2} that
		$K_2\in H^\infty(\Omega)$, and therefore $(\partial_z K_2)_{\vert_{\GD}}\in H^\infty(\GD)$. The fact that $(\partial_z K_2)_{\vert_{\GD}}$ is compatible at any order in the sense of Definition \ref{Def: Hcc}  is then again a consequence from the trace theorem on quadrants (Theorem 1.5.2.4 in \cite{Grisvard85}).

		For $K_1$, since $\partial_{\rm n} K_1=\kappa_1=\pm 1$ on $\Gamma^{\rm w}\cap \{z>-h_1\}$, we can decompose $K_1$ as
		$$
		K_1=K_{1,1}+K_{1,2}+K_{1,3}
		$$
		with  $K_{1,1}(x,z)=(x-x_{\rm l})\chi_{[0]}(x-x_{\rm l})+(x-x_{\rm r})\chi_{[0]}(x-x_{\rm r})$, with $\chi_{[0]}$ as in Notation \ref{notachi}, while $K_{1,2}=-(K_{1,1}(\cdot,0))^{\mathfrak h}$ and $K_{1,3}$ solves
		$$
		\begin{cases}
			\Delta K_{1,3} = h &\mbox{ in }\Omega\\
			K_{1,3} = 0 & \mbox{ on }\Gamma^{\rm D}\\
			\partial_{\rm n} K_{1,3} = g  & \mbox{ on }\Gamma^{\rm N},
		\end{cases}
		$$
		wihere $h=-\Delta K_{1,1}$ and $g=\kappa_2-\partial_{\rm n}K_{1,1}$ are smooth and supported far from the corners. In particular, both $K_{1,1}$ and $K_{1,3}$ are in $H^{\infty}(\Omega)$. On the contrary, since $\partial_x K_{1,1}(x_{\rm r},0)=\partial_x K_{1,1}(x_{\rm l},0)=1$, we know by Theorem \ref{Thm: sing decomp mixed pb} that
		$$
		K_{1,2}(x,z)=\Theta_{c_{\rm r}}{\mathcal S}_1(x-x_{\rm r},z)-\Theta_{c_{\rm l}}{\mathcal S}_1(x-x_{\rm l},z)+R,
		$$
		with $R\in H^{4^-}(\Omega)$. It follows, using the explicit expression of ${\mathcal S}_1$ computed in Remark \ref{Remark: Sing decomp}, that for all $x\in \GD$, 
		$$
		\partial_z K_1(x,0)= 
		\frac{2}{\pi}\big[ \Theta_{{\mathtt c}_{\rm r}}(x,0)\big{(}  \ln(x-x_{\rm r}) + 1\big{)}
		-\Theta_{{\mathtt c}_{\rm l}}(x,0)\big{(}  \ln(x_{\rm l}-x) + 1\big{)} +r,
		$$
		with $r\in H^{5/2^-}(\GD)$. It follows that $(\partial_z K_1)_{\vert_{\GD}}\not\in H^{1/2}(\GD)$.
		
		Let us finally consider the case of $K_3$. Since the wall is vertical near the contact points, we get that on $\Gamma^{\rm w}$, 
		$\partial_x K_3(x_{\rm l},z)=\partial_x K_3(x_{\rm r},z)=z-z_G$. Therefore, $\partial_x K_3$ does not vanish at the contact points if $z_G\neq 0$ and the situation is the same as for $K_1$. Now, if $z_G=0$, one can write
		$
		K_{3}=K_{3,1}+K_{3,2}
		$
		with $K_{3,1}=z(x-x_{\rm l})\chi_{[0]}(x-x_{\rm l})+z(x-x_{\rm r})\chi_{[0]}(x-x_{\rm r})$, which is in $H^\infty(\Omega)$ while $K_{3,2}$ solves
		$$
		\begin{cases}
			\Delta K_{3,2} = h &\mbox{ in }\Omega\\
			K_{3,2} = 0 & \mbox{ on }\Gamma^{\rm D}\\
			\partial_{\rm n} K_{3,2} = g  & \mbox{ on }\Gamma^{\rm N},
		\end{cases}
		$$
		where $h=-\Delta K_{3,1}$ and $g=\kappa_3-\partial_{\rm n}K_{1,1}$ are smooth and supported far from the corners, so that $K_{3,2}$ is also in $H^\infty(\Omega)$. One then concludes the proof as we did for $K_1$.
		
	\end{proof}
	
	As a consequence of the lemma, the solution of Fritz John's problem is more regular if only vertical translation, as well as rotations of  center $G$ if $z_G=0$, are allowed.  In such cases, the problem takes the simpler form
	\begin{equation}\label{FJabstract vertical motion}
		\partial_t \widetilde{\bf U}+{\bf {A}}\widetilde{\bf U}=0
	\end{equation}
	where $\widetilde{{\bf U}}=(\zeta,\widetilde{\mathtt X}^{\rm T},\psi,\widetilde{\mathtt V}^{\rm T})^{\rm T}$ with 
	$$
	\widetilde{\mathtt X}^{\rm T} = (0,{\mathtt X}_2,0) \quad\mbox{ and }\quad \widetilde{\mathtt V}^{\rm T}  = (0,{\mathtt V}_2,0),
	$$
	if only vertical translation is allowed, and
	$$
	\widetilde{\mathtt X}^{\rm T} = (0,{\mathtt X}_2,{\mathtt X}_3) \quad\mbox{ and }\quad \widetilde{\mathtt V}^{\rm T}  = (0,{\mathtt V}_2,{\mathtt V}_3),
	$$
	if $z_G=0$ and rotations of center $G$ are allowed. In all cases, the operator ${\bf A}$ is still given by \eqref{defA}. To state the following corollary, it is convenient to introduce the notation
	$$
	\mathbb{X}^n_{\rm Sob, c.c.} = H_{\rm c.c.}^{n/2}(\Gamma^{\rm D})\times {\mathbb R}^3\times \dot{H}_{\rm c.c.}^{(n+1)/2}(\Gamma^{\rm D})\times {\mathbb R}^3.
	$$
	\begin{cor}\label{Cor: vertical walls}  
		Let $\Omega$ and $\Gamma^{\rm D}$ be as in Assumption \ref{Assumption domain}, and assume moreover that all the contact angles are equal to $\pi/2$. Suppose also that the coefficients of $\mathcal{C}$ satisfy Assumption \ref{hypStab} and that we are in one of the following situations
		\begin{itemize}
			\item The object is only allowed to move vertically,
			\item The center of mass $G$ is such that $z_G=0$ and the object is only allowed to move by vertical translation and rotations around $G$,
		\end{itemize}
		and we consider the corresponding reduced problem \eqref{FJabstract vertical motion}.\\
		Then for all $n \in \N$ and  $\widetilde{\bf U}^{\rm in}\in {\mathbb X}_{\rm Sob, c.c.}^n$, there exists a unique solution $\widetilde{\bf U}\in \cap_{j=0}^n C^j({\mathbb R}_+;{\mathbb X}_{\rm Sob, c.c.}^{n-j})$ to  \eqref{FJabstract vertical motion} with initial condition $\widetilde{\bf U}(0)=\widetilde{\bf U}^{\rm in}$, and one also has  $\widetilde{\mathtt V}=\dot{\widetilde{\mathtt X}}\in C^{n+1}({\mathbb R}_+;{\mathbb R}^3)$; moreover, there exists $C>0$ independent of $\widetilde{\bf U}^{\rm in}$ such that
		$$
		\forall t\geq 0, \qquad
		\sum_{j=0}^n \Vert \partial_t^j \widetilde{\bf U}\Vert_{{\mathbb X}_{\rm Sob, c.c.}^{n-j}}(t) +\vert \widetilde{\mathtt V}^{(n+1)}(t)\vert \leq C \Vert \widetilde{\bf U}^{\rm in}\Vert_{{\mathbb X}_{\rm Sob, c.c.}^n}.
		$$
	\end{cor}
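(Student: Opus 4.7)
The strategy is to apply the abstract higher order regularity theorem Corollary \ref{correg} to the reduced system \eqref{FJabstract vertical motion}, and then translate the $\mathbb{X}^n$-regularity into the standard Sobolev framework $\mathbb{X}^n_{\rm Sob, c.c.}$ via the characterization provided by Proposition \ref{propHNdroit}. In this sense, the proof is the right-angle analogue of Corollary \ref{correg2}, with the twist that the admissibility of the Kirchhoff potentials is no longer inherited from a smallness condition on the contact angles, but from the restriction on the degrees of freedom of the motion.

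The crucial observation is that the singular horizontal Kirchhoff potential $K_1$ (and, in the second case, also $K_3$ if one were to allow $z_G\neq 0$) is excluded from the reduced dynamics. Indeed, since $\widetilde{\mathtt V}$ has vanishing first component in both cases (and vanishing third component in the first case), the source term $\widetilde{\mathtt V}\cdot \partial_z{\bf K}$ in the first equation of \eqref{FJ2} only involves $\partial_z K_2$ (case 1) or $\partial_z K_2$ and $\partial_z K_3$ (case 2, where $z_G=0$); the very same restriction propagates to Newton's equation \eqref{Newton2}, the first row/column of $\mathcal{C}$ being already zero by \eqref{defkappaC} and the added-mass block being consistent thanks to the symmetry $z_G=0$ in the second case. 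Lemma \ref{lemsingK1} then guarantees that the only components of $\partial_z{\bf K}$ present in the reduced system belong to $H^\infty_{\rm c.c.}(\Gamma^{\rm D})=\bigcap_{n\geq 1} H^{(n-1)/2}_{\rm c.c.}(\Gamma^{\rm D})$, which by Proposition \ref{propHNdroit} coincides with $\bigcap_{n\geq 1}{\mathcal H}^{(n-1)/2}(\Gamma^{\rm D})$. In other words, the hypothesis of Corollary \ref{correg} on the reduced Kirchhoff potential vector is satisfied at every order $n\in\mathbb{N}^*$.

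Assumption \ref{hypStab} gives, by the discussion in Remark \ref{remstab}, the bound $\mathcal{C}{\mathtt X}\cdot{\mathtt X}\geq c_0(\widetilde{z}^2+\theta^2)$, which guarantees that the restriction of $\mathcal{C}$ to the admissible degrees of freedom is strictly positive definite (the scalar $c_{zz}$ in case 1, and the $2\times 2$ block with determinant $c_{zz}c_{\theta\theta}-c_{z\theta}^2>0$ in case 2). Consequently, the semi-norm $\Vert\cdot\Vert_{\mathbb{X}}$ restricted to the reduced phase space loses its degeneracy in the horizontal translation -- which is precisely the component that has been frozen -- and becomes a genuine Hilbertian norm. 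The duality construction in the proof of Theorem \ref{thm: Wp energy space} and the induction/Duhamel argument of Corollary \ref{correg} therefore go through verbatim on this subspace, producing a unique $\widetilde{\bf U}\in\bigcap_{j=0}^n C^j(\mathbb{R}_+;\mathbb{X}^{n-j})$ with $\widetilde{\mathtt V}=\dot{\widetilde{\mathtt X}}\in C^{n+1}(\mathbb{R}_+;\mathbb{R}^3)$ and satisfying the stated energy estimate. A final application of Proposition \ref{propHNdroit} yields the identifications ${\mathcal H}^{n/2}(\Gamma^{\rm D})=H_{\rm c.c.}^{n/2}(\Gamma^{\rm D})$ and $\dot{\mathcal H}^{(n+1)/2}(\Gamma^{\rm D})=\dot{H}_{\rm c.c.}^{(n+1)/2}(\Gamma^{\rm D})$, hence $\mathbb{X}^n=\mathbb{X}^n_{\rm Sob, c.c.}$ on the reduced phase space, which yields the conclusion. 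The main delicate point I expect to encounter is the consistency of the reduction in case 2: one must verify that the $2\times 2$ restricted Kirchhoff/Newton system remains self-contained (no hidden coupling back to $K_1$ through the added-mass matrix), and this is precisely where the hypothesis $z_G=0$ enters in a decisive way.
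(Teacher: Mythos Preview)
Your proposal is correct and takes essentially the same approach that the paper (which gives no explicit proof for this corollary) implicitly relies on: apply Corollary~\ref{correg} to the reduced system, using Lemma~\ref{lemsingK1} to guarantee the Kirchhoff hypothesis and Proposition~\ref{propHNdroit} to identify $\mathbb{X}^n$ with $\mathbb{X}^n_{\rm Sob,c.c.}$; this is precisely the right-angle analogue of Corollary~\ref{correg2}, as you say.

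One small correction to your closing remark: the hypothesis $z_G=0$ does not enter through the added-mass matrix. The off-diagonal entries $(\mathcal{M}_{\rm a})_{12}$ and $(\mathcal{M}_{\rm a})_{13}$ need not vanish, but this is irrelevant because in the constrained problem the first component of $\widetilde{\mathtt V}$ is identically zero and the reduced Newton equation is simply the projection of \eqref{Newton2} onto the allowed degrees of freedom (the constraint forces absorb the rest). The role of $z_G=0$ is exactly the one recorded in Lemma~\ref{lemsingK1}: it is what makes $\partial_n K_3$ vanish at the contact points and hence $(\partial_z K_3)|_{\Gamma^{\rm D}}\in H^\infty_{\rm c.c.}$, so that the source term $\widetilde{\mathtt V}\cdot\partial_z{\bf K}$ involves only regular Kirchhoff traces.
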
 
	
	\begin{remark}
		Similarly, one obtains a reduced Cummins equation on $\widetilde{\mathtt X}$. If we consider a smooth initial wave field, compactly supported away from the object, then the solution $\widetilde{\mathtt X}$ to the reduced Cummins equation is $C^\infty$. However, as soon as horizontal translations are allowed, the solution is not better than $C^3$ because of the singular behavior of $K_1$ exhibited in Lemma \ref{lemsingK1}.
	\end{remark}

	\appendix 
	\section{Derivation of the linearized Newton's equations} \label{AppderN}

	When the object is in forced motion, that is, when the velocity ${\mathcal V}^{\rm w}$ of the boundary of  the object is a given function, then the equations \eqref{FJ1}-\eqref{Eq: Laplace equation} are sufficient to describe the motion of the waves generated by and interacting with the objects. This is no longer true when the object is freely floating: the velocity ${\mathcal V}^{\rm w}$ of the solid must then obtained through Newton's equations (or rather their linear approximation for the model considered here). To this end, let us introduce the following notations. 
	\begin{notation}\label{notaGetc}
		We make the following definitions:
		\begin{itemize}
			\item [-] We denote by $G_{\rm eq}=(x_{G,{\rm eq}}, z_{G,{\rm eq}})^{\rm T}$ the position of the center of gravity of the object at equilibrium, and by 
			$$G(t)=(x_G(t),z_G(t))^{\rm T}$$
			its position at time $t$. 
			\item [-] We also denote by $B$ the center of buoyancy of the object defined as
			$$\hspace{1.5cm}
			B=(x_B,z_B)^{\rm T}
			\quad\mbox{ with }\quad
			x_B=\frac{1}{|\Omega^{\rm w}|}\int_{\Omega^{\rm w}}x{\rm d}x{\rm d}z
			\quad\mbox{ and }\quad 
			z_B=\frac{1}{|\Omega^{\rm w}|}\int_{\Omega^{\rm w}}z {\rm d}x{\rm d}z,
			$$
			where $\Omega^{\rm w}$ is the volume of fluid displaced by the object in the equilibrium configuration, that is, the volume delimited by $\Gamma^{\rm w}$ and the horizontal axis $\{z=0\}$. 
			\item [-] We write $\widetilde{x}(t)$ and $\widetilde{z}(t)$  the horizontal and vertical displacements of the center of mass from its equilibrium position, so that  
			$$G(t)=G_{\rm eq}+(\widetilde{x}(t),\widetilde{z}(t))^{\rm T}.$$ 
			\item [-] We denote by $\theta(t)$ the rotation of the object with respect to its equilibrium position around the $(Oy)$ axis perpendicular to the plane: this means that in the $(Oxz)$ plane, we use the anti-trigonometrical orientation for $\theta$. 
			\item [-] We write 
			$${\mathtt X}(t)=(\widetilde{x}(t),\widetilde{z}(t),\theta(t))^{\rm T}\quad \text{and} \quad {\mathtt V}(t)=(\dot{\widetilde{x}}(t),\dot{\widetilde{z}}(t),\dot\theta(t))^{\rm T}.
			$$ 
			We also use the notation $$ {\mathcal V }_G(t)=(\dot{\widetilde{x}}(t),\dot{\widetilde{z}}(t))^{\rm T}\quad  \text{and} \quad \omega(t)=\dot\theta(t).  $$
			\item [-] For all $(x,z)\in \Gamma^{\rm w}$, we define $${\bf r}_{G_{\rm eq}}(x,z)=(x-x_{G,{\rm eq}},z-z_{G,{\rm eq}})^{\rm T}.$$
		\end{itemize}
	\end{notation}
	\begin{remark}
		The domain $\Omega^{\rm w}$ is the volume of fluid displaced by the object in the equilibrium configuration, that is, the volume delimited by $\Gamma^{\rm w}$ and the horizontal axis $\{z=0\}$. By Archimedes' law, we know that
		\begin{equation}\label{Archimedes}
			\rho |\Omega^{\rm w}|={\mathfrak m}
			\quad\mbox{ and }\quad
			x_B=x_{G_{\rm eq}},
		\end{equation}
		where ${\mathfrak m}$ is the mass of the object;  this means that the mass of the volume of fluid displaced by the object at equilibrium is equal to the mass of the object and that the center of buoyancy and the center of gravity must be on the same vertical plane.
	\end{remark}
	The velocity of a point at the boundary of the solid can be expressed in terms of the motion of its center of gravity ${\mathcal V}_G$ and its angular velocity $\omega$, which in turn are expressed through the linearized Newton equation. Before we turn to the derivation, we need to express the velocity on the boundary of the object.

	\subsection{Velocity of a point of the boundary of the object}
	We recall that we denote the position at time $t$ of the center of gravity of the object by $G(t)=G_{\rm eq}+(\widetilde{x}(t),\widetilde{z}(t))^{\rm T}$, where $\widetilde{x}(t)$ and $\widetilde{z}(t)$ denote the horizontal and vertical displacement of the center of mass from its equilibrium position. We also denote by $\theta(t)$ the rotation of the object with respect to its equilibrium position around the $(Oy)$ axis perpendicular to the plane: this means that in the $(Oxz)$ plane, we use the anti-trigonometrical orientation for $\theta$. Finally, denoting by $M(t)=(X(t),Z(t))^{\rm T}$ the position at time $t$ of a particular point of the boundary which occupies the position $M_{\rm eq}=(X_{\rm eq},Z_{\rm eq})^{\rm T}$ at equilibrium, we have
	\begin{align}
		\nonumber
		\begin{pmatrix}
			X(t)-x_{G}(t) \\ Z(t)-Z_{G}(t) 
		\end{pmatrix}
		&=\begin{pmatrix}
			\cos(\theta(t) -\theta(0)) & \sin(\theta(t)-\theta(0)) \\ 
			-\sin(\theta(t)-\theta(0)) & \cos(\theta(t)-\theta(0))
		\end{pmatrix}
		\begin{pmatrix}
			X(0)-x_{G}(0) \\ Z(0)-Z_{G}(0) 
		\end{pmatrix}  \\
		\label{relvel}
		& 
		=\begin{pmatrix}
			\cos(\theta(t) ) & \sin(\theta(t)) \\ 
			-\sin(\theta(t)) & \cos(\theta(t))
		\end{pmatrix}
		\begin{pmatrix}
			X_{\rm eq}-x_{G,{\rm eq}} \\ Z_{{\rm eq}}-z_{G,{\rm eq}} 
		\end{pmatrix}  .
	\end{align}
	Time differentiating the first identity, one deduces an expression for the velocity ${\mathcal V}^{\rm w}$  of a point of the boundary
	\begin{equation}\label{eqUsol}
		\forall (x,z)\in \Gamma^{\rm w}, \qquad {\mathcal V}^{\rm w}(t,x,z)={\mathcal V}_{G}(t)-\omega(t) {\bf r}_{G(t)}^\perp(x,z),
	\end{equation}
	where ${\mathcal V}_{G}(t)=(\dot{\widetilde{x}(}t),\dot{\widetilde{z}}(t))^{\rm T}$ and $\omega(t)=\dot\theta(t)$ denote the velocity of the center of mass the angular velocity, while 
	${\bf r}_{G(t)}(x,z)=(x-x_{G(t)},z-z_{G(t)})^{\rm T}$.
	
	\subsection{Newton's laws}
	Both ${\bf U}_{G}$ and $\omega$ are determined by Newton's laws; in order to state these laws in the present context, let us note that the resulting force and torque exerted by the atmosphere and the fluid on the object are, respectively
	$$
	{\bf F}=\int_{\Gamma^{\rm w}(t)}(P-P_{\rm atm}) {\bf n}^{\rm w}(t) {\rm d}\Gamma^{\rm w}(t) \quad\mbox{ and }\quad
	T=-\int_{\Gamma^{\rm w}(t)}(P-P_{\rm atm}) {\bf r}_{G(t)}^\perp\cdot {\bf n}^{\rm w}(t) {\rm d}\Gamma^{\rm w}(t), 
	$$
	where $\Gamma^{\rm w}(t)$ is the immersed part of the object at time $t$, ${\bf n}^{\rm w}(t)$ the outward unit vector to $\Gamma^{\rm w}(t)$. Newton's laws then take the following form
	$$
	\begin{cases}
		\mathfrak{m}
		\dot{\mathcal V}_{G}
		& =
		-\mathfrak{m} {\mathtt g} \mathbf{e}_z + {\mathbf F} ,
		\\ 
		%\label{Eq: Newtons law's of motion torque}
		\mathfrak{i} \dot{\omega} & = T,
	\end{cases}
	$$
	where ${\mathfrak m}$ is the mass of the object and ${\mathfrak i}$ its inertia coefficient.
	
	\subsection{Linearized Newton's laws}
	Since we are interested in small motions of the object around its equilibrium position, we derive linear approximations of the above formulas. Throughout this section, we write $A\sim B$ if $A=B$ up to quadratic and higher order terms in $\widetilde{x}$, $\widetilde{z}$ and $\theta$. We first get
	\begin{equation}\label{eqUsollin}
		{\mathcal V}^{\rm w}(t,x,z)\sim{\mathcal V}_{G}(t)-\omega(t) {\bf r}_{G_{\rm eq}}^\perp(x,z),
	\end{equation}
	Since the pressure is given by Bernoulli's law,
	$$
	-\frac{1}{\rho}(P-P_{\rm atm)}=\partial_t \phi+{\mathtt g}z,
	$$
	we can decompose
	$$
	{\bf F}=-\rho \int_{\Gamma^{\rm w}(t)} \partial_t \phi{\bf n}^{\rm w}(t) {\rm d}\Gamma_j^{\rm w}(t)
	-
	\rho \int_{\Gamma^{\rm w}(t)} {\mathtt g}z {\bf n}^{\rm w}(t) {\rm d}\Gamma^{\rm w}(t),
	$$
	and a similar decomposition for $T$. In the linear approximation, the integral of the first term can be taken over the position $\Gamma^{\rm w}$ of the immersed part of the object at equilibrium,
	$$
	-\rho \int_{\Gamma^{\rm w}(t)} \partial_t \phi{\bf n}^{\rm w}(t) {\rm d}\Gamma^{\rm w}(t) \sim
	- \rho \int_{\Gamma^{\rm w}} \partial_t \phi{\bf n}^{\rm w} {\rm d}\Gamma^{\rm w},
	$$
	but this is not the case for the second component. In order to describe its contribution, one must derive an approximation of the position of $\Gamma^{\rm w}(t)$ in terms of $\widetilde{x}$, $\widetilde{z}$, and $\theta$. Neglecting quadratic and higher order terms in \eqref{relvel}, one gets
	\begin{align*}
		X(t) & \sim X_{\rm eq} + \widetilde{x}(t)+\theta(t) (Z_{\rm eq}-z_{G,{\rm eq}}),  \\
		Z(t)& \sim Z_{\rm eq} +  \widetilde{z}(t)-\theta(t) (X_{\rm eq}-x_{G,{\rm eq}});
	\end{align*}
	similarly, one checks that ${\bf n}^{\rm w}(t) \sim {\bf n}^{\rm w}-\theta(t) ({\bf n}^{\rm w} )^\perp$. Therefore, the second component of ${\bf F}$ can be approximated as
	$$
	-\rho \int_{\Gamma^{\rm w}(t)} {\mathtt g}z {\bf n}^{\rm w}(t) {\rm d}\Gamma^{\rm w}(t)
	\sim
	-\rho \int_{\Gamma^{\rm w}} {\mathtt g} \big[ z + \widetilde{z}(t) - \theta(t) (x-x_{G,{\rm eq}})\big] {\bf n}^{\rm w}{\rm d}\Gamma^{\rm w} +   \theta(t)  \rho \int_{\Gamma^{\rm w}} {\mathtt g}  z ({\bf n}^{\rm w})^\perp{\rm d}\Gamma^{\rm w}.
	$$
	If we denote by $\Omega^{\rm w}$ the region delimited by $\Gamma_j^{\rm w}$ and the horizontal axis $\{ z=0 \} $, which corresponds to the area of fluid displaced by the solid in the equilibrium configuration, we have by Green's identity and Archimedes' law \eqref{Archimedes} that
	$$
	-\rho \int_{\Gamma^{\rm w}(t)} {\mathtt g}z {\bf n}^{\rm w}(t) {\rm d}\Gamma^{\rm w}(t)
	\sim \big( {\mathfrak m} {\mathtt g} -\rho {\mathtt g}(x_{\rm r}-x_{\rm l})\widetilde{z} 
	+ \rho {\mathtt g (}\int_{x_{\rm l}}^{x_{\rm r}}(x-x_{\rm eq}){\rm d}x) \theta \big) {\bf e}_z.
	$$
	From these approximations of the two components of ${\mathbf F}$, we infer that
	$$
	{\mathbf F} \sim
	-\rho \int_{\Gamma^{\rm w}} \partial_t \phi{\bf n}^{\rm w} 
	+
	\big ( {\mathfrak m} {\mathtt g} -\rho {\mathtt g}(x_{\rm r}-x_{\rm l})\widetilde{z}
	+ \rho {\mathtt g (}\int_{x_{\rm l}}^{x_{\rm r}}(x-x_{\rm eq})) \theta \big) {\bf e}_z .
	$$
	Proceeding similarly for the torque, we first observe  that ${\bf r}_{G(t)}(X,Z)={\bf r}_{G_{\rm eq}}(X_{\rm eq},Z_{\rm eq})-\theta {\bf r}_{G_{\rm eq}}(X_{\rm eq},Z_{\rm eq})^\perp$, and therefore ${\bf r}_{G(t)}(X,Z)\cdot {\bf n}^{\rm w}(t)^\perp\sim{\bf r}_{G_{\rm eq}}(X_{\rm eq},Z_{\rm eq})\cdot ({\bf n}^{\rm w})^\perp$, so that
	$$
	T \sim \int_{\Gamma^{\rm w}} \partial_t \phi {\bf r}_{G}^\perp\cdot {\bf n}^{\rm w} {\rm d}\Gamma^{\rm w}
	+\rho \int_{\Gamma^{\rm w}} {\mathtt g} \big[ z + \widetilde{z}(t) - \theta(t) (x-x_{G,{\rm eq}})\big]  {\bf r}_{G}^\perp\cdot {\bf n}^{\rm w} {\rm d}\Gamma^{\rm w}
	.
	$$
	Since $\int_{\Gamma^{\rm w}} z {\bf r}_G^\perp\cdot {\bf n}^{\rm w}=-\int_{\Omega^{\rm w}}(x-x_{G,{\rm eq}})=|\Omega^{\rm w}|(x_B-x_{G_{\rm eq}})=0$ by Archimedes' law, and using Green's identity, we get
	\begin{align*}
		T \sim &\int_{\Gamma^{\rm w}} \partial_t \phi {\bf r}_{G}^\perp\cdot {\bf n}^{\rm w} {\rm d}\Gamma^{\rm w}
		+\rho {\mathtt g}(\int_{x_{\rm l}}^{x_{\rm r}}(x-x_{G,{\rm eq}}){\rm d}x) \widetilde{z} \\
		&-\rho {\mathtt g}\big(\int_{x_{\rm l}}^{x_{\rm r}}(x-x_{G,{\rm eq}})^2{\rm d}x 
		+ \int_{\Omega^{\rm w}}(z-z_{G,{\rm eq}}){\rm d}x{\rm d}z\big) \theta.
	\end{align*}
	Denoting ${\mathtt X}(t)=(\widetilde{x}(t),\widetilde{z}(t),\theta(t))^{\rm T}$ and ${\mathtt V}(t)=\dot{\mathtt X}(t)$%(\widetilde{x}'(t),\widetilde{z}'(t),\theta'(t))^{\rm T}$
	and with $\boldsymbol{\kappa}=(\kappa_1,\kappa_2,\kappa_3)^{\rm T}$ as defined in \eqref{defkappaC}, the linearized Newton's equations can therefore be written as
	$$
	{\mathcal M}\dot{\mathtt V}=-\rho \int_{\Gamma^{\rm w}}\partial_t \phi \boldsymbol{\kappa}{\rm d}\Gamma^{\rm w}-{\mathcal C}{\mathtt X},
	$$
	with ${\mathcal M}={\rm diag}({\mathfrak m},{\mathfrak m},{\mathfrak i})$ and 
	$$
	{\mathcal C}=\begin{pmatrix}  0 & 0 & 0 \\ 
		0 & c_{zz} & -c_{z\theta} \\
		0 & -c_{\theta z} & c_{\theta\theta}
	\end{pmatrix},
	$$
	and
	\begin{align*}
		c_{zz}&=\rho {\mathtt g}(x_{\rm r}-x_{\rm l}), \\
		c_{z\theta}&=c_{\theta z}=\rho {\mathtt g} \int_{x_{\rm l}}^{x_{\rm r}}(x-x_{G,{\rm eg}}){\rm d}x, \\
		c_{\theta\theta}&=\rho {\mathtt g} \int_{x_{\rm l}}^{x_{\rm r}}(x-x_{G,{\rm eq}})^2{\rm d}x 
		+ {\mathfrak m}{\mathtt g}(z_B-z_{G,{\rm eq}}),
	\end{align*} 
	where we used Archimedes' law and the definition of the center of buoyancy $B$ to simplify the expression for $c_{\theta\theta}$. 
	\begin{remark}\label{analC}
		{\bf i.} If the object is symmetric with respect to the vertical axis passing through $x_{G_{\rm eq}}$, then $c_{z\theta}=c_{\theta z}=0$.\\
		{\bf ii.} Under the condition
		$
		c_{zz}c_{\theta\theta}- c_{z\theta}^2>0
		$,
		the matrix ${\mathcal C}$ is symmetric positive, and there exists $c_0>0$ such that ${\mathtt X}\cdot {\mathcal C}{\mathtt X}\geq c_0 ( \widetilde{z}^2 + \theta^2)$.\\
		{\bf iii.} One can rewrite $c_{\theta\theta}$ under the form
		$$
		c_{\theta\theta}={\mathfrak m}{\mathtt g}(z_{\rm MG}-z_{G,{\rm eq}}) 
		\quad \mbox{ with }\quad z_{\rm MG}=z_B+\frac{1}{|\Omega^{\rm w}|}\int_{x_{\rm l}}^{x_{\rm r}}(x-x_{G,{\rm eq}})^2{\rm d}x;
		$$
		the quantity $z_{\rm MG}$ is called \emph{meta-center} \cite{Moore67}. The positivity condition of the previous point can therefore be expressed as a condition on the relative position of the meta-center and the vertical coordinate of the center of gravity. If the object is symmetric, it simply states that the meta-center must be above the center of gravity.
	\end{remark}
	\color{black}

	Lastly, we recall that ${\mathtt V}$ are coupled with the fluid equations via ${\mathcal V}^{\rm w}$, where we obtained in \eqref{eqUsollin} the following linear approximation:
	\begin{equation}\label{eqUsol0}
		\forall (x,z)\in \Gamma^{\rm w},\qquad
		{\mathcal V}^{\rm w}(t,x,z)={\mathcal V}_G(t)-\omega(t) {\bf r}_{G_{\rm eq}}^\perp(x,z).
	\end{equation}

	\section{Proof of Lemmas \ref{LMell1} and \ref{Lemma: extension Neumann}}\label{appcompatible}
	
	The goal of this section is to prove the extension Lemmas \ref{LMell1} and \ref{Lemma: extension Neumann}, whose statements are reproduced below for the sake of clarity. We recall that $\Omega_{x_{\rm l},x_{\rm r}}=\Omega \cap \{(x,z), x_{\rm l}<x<x_{\rm r}\}$.
	\begin{lemma}\label{LMell1bis}
		Let $\Omega$, $\Gamma^{\rm D}$ and $\Gamma^{\rm N}$ be as in Assumption \ref{Assumption domain}, and assume moreover that all the contact angles are equal to $\pi/2$.
		There exists $h_1>0$ such that for all $s\geq 0$ and $\psi\in \dot{H}_{\rm c.c.}^{s+\frac{1}{2}}(\Gamma^{\rm D})$, there exists a function $\psi^{\rm ext}\in \dot{H}^{s+1}(\Omega)$, harmonic in $\Omega\backslash\overline{\Omega_{x_{\rm l},x_{\rm r}}}$,  such that $\psi^{\rm ext}=\psi$ on $\Gamma^{\rm D}$ and $\partial_n \psi^{\rm ext}=0$ on $\Gamma^{\rm N}\cap \{z>-h_1\}$, and such that
		\begin{equation}\label{Eq: Est on nabla psi ext in Hk App. C}
			\Vert \nabla \psi^{\rm ext}\Vert_{H^s(\Omega)}
			\leq C \vert \psi\vert_{\dot{H}_{\rm c.c.}^{s+\frac{1}{2}}(\Gamma^{\rm D})},
		\end{equation}
		for some constant $C$ independent of $\psi$.
	\end{lemma}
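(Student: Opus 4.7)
The plan is to construct $\psi^{\rm ext}$ by gluing local harmonic extensions obtained near each contact point via even reflection across the vertical wall, together with a bounded global extension of the type furnished by Proposition \ref{Prop: extension high reg}. The partition of unity will be arranged so that the gluing transitions lie strictly inside $\Omega_{x_{\rm l},x_{\rm r}}$, leaving a single reflection-based harmonic piece in each outer component of $\Omega \setminus \overline{\Omega_{x_{\rm l},x_{\rm r}}}$; this automatically yields both harmonicity and the Neumann condition in that region.

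For each finite contact point ${\mathtt c}=(x_{\mathtt c},0)\in{\mathtt C}$, Assumption \ref{Assumption domain}(4) together with the right-angle hypothesis supplies $h_1>0$ such that the portion of $\Gamma^{\rm N}$ within $\{z>-h_1\}$ adjacent to ${\mathtt c}$ is the vertical segment $\{x_{\mathtt c}\}\times(-h_1,0)$. I take the restriction of $\psi$ to the free-surface component attached to ${\mathtt c}$ and extend across $x_{\mathtt c}$ by even reflection, preceded if necessary by a bounded $\dot{H}^{s+1/2}$-extension to the rest of $\R$, obtaining $\tilde\psi_{\mathtt c}\in H^{s+1/2}(\R)$ satisfying $\tilde\psi_{\mathtt c}(x)=\tilde\psi_{\mathtt c}(2x_{\mathtt c}-x)$ in a neighborhood of $x_{\mathtt c}$. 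I then define $\psi^{\rm ext}_{\mathtt c}(x,z) := (e^{z|D|}\tilde\psi_{\mathtt c})(x)$ for $z<0$: this is harmonic in $\R\times \R_-$, its trace on $\{z=0\}$ equals $\psi$ on the free-surface side of ${\mathtt c}$, and by evenness $\partial_x\psi^{\rm ext}_{\mathtt c}(x_{\mathtt c},z)=0$, which is the required Neumann condition on the wall. With $E\psi\in \dot{H}^{s+1}(\Omega)$ the global extension from Proposition \ref{Prop: extension high reg} and cutoffs $\eta_{\mathtt c}$ equal to $1$ on the outer-region side of each wall and vanishing well inside $\Omega_{x_{\rm l},x_{\rm r}}$, one sets
\[
\psi^{\rm ext} := \sum_{{\mathtt c}\in{\mathtt C}}\eta_{\mathtt c}\,\psi^{\rm ext}_{\mathtt c} + \Big(1-\sum_{{\mathtt c}\in{\mathtt C}}\eta_{\mathtt c}\Big)E\psi.
\]
In each outer component only one $\eta_{\mathtt c}$ equals $1$, so $\psi^{\rm ext}$ reduces there to $\psi^{\rm ext}_{\mathtt c}$, as required.

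The main obstacle is the quantitative reflected-trace estimate $|\tilde\psi_{\mathtt c}|_{H^{s+1/2}(\R)} \lesssim |\psi|_{\dot{H}^{s+1/2}_{\rm c.c.}(\Gamma^{\rm D})}$, which amounts to a characterization of when even extension from a half-line preserves $H^{s+1/2}$-regularity. Each odd-order derivative $\partial_x^{2l+1}\tilde\psi_{\mathtt c}$ is odd about $x_{\mathtt c}$ and a priori exhibits a jump of size $2\partial_x^{2l+1}\psi(x_{\mathtt c})$ at $x_{\mathtt c}$. For non-integer $s+1/2$, these jumps must vanish for all $2l+1<s$, giving the pointwise conditions $\partial_x^{2l+1}\psi(x_{\mathtt c})=0$. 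For the borderline integer case $s=2l+1$, $\partial_x^s\tilde\psi_{\mathtt c}$ is odd and only a priori in $L^2$; its membership in $H^{1/2}(\R)$ (needed to place $\tilde\psi_{\mathtt c}$ in $H^{s+1/2}(\R)$) is, by a Hardy-type weighted-$L^2$ characterization of $H^{1/2}$ near a jump, equivalent to $|\cdot-x_{\mathtt c}|^{-1/2}\partial_x^s\psi\in L^2$. These are precisely the conditions built into $\dot{H}^{s+1/2}_{\rm c.c.}(\Gamma^{\rm D})$. I will verify them either by Fourier analysis on $\R$, exploiting the evenness to reduce the $H^{s+1/2}$-seminorm to a real multiplier estimate on $|\xi|^{2s+1}|\hat{\tilde\psi}_{\mathtt c}(\xi)|^2$, or, more elementarily, by induction on the largest even integer $k<s+1/2$, combining the standard $H^k$-extension theorem on the half-line with Hardy's inequality at the borderline.

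Finally, the classical Poisson-semigroup mapping property $e^{z|D|}: H^{s+1/2}(\R)\to \dot{H}^{s+1}(\R\times\R_-)$ gives $\|\nabla\psi^{\rm ext}_{\mathtt c}\|_{H^s(\R\times\R_-)} \lesssim |\tilde\psi_{\mathtt c}|_{H^{s+1/2}(\R)}$, and the estimate \eqref{Eq: Est on nabla psi ext in Hk App. C} then follows by combining this with the $\dot{H}^{s+1}(\Omega)$-bound for $E\psi$ from Proposition \ref{Prop: extension high reg} and handling the partition-of-unity commutators, whose supports sit in a compact subset of $\Omega_{x_{\rm l},x_{\rm r}}$ where all three pieces are uniformly smooth by interior elliptic regularity of the harmonic factors.
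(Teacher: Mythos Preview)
Your construction has a genuine structural gap in the case where a connected component of $\Gamma^{\rm D}$ is a \emph{bounded} interval, say ${\mathcal E}_-=(x_{\rm L},x_{\rm l})$ with $-\infty<x_{\rm L}$. Such a component has contact points at \emph{both} endpoints ${\mathtt c}_{\rm L}$ and ${\mathtt c}_{\rm l}$, and the outer region $\Omega_-:=\Omega\cap\{x<x_{\rm l}\}$ is bounded on both sides by vertical walls in $\{z>-h_1\}$. With your scheme, both cutoffs $\eta_{{\mathtt c}_{\rm L}}$ and $\eta_{{\mathtt c}_{\rm l}}$ equal $1$ on $\Omega_-$ (their transition zones lie in $\Omega_{x_{\rm l},x_{\rm r}}$), so on $\Omega_-$ you obtain $\psi^{\rm ext}=\psi^{\rm ext}_{{\mathtt c}_{\rm L}}+\psi^{\rm ext}_{{\mathtt c}_{\rm l}}-E\psi$, which is neither harmonic nor has the correct trace $\psi$ on ${\mathcal E}_-$. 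Even if you tried to use only one of the two local pieces on $\Omega_-$, say $\psi^{\rm ext}_{{\mathtt c}_{\rm l}}$, a single even reflection across $x_{\rm l}$ gives $\partial_x\psi^{\rm ext}_{{\mathtt c}_{\rm l}}=0$ only on the wall $\{x_{\rm l}\}\times(-h_1,0)$ and not on the opposite wall $\{x_{\rm L}\}\times(-h_1,0)$, so the Neumann condition on $\Gamma^{\rm N}\cap\{z>-h_1\}$ fails there. Your assertion that ``in each outer component only one $\eta_{\mathtt c}$ equals $1$'' is simply false in this configuration, and the difficulty is not cosmetic: one genuinely needs a single harmonic function on $\Omega_-$ satisfying the Neumann condition on \emph{both} vertical walls simultaneously.

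The paper resolves this by building one extension per connected component of $\Gamma^{\rm D}$ rather than per contact point. For a bounded ${\mathcal E}_-$ it uses the Boussinesq extension (even reflection across one endpoint \emph{followed by $2L$-periodization}, $L=x_{\rm l}-x_{\rm L}$), yielding $\psi_-^{\rm B}\in\dot{H}^{s+1/2}(\R_{2L})$ under the compatibility conditions, and then applies the periodic Poisson kernel on $\R_{2L}\times(-H_0,0)$. Because the resulting $\psi_-^{\rm ext}$ is even about both $x_{\rm L}$ and $x_{\rm l}$ (it is a cosine series), one obtains $\partial_x\psi_-^{\rm ext}=0$ on both vertical walls at once. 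The two component-wise extensions $\psi_-^{\rm ext}$ and $\psi_+^{\rm ext}$ are then glued by a single cutoff $\chi_-$ supported in $\Omega_{x_{\rm l},x_{\rm r}}$, with no need for the global extension $E\psi$. A secondary point you gloss over, but which the paper handles explicitly, is the homogeneous nature of the norms: since $\psi\in\dot{H}^{s+1/2}(\Gamma^{\rm D})$ and not $H^{s+1/2}(\Gamma^{\rm D})$, the commutator term $(\nabla\chi_-)(\psi_-^{\rm ext}-\psi_+^{\rm ext})$ must be estimated after subtracting suitable averages $c_1,c_2$, and the remaining constant $|c_1-c_2|$ is precisely controlled by the cross-term $|\overline{f}_+-\overline{f}_-|$ in the definition of $|\cdot|_{\dot{H}^{s+1/2}(\Gamma^{\rm D})}$.
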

	
	\begin{lemma}\label{Lemma: extension Neumann2} 
		Let $\Omega$, $\Gamma^{\rm D}$ and $\Gamma^{\rm N}$ be as in Assumption \ref{Assumption domain}, and assume moreover that all the contact angles are equal to $\pi/2$.
		There exists $h_1>0$ such that for all $s\geq 1/2$ and  all  $ f\in H_{\rm c.c.}^{s - \frac{1}{2}}(\Gamma^{\rm D})$, there exists a function $F\in H^{s+1}(\Omega)$ such that $\Delta F$ is in $H^s(\Omega)$ and vanishes outside $\Omega_{x_{\rm l},x_{\rm r}}$, and   such that $\partial_{\rm n}  F= f$ on $\Gamma^{\rm D}$ and $\partial_{\rm n} F = 0$ on $ \Gamma^{\rm N}\cap \{z > -h_1\}$, and satisfying the estimate
		\begin{equation}\label{Eq: Est on nabla F ext in Hk2} 
			\| F \|_{H^{s+1}(\Omega) } 
			\leq C | f|_{H^{s-1/2}_{\rm c.c.}(\Gamma^{\rm D})} 
		\end{equation}
		for a constant $C>0$ independent of $f$.
	\end{lemma}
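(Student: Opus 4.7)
The idea parallels the Dirichlet extension of Lemma \ref{LMell1}: localize $f$ near each corner and exploit the right-angle geometry to reflect across the vertical wall, converting the corner problem into a smooth Neumann problem on an unfolded domain. Split $f = \chi_0 f + \sum_{{\mathtt c}\in {\mathtt C}} \chi_{\mathtt c} f$ using smooth one-dimensional cutoffs, with each $\chi_{\mathtt c}$ supported in a small one-sided interval of $\Gamma^{\rm D}$ adjacent to the corner ${\mathtt c} = (x_{\mathtt c}, 0)$. I will construct separately a far-field lift $F_0$ for $\chi_0 f$ by a standard trace lift and, for each corner, a local lift $F_{\mathtt c}$ by even reflection across $\{x = x_{\mathtt c}\}$; the final extension is the sum $F = F_0 + \sum_{\mathtt c} F_{\mathtt c}$.

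\textbf{Construction.} For the far-field piece $\chi_0 f$, supported away from every corner, I would work on a smooth subdomain $\widetilde{\Omega}\subset \Omega$ whose boundary coincides with $\partial \Omega$ on $\mathrm{supp}(\chi_0)$ and is smooth elsewhere. A classical Neumann trace-lifting theorem on $\widetilde{\Omega}$ produces $F_0\in H^{s+1}(\Omega)$, supported in $\Omega_{x_{\rm l}, x_{\rm r}}$, with $\partial_{\rm n}F_0 = \chi_0 f$ on $\Gamma^{\rm D}$, $\partial_{\rm n}F_0 = 0$ on $\Gamma^{\rm N}$, and $\|F_0\|_{H^{s+1}(\Omega)}\lesssim |\chi_0 f|_{H^{s-1/2}(\Gamma^{\rm D})}$. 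For each corner ${\mathtt c}$, choose $h_1>0$ small enough that the portion of $\Gamma^{\rm N}$ within distance $h_1$ of ${\mathtt c}$ is the vertical segment $\{x = x_{\mathtt c}\}\times (-h_1, 0)$ (this is possible by point (4) of Assumption \ref{Assumption domain} combined with the $\pi/2$ contact angle). Let $B_{\mathtt c}\subset \Omega$ be a small half-disk centered at ${\mathtt c}$ on the fluid side of the wall, and $\widetilde{B}_{\mathtt c}$ its reflection through the wall, which is a full disk centered at ${\mathtt c}$. Extend $\chi_{\mathtt c} f$ by even reflection about $x_{\mathtt c}$ to get a function $\widetilde{f}_{\mathtt c}$ defined on the horizontal diameter of $\widetilde{B}_{\mathtt c}$. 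Then solve the Neumann problem $\Delta \widetilde{F}_{\mathtt c} = 0$ in $\widetilde{B}_{\mathtt c}$ with $\partial_z \widetilde{F}_{\mathtt c} = \widetilde{f}_{\mathtt c}$ on the diameter and $\partial_{\rm n}\widetilde{F}_{\mathtt c} = 0$ on the arc; set $F_{\mathtt c} = \Theta_{\mathtt c}\widetilde{F}_{\mathtt c}|_{B_{\mathtt c}}$, with $\Theta_{\mathtt c}$ a smooth two-dimensional cutoff equal to $1$ near ${\mathtt c}$ and supported inside $B_{\mathtt c}$. By the evenness of $\widetilde{f}_{\mathtt c}$, the solution $\widetilde{F}_{\mathtt c}$ is even in $x$ about $x_{\mathtt c}$, so $\partial_x \widetilde{F}_{\mathtt c} \equiv 0$ on the wall, giving the desired Neumann condition there.

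\textbf{Verification and main obstacle.} Summing, one checks directly: $\partial_{\rm n}F = f$ on $\Gamma^{\rm D}$ from the local matching; $\partial_{\rm n} F = 0$ on $\Gamma^{\rm N}\cap \{z > -h_1\}$ from the symmetry of each $\widetilde{F}_{\mathtt c}$ and the support choice for $F_0$; $\Delta F \in H^s(\Omega)$ with support in $\Omega_{x_{\rm l}, x_{\rm r}}$ because each $F_{\mathtt c}$ is harmonic on a neighborhood of ${\mathtt c}$ (so the only non-harmonic contributions come from the supports of $\nabla \Theta_{\mathtt c}$ and of the cutoffs used to build $F_0$, which all lie in the interior); and estimate \eqref{Eq: Est on nabla F ext in Hk2} follows by summing the local $H^{s+1}$ bounds. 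The main technical obstacle is establishing continuity of the even extension map $f\mapsto \widetilde{f}_{\mathtt c}$ from $H^{s-1/2}_{\rm c.c.}(\Gamma^{\rm D})$ into $H^{s-1/2}(\mathbb{R})$: for generic $s$ this is the classical characterization of the range of even extension as the subspace where odd-order traces up to order less than $s-1$ vanish, which is precisely the pointwise compatibility built into Definition \ref{Def: Hcc}; in the critical case where $s$ is even (so $s-1 \in 2\mathbb{N}+1$), pointwise vanishing of $\partial_x^{s-1} f$ at ${\mathtt c}$ is no longer meaningful and the obstruction becomes a weighted integrability condition, which is exactly what the factor $|x - x_{\mathtt c}|^{-1/2}\partial_x^{s-1}f \in L^2$ in the norm of $H^{s-1/2}_{\rm c.c.}$ provides, the equivalence being established via a Hardy inequality applied to $\partial_x^{s-1} f$ near ${\mathtt c}$. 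Once this mapping property is secured, the $H^{s+1}(\widetilde{B}_{\mathtt c})$ regularity of $\widetilde{F}_{\mathtt c}$ follows from classical elliptic regularity on the smooth disk, completing the proof.
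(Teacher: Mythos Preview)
Your key idea—even reflection across the vertical wall, with the compatibility conditions of Definition~\ref{Def: Hcc} ensuring the reflected data stays in $H^{s-1/2}$—is exactly right and matches the paper's mechanism. The gap is in the support claim for $\Delta F$. The set $\Omega_{x_{\rm l},x_{\rm r}} = \Omega\cap\{x_{\rm l}<x<x_{\rm r}\}$ is the region \emph{under the object}, disjoint from $\Gamma^{\rm D} = \mathcal{E}_-\cup\mathcal{E}_+$. Your far-field lift $F_0$ therefore cannot simultaneously be supported in $\Omega_{x_{\rm l},x_{\rm r}}$ and have $\partial_z F_0 = \chi_0 f \neq 0$ on $\Gamma^{\rm D}$; and even dropping the support claim, a generic trace lift is not harmonic, so $\Delta F_0$ is nonzero in the region below $\mathcal{E}_\pm$, outside $\Omega_{x_{\rm l},x_{\rm r}}$. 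For the corner pieces, say at ${\mathtt c}_{\rm l}=(x_{\rm l},0)$, your quarter-disk $B_{{\mathtt c}_{\rm l}}$ lies in $\{x<x_{\rm l}\}$, so $\nabla\Theta_{{\mathtt c}_{\rm l}}$—and hence the commutator terms making up $\Delta F_{{\mathtt c}_{\rm l}}$—are supported there too, again outside $\Omega_{x_{\rm l},x_{\rm r}}$. ``In the interior of $\Omega$'' is not the same as ``in $\Omega_{x_{\rm l},x_{\rm r}}$'', and the latter is exactly what the subsequent Lemma~\ref{Lemma: Neumann Neumann away from corner} consumes.

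The paper avoids this by working globally on each connected component rather than locally at corners: it takes the Boussinesq extension $f_\pm^{\rm B}$ of $f|_{\mathcal{E}_\pm}$ to the full line (or torus) and writes down a genuinely \emph{harmonic} Neumann extension $F_\pm$ on a horizontal strip via an explicit Poisson-type kernel; even symmetry of $f_\pm^{\rm B}$ gives $\partial_x F_\pm=0$ on the vertical walls. The only non-harmonic contribution to $F=\chi_-F_-+(1-\chi_-)F_+$ then comes from the gluing cutoff $\chi_-$, which varies only over $(x_{\rm l},x_{\rm r})$, so $\Delta F$ lands automatically in $\Omega_{x_{\rm l},x_{\rm r}}$. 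A secondary issue in your local construction: the pure Neumann problem $\Delta\widetilde{F}_{\mathtt c}=0$ on the half-disk with $\partial_z\widetilde{F}_{\mathtt c}=\widetilde{f}_{\mathtt c}$ on the diameter and zero flux on the arc is solvable only if $\int\widetilde{f}_{\mathtt c}=2\int\chi_{\mathtt c}f=0$, which you have not arranged.
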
   
	
	The idea of the proof of Lemma \ref{LMell1bis}  is to use the Boussinesq extension of the restriction of $\psi$ to each connected component of $\Gamma^{\rm D}$. This Boussinesq extension can be classically extended through a Poisson kernel whose regularity depends on the compatibility conditions used to define the space $\dot{H}_{\rm c.c.}^{s+1/2}(\GD)$. The proof of Lemma \ref{Lemma: extension Neumann2} follows identical lines, but the Poisson kernel is not the same.
	
	We first introduce in Section \ref{appsectsp} the spaces $\dot{H}^{s+1/2}$ on the full line and on the torus, and then introduce the Boussinesq extension in Section \ref{sectBouss}. We then derive conditions on a function defined on an interval or a half-line that ensure that its Boussinesq extension is regular. We then prove Lemma \ref{LMell1bis} in Section \ref{appprooflm} and Lemma \ref{Lemma: extension Neumann2} in Section \ref{appLMell2}.

	\subsection{The spaces $\dot{H}^{s+1/2}({\mathbb R}_{2L})$ and $\dot{H}^{s+1/2}({\mathbb R})$	}\label{appsectsp}
	
	For the sake of clarity, we denote ${\mathbb R}_{2L}={\mathbb R}/{(2L{\mathbb Z})}$. If $f$ is a locally integrable function on ${\mathbb R}_{2L}$, we classically define its Fourier coefficients by
	$$
	\forall n\in {\mathbb Z}, \quad \widehat{f}(n)=\frac{1}{L}\int_{-L}^L f(x)e^{-i  n k_Lx}{\rm d}x,
	\quad\mbox{ with }\quad
	k_L:=\frac{\pi}{L}.
	$$
	We can now define $\dot{H}^{s+1/2}({\mathbb R}_{2L})$.
	\begin{Def}\label{Def: HsR 2L}
		Let $L>0$ and  $s\geq0$. We define the space  $\dot{H}^{s+1/2}({\mathbb R}_{2L})$ as
		$$
		\dot{H}^{s+1/2}({\mathbb R}_{2L})
		= \{ f\in L^1_{\rm loc}({\mathbb R}_{2L}) \: : \: \vert f \vert_{\dot{H}^{s+1/2}({\mathbb R}_{2L})}<\infty\},
		$$
		where the semi-norm $\vert f \vert_{\dot{H}^{s+1/2}({\mathbb R}_{2L})}$ is defined by
		$$
		\vert f \vert_{\dot{H}^{s+1/2}({\mathbb R}_{2L})}^2 = \sum_{n\in {\mathbb Z}} |n k_L|^2(1+|n k_L|^2)^{s-1/2}  |\widehat{f}(n)|^2.
		$$
	\end{Def}
	%%%%
	%
	%
	%
	The case of the full line is treated below and is defined via the Fourier transform: 
	\begin{equation}\label{Def: Fourier trasform}
		\hspace{-1cm}\forall \xi \in \R, \quad \hat{f} (\xi) = \int_{\R} e^{-  i x \xi } f(x) \: \mathrm{d}x.
	\end{equation}
	\begin{Def}\label{Def: HsR}
		Let $s\geq 0$. We define the space $\dot{H}^{s+1/2}(\R)$ by
		\begin{equation*}
			\dot{H}^{s+1/2}(\R) = \{f \in \mathcal{S}'(\R) \: : \: |f|_{\dot{H}^{s+1/2}(\R)} <\infty\},
		\end{equation*}
		where the semi-norm $|f|_{\dot{H}^{s+1/2}(\R)}$ is defined by
		\begin{equation*}
			|f|_{\dot{H}^{s+1/2}(\R)}^2 = \int_{\R} |\xi|^2 (1+|\xi|^2)^{s-1/2}|\hat{f}(\xi)|^2 \: \mathrm{d}\xi
			=\vert \partial_x f\vert_{H^{s-1/2}(\R)}^2.
		\end{equation*}
	\end{Def}
	We will now identify the necessary conditions on $f\in \dot{H}^{s+1/2}(I)$ in order to define an extension  on $\dot{H}^{s+1/2}({\mathbb R}_{2L})$ when $I$ is finite and on $\dot{H}^s(\R)$ when $I$ is a half-line. Then we will use this characterization to study the regularity of solutions to Laplace problems with regular boundary data.

	\subsection{The Boussinesq extension}\label{sectBouss}
	If $I$ is an open interval or a half-line, and $f\in \widetilde{L}_{\rm loc}^1(I)$, we can define its Boussinesq extension $f^{\rm B}$ as follows: reflect $f$ across one extremity of $I$ and, when $I$ is finite, periodize this extension. Such an extension has, for instance, been used in \cite{AlazardBurqZuily_Wall_16} in the context of water waves.
	\begin{Def}\label{DefBoussinesqext} The Boussinesq extension $f^{\rm B}$ of a function $f\in \widetilde{L}_{\rm loc}^1(I)$ is defined by:\\
		{\bf i.} If $I=(a,b)$, with $a<b$, we write $L=b-a$ and associate an almost everywhere unique function $f^{\rm B}\in L^1({\mathbb R}_{2L})$ such that
		\begin{equation*}
			\qquad f^{\rm B}=f \qquad \text{on}  \quad I
		\end{equation*}
		and on $(2a-b,a)$,$f^{\rm B}$ is the reflection of $f$ with respect to $a$: 
		\begin{equation*}
			\quad \qquad  f^{\rm B}(x) = f(2a-x) \qquad \text{for}  \quad x\in (2a-b,a).
		\end{equation*}
		{\bf ii.} If $I=(a,\infty)$ (resp. $(-\infty,a)$), with $a\in {\mathbb R}$, then  we denote by $f^{\rm B}\in L_{\rm loc}^1({\mathbb R})$ the function deduced from $f$ by an even reflection  with respect to $a$.
	\end{Def}
	%%%%
	%
	%
	%
	Clearly, the fact that $f\in \dot{H}^{s+1/2}(I)$ does not imply that $f^{\rm B}\in \dot{H}^{s+1/2}({\mathbb R}_{2L})$ when $I$ is finite or $\dot{H}^{s+1/2}({\mathbb R})$ when $I$ is a half-line.  This is, however, true if certain conditions are imposed on $f$ at the boundary of $I$. 
	\begin{Def}\label{Def: Compatability conditions}
		Let $s\geq 0$. If $I$ is a non empty interval or a half-line and $a\in {\mathbb R}$ is an extremity of $I$, we say that $f\in \dot{H}^{s+1/2}(I)$ is compatible at $a$ if the following holds
		$$
		\begin{cases}
			\partial_x^{2l+1} f(a)=0 & \mbox{ for all } l\in {\mathbb N} \mbox{ such that } 2l+1<s,\\
			| x-a|^{-1/2}\partial_x^s f \in L^2(I) & \mbox{ if } s\in 2{\mathbb N}+1.
		\end{cases}
		$$ 
	\end{Def}
	We can now state the following result on the regularity of the Boussinesq extension.
	\begin{prop}\label{propositionregext}
		Let $I$ be a nonempty open interval or a half-line, $s\geq 0$, and  $f\in \dot{H}^{s+1/2}(I)$. The following holds,
		\begin{itemize}
			\item  [1.] If $I=(a,b)$, with $a<b$ finite numbers, and if $f$ is compatible at order $s$ at $a$ and $b$, then its
			Boussinesq extension $f^{\rm B}$ belongs to $\dot{H}^{s+1/2}({\mathbb R}_{2L})$, with $L=b-a$, and there holds,
			\begin{equation}\label{Eq: fB extension}
				\vert f^{\rm B}\vert_{\dot{H}^{s+1/2}({\mathbb R}_{2L})} \lesssim 
				\begin{cases}
					\vert f\vert_{\dot{H}^{s+1/2}(I)} &\mbox{ if } s\not\in 2{\mathbb N}+1,\\
					\vert f\vert_{\dot{H}^{s+1/2}(I)} +  \big\vert( |x-a|^{-1/2}+|x-b|^{-1/2})\partial_x^s f \big\vert_{L^2(I)}&\mbox{ if } s\in 2{\mathbb N}+1.
				\end{cases}
			\end{equation} 
			\item [2.] If $I=(a,\infty)$ (resp. $(-\infty,a)$), with $a\in {\mathbb R}$, and if $f$ is compatible at order $s$ at $a$, then its Boussinesq extension $f^{\rm B}$ belongs to $\dot{H}^{s+1/2}({\mathbb R})$, and there holds,
			\begin{equation}\label{Eq: fB inf extension}
				\vert f^{\rm B}\vert_{\dot{H}^{s+1/2}(\R)} \lesssim 
				\begin{cases}
					\vert f\vert_{\dot{H}^{s+1/2}(I)} &\mbox{ if } s\not\in 2{\mathbb N}+1,\\
					\vert f\vert_{\dot{H}^{s+1/2}(I)} +\big\vert |x-a|^{-1/2}\partial_x^s f \big\vert_{L^2(I)}&\mbox{ if } s\in 2{\mathbb N}+1.
				\end{cases}
			\end{equation}
		\end{itemize}
	\end{prop}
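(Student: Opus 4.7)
I would treat the two cases in parallel and reduce the analysis to the behavior of the Boussinesq extension locally near each endpoint of $I$. Introducing a smooth cutoff $\chi \in C_c^\infty(\R)$ equal to $1$ in a small neighborhood of an endpoint $a$, I decompose $f^{\rm B} = \chi f^{\rm B} + (1-\chi)f^{\rm B}$; the second term inherits its regularity directly from $f$, so the task reduces to controlling the semi-norm of $\chi f^{\rm B}$, with an analogous cutoff handling $b$ in the finite interval case. After translation, I may assume $a = 0$, so that $f^{\rm B}(x) = f(|x|)$ on the support of $\chi$.

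Using the equivalent semi-norm from Remark \ref{exprequivnorm}, I would write $s + 1/2 = k + \sigma$ with $k \in \N$ and $\sigma \in [0,1)$, which reduces the bound to controlling $|\partial_x^j f^{\rm B}|_{L^2}$ for $1 \leq j \leq k$ together with $|\partial_x^k f^{\rm B}|_{\dot{H}^\sigma}$ when $\sigma > 0$. A direct computation shows that $\partial_x^j f^{\rm B}$ is the even reflection of $\partial_x^j f$ when $j$ is even, and its odd reflection when $j$ is odd. The $L^2$-bound $|\partial_x^j f^{\rm B}|_{L^2} \lesssim |\partial_x^j f|_{L^2}$ is then immediate as a bound on locally $L^2$ functions, but holds distributionally (i.e., without Dirac contributions at $0$) only when $\partial_x^{j-1}f^{\rm B}$ is continuous across $0$; by the parity above, this is automatic when $j-1$ is even and demands $\partial_x^{j-1} f(0) = 0$ when $j-1$ is odd. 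A finite induction on $j$ shows that the integer part of the compatibility condition, $\partial_x^{2l+1} f(0) = 0$ for $2l+1 < s$, is exactly what is needed to make $\chi\,\partial_x^j f^{\rm B} \in L^2$ for all $j \leq k$.

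It remains to handle the fractional term $|\partial_x^k f^{\rm B}|_{\dot{H}^\sigma}$ when $\sigma > 0$. For $\sigma \in (0,1) \setminus \{1/2\}$, the $\dot{H}^\sigma$ semi-norm is preserved up to a constant under even or odd reflection, so no extra condition is required. The critical value $\sigma = 1/2$ corresponds to $s$ being an integer. When $s = 2l$ is an even integer, $k = s$ is even and $\partial_x^k f^{\rm B}$ is the even reflection of $\partial_x^s f \in \dot{H}^{1/2}(I)$, which is again stable under that reflection. When $s = 2l+1 \in 2\N + 1$, however, $\partial_x^s f^{\rm B}$ is the odd reflection of $\partial_x^s f$; the standard Hardy-type characterization asserts that this odd extension lies in $\dot{H}^{1/2}(\R)$ if and only if $\big\vert |x|^{-1/2}\partial_x^s f \big\vert_{L^2(0,\infty)} < \infty$, with a semi-norm equivalent to $|\partial_x^s f|_{\dot{H}^{1/2}(I)} + \big\vert |x|^{-1/2}\partial_x^s f \big\vert_{L^2(I)}$. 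This is precisely the weak compatibility condition of Definition \ref{Def: Compatability conditions} and produces the extra term in the right-hand side of \eqref{Eq: fB extension} and \eqref{Eq: fB inf extension}.

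The finite-interval case requires performing the same analysis at both endpoints of $(a,b)$, and the periodization introduces no new obstruction because the compatibility conditions at $a$ and $b$ make the one-sided Boussinesq extensions match up to the required smoothness across the periodic boundary. The main obstacle I anticipate is the borderline case $s \in 2\N+1$: the pointwise trace $\partial_x^s f(a)$ is not defined for $f \in \dot{H}^{s+1/2}(I)$, and one must substitute the weak Hardy-type condition while keeping the induction of the previous paragraph consistent at the top order; verifying the equivalence between the $\dot{H}^{1/2}$ semi-norm of the odd extension and the weighted $L^2$-bound is where the most care will be required.
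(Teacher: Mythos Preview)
Your overall strategy is sound and closely mirrors the paper's proof: both reduce to analyzing how reflection interacts with differentiation, use that $\partial_x^j f^{\rm B}$ alternates between the even and odd reflections of $\partial_x^j f$, and isolate the critical role of the $\dot H^{1/2}$ level for odd reflections (where the weighted $L^2$ condition enters via a Hardy-type characterization of $H^{1/2}_{00}$). The paper carries this out by a case analysis in $s$ directly on the Gagliardo/Fourier semi-norm in the periodic setting, while you organize it via the decomposition $s+1/2=k+\sigma$ and a cutoff; these are presentational differences only.

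There is, however, one genuine gap. Your claim that ``for $\sigma\in(0,1)\setminus\{1/2\}$ the $\dot H^\sigma$ semi-norm is preserved under even or odd reflection, so no extra condition is required'' is false for odd reflections when $\sigma>1/2$: the odd extension of $g$ has a jump of size $2g(0)$ at the origin, and a jump discontinuity does not belong to $H^\sigma$ for $\sigma>1/2$ unless $g(0)=0$. Concretely, when $k$ is odd and $\sigma\in(1/2,1)$ --- equivalently $s\in(k,k+\tfrac12)$ with $k$ odd, e.g.\ $s\in(1,\tfrac32)$ --- you need $\partial_x^k f(a)=0$ to place the odd reflection of $\partial_x^k f$ in $\dot H^\sigma$. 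This condition \emph{is} part of the compatibility hypothesis (since $k<s$ in that range), but your integer-part induction only consumes the conditions $\partial_x^{2l+1}f(a)=0$ for $2l+1\le k-2$; the remaining condition at order $k$ must be invoked precisely here, in the fractional step. The paper handles exactly this range in its Step~4, where Hardy's inequality with exponent $s-\tfrac12\in(\tfrac12,1)$ explicitly requires $\partial_x f(0)=\partial_x f(L)=0$. Once you insert this correction, your argument goes through.
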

	\begin{remark}\label{remregBouss}
		The same kind of results of course hold with standard Sobolev spaces $H^{s+1/2}(I)$ instead of $\dot{H}^{s+1/2}(I)$.
	\end{remark} 
	
	\begin{proof}
		We consider only the case where $I$ is bounded because the case of the half-line is a straightforward adaptation.
		We let $I = (0,L)$ for simplicity. We then divide the proof into several cases, depending on the regularity index $s$. But first, we make a classical observation that relates the two norms. In particular, for any $n\in \Z$,  $\delta \in I$, and $r \in (0,1)$ there holds,
		\begin{align}\label{Eq: Stricharz}
			|nk_L|^2(1+(nk_L)^{2r-2}) 
			%\lesssim |n|^{2r}\int_{0}^{\delta} \frac{\sin^2(\pi \xi)}{|\xi|^{2r+1}} \: \mathrm{d} \xi
			\leq C
			\int_{0}^{\delta}	\frac{|1-e^{-in k_L  h}|^2}{h^{2r+1}} \: \mathrm{d} h,
		\end{align}
		for some constant $C$ depending only on $r$ and $\delta$, so that by Fubini and Parseval's identity we get
		\begin{align*}
			\vert f^{\rm B}\vert_{\dot{H}^r({\mathbb R}_{2L})}^2
			&  \lesssim 
			\sum \limits_{n\in \Z}	\int_{0}^{\delta}	\frac{|1-e^{-in k_L  h}|^2}{h^{2r+1}} |\widehat{f^{\rm B}}(n)|^2 \: \mathrm{d} h
			\\ 
			& = 
			\int_{-L}^{L} \int_{y}^{\delta + y} \frac{|f^{\rm B}(x) -  f^{\rm B}(y)|^2}{|x-y|^{2r+1}} \: \mathrm{d}x \mathrm{d} y.
		\end{align*}
		We now prove \eqref{Eq: fB extension}, considering separate cases for $s\geq 0$.% 
		\\ 
		
		\noindent	
		{\bf{Step 1}.} Case $s\in [0,1/2)$. From the previous observation, with $r=s+1/2$, and the definition of $f^{\rm B}$ we find that 
		\begin{align*}
			\vert f^{\rm B}\vert_{\dot{H}^{s+1/2}({\mathbb R}_{2L})}^2
			&  \lesssim 
			\vert f\vert_{\dot{H}^{s+1/2}(I)}^2
			+
			\int_{-\delta}^{0} \int_{0}^{\delta } \frac{|f^{\rm B}(x) -  f^{\rm B}(y)|^2}{|x-y|^{2s+2}} \: \mathrm{d}x \mathrm{d} y
			\\ 
			& 
			\hspace{0.5cm}
			+
			\int_{L-\delta}^{L} \int_{L}^{L+\delta } \frac{|f^{\rm B}(x) -  f^{\rm B}(y)|^2}{|x-y|^{2s+2}} \: \mathrm{d}x \mathrm{d} y.
		\end{align*}
		Since the second and third terms of the right-hand side can be treated similarly, we just focus on the second one. Since for all $y\in (-\delta,0)$, one has $f^{\rm B}(y)=f(-y)$ and because $(x+y)^{-(2s+2)}\leq (x-y)^{-(2s+2)}$ when $x,y\in (0,\delta)$, we can perform  the change of variable $y\mapsto -y$ in the integral to obtain 
		\begin{align*}
			\int_{-\delta}^{0} \int_{0}^{\delta } \frac{|f^{\rm B}(x) -  f^{\rm B}(y)|^2}{|x-y|^{2s+2}} \: \mathrm{d}x \mathrm{d} y
			& = \int_{0}^{\delta} \int_{0}^{\delta } \frac{|f(x) -  f(-y)|^2}{|x+y|^{2s+2}} \: \mathrm{d}x \mathrm{d} y 
			\\ 
			& \leq \vert f\vert_{\dot{H}^{s+1/2}(I)}^2,
		\end{align*}
		so that we can conclude that $\vert f^{\rm B}\vert_{\dot{H}^{s+1/2}({\mathbb R}_{2L})}
		\lesssim 
		\vert f\vert_{\dot{H}^{s+1/2}(I)}$.\\
		
		\noindent	
		{\bf{Step 2}.} Case $s\in [1/2,1)$. We observe for $s=1/2$ that 
		\begin{align*}
			\vert f^{\rm B}\vert_{\dot{H}^1(\R_{2L})}^2
			=  
			\sum \limits_{n\in \Z} (nk_L)^2|\widehat{f^{\rm B}}(n)|^2
			= 
			\sum \limits_{n\in \Z} 
			|\widehat{\partial_x f^{\mathrm{B}}}(n)|^2,
		\end{align*}
		so that, by Parseval identity, $\vert f^{\rm B}\vert_{\dot{H}^1(\R_{2L})}\leq |\partial_x f\vert_{L^2(I)}=\vert f\vert_{\dot{H}^1(I)}$. For $s\in(1/2,1)$ we remark that on $(-L,L)$, one has $\partial_x f^{\rm B}=(\partial_x f)^{\rm odd}$, where  $(\partial_x f)^{\rm odd}$ denotes the $2L$ periodization of the odd reflection of $f$ with respect to the origin. We then argue, as in the previous step, to see that
		\begin{align*}
			\vert f^{\rm B}\vert_{\dot{H}^{s+1/2}_{\R_{2L}}}^2
			&  \lesssim 
			\vert f\vert_{\dot{H}^{s+1/2}(I)}^2
			+
			\int_{-\delta}^{0} \int_{0}^{\delta } \frac{|(\partial_x f)^{\rm odd}(x) -  (\partial_xf)^{\rm odd}(y)|^2}{|x-y|^{2s+2}} \: \mathrm{d}x \mathrm{d} y
			\\ 
			& 
			\hspace{0.5cm}
			+
			\int_{L-\delta}^{L} \int_{L}^{L+\delta } \frac{| (\partial_x f)^{\rm odd}(x) -  (\partial_x f)^{\rm odd}(y)|^2}{|x-y|^{2s+2}} \: \mathrm{d}x \mathrm{d} y.
		\end{align*}
		For the second term, we find that
		\begin{align*}
			\int_{-\delta}^{0} \int_{0}^{\delta } \frac{|(\partial_x f)^{\rm odd}(x) -  (\partial_xf)^{\rm odd}(y)|^2}{|x-y|^{2s+2}} \: \mathrm{d}x \mathrm{d} y
			& \lesssim \int_0^{\delta} \frac{1}{|x|^{2(s-1/2)}} |\partial_x f|^2 \: \mathrm{d}x, 
		\end{align*}
		and since $0<s-1/2<\frac{1}{2}$ we may use Hardy's inequality (see for instance Theorem 1.4.4.4. in \cite{Grisvard85}) to control the last term by $|\partial_x f|_{\dot{H}^{s-1/2}(I)}^2$. The last term is treated similarly.\\

		\noindent
		{\bf{Step 3}.} Case $s = 1$. Arguing as in the previous case, we find that
		\begin{equation*}
			\vert f^{\rm B}\vert_{\dot{H}^{3/2}(\R_{2L})}^2
			\lesssim |\partial_x f|_{\dot{H}^{\frac{1}{2}}(I)}^2  
			+
			|x^{-\frac{1}{2}}\partial_xf|^2_{L^2([0,\delta])}
			+
			|(L-x)^{-\frac{1}{2}}\partial_xf|^2_{L^2([L-\delta,L])}.
		\end{equation*}
		
		\noindent
		{\bf{Step 4}.} Case $s \in (1, 3/2)$. We proceed exactly as in Step 2, but since one now has $\frac{1}{2}<s-1/2<1$, Hardy's inequality requires that $\partial_x f(0) = \partial_x f(L)=0$. \\ 
		
		\noindent
		{\bf{Step 5}.} Case $s\geq 3/2$. First observe that if $\partial_xf(0)=\partial_x f(1)=0$ then $\partial_x^2 f^{\rm B}=(\partial_x^2 f)^{\rm B}$. For $s=3/2$, one can then easily compute that 
		\begin{align*}
			\vert f^{\rm B}\vert_{\dot{H}^2(\R_{2L})}^2
			&=  
			\sum \limits_{n\in \Z} (nk_L)^2 (1+(nk_L)^2)|\widehat{f^{\rm B}}(n)|^2\\
			&= 
			\sum \limits_{n\in \Z} 
			|\widehat{\partial_x f^{\mathrm{B}}}(n)|^2
			+
			\sum \limits_{n\in \Z} 
			|\widehat{(\partial_x^2f)^{\mathrm{B}}}(n)|^2.
		\end{align*}
		The first term corresponds to the case $s=1/2$, and for the second one, we can use Parseval's identity to finally conclude that $\vert f^{\rm B}\vert_{\dot{H}^2(\R_{2L})}\lesssim \vert \partial_x f\vert_{H^1(I)}$.
		For $3/2<s<5/2$, the same analysis as in Step 1 can then be performed.  For $5/2\leq s<7/2$, one proceeds as in Steps 2, 3, and 4, adding the additional condition on $\partial^3_x f(0)=\partial^3_x f(L)=0$ for $s\geq 3$. This scheme is repeated for $2k\leq s +1/2 < 2k+2$ for $k\geq 2$.
		\\

	\end{proof}
	
	\subsection{Proof of Lemma \ref{LMell1bis}}\label{appprooflm}

	First, we do the proof in the case where ${\mathcal E}_-= (x_{\rm L},x_{\rm l})$  is finite while  ${\mathcal E}_+ = (x_{\rm r}, \infty)$  is infinite. All the other configurations can be treated with simple adaptations. We also denote by $\psi_-$ and $\psi_+$ the restriction of $\psi$ on $\mathcal{E}_-$ and ${\mathcal E}_+$ respectively.
	Secondly, we construct explicit harmonic extensions of $\psi_-$ and $\psi_+$ whose horizontal derivatives vanish on the vertical lines issued from the contact points. These extensions are then used to construct $\psi^{\rm ext}$.\\

	\noindent
	{\bf Step 1.} \textit{Extension of $\psi_- \in \dot{H}^{s + \frac{1}{2}}(\mathcal{E}_-)$, with $s\geq 0$}.
	Under the assumptions of the lemma, we know from Proposition \ref{propositionregext} that the Boussinesq extension $\psi_-^{\rm B}$  belongs to $\dot{H}^{s+\frac{1}{2}}(\R_{2L})$ with $L = x_{\rm l} - x_{\rm L}$. Let $H_0>0$ be large enough so that $\Omega$ is contained in the strip ${\mathbb R}\times (-H_0,0)$, and introduce the periodic strip ${\mathcal S}_{-}=\R_{2L}\times (-H_0,0)$. Then we define the extension $\psi_-^{\rm ext}:{\mathcal S}_{-} \rightarrow \R$ via the Poisson kernel:
	\begin{align*}
		\psi_-^{\rm ext}(x,z)
		= \exp(z |\mathrm{D}|)\psi_-^{\rm B}(x)  = 
		\sum \limits_{n \in \Z} e^{z|n|} \widehat{\psi_-^{\rm B}}(n) e^{ink_L x}.
	\end{align*}
	Clearly, $\psi_-^{\rm ext}$ is harmonic. Since $\psi_-^{\rm B}$ is even we have that $\psi_-^{\rm ext}$ reduces to a cosine series and gives
	\begin{equation}\label{extneum finite}
		\partial_x \psi_-^{\rm ext}=0 \mbox{ on } \{x_{\rm L}\}\times {\mathbb R}_-\cup \{x_{\rm l}\}\times {\mathbb R}_-.
	\end{equation}
	By classical properties of Poisson kernels, we also have $\Vert \nabla \psi_-^{\rm ext}\Vert_{H^s({\mathcal S}_{-})}\lesssim |\psi_-^{\rm B}|_{\dot{H}^{s + \frac{1}{2}}(\R_{2L})}$. 
	We then infer from Proposition \ref{propositionregext} that 
	$$
	\Vert \nabla \psi_-^{\rm ext}\Vert_{H^{s}( {\mathcal S}_{-})} 
	\lesssim
	\begin{cases}
		\vert \psi_-\vert_{ \dot{H}^{s+\frac{1}{2}}(\mathcal{E}_-)} &\mbox{ if } s \not\in2{\mathbb N}+1,\\
		\vert \psi_-\vert_{ \dot{H}^{s+\frac{1}{2}}(\mathcal{E}_-)} +\vert (|x-x_{\rm L}|^{-\frac{1}{2}}+|x-x_{\rm l}|^{-\frac{1}{2}})\partial_x^{s}\psi_-\vert_{L^2(\mathcal{E}_-)} &\mbox{ if } s \in2{\mathbb N}+1.
	\end{cases}
	$$
	
	\noindent
	{\bf Step 2.} \textit{Extension of $\psi_+ \in \dot{H}^{k + \frac{1}{2}}({\mathcal E}_+)$}. We define the strip ${\mathcal S}_{+}= \R\times (-H_0,0)$. Then we define the extension $\psi_+^{\rm ext}:{\mathcal S}_{+} \rightarrow \R$ as
	\begin{align*}
		\psi_+^{\rm ext}(x,z)
		= \exp(z |\mathrm{D}|)\psi_+^{\rm B}(x)  = 
		\mathcal{F}^{-1}\big{(} e^{z |\xi|} \widehat{\psi_+^{\rm B}}(\xi) \big{)}(x).
	\end{align*}
	Clearly, we have by Plancherel's theorem  and Proposition \ref{propositionregext} that
	\begin{align*}
		\Vert \nabla \psi_+^{\rm ext}\Vert_{H^s({\mathcal S}_{+})}^2 
		& \lesssim
		|\psi_+^{\rm B}|^2_{\dot{H}^{s + \frac{1}{2}}(\R)}
		\\ 
		& \lesssim 
		\begin{cases}
			\vert \psi_-\vert_{ \dot{H}^{s+\frac{1}{2}}(\mathcal{E}_-)} &\mbox{ if } s \not\in2{\mathbb N}+1,\\
			\vert \psi_-\vert_{ \dot{H}^{s+\frac{1}{2}}(\mathcal{E}_-)} +\vert |x-x_{\rm r}|^{-\frac{1}{2}}\partial_x^{s}\psi_-\vert_{L^2(\mathcal{E}_-)} &\mbox{ if } s \in2{\mathbb N}+1,
		\end{cases}
	\end{align*}
	while the condition 
	\begin{equation}\label{extneum finite psi_ext2}
		\partial_x \psi_+^{\rm ext}=0 \mbox{ on } \{x_{\rm r}\}\times {\mathbb R}_-
	\end{equation}
	can be verified by direct computations. 
	\\

	\noindent
	{\bf Step 3.} \textit{Extension of $\psi\in \dot{H}_{\rm c.c.}^{s + \frac{1}{2}}(\Gamma^{\rm D})$}. We will now glue the two functions above together to construct the final extension on $\Gamma^{\rm D}$. To do so, we introduce a smooth cut-off function $\chi_{-}: [x_{\rm L}, \infty) \rightarrow [0,1]$ such that  $\chi_{-}(x)=1$ for $x\in \mathcal{E}_-$, $\chi_{-}(x)=0$ for $x\in  {\mathcal E}_+$. Then define $\psi^{\rm ext}$ as
	\begin{align}
		\label{defpsiext}
		%\nonumber
		\psi^{\rm ext} (x,z)&=\chi_{-}(x)\psi_-^{\rm ext}(x,z)+(1-\chi_{-}(x))\psi_+^{\rm ext}(x,z). 
	\end{align}
	Clearly, we have that $\partial_{\rm n} \psi^{\rm ext}=0$ on $\Gamma^{\rm N}\cap \{z>-h_1\}$ for $h_1>0$ is chosen small to ensure that $\Gamma^{\rm N}\cap \{z>-h_1\}$ is a union of vertical segments. It therefore only remains to prove  \eqref{Eq: Est on nabla psi ext in Hk App. C}. To do so, we add and subtract $c_1$ which is the average of $\psi_-^{\rm B}$ over one period and $c_2$ being the average of $\psi_+^{\rm B}$ on some finite interval:
	\begin{equation}\label{ST21}
		\nabla \psi^{\rm ext}={\mathbf v}_{\rm I}+{\mathbf v}_{\rm II},
	\end{equation}
	with
	\begin{align*}
		{\mathbf v}_{\rm I}=&\chi_{-}(x) \nabla \psi_-^{\rm ext}+(1-\chi_{-}(x)) \nabla \psi_+^{\rm ext}-(\nabla \chi_{-} )(c_2-c_1),\\
		{\mathbf v}_{\rm II}= &(\nabla \chi_{-})(\psi_-^{\rm ext}-c_1)- (\nabla \chi_{-})(\psi_+^{\rm ext}(x,z)-c_2).
	\end{align*}
	From Steps 1 and 2, and Definition \ref{defGN}, one directly gets that
	\begin{equation}\label{ST22}
		\Vert {\mathbf v}_{\rm I}\Vert_{H^{s}(\Omega)}\lesssim 
		\vert \psi\vert_{\dot{H}_{\rm c.c.}^{s+\frac{1}{2}}(\Gamma^{\rm D})}. 
	\end{equation} 
	For the estimate on ${\mathbf v}_{\rm II}$ we use that $\nabla\chi_{-}$ is supported on $(x_{\rm l}, x_{\rm r})$ to obtain
	$$
	\Vert {\mathbf v}_{\rm II}\Vert_{H^{s}(\Omega)}
	\lesssim
	\Vert \psi_-^{\rm ext}-c_1 \Vert_{H^{s}((x_{\rm l}, x_{\rm r})\times (-H_0,0))}+\Vert \psi_+^{\rm ext}-c_2 \Vert_{H^{s}((x_{\rm l}, x_{\rm r})\times (-H_0,0))}.
	$$
	Both terms are estimated in the same way, where we use the identity
	\begin{equation*}
		\psi_j^{\rm ext} =  \psi_j^{\rm B}(x) - \int_{z}^0 \partial_z \psi_j^{\rm ext}(x,z')\: \mathrm{d}z',
	\end{equation*}
	for $j=1,2$ to deduce that
	\begin{align*}
		\Vert \psi_j^{\rm ext}-c_j \Vert_{H^{s}((x_{\rm l}, x_{\rm r})\times (-H_0,0))} 
		& \lesssim \vert \psi_j^{\rm B}-c_j \vert_{L^2(x_{\rm l}, x_{\rm r})}+\Vert \nabla\psi_j^{\rm ext}\Vert_{H^{s}((x_{\rm l}, x_{\rm r})\times (-H_0,0))},
	\end{align*}
	where we also used the Minkowski inequality. To conclude, we use Theorem 3.5 in \cite{LeoniTice19}, which states for all finite interval $I$ one has 
	$$\vert f - \frac{1}{|I|}\int_I f \vert_{L^2(I)}\lesssim \vert f\vert_{\dot{H}^{\frac{1}{2}}(I)}.$$
	Then, since the constants $c_j$ appearing in the definition of $\dot{H}^{k+\frac{1}{2}}(\Gamma^{\rm D})$ do not depend on the choice of the interval (see Proposition 3 in \cite{LannesMing24}), we get together with Step 1 that 
	\begin{equation}\label{ST23}
		\Vert {\mathbf v}_{\rm II}\Vert_{H^{s}(\Omega)}\lesssim 
		\vert \psi\vert_{\dot{H}_{\rm c.c.}^{s+\frac{1}{2}}(\Gamma^{\rm D})}. 
	\end{equation} 
	Estimate \eqref{Eq: Est on nabla psi ext in Hk} therefore follows from \eqref{ST21}, \eqref{ST22}, and \eqref{ST23}. This concludes the proof of Lemma \ref{LMell1}.

	\subsection{Proof of Lemma \ref{Lemma: extension Neumann2}}\label{appLMell2}

	Let $f_\pm$ be the restriction of $f$ on $\mathcal{E}_\pm$. Then we argue as in the proof of Lemma \ref{LMell1}, keeping the same notations; the only change is on the nature of the extension of the boundary data, since we now have to deal with non-homogeneous Neumann rather than Dirichlet conditions. As above, $h_1>0$ is chosen such that $\Gamma^{\rm N}\cap \{z>-h_1\}$ is a union of vertical segments. We also take $H_0$ large enough to have $\Omega$ strictly included in the strip $\R\times (-H_0,0)$.   \\ 
	
	\noindent
	{\bf Step 1.} \textit{Neumann extension of $f_- \in H^{s-\frac{1}{2}}(\mathcal{E}_-)$}. A function $F_-$, harmonic in the periodic strip ${\mathcal S}_-=\R_{2L}\times (-H_0,0)$ and such that $\partial_z F_-=f_-$ on $\mathcal{E}_-$ is given by 
	\begin{align*}
		F_-(x,z)
		& = 
		\Big{(}\frac{\sinh((z+H_0) |\mathrm{D}|)}{\cosh(H_0|\mathrm{D}|)}\frac{1}{|\mathrm{D}|}f_1^{\rm B}\Big{)}(x). 
	\end{align*}
	Also, proceeding as in Step 1 in the proof of Lemma \ref{LMell1bis} in Section \ref{appprooflm}, we observe that
	\begin{equation}\label{extneum finite neumann 1}
		\partial_x F_- =0 \mbox{ on } \{x_{\rm L}\}\times (-H_0,0)\cup \{x_{\rm l}\}\times (-H_0,0),
	\end{equation}
	%			%
	%			% 
	%
	%
	%
	and that 
	\begin{align*}
		\|  F_-\|_{H^{s+1}({\mathcal S}_{-})} 
		& \lesssim  |f_-^{\rm B}|_{{H}^{s-1/2}(\R_{2L})} \\
		&\lesssim  \vert f_- \vert_{{H}^{s-1/2}_{\rm c.c.}({\mathcal E}_-)} . 
	\end{align*}
	where we also used Proposition \ref{propositionregext} and Remark \ref{remregBouss} for the final estimate.\\
	
	\noindent
	{\bf Step 2.} \textit{Neumann extension of $f_+ \in H^{s-\frac{1}{2}}({\mathcal E}_+)$}. The same formula as in the previous step furnishes a function $F_+$, harmonic in the strip ${\mathcal S}_+=\R\times (-H_0,0)$, such that $\partial_z F_+=f_+$ on ${\mathcal E}_+$, and that satisfies $\partial_x F_+ =0 \mbox{ on } \{x_{\rm r}\}\times (-H_0,0)$ as well as the estimate
	$$  \Vert F_+\|_{H^{s+1}({\mathcal S}_{-})} 
	\lesssim \vert f_+ \vert_{{H}^{s-1/2}_{\rm c.c.}({\mathcal E}_-)}.
	$$

	\noindent
	{\bf Step 3.} \textit{Neumann extension of $f \in H^{s -\frac{1}{2}}(\Gamma^{\rm D})$}. As in Step 3 of the proof of Lemma \ref{LMell1bis}, we introduce a smooth cut-off function $\chi_{-}: [x_{\rm L}, \infty) \rightarrow [0,1]$ such that  $\chi_{-}(x)=1$ for $x\in \mathcal{E}_-$, $\chi_{-}(x)=0$ for $x\in  {\mathcal E}_+$. We then define $F$ as
	\begin{align} 
		F (x,z)
		& =
		\chi_{-}(x)F_-(x,z) 
		+
		(1-\chi_{-}(x))F_+(x,z).
	\end{align}
	This extension satisfies $\partial_{\rm n} F = 0$ on $\Gamma^{\rm N}\cap \{z>-h_1\}$ and by construction there holds
	\begin{equation*}
		\quad \partial_z  F = f_\pm
		\quad \text{on} \quad 
		\mathcal{E}_\pm,
	\end{equation*}
	and moreover $\Delta F\in H^{s}(\Omega)$ and vanishes outside $\Omega_{x_{\rm l},x_{\rm r}}$.
	We are therefore left to prove  \eqref{Eq: Est on nabla F ext in Hk2}, which follows easily by the previous two steps.

	\bibliographystyle{plain}
	\bibliography{Biblio.bib}
	
\end{document}